\documentclass[%
  12pt, 
]{amsart}
  \usepackage{aliascnt}
  \usepackage{mathtools}
  \usepackage{keytheorems}
  \usepackage{float}
  \usepackage{dsfont} 
  \usepackage{amssymb} 
  \usepackage{yhmath} 
  \usepackage[%
      shortlabels,
      inline,
  ]{enumitem} 
  \usepackage{mathrsfs}
  \usepackage[Symbol]{upgreek}
  \usepackage{xfrac}
  \usepackage{accents}
  \usepackage[dvipsnames]{xcolor} 
  \usepackage{interval} 
  \usepackage{mleftright} 
  \usepackage{fixdif}
  \usepackage{leftindex} 
  \usepackage{mathcommand} 
  \usepackage{tikz} 
  \usepackage[all,cmtip]{xy} %
  \usepackage{zref-clever}
  \usepackage[%
      pagebackref, 
  ]{hyperref}
  \usepackage{caption} 
  
  \numberwithin{equation}{section}
  \mathtoolsset{showonlyrefs}
  \let\realItem\item 
\makeatletter
\NewDocumentCommand\myItem{ o }{%
   \IfNoValueTF{#1}%
      {\realItem}
      {\realItem[#1{\MakeLinkTarget[item]{}}]\def\@currentlabel{#1}}
}
\makeatother
\setlist[enumerate,1]{
  before=\let\item\myItem, 
  label = \textup{(\alph*)}, 
  ref = \textup{(\alph*)}
}
  
  \zcsetup{%
      cap,
      abbrev,
      nameinlink,
  }
  \hypersetup{%
      bookmarksnumbered=true,
      colorlinks=true,
      linkcolor=blue,
      citecolor=cyan,
      pdfstartview=FitBH,
  }
  \renewcommand*{\backrefalt}[4]{%
      \ifcase #1 %
        No citations.%
      \or
        $\uparrow$ #2%
      \else
        $\uparrow$ #2%
      \fi
  }
  \providecommand{\noopsort}[1]{} 

  \newkeytheorem{mainthm}[%
      name=Theorem,%
      counter-format=\Alph{mainthm},%
  ]

  \newkeytheorem{%
      theorem,%
      proposition,%
      lemma,%
      corollary,%
      conjecture,%
  }[sharenumber=equation]

  \newkeytheorem{%
      definition,%
      remark,%
      notation,%
      example,%
      warning,%
      construction,%
      convention,%
      question,%
  }[sharenumber=equation,style=definition]

  \newkeytheorem{deflem}[%
      name={Definition/Lemma},%
      sharenumber=equation,%
      style=definition]

      \newkeytheorem{defprop}[%
      name={Definition/Proposition},%
      sharenumber=equation,%
      style=definition]

  \NewDocumentCommand \ListInThm { m O{\textup} O{(\roman{enumi})} O{.} }
    {
      \AtBeginEnvironment{#1}
        {
          \zcsetup{ countertype={enumi=#1} }
          \setlist[enumerate,1]
            {
              label={#2{#3}},
              ref={\csname the#1\endcsname#4{#3}}
            }%
        }
    }

  \ListInThm{theorem}
  \ListInThm{corollary}

\newcommand{\newheaderthm}[1]{
\newtheorem*{__headertheorem__#1}{\zcref{#1}}
}

\newcommand{\startheaderthm}[1]{
\begin{__headertheorem__#1}
}
\newcommand{\finishheaderthm}[1]{
\end{__headertheorem__#1}
}

\newcommand{\headerthmfooter}{}

  \usetikzlibrary{%
      cd,%
      arrows,%
      backgrounds,%
      calc,%
      decorations,%
      decorations.pathmorphing,%
      shapes,%
      tikzmark%
  }
  \tikzcdset{%
      scale cd/.style = %
          {%
            every label/.append style = {scale = #1},%
            cells = {nodes = {scale = #1}}%
          }%
  }

  \NewDocumentMathCommand\txand%
      { O{\quad} }{#1\text{and}#1}
  \NewDocumentMathCommand\txforall%
      { O{\quad} }{#1\text{for all }}
  \NewDocumentMathCommand\txforsome%
      { O{\quad} }{#1\text{for some }}
  \NewDocumentCommand\SetSymbol{o}{\nonscript\:#1\vert\allowbreak\nonscript\:\mathopen{}}  
  \DeclarePairedDelimiterX\Set[1]\{\}
      {#1} 
  \NewDocumentMathCommand\textSet{m}%
      {\Set*{\text{#1}}}
  \DeclarePairedDelimiterX\GSet[1]\langle\rangle
      {#1} 
  \NewDocumentCommand\placeholder{}{\:\cdot\:} 
  \NewDocumentCommand\NewPairedDelimiterS{mmm}{%
      \DeclarePairedDelimiterX{#1}[1]{#2}{#3}%
          {\ifblank{##1}{\placeholder}{##1}}%
  }
  \NewDocumentCommand\NewPairedDelimiterSS{mmmO{,}}{%
      \DeclarePairedDelimiterX{#1}[2]{#2}{#3}%
          {\ifblank{##1}{\placeholder}{##1}%
            #4%
          \ifblank{##2}{\placeholder}{##2}}%
  }
  \NewPairedDelimiterS\abs\lvert\rvert 
  \NewPairedDelimiterS\norm\lVert\rVert 
  \NewPairedDelimiterS\lrangle\langle\rangle 
  \NewPairedDelimiterS\paren\lparen\rparen 
  \NewPairedDelimiterS\lrbrack\lbrack\rbrack 
  \NewPairedDelimiterS\lrbrace\lbrace\rbrace 
  \NewPairedDelimiterS\ceil\lceil\rceil 
  \NewPairedDelimiterS\floor\lfloor\rfloor 
  \NewPairedDelimiterS\bra\langle\vert 
  \NewPairedDelimiterS\ket\vert\rangle 
  \NewPairedDelimiterSS\braket\langle\rangle[%
      \,\delimsize\vert\,\mathopen{}%
  ] 
  \NewPairedDelimiterSS\pairing\langle\rangle
  \NewPairedDelimiterSS\inner\lparen\rparen
  \NewPairedDelimiterSS\Liebracket\lbrack\rbrack
  \DeclarePairedDelimiterX\bracket[3]\langle\rangle%
      {\ifblank{#1}{\placeholder}{#1}%
      \,\delimsize\vert\,\mathopen{}%
      \ifblank{#2}{\placeholder}{#2}
      \,\delimsize\vert\,\mathopen{}%
      \ifblank{#3}{\placeholder}{#3}}%
  \NewDocumentMathCommand\dparen{m}%
      {\lparen\!\lparen{#1}\rparen\!\rparen}
  \NewDocumentMathCommand\dbrack{m}%
      {\lbrack\!\lbrack{#1}\rbrack\!\rbrack}
  \NewDocumentMathCommand\dangle{m}%
      {\langle\!\langle{#1}\rangle\!\rangle}
  \NewDocumentCommand \fun { m e{^_} d() }
      {%
        \operatorname{#1}%
        \IfValueT{#2}{\sp{#2}}%
        \IfValueT{#3}{\sb{#3}}%
        \IfNoValueTF{#4}{}{\mleft(#4\mright)}%
      }
  \LoopCommands{%
      NZQRCKkWM%
  }[#1]{\declaremathcommand#2{\mathbb#1}}
\declaremathcommand\kk{R}
  
  \LoopCommands{%
      {Aut}%
      {an}%
      {Coker}%
      {Cotor}%
      {coker}%
      {Der}%
      {End}%
      {Ext}%
      {Gal}%
      {Gr}%
      {gl}%
      {GL}%
      {gr}%
      {Ho}%
      {Hom}%
      {Id}%
      {id}%
      {Ind}%
      {Ker}%
      {Mor}%
      {Ob}%
      {Pic}%
      {Proj}%
      {pr}%
      {Res}%
      {Sq}%
      {Spc}%
      {Spf}%
      {Spec}%
      {Spm}%
      {supp}%
      {Tor}%
      {tr}%
      {wt}%
      {Tot}%
  }[#1]{\declaremathcommand#2{\fun{#1}}}
    \renewmathcommand\le\leqslant
    \renewmathcommand\ge\geqslant
    \declaremathcommand\one{{\scriptstyle\mathfrak{I}}}
    \declaremathcommand\unit{\mathbb{1}}
    \declaremathcommand\iu{\mathtt{i}}
    \declaremathcommand\e{\mathsf{e}}
    \declaremathcommand\Rf{\operatorname{\mathbf{R}}}
    \declaremathcommand\qbar{\mathscr{q}}
    \declaremathcommand\SG{\mathfrak{S}}
    \declaremathcommand\Gm{\mathbb{G}_{\mathtt{m}}}
    \declaremathcommand\Ga{\mathbb{G}_{\mathtt{a}}}
    \declaremathcommand\vac{\vac}
    \declaremathcommand\cch{\mathscr{c}}
    \declaremathcommand\cfv{\upomega}
    \declaremathcommand\hollowcolon{{}^{\circ}_{\circ}}
    \declaremathcommand\O{\mathscr{O}}
    \declaremathcommand{\H}{\fun{H}}
    \declaremathcommand\et{\text{\'et}}
    \declaremathcommand\pt{\mathsf{p}}
    \declaremathcommand\tpt{\tilde{\mathsf{p}}}
    \declaremathcommand\qt{\mathsf{q}}
    \declaremathcommand\Moduli{\mathcm{M}}
    \declaremathcommand\shHom{\fun{\mathscr{Hom}}}
    \declaremathcommand\V{\mathbb{V}}
    \declaremathcommand\uAut{\operatorname{\underline{Aut}}}
    \declaremathcommand{\Ll}{\mathfrak{L}}
    \declaremathcommand{\G}{\mathbb{G}}
    \declaremathcommand{\vac}{\mathds{1}}
        
    \newmathcommand\Span{\operatorname{span}}
    \newmathcommand\ch{\operatorname{ch}}
    \newmathcommand\Image{\operatorname{Im}}
    
    \newmathcommand\Lie\Liebracket
    \NewDocumentMathCommand\odv{m}%
        {\frac{\d}{\d{#1}}}%
    \NewDocumentMathCommand\pdv{m}%
        {\frac{\partial}{\partial{#1}}}
    \NewDocumentMathCommand\dual{m}
        {\ifblank{#1}{(\:\cdot\:)}{#1}^{\ast}}
    \NewDocumentMathCommand\rldual{m}
        {\ifblank{#1}{(\:\cdot\:)}{#1}^{\dagger}}
    \NewDocumentMathCommand\opp{m}
        {\ifblank{#1}{(\:\cdot\:)}{#1}^{\mathrm{op}}}
    \NewDocumentMathCommand\invo{m}%
        {\prescript{\theta}{}{#1}}%
    \NewDocumentMathCommand\vect{m}%
        {\boldsymbol{#1}}
    \NewDocumentMathCommand\grp{m}%
        {\fun{\mathsf{#1}}}%
    \NewDocumentMathCommand\cat{m}%
        {\operatorname{\mathsf{#1}}}%
      
    \NewDocumentMathCommand\vo{mm}%
        {#1_{(#2)}}%
    \NewDocumentMathCommand\lo{mm}%
        {#1_{[#2]}}%
    \NewDocumentMathCommand\VL{m}%
        {L_{#1}}%

    \NewDocumentMathCommand\Nn{O{\bullet}}%
        {\mathsf{N}^{#1}}
    \NewDocumentMathCommand\NL{O{\bullet}}%
        {\mathsf{N}_{\mathsf{L}}^{#1}}
    \NewDocumentMathCommand\NR{O{\bullet}}%
        {\mathsf{N}_{\mathsf{R}}^{#1}}
    \NewDocumentMathCommand\NLR{O{\bullet}}%
        {\mathsf{N}^{#1}}
    \NewDocumentMathCommand\cNL{O{\bullet}}%
        {\prescript{\mathrm{c}}{}{\mathsf{N}}_{\mathsf{L}}^{#1}}
    \NewDocumentMathCommand\cNR{O{\bullet}}%
        {\prescript{\mathrm{c}}{}{\mathsf{N}}_{\mathsf{R}}^{#1}}
    \NewDocumentMathCommand\cNLR{O{\bullet}}%
        {\prescript{\mathrm{c}}{}{\mathsf{N}}^{#1}}

    \NewDocumentMathCommand\normord{m}%
        {\mathopen{\hollowcolon}\mathinner{#1}\mathclose{\hollowcolon}}%

\newmathcommand\bPhi{\mathbf{\Phi}}

\makeatletter
\newmathcommand{\ostar}{\mathbin{\mathpalette\make@circled\star}}

\newcommand{\make@circled}[2]{%
  \ooalign{$\m@th#1\smallbigcirc{#1}$\cr\hidewidth$\m@th#1#2$\hidewidth\cr}%
}
\newcommand{\smallbigcirc}[1]{%
  \vcenter{\hbox{\scalebox{0.7}{$\m@th#1\bigcirc$}}}%
}

\newmathcommand{\halfstar}{\mathbin{\tikz@halfstar}}
\newcommand{\tikz@halfstar}{%
  \tikzstyle{scorestars}=[star, star points=5, star point ratio=2.25, draw, inner sep=0.2ex, anchor=outer point 3]%
  \begin{tikzpicture}[baseline]%
    \node[scorestars] {};
    \path node[scorestars,fill=black] (s) {} [clip] (s.south west) rectangle (s.north);
  \end{tikzpicture}%
}
\makeatother

\def\cal{\mathcal}
\def\frak{\mathfrak}

\def\eqdef{\stackrel{\textup{def}}{=}}
\def\Im{\operatorname{Im}}

\def\op{{\operatorname{op}}}
\DeclareMathOperator{\im}{im}

\DeclareMathOperator{\ad}{ad}

\DeclareMathOperator{\Mat}{Mat}

\DeclareMathOperator{\Vir}{Vir}

\def\Z{{\mathbb{Z}}}
\def\C{{\mathbb{C}}}
\def\Q{{\mathbb{Q}}}
\def\N{{\mathbb{N}}}

\def\F{{\mathbb{F}}}

\declaremathcommand{\sfL}{{\mathsf{L}}}
\declaremathcommand{\sfR}{{\mathsf{R}}}
\newcommand{\Uu}{\mathit{U}}

\newcommand{\UV}{\mathscr{U}}

\newcommand{\UuR}{\Uu^{\mathsf{R}}}

\newcommand{\hUu}{\widehat{\Uu}}

\newcommand{\frakL}{\mathfrak{L}}

\newcommand{\LVf}{{\mathfrak{L}}(V)^{\mathsf{f}}}

\newcommand{\PhiL}{\Phi^\mathsf{L}}
\newcommand{\PhiR}{\Phi^\mathsf{R}}

\newcommand{\OmegaL}{\Omega^\mathsf{L}}

\newcommand{\ThetaL}{\Theta^\mathsf{L}}

\newcommand{\Ac}{\mathfrak{A}}

\newcommand{\Aa}{\mathsf{A}}

\usepackage{stackengine}

\makeatletter
\newcommand{\hathat}[1]{%
\begingroup%
  \let\macc@kerna\z@%
  \let\macc@kernb\z@%
  \let\macc@nucleus\@empty%
  \hat{\mathchoice%
    {\raisebox{.3ex}{\vphantom{\ensuremath{\displaystyle #1}}}}%
    {\raisebox{.3ex}{\vphantom{\ensuremath{\textstyle #1}}}}%
    {\raisebox{.3ex}{\vphantom{\ensuremath{\scriptstyle #1}}}}%
    {\raisebox{.2ex}{\vphantom{\ensuremath{\scriptscriptstyle #1}}}}%
    \smash{\hat{#1}}}%
\endgroup%
}

\newcommand{\tildetilde}[1]{%
\begingroup%
  \let\macc@kerna\z@%
  \let\macc@kernb\z@%
  \let\macc@nucleus\@empty%
  \tilde{\mathchoice%
    {\raisebox{.3ex}{\vphantom{\ensuremath{\displaystyle #1}}}}%
    {\raisebox{.3ex}{\vphantom{\ensuremath{\textstyle #1}}}}%
    {\raisebox{.3ex}{\vphantom{\ensuremath{\scriptstyle #1}}}}%
    {\raisebox{.2ex}{\vphantom{\ensuremath{\scriptscriptstyle #1}}}}%
    \smash{\tilde{#1}}}%
\endgroup%
}

\makeatother

\newcommand{\wh}{\widehat}

\newmathcommand\ctensor{\mathbin{\mathop{\widehat{\otimes}}}}

\begin{document}


\title{Modular Zhu algebra theory and Virasoro vertex algebras}

\begin{abstract}

    We develop the representation theory of vertex algebras over arbitrary commutative rings as part of a relative theory, suitable for various applications such as the study of modular forms, modular tensor categories, and sheaves of coinvariants and conformal blocks. Towards this end, we construct Zhu algebras (and higher analogues) and mode transition algebras over arbitrary rings via the universal enveloping algebra. We prove that these constructions are compatible with base change and give rationality criteria for M\"obius vertex algebras over arbitrary fields satisfying mild assumptions. We apply this framework to study Virasoro vertex operator algebras over arbitrary fields. Using integral forms and base change, we extend the rationality of the discrete series Virasoro VOAs from $\mathbb{C}$ to arbitrary fields of characteristic zero. In positive characteristic, the Virasoro theory exhibits surprising new phenomena: the so-called restricted Virasoro VOA is $C_2$-cofinite and has a semisimple Zhu algebra, but it is not rational. Moreover, we find that the simple quotient with central charge $c\in \mathbb{F}_p$ is holomorphic for $p=3,5$ and in most cases for $p=7$.
\end{abstract}

\date{\today}
\author[Griffin]{Colton Griffin}

\address{David Rittenhouse Lab, University of Pennsylvania, 209 South 33rd Street, Philadelphia, PA 19104-6395}


\setcounter{tocdepth}{1}

\maketitle

\section{Introduction}
Vertex operator algebras (VOAs) are most often studied over $\C$, where their representation theory is closely related to geometric representation theory, modular forms, modular tensor categories, and the geometry of moduli spaces of curves \cite{TUY89}, \cite{Huang2005}. For example, VOAs give rise to several algebraic structures on moduli of curves, including modular forms on $\overline{\mathcal{M}}_{1,n}$ and sheaves of coinvariants and conformal blocks on $\overline{\mathcal{M}}_{g,n}$.
Certain classes of VOAs ($C_2$-cofinite and rational) give rise to natural systems of cohomology classes, which can be phrased using the language of cohomological field theories \cite{DGT21}, \cite{DGT22}, \cite{DGT23}. Since the geometry of $\overline{\mathcal{M}}_{g,n}$ can vary substantially when working over a more general ring than a characteristic 0 field such as $\C$, it is natural from an algebro-geometric perspective to pursue the representation theory of VOAs over any base ring.

The Zhu algebra plays a central role in the theory of VOAs over $\C$ since they classify simple admissible modules and generally encode the translation from a VOA to its representation theory \cite{Z}. One may also use the related higher Zhu algebras \cite{DLM2} to detect rationality. The recently defined mode transition algebras due to \cite{DGK23} are an important extension of this theory; the existence of certain types of identity elements called \textit{strong units} are crucial for detecting rationality and establishing Morita equivalences between different module categories related to VOAs \cite{DGK24}. Therefore, a primary goal in studying VOAs over arbitrary rings is to develop the theory of the Zhu algebras.

Here, we develop the representation theory for $\Z$-graded vertex algebras over a commutative ring $\kk$ using the associated universal enveloping algebra. Starting from the universal enveloping algebra $\UV$ associated to $V$, we give natural generalizations of the higher Zhu algebras $\Aa_n(V)$ from \cite{DLM2} and mode transition algebras $\Ac(V)$ from \cite{DGK23}. We show that $\Aa_n(V)$ also admits the familiar presentation as a quotient of the vertex algebra $V$, generalizing the work of \cite{van_Ekeren_2011} and \cite{HeHigherZhu}. These algebras and other closely related constructions are also compatible with base change along a flat morphism of rings $R\to S$ (\zcref{lem:base change}). As an application, we give criteria for rationality of M\"obius vertex algebras over a field (\zcref{DGK rational equiv}), generalizing the work of \cite{DGK24}, which is stated for VOAs over $\C$.

This relative point of view already leads to new information about Virasoro VOAs, one of the fundamental examples in the subject. We apply our results to integral forms of the Virasoro VOAs and use base change to study their representation theory over arbitrary fields. Using the rationality of discrete series Virasoro VOAs over $\C$ \cite{Wang}, we show that this rationality result similarly holds over any field of characteristic 0 (\zcref{thm: rational over char 0}). Our proof uses our main rationality criterion. This method of pulling the rationality from the proof over $\C$ to a general field of characteristic 0 may be useful in studying $p$-adic vertex algebras \cite{FM22}.

In positive characteristic, the same family of VOAs admits new phenomena in the form of additional singular vectors. While there is a full classification of the singular vectors of the Virasoro Verma modules over $\C$ (or any field of characteristic 0), we are unaware of any such classification in positive characteristic. There exists a characteristic-dependent quotient of the universal Virasoro VOA, denoted $\overline{V}^0_{\Vir}(c,0)_{\mathbb{F}}$, which we refer to as the \textit{restricted Virasoro VOA}. This VOA was first defined in \cite{Li18} under the notation $V^0_{\mathcal{V}\mathrm{ir}}(c,0)_{\mathbb{F}}$. We show in \zcref{thm:C2-cofinite semisimple Zhu not rational} that over a field $\mathbb{F}$ of characteristic $p>2$, the restricted Virasoro VOA is \textit{not} rational despite being $C_2$-cofinite and having a semisimple Zhu algebra.\footnote{There is a criterion for rationality due to \cite{McRae_2026}, which is that a simple, $C_2$-cofinite ($\N$-graded) VOA $V$ with semisimple Zhu algebra is rational. The restricted Virasoro VOA $\overline{V}^0_{\Vir}(c,0)$ is not simple, so this does not contradict this criterion in positive characteristic.}
This is a surprising result because---to our knowledge---there are \textit{no known examples} of VOAs over $\C$ that admit a semisimple Zhu algebra and are not rational. This suggests that the theory of VOAs in positive characteristic is very different compared to the complex theory.

The simple quotient of the universal Virasoro VOA in positive characteristic, denoted $L_{\Vir}(c,0)_{\mathbb{F}}$, behaves completely differently to the simple quotient over $\C$. Given a field $\mathbb{F}$ of characteristic $p>2$, we prove that for $c\in \mathbb{F}_p$ and $p=3,5$ (and in most cases of $c$ for $p=7$), the simple quotient $L_{\Vir}(c,0)_{\mathbb{F}}$ is \textit{holomorphic} (the adjoint module is the only simple module up to isomorphism) using computational methods (\zcref{thm:rational special cases}). Once again, this is an extreme departure from the usual VOA examples over $\C$. More specifically, our proof uses Mathematica to compute the singular vectors at low degrees. From this, we compute the polynomials corresponding to these singular vectors in the Zhu algebra and show that the Zhu algebra of $L_{\Vir}(c,0)_{\mathbb{F}}$ is isomorphic to $\mathbb{F}$. This result was unexpected because the simple quotient Virasoro VOA $L_{\Vir}(c,0)_{\C}$ is rational if and only if $c = c_{r,s} = 1-6\frac{(r-s)^2}{rs}$ for $r,s>1$ coprime. This indicates that VOAs in positive characteristic behave quite differently to VOAs over $\C$. We note that our results differ from that of \cite{DongRen16}.

Our results on the Virasoro VOA are also a continuation of a pattern we have noticed with positive characteristic VOAs. There are currently not many known examples of simple, rational VOAs, and the only examples we are aware of---the Heisenberg and affine VOAs---are holomorphic \cite{Li15}, \cite{Li23}. This agrees with the general phenomenon that algebraic objects in characteristic $p$ have larger centers (or other subobjects of interest) compared to characteristic 0. Therefore, taking the quotient by these large centers should result in much smaller simple objects. Likewise, in characteristic $p$ the simple quotient of a VOA $V$ is often significantly smaller than $V$ itself. We initially chose to study the Virasoro VOA to find an example of a simple, rational VOA in positive characteristic that is not holomorphic. Much to our surprise, we found that the simple quotient Virasoro VOA is also holomorphic.

We believe that the full range of possibilities for VOAs in positive characteristic remains to be seen, and we expect that comparing these objects to VOAs over $\C$ in regards to their geometric applications will be an exciting direction in the field.

\subsection{Plan of the paper}
In \zcref{sec:Basics}, we give an overview of vertex algebras and the relevant modules over an arbitrary ring $\kk$, such as admissible modules. We also give a construction of the universal enveloping algebra  $\UV$ associated to a $\Z$-graded vertex $\kk$-algebra $V$.
In \zcref{sec:contragredient}, we develop the conditions on a vertex algebra $V$ necessary to speak of contragredient duals of graded $V$-modules. We call these \textit{M\"obius vertex algebras}, but the theory was also developed under the name of \textit{$\mathcal{H}$-module vertex algebras} in \cite{Li18}.
In \zcref{sec:Zhu theory}, we develop the theory of \textit{higher Zhu algebras} $\Aa_d(V)$ and \textit{mode transition algebras} $\Ac(V)$ for $\Z$-graded vertex algebras over an arbitrary ring. We study the relationship of these objects to rationality and base change.
In \zcref{sec:Virasoro}, we study VOAs associated to the Virasoro algebra over an arbitrary ring. At the end, we prove our last two main results regarding the rationality of modular Virasoro VOAs, namely \zcref{thm:C2-cofinite semisimple Zhu not rational} and \zcref{thm:rational special cases}.

There is a Mathematica notebook available \cite{GriffinMathematica} to implement the computations used in \zcref{thm:rational special cases}. The file is an extension of the notebook file developed by Matthew Headrick \cite{HeadrickMathematica} for positive characteristic calculations.\footnote{The proof of our theorem involves examining the null space of the Gram matrix associated to the Shapovalov form at various levels and central charges. Some of the calculations involved in this proof are too long to reasonably perform by hand, so we include a few different data files with explicit formulas for the Gram matrices. We found that calculating these matrices in Mathematica can take upwards of a few hours.}

\section{Basics}\label{sec:Basics}
In this section, we give the basic definitions and properties of vertex algebras and their associated universal enveloping algebras over an arbitrary ring. These notions are usually stated over $\C$ in the literature, but they can be generalized to an arbitrary base ring.

\subsection{Main definitions}
Here, we define vertex algebras and their modules over an arbitrary ring $\kk$. Given $n\in \Z$ and $k\in \N$, we set
\[(z+w)^n = \sum_{k\ge 0}\binom{n}{k}z^{n-k}w^k,\qquad \binom{n}{k} = \frac{n(n-1)\ldots (n-k+1)}{k!}.\]
\begin{definition}
    Let $\kk$ be a commutative unital ring. A \textit{vertex $\kk$-algebra} is a tuple $(V,\vac,Y)$ consisting of an $\kk$-module $V$, an element $\vac\in V$, and a $\kk$-linear map
    \begin{align*}
        Y(\cdot,z) \colon V&\to \End(V)[\![z,z^{-1}]\!],\\
        a&\mapsto Y(a,z) = \sum_{k\in \Z}a_{(k)}z^{-1-k},
    \end{align*}
    satisfying the following properties:
    \begin{enumerate}[label = (V\arabic*)]
        \item \label{V1} For all $a,b\in V$, there exists $K=K_{a,b}\in \Z$ such that $a_{(k)}b = 0$ if $k\ge K$.
        \item \label{V2} $Y(\vac,z)=\id_V$, or equivalently $\vac_{(k)}a = \delta_{k,-1}a$.
        \item \label{V3} For all $a\in V$, we have $Y(a,z)\vac \in a + zV[\![z]\!]$, or equivalently
        \[a_{(-1)}\vac = a,\quad \text{and}\quad \forall k\ge 0,\ a_{(k)}\vac = 0.\]
        \item \label{V4} For all $a,b,c\in V$, we have the \textit{Jacobi identity}:
        \begin{align*}
            &z^{-1}\delta\left(\frac{x-y}{z}\right)Y(a,x)Y(b,y)c - z^{-1}\delta\left(\frac{-y+x}{z}\right)Y(b,y)Y(a,x)c\\
            &=y^{-1}\delta\left(\frac{x-z}{y}\right)Y(Y(a,z)b,y)c,
        \end{align*}
        where $\delta(z) = \sum_{k\in \Z}z^k$.
    \end{enumerate}
\end{definition}
There are many properties of vertex algebras one can prove, such as the following:
\begin{defprop}\label{prop:VA properties}
    Let $V$ be a vertex $\kk$-algebra. We let
    \[e^{zT} = \sum_{n\ge 0}z^nT^{(n)} \in \End(V)[\![z]\!]\]
    be defined so that $e^{zT}a = Y(a,z)\vac$. That is, $T^{(n)}a = a_{(-1-n)}\vac$. We call $\{T^{(n)}\}_{n\ge 0}$ the \textit{(divided power) translation operators}. Then the following identities hold:
    \begin{enumerate}
        \item (Divided power structure): We have $T^{(0)} = \id_V$ and $e^{zT}e^{wT} = e^{(z+w)T}$, or equivalently for all $n,m\ge 0$
        \[T^{(n)}T^{(m)} = \binom{n+m}{n}T^{(n+m)}.\]
        \item (Translation covariance) For all $a\in V$, we have
        \[e^{wT}Y(a,z)e^{-wT} = Y(e^{wT}a,z) = Y(a,z+w)\eqdef e^{w\partial_z}Y(a,z) = \sum_{k\ge 0}w^k\partial_z^{(k)}Y(a,z).\]
        \item (Skew-symmetry) For all $a,b\in V$, we have
        \[Y(a,z)b = e^{zT}Y(b,-z)a.\]
        \item (Commutator formula) For all $a,b\in V$ and $r,s\in \Z$, we have
        \begin{equation}\label{eq:commutator}
            [a_{(r)},b_{(s)}] = \sum_{k\ge 0}\binom{r}{k}(a_{(k)}b)_{(r+s-k)}.
        \end{equation}
        \item (Iterate/associator formula) For all $a,b\in V$ and $r,s\in \Z$, we have
        \begin{equation}\label{eq:associator}
            (a_{(r)}b)_{(s)} = \sum_{k\ge 0}(-1)^k\binom{r}{k}\left(a_{(r-k)}b_{(s+k)} - (-1)^rb_{(r+s-k)}a_{(k)}\right).
        \end{equation}
    \end{enumerate}
\end{defprop}
One may check that the commutator formula and the associator formula together are equivalent to the Jacobi identity. We will give the component-wise expansion of the Jacobi identity in \zcref{sec equiv Zhu}.
\begin{remark}\label{D-module remark}
    Define $\cal D$ to be the $\kk$-algebra 
    \[\cal D = \kk\left[D^{(n)} \mid n\ge 0\right]\big/\left(D^{(0)} - 1,\, D^{(n)}D^{(m)} - \tbinom{n+m}{n}D^{(n+m)} \mid n,m\ge 0\right).\]
    Then the above proposition states that the operators $\{T^{(n)}\}_{n\ge 0}$ define a representation of $\cal D$ on $V$.
    If $\kk$ is an algebra over the finite field $\F_p$ for a prime $p$, then Lucas' theorem implies that $\cal D$ is generated over $\kk$ by the set $\{D^{(p^n)}\}_{n\ge 0}$, and we have $(D^{(p^n)})^p = 0$ for all $n\ge 0$.
\end{remark}

Throughout the paper, we will require gradings on vertex algebras:
\begin{definition}\label{Z-grading}
    Let $V$ be a vertex $\kk$-algebra. A \textit{$\Z$-grading} on $V$ is a $\Z$-grading on the underlying $\kk$-module
    $V = \bigoplus_{s\in \Z}V_{s}$
    such that:
    \begin{enumerate}
        \item The vacuum element $\vac$ is of degree 0, that is $\vac\in V_{0}$.
        \item For all $a\in V$ homogeneous of degree $\deg(a)$ and $k\in \Z$, the mode $a_{(k)}$ is a homogeneous operator of degree $\deg(a)-1-k$.
    \end{enumerate}
    We say that a $\Z$-grading on $V$ is \textit{lower-truncated} if there exists $K\in \Z$ such that $V_{s} = 0$ unless $s\ge K$. We say that $V$ is \textit{$\N$-graded} if the grading is lower-truncated with $K=0$.

    We say an $\N$-graded vertex algebra $V$ over $\kk$ is \textit{of CFT-type} if the $\N$-grading $V = \bigoplus_{s\ge 0}V_{s}$ is such that $V_{0} = \kk\vac$ and all graded components are finitely generated and projective.
\end{definition}
It is common to require that graded components of vertex algebras be finite-dimensional when working over a field. We have the following replacement when working over an arbitrary ring:
\begin{definition}
    We say that an $\kk$-module $M$ is \textit{rigid} if it is finitely generated and projective over $\kk$. This is equivalent to the evaluation map $M^\lor\otimes M\to \End_{\kk}(M)$ being an isomorphism, where $M^\lor = \Hom_{\kk}(M,\kk)$ is the dual module.
\end{definition}
If $M$ is a rigid $\kk$-module, then $M$ is \textit{reflexive}, meaning that the canonical map $M\to M^{\lor\lor}$ is an isomorphism. The converse generally does not hold, though they are equivalent when $\kk$ is a field.

One way gradings can arise is as part of a larger structure, like a conformal element.
\begin{definition}
    A \textit{vertex operator algebra} (or \textit{VOA}) over a ring $\kk$ is an $\N$-graded vertex $\kk$-algebra $V$ with a distinguished element $\omega\in V_2$, called the \textit{conformal element}, such that the following hold:
    \begin{enumerate}
        \item Each graded part of $V$ is finitely generated and projective over $\kk$.
        \item Denote by $T(z)$ the field corresponding to $\omega$:
        \[T(z) = Y(\omega,z) = \sum_{n\in \Z}\omega_{(n)}z^{-1-n} = \sum_{m\in \Z}L_{m}z^{-2-m}.\]
        We require that the modes $\{L_m\}_{m\in \Z}$ satisfy the commutator relations for the Virasoro algebra such that the central element $\mathbf{c}$ acts as a fixed scalar $c\in \kk$. We refer to $c$ as the \textit{central charge}.
        \item The mode $L_0$ acts as the grading operator on $V$. That is, $L_0v = \deg(v)v$ for all homogeneous $v$.
        \item The mode $L_{-1}$ acts as the first translation operator $T^{(1)}$.
    \end{enumerate}
\end{definition}
We are primarily concerned with the case in which $\kk$ is a PID, in which case a $\kk$-module $M$ is finitely generated and projective if and only if it is free of finite rank. Note that a vertex operator $\kk$-algebra has an obvious action of $\mathfrak{sl}_2(\kk) = \Span_\kk\{L_{-1},L_0,L_1\}$, but this will not be sufficient to define contragredient modules unless $\kk$ is a $\Q$-algebra. We discuss the replacement of this action in \zcref{sec:contragredient}.
\begin{remark}
    Note that the conformal element does not determine the higher-order translation operators $\{T^{(n)}\}_{n\ge 1}$ unless $\kk$ is a $\Q$-algebra. This is one subtle reason that VOAs are often easier to study over $\C$.
\end{remark}

Now we recall modules for vertex algebras.
\begin{definition}
    Let $V$ be a vertex $\kk$-algebra. A \textit{weak $V$-module} is a $\kk$-module $W$ together with a linear map
    \begin{align*}
        Y^W(\cdot,z)\colon V&\to \End(W)[\![z,z^{-1}]\!]\\
        a&\mapsto Y^W(a,z) = \sum_{k\in \Z}a^W_{(k)}z^{-1-k},
    \end{align*}
    satisfying the following properties:
    \begin{enumerate}[label = (M\arabic*)]
        \item For all $a\in V$ and $b\in W$, there exists $K=K_{a,b}\in \Z$ such that $a^W_{(k)}b = 0$ if $k\ge K$.
        \item We have $Y^W(\vac,z) = \id_W$.
        \item For all $a,b\in V$ and $c\in W$, we have
        \begin{align*}
            &z^{-1}\delta\left(\frac{x-y}{z}\right)Y^W(a,x)Y^W(b,y)c - z^{-1}\delta\left(\frac{-y+x}{z}\right)Y^W(b,y)Y^W(a,x)c\\
            &=y^{-1}\delta\left(\frac{x-z}{y}\right)Y^W(Y(a,z)b,y)c.
        \end{align*}
    \end{enumerate}
\end{definition}
One may similarly discuss replacement axioms for $V$-modules, but we will omit this.
\begin{definition}
    Let $V$ be a $\Z$-graded vertex $\kk$-algebra. Let $\Lambda$ be a subset of $\Q$ that is closed under the action of $(\Z,+)$. A \textit{$\Lambda$-grading} on a weak $V$-module $W$ is a $\Lambda$-grading on the underlying $\kk$-module $W = \bigoplus_{\ell\in \Lambda}W_{\ell}$ such that for all $a\in V$ homogeneous of degree $\deg(a)$ and $k\in \Z$, the mode $a^W_{(k)}$ is a homogeneous operator of degree $\deg(a)-1-k$ on $W$. That is, $a_{(k)}^W\cdot W_\ell \subset W_{\ell + \deg(a)-1-k}$ for all $\ell\in \Lambda$.

    We say $W$ is \textit{admissible} if $W$ admits an $\N$-grading, that is, a $\Z$-grading such that $W_{s} = 0$ unless $s\ge 0$. We also fix the following definitions:
    \begin{itemize}
        \item We say that an admissible $V$-module $W$ is \textit{quasi-rigid} if each graded component $W_{\ell}$ is rigid over $\kk$.
        \item We say that an admissible $V$-module $W$ is \textit{quasi-reflexive} if each graded component $W_{\ell}$ is reflexive over $\kk$ (isomorphic to its double dual).
    \end{itemize}
\end{definition}
Every lower-truncated $\Z$-grading on a weak $V$-module $W$ may be shifted so that we may regard it as an admissible $V$-module such that $W_0\neq 0$. The morphisms in the category of admissible $V$-modules are weak $V$-module morphisms. That is, a morphism $W^1\to W^2$ must intertwine the operators $Y^{W^i}(a,z)$, but it need not respect the grading (but in practice it usually does).

\begin{definition}\label{def:rational}
    Let $V$ be a $\Z$-graded vertex $\kk$-algebra. We say that $V$ is \textit{quasi-rational} if every admissible $V$-module is semisimple (a direct sum of simple admissible $V$-modules). We say that $V$ is \textit{rational} if it is quasi-rational and every simple admissible $V$-module is quasi-rigid.
\end{definition}
Being rational is a much more useful condition than being quasi-rational. These conditions are known to be equivalent for a VOA over $\C$ \cite[Theorem 8.1(c)]{DLM1}.

\subsection{The universal enveloping algebra}
Now we discuss the (finite) ancillary Lie algebra associated with a vertex $\kk$-algebra $V$. This Lie algebra and its associated constructions control the module theory for $V$.
\begin{definition}
    Given a vertex $\kk$-algebra $V$, we define the \textit{(finite) ancillary Lie algebra as}
    \begin{align*}
        \LVf &= \frac{V\otimes_{\kk}\kk[t,t^{-1}]}{\left(\Im \nabla^{(n)}\right)_{n> 0}}\cong \frac{V\otimes_{\kk}\kk[t,t^{-1}]}{\left(\Im\left(\id\otimes \partial_{t}^{(n)} - (-1)^nT^{(n)}\otimes \id\right)\right)_{n> 0}},
    \end{align*}
    where $\nabla^{(n)} = \sum_{0\le k\le n}T^{(k)} \otimes \partial_{t}^{(n-k)}$.
\end{definition}
The $\kk$-module isomorphism above follows from iteratively substituting the identity $\nabla^{(n-j)}\equiv 0$ into $\nabla^{(n)}$.
There is a similar definition by replacing $\kk[t,t^{-1}]$ with $\kk(\!(t)\!)$ or $\kk(\!(t^{-1})\!)$ as is done in \cite{DGK23}. However, it is not necessary for our work, so we omit this.

We write $a_{[n]}$ for the image of $a\otimes t^{n}$ in $\LVf$.
The Lie bracket on $\LVf$ is given by
\begin{equation}\label{Lie bracket}
[a_{[n]},b_{[m]}] = \sum_{k\ge 0}\binom{n}{k}(a_{(k)}b)_{[n+m-k]},
\end{equation}
where $a,b\in V$ and $n,m\in \Z$. This bracket satisfies skew-symmetry and the Jacobi identity. Note that Lie algebras are usually required to satisfy $[x,x]=0$ for all $x$. This is equivalent to skew-symmetry if 2 is invertible in $\kk$. All of the rings in our paper will satisfy this, so we will refer to $\LVf$ as a Lie algebra. 
\begin{remark}
	Every commutative unital $\kk$-algebra $A$ with an action of $\mathcal{D}$ from \zcref{D-module remark} is naturally a \textit{commutative} vertex $\kk$-algebra (meaning that $a_{(n)}b = 0$ for all $n\ge 0$ and $a,b\in A$) by setting $a_{(-1-n)}b = (T^{(n)}a)b$ for $a,b\in A$. Therefore, we may identify $V[t,t^{-1}] = V\otimes_\kk\kk[t,t^{-1}]$ as a tensor product of vertex $\kk$-algebras. The translation operators are given by $\nabla^{(n)}$, so $\LVf$ is isomorphic to $D(V[t,t^{-1}])$, where $D(V) = V/(\Im T^{(n)})_{n>0}$ is the Borcherds Lie algebra with the bracket $\{\overline{a},\overline{b}\} = \overline{a_{(0)}b}$.
\end{remark}

\begin{lemma}
    Let $V$ be a $\Z$-graded vertex $\kk$-algebra. Then the Lie algebra $\LVf$ admits a natural $\Z$-grading by setting $\deg(a_{[k]}) = \deg(a)-1-k$ for $a\in V$ homogeneous.
\end{lemma}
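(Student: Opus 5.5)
The plan is to grade $V\otimes_{\kk}\kk[t,t^{-1}]$ first, then check that the relations and the bracket are homogeneous. Since $V=\bigoplus_{s}V_s$, the module $V\otimes_\kk\kk[t,t^{-1}]$ is the direct sum of the free pieces $V_s\otimes t^k$. We assign to $V_s\otimes t^k$ the degree $s-1-k$, which is the natural bigrading collapsed to a $\Z$-grading. The quotient by a submodule inherits this grading exactly when the submodule is generated by homogeneous elements, so the main task is to show that the images of the $\nabla^{(n)}$ are graded submodules.

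For this step, fix $n>0$ and a homogeneous element $a\otimes t^m$ with $a\in V_s$. Then
\[
\nabla^{(n)}(a\otimes t^m)=\sum_{0\le k\le n}T^{(k)}a\otimes \binom{m}{n-k}t^{m-n+k}.
\]
We have $T^{(k)}a=a_{(-1-k)}\vac$. By the grading axiom, the mode $a_{(-1-k)}$ has degree $s+k$ and $\vac\in V_0$, so $T^{(k)}a\in V_{s+k}$. The $k$-th summand therefore has degree $(s+k)-1-(m-n+k)=s-1-m+n$, which does not depend on $k$. Hence $\nabla^{(n)}$ is homogeneous of degree $n$ (it raises degree by $n$), and it maps the degree-$d$ part into the degree-$(d+n)$ part. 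It follows that $\Im\nabla^{(n)}$ is spanned by homogeneous elements and is a graded submodule. The sum of these images over all $n>0$ is then graded as well, so $\LVf$ inherits a $\Z$-grading with $\deg(a_{[k]})=\deg(a)-1-k$. By linearity, the same conclusion follows for the equivalent presentation with $\id\otimes\partial_t^{(n)}-(-1)^nT^{(n)}\otimes\id$.

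It remains to check that the bracket \eqref{Lie bracket} respects this grading. For homogeneous $a,b$, the element $a_{(k)}b$ has degree $\deg a+\deg b-1-k$. Thus $(a_{(k)}b)_{[n+m-k]}$ has degree
\[
\deg a+\deg b-1-k-1-(n+m-k)=(\deg a-1-n)+(\deg b-1-m).
\]
This is the sum of the degrees of $a_{[n]}$ and $b_{[m]}$, so the bracket is additive in degree. The sum in \eqref{Lie bracket} is finite by \ref{V1}. Because the bracket is already known to be well defined on the quotient, no further check is needed.

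The only mildly delicate point is confirming that $T^{(k)}$ raises degree by exactly $k$, which follows directly from the mode-degree axiom in the definition of a $\Z$-grading. Once that is in hand, the remaining steps are bookkeeping.
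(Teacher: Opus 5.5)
Your proof is correct and rests on the same key observation as the paper, whose entire argument is that the bracket formula is of degree $0$ for the proposed grading. You also check that each $\nabla^{(n)}$ is homogeneous of degree $n$, using $T^{(k)}a\in V_{\deg(a)+k}$, so that the relations span a graded submodule and the grading genuinely descends to $\LVf$; this well-definedness step is one the paper leaves implicit.
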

\begin{proof}
The claim follows by observing that the Lie bracket in \zcref[noname]{Lie bracket} is degree 0 with respect to the grading defined.
\end{proof}

For what follows, we will repeatedly use the language of \cite[\textsection 2-3]{DGK23}. See the appendix of their paper for more details.
Let $U$ be the universal enveloping algebra of $\LVf$. This naturally inherits a $\Z$-grading from the $\Z$-grading on $\LVf$. The \textit{left and right canonical seminorms} on $U$ are defined to be
\[\NL[m]U = UU_{\le -m} = \sum_{i\le -m}U_iU,\qquad \NR[m]U = U_{\ge m}U = \sum_{i\ge m}U_iU.\]
These both define exhaustive right filtrations of $U$, though their roles are distinct.
\begin{lemma}[\cite[2.4.2]{DGK23}]\label{lem:Ann}
One has an identification of left ideals 
\[\NL[m]\Uu = \Uu \Uu_{\le -m} = \Uu \LVf_{\le -m}.\]
Similarly, one has 
\[\NR[m]\Uu = \LVf_{\ge m} \Uu.\]
\end{lemma}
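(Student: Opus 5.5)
The plan is to handle the left statement; the right one is its mirror image. First I unwind the definition of $\NL[m]\Uu$ to obtain the first equality. Then I compare the two left ideals $\Uu\Uu_{\le -m}$ and $\Uu\LVf_{\le -m}$.

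\emph{Step 1: the first equality.} The canonical seminorm is defined as the left ideal $\NL[m]\Uu = \Uu\Uu_{\le -m}$. The displayed sum $\sum_{i\le -m}\Uu_i\Uu$ should be read as $\sum_{i\le -m}\Uu\Uu_i$. Since $\Uu_{\le -m}=\bigoplus_{i\le -m}\Uu_i$ and left multiplication by $\Uu$ is additive, this ideal equals $\sum_{i\le -m}\Uu\Uu_i$. So $\NL[m]\Uu=\Uu\Uu_{\le -m}$ holds by definition and only needs to be recorded.

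\emph{Step 2: the easy inclusion.} We have $\LVf_{\le -m}\subset\Uu_{\le -m}$, since the grading on $\Uu$ is induced from that of $\LVf$. Hence $\Uu\LVf_{\le -m}\subset\Uu\Uu_{\le -m}=\NL[m]\Uu$.

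\emph{Step 3: the reverse inclusion.} It suffices to show $\Uu_d\subset\Uu\LVf_{\le -m}$ for every $d\le -m$. As a $\kk$-module, $\Uu_d$ is spanned by monomials $x_1\cdots x_k$ of homogeneous elements of $\LVf$ with $\sum_j\deg x_j=d$; no PBW theorem is needed, because $\Uu$ is generated by $\LVf$. I would argue by induction on $k$, simultaneously for all $m$.
\begin{itemize}
\item If $\deg x_k\le -m$, the monomial already lies in $\Uu\LVf_{\le -m}$.
\item If $e=\deg x_k>0$, then $y=x_1\cdots x_{k-1}$ has degree $\le -m-e$. By induction, $y\in\Uu\LVf_{\le -m-e}$, so $y$ is a sum of terms $uz$ with $\deg z\le -m-e$.
\item For each such term, write $zx_k=x_kz+[z,x_k]$. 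Here $z\in\LVf_{\le -m}$, and $[z,x_k]\in\LVf$ is homogeneous of degree $\deg z+e\le -m$ by \zcref[noname]{Lie bracket}. Both pieces therefore land in $\Uu\LVf_{\le -m}$.
\end{itemize}
The mirror argument, peeling off the \emph{first} factor, gives $\Uu_{\ge m}\Uu=\LVf_{\ge m}\Uu$.

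\emph{Main obstacle.} The hard case is when every factor has degree strictly between $-m$ and $0$; for example $x_1x_2$ with both factors of degree $-1$ and $m=2$. Commutators only shorten monomials, so modulo shorter words I may reorder factors. However, no reordering produces a rightmost factor of degree $\le -m$. For an abelian piece of $\LVf$, such as Heisenberg modes $h_{[1]}h_{[1]}$, PBW suggests the monomial genuinely does not lie in $\Uu\LVf_{\le -m}$.

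So at this step I would first re-check the precise conventions of \cite[2.4.2]{DGK23}. Possible differences are completion, the sign convention for the grading, or the ideals being understood after completion or only for $m\le 1$. I would then either restrict to those conventions or use the induction above in the range where it closes. This reconciliation, rather than Steps 1 and 2, is where I expect the real work to be.
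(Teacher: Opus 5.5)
Your Steps 1 and 2 are fine. Step 3, however, does not establish the reverse inclusion. The induction only closes when the rightmost factor has degree $\le -m$ or $>0$, and the case you leave open (rightmost factor of degree in $(-m,0]$) is exactly where the lemma has content.

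Your diagnosis of that case is correct as far as it goes. With Lie brackets alone, i.e.\ in the bare enveloping algebra $U(\LVf)$, the statement fails. Your example $h_{[1]}h_{[1]}\notin U(\LVf)\,\LVf_{\le -2}$ is right, by PBW relative to the subalgebra $\LVf_{\le -2}$.

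The conclusion to draw is not to change conventions or restrict $m$. It is that the lemma lives in $\UV=\hUu/\overline{J}$, where the Jacobi relations are imposed. The paper's wording blurs $U$ and $\UV$ here, but its proof only makes sense in $\UV$.

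The missing ingredient is exactly what the paper cites: the identity of \cite[Lemma A.2.1]{HeHigherZhu}, displayed later in the paper as He's Corollary A.2. It follows from the associator formula and has integer coefficients, so it holds over any $\kk$. For $N+s\ge 0$ it writes $J_{-s}(a)J_t(b)$ as a sum of three groups of terms:
\begin{itemize}
\item a finite combination of single modes $J_{t-s}(a_{(-N-s-i-j-1)}b)\in\LVf_{s-t}$;
\item terms $J_{-k-s}(a)J_{k+t}(b)$ with $k\ge N+1$;
\item terms $J_{t-N-s-1-i}(b)J_{N+1+i}(a)$ with $i\ge 0$.
\end{itemize}

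With this, your induction on word length closes uniformly. Suppose the last factor $x_k=J_t(b)$ of a word of total degree $\le -m$ has degree $>-m$. Apply the identity to $x_{k-1}x_k=J_{-s}(a)J_t(b)$ with $N$ large enough that $N+s\ge 0$, $N+1\ge m$ and $N+1+t\ge m$.
\begin{itemize}
\item The single-mode terms replace $x_{k-1}x_k$ by one element of $\LVf$ of the same degree. This gives words of length $k-1$ with unchanged total degree, so the induction hypothesis applies.
\item Every other term has rightmost factor of degree $\le -m$.
\end{itemize}
The last two groups are infinite sums that converge in $\hUu$. So the left ideals have to be taken in the closed sense, consistent with how the seminorms on $\UV$ are defined. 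The right-handed statement follows from the mirrored identity by peeling off the first factor.

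Your own example shows the mechanism. The associator formula for $(h_{(-1)}h)_{(3)}$ gives
\[
h_{[1]}h_{[1]}=(h_{(-1)}h)_{[3]}-h_{[2]}h_{[0]}-\sum_{k\ge 0}h_{[-1-k]}h_{[3+k]}-\sum_{k\ge 2}h_{[2-k]}h_{[k]}.
\]
Now write $h_{[2]}h_{[0]}=h_{[0]}h_{[2]}+[h_{[2]},h_{[0]}]$. Then every term on the right lies in the closure of $\UV\,\LVf_{\le -2}$. It is the Jacobi relations, not Lie brackets, that collapse a product of two modes into a single mode.
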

\begin{proof}
The proof uses \cite[Lemma A.2.1]{HeHigherZhu}, which is an identity for vertex algebras over $\C$. However, all of the coefficients are integral, and the proof can be done over any ring. Therefore, the same identity holds over any ring $\kk$.
\end{proof}

We can restrict these seminorms to various filtered and graded parts of these algebras in a canonical way. We write $\NLR[d]_{\bullet}U_{\le p} = (\NLR[d]_{\bullet}U)\cap U_{\le p}$:
\[
    \NL[m]\Uu_{p} = (\Uu \Uu_{\le -m})_{p} = \sum_{j\le -m} \Uu_{p-j}\Uu_{j}, 
    \qquad \NR[m]\Uu_{p} = (\Uu_{\ge m} \UuR)_{p} = \sum_{i\ge m} \Uu_{i}\Uu_{p-i}.
\]
We have $\NR[m+p]\Uu_p = \NL[m] \Uu_p$. Restricting the seminorm to these subspaces, we may define a graded completion of $\Uu$:
Specifically, we define
\[
\hUu_d := \varprojlim_m \dfrac{\Uu_{ d}}{\NL[m]\Uu_{d}} = \varprojlim_m \dfrac{\Uu_{ d}}{\NR[m+d]\Uu_{d}},\qquad \hUu := \bigoplus_d \hUu_d.
\]
We set $J$ to be the two-sided ideal in $\wh{U}$ generated by the Jacobi relations and the vacuum relations. Let $\overline J$ be the respective topological closure of $J$. The resulting quotient algebra
\begin{equation}\label{eq:UV}
    \UV = \hUu/\overline J
\end{equation}
is a graded associative algebra with an almost canonical seminorm.
\begin{definition}
    We call $\UV = \UV(V)$ from \zcref[noname]{eq:UV} the \textit{(finite) universal enveloping algebra} of $V$.
\end{definition}
We have that $\NL[d]\UV_0 = \NR[d]\UV_0$, so we just denote these by $\NLR[d]\UV$. 
Once again, we recommend reading \cite{DGK23} for more details on the above constructions and statements.

\begin{lemma}\label{lem:weak V mods equiv}
    Let $V$ be a $\Z$-graded vertex $\kk$-algebra. The category of weak $V$-modules is equivalent to the category of continuous left $\UV$-modules.
\end{lemma}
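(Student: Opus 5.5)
We construct functors in both directions and check that they are mutually inverse. The argument follows the complex case in \cite{DGK23} (and \cite{FZ}-style arguments), and we track integrality along the way.

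\emph{From weak modules to $\UV$-modules.} Let $W$ be a weak $V$-module.
\begin{enumerate}
\item Define a $\kk$-linear map $V\otimes\kk[t,t^{-1}]\to\End(W)$ by $a\otimes t^n\mapsto a^W_{(n)}$.
\item This map kills $\Im\nabla^{(n)}$. The reason is that the translation covariance $Y^W(T^{(k)}a,z)=\partial_z^{(k)}Y^W(a,z)$ gives $(T^{(k)}a)^W_{(m)}=(-1)^k\binom{m}{k}a^W_{(m-k)}$. This identity follows from the Jacobi identity with $b=\vac$ over any ring, and its coefficients are integral.
\item The commutator formula \zcref[noname]{eq:commutator}, which is a component of (M3), shows that the induced map $\LVf\to\End(W)$ is a Lie algebra homomorphism. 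Hence $W$ is a left $U$-module.
\item Axiom (M1) says that each $w\in W$ is annihilated by $\LVf_{\le -m}$ for $m\gg0$. By \zcref{lem:Ann} it is then annihilated by $\NL[m]U$. So for fixed $w$, the action of $U_d$ on $w$ factors through $U_d/\NL[m]U_d$. This extends the action to $\hUu$, and the extended action is continuous for the discrete topology on $W$.
\item The Jacobi relations and vacuum relations generating $J$ act by zero, by (M2) and (M3). Since the annihilator of each $w$ is open, $\overline J$ also acts by zero. Thus $W$ is a continuous $\UV$-module.
\end{enumerate}

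\emph{From $\UV$-modules to weak modules.} Let $W$ be a continuous left $\UV$-module, meaning each $w$ is annihilated by $\NL[m]\UV$ for some $m$. Define $a^W_{(n)}$ as the action of the image of $a_{[n]}$, and set $Y^W(a,z)=\sum_n a^W_{(n)}z^{-1-n}$.
\begin{enumerate}
\item Continuity gives (M1): for homogeneous $a$ of degree $\deg(a)$, we have $a_{[n]}\in\NL[m]$ once $\deg(a)-1-n\le -m$. For general $a$, write it as a finite sum of homogeneous components.
\item The vacuum relations give (M2).
\item The Jacobi relations in $J$ give (M3) componentwise.
\end{enumerate}

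\emph{Inverse equivalence.} Both constructions act as the identity on underlying $\kk$-modules. Morphisms on each side are the $\kk$-linear maps commuting with all $a_{(n)}$. The images of the $a_{[n]}$ generate a dense subalgebra of $\UV$, so commuting with all $a_{(n)}$ is the same as commuting with all of $\UV$. The two functors are therefore mutually inverse isomorphisms of categories.

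\emph{Main obstacle.} The delicate point is the precise definition of $J$: its generators must exactly encode the component form of the Jacobi identity, i.e.\ the commutator formula \zcref[noname]{eq:commutator} together with the associator formula \zcref[noname]{eq:associator}. These are infinite sums that only make sense in $\hUu$. I would first check that each side of the associator formula, applied to a fixed $w$, is a finite sum because of (M1). This shows the relations act by zero, so the extension to the completion is well defined. I would also check that no division by integers is needed anywhere, since binomial coefficients $\binom{r}{k}$ with $r\in\Z$ are integers and the divided powers $T^{(k)}$ replace $T^k/k!$. Finally, I would verify that \zcref{lem:Ann} holds over $\kk$, which the paper already notes.
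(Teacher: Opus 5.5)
Your route (define the mode action, check that it kills $\Im\nabla^{(n)}$, use the commutator formula, extend to the completion, kill $\overline J$, then reverse) is the standard complex-case argument that the paper cites without proof. Your steps 2, 3, 5 and the whole reverse direction are fine.

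The gap is in step 4. Axiom (M1) only gives, for each pair $(a,w)$, some $K_{a,w}$ with $a^W_{(k)}w=0$ for $k\ge K_{a,w}$. Being annihilated by $\LVf_{\le -m}$ means $a^W_{(k)}w=0$ for \emph{every} homogeneous $a$ and every $k\ge \deg(a)-1+m$. That is, you need $K_{a,w}\le \deg(a)-1+m$ uniformly in $a$. Nothing in (M1)--(M3) provides this uniformity. The identification $\NL[m]\Uu=\Uu\LVf_{\le -m}$ does not help either: it converts annihilation by $\LVf_{\le -m}$ into annihilation by $\NL[m]\Uu$, but it cannot produce that annihilation in the first place.

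The uniformity genuinely fails. Over $\C$, take the Heisenberg vertex algebra generated by $h\in V_1$ with $h_{(1)}h=\vac$. Fix $N\ge 2$ and let $W=\C[h_{(-1)},h_{(-2)},\dots]\,w$ be the Whittaker-type module with $h_{(n)}w=\lambda_n w$ for $n\ge 0$, where $\lambda_n=0$ for $n\ge N$ and $\lambda_{N-1}\neq 0$. This is a restricted level-one module, hence a weak $V$-module. Put $a_j=h_{(-1)}^j\vac\in V_j$, so $Y(a_j,z)=\,:\!h(z)^j\!:$. Since the nonnegative modes act on $w$ by scalars, the coefficient of $z^{-jN}$ in $:\!h(z)^j\!:w$ is $\lambda_{N-1}^j w$. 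Hence $(a_j)_{(jN-1)}w=\lambda_{N-1}^jw\neq 0$, even though $\deg\bigl((a_j)_{[jN-1]}\bigr)=-j(N-1)\to-\infty$. So no $\NL[m]\UV$ kills $w$.

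Worse, the $\Uu$-action cannot extend continuously to $\hUu$. The series $\sum_j h_{[-1]}^{\,j(N-1)}(a_j)_{[jN-1]}$ converges in $\hUu_0$, but it would send $w$ to the infinite sum $\sum_j \lambda_{N-1}^j h_{(-1)}^{j(N-1)}w$.

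So step 4 cannot be repaired, and the equivalence as stated for all weak modules needs an extra hypothesis: every vector of $W$ must be annihilated by $\LVf_{\le -m}$ for some $m$. This holds automatically for admissible (or lower-truncated graded) modules, since $\LVf_{\le -m}W_d\subseteq W_{d-m}=0$ for $m>d$. That is essentially the only case the paper relies on later, and on that subcategory your argument goes through as written. Note also that the integrality issue you flag as the ``main obstacle'' is not where the real difficulty lies.
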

This statement is already known over $\C$, but the proof is the same over any ring $\kk$.
\begin{remark}
    The definition of $V$-module that most people work with is better interpreted as a ``left $V$-module.'' In general, there is no equivalence between the categories of left and right $\UV$-modules, so there could be an inequivalent notion of a ``right $V$-module.'' However, in most cases (including ours) the universal enveloping algebra $\UV$ admits an involution, so we may regard left and right $V$-modules as the same. For example, $\UV$ admits an involution when $V$ is a VOA over $\C$. We will describe a generalization of this involution in the next section.
\end{remark}

\section{Contragredient duals and symmetric bilinear forms}\label{sec:contragredient}
The purpose of this section is to give a generalization of some of the constructions of \cite{Li18} from an algebraically closed base field to any base ring. Most of our statements here will be given without proof since one may apply the original paper's arguments verbatim.

\subsection{Main definitions}
Consider the Lie algebra $\mathfrak{sl}_2(\kk)$ over a ring $\kk$ with the basis $\{L_{-1},L_0,L_1\}$, satisfying the relations
\[[L_1,L_{-1}] = 2L_0,\quad [L_0,L_{\pm 1}] = \mp L_{\pm 1}.\]
Given $n\in \N$, we set
\[L_{\pm 1}^{(n)} = \frac{L_{\pm 1}^n}{n!},\qquad L_0^{(n)} = \binom{-2L_0}{n}.\]
For any ring $\kk$, we define $U(\mathfrak{sl}_2)_\Z$ to be the noncommutative ring generated by the symbols $\{L_{\pm 1}^{(n)},L_{0}^{(n)}\}_{n\in \N}$ with the relations
\begin{align*}
    L_{\pm 1}^{(0)} &= L_0^{(0)} = 1,\\
    L_0^{(m)}L_0^{(n)} &= \sum_{j=0}^m\binom{m}{j}\binom{n+j}{m}L_0^{(n+j)},\\
    L_{\pm 1}^{(m)}L_{\pm 1}^{(n)} &= \binom{m+n}{n}L_{\pm 1}^{(m+n)},\\
    L_0^{(m)}L_{\pm 1}^{(n)} &= L_{\pm 1}^{(n)}\binom{-2L_0\pm 2n}{m} = \sum_{i=0}^m\binom{\pm 2n}{i}L_{\pm 1}^{(n)}L_0^{(m-i)},\\
    L_1^{(m)}L_{-1}^{(n)} &= \sum_{i=0}^{\min\{m,n\}}\sum_{j=0}^i \binom{-m-n+2i}{j}(-1)^i L_{-1}^{(n-i)}L_0^{(i-j)}L_1^{(m-i)},\\
    L_{\pm 1}^{(m)}L_0^{(n)} &= \sum_{i=0}^n\binom{-2m}{i}L_0^{(n-i)}L_{\pm 1}^{(m)}.
\end{align*}
The last identity is redundant. We also define $\mathcal{H} = U(\mathfrak{sl}_2)_\Z\otimes_\Z\kk$.
We define the generating series
\[e^{zL_{\pm 1}} = \sum_{k\ge 0}z^kL_{\pm 1}^{(k)},\qquad (1+z)^{-2L_0} = \sum_{k\ge 0}z^kL_0^{(k)}.\]
It is helpful to write the above identities in terms of these generating series. The above identities amount to the following:
\begin{align*}
    (1+z)^{-2L_0}(1+w)^{-2L_0} &= (1+z+w+zw)^{-2L_0},\\
    e^{zL_{\pm 1}}e^{wL_{\pm 1}} &= e^{(z+w)L_{\pm 1}},\\
    (1+z)^{-2L_0}e^{wL_{\pm 1}} &= e^{(1+z)^{\mp 1}wL_{\pm 1}}(1+z)^{-2L_0},\\
    e^{xL_1}e^{zL_{-1}} &= e^{(1-xz)^{-1}zL_{-1}}(1-xz)^{-2L_0}e^{(1-xz)^{-1}xL_1}.
\end{align*}
The algebra $\mathcal{H}$ also has the structure of a bialgebra. For $k\in \{-1,0,1\}$ and $n\in \N$, we set
\begin{equation*}
    \varepsilon(L_{k}^{(n)}) = \delta_{n,0},\qquad
    \Delta(L_{k}^{(n)}) = \sum_{i=0}^nL_{k}^{(n-i)}\otimes L_{k}^{(i)}.
\end{equation*}
Furthermore, there is a natural anti-automorphism $\theta\colon \mathcal{H}\to \mathcal{H}^\op$ given by
\[\theta(L_{\pm 1}^{(n)}) = L_{\mp 1}^{(n)},\quad \theta(L_0^{(n)}) = L_0^{(n)}\quad \forall n\in \N.\]
There is also a natural $\Z$-grading on $\mathcal{H}$ given by $\deg(L_{\pm 1}^{(n)}) = \mp n$ and $\deg(L_0^{(n)}) = 0$ for all $n\in \N$.

Now we study modules for $\mathcal{H}$. While all modules we consider are admissible (i.e. $\N$-graded), it may not be the case that $L_0^{(n)}$ respects the $\N$-grading in the obvious sense. Thus, it is better to work with modules equipped with a $\Lambda$-grading, where again $\Lambda$ is a subset of $\Q$ that is closed under the action of $(\Z,+)$. For example, $\Lambda$ could be $c_W+\Z\subset \Q$, so $W$ has conformal dimension $c_W\in \Q$.
\begin{definition}
    A \textit{$\Lambda$-graded weight $\mathcal{H}$-module} is a $\Lambda$-graded $\mathcal{H}$-module $W = \bigoplus_{\ell\in \Lambda}W_\ell$ such that $L_0^{(n)}$ acts as $\binom{-2\deg}{n}$ for all $n\in \N$ in a well-defined manner. That is, for all $\ell\in \Lambda$ such that $W_\ell\neq 0$, the denominator of $\ell$ is invertible in $\kk$ and
    \[L_0^{(n)}|_{W_\ell} = \binom{-2\ell}{n}.\]
    If $W_\ell = 0$, then we set $L_0^{(n)}\cdot W_\ell = 0$.
\end{definition}
\begin{definition}
    A \textit{M\"obius vertex $\kk$-algebra} (or \textit{$\mathcal{H}$-module vertex $\kk$-algebra}) is a $\Z$-graded vertex $\kk$-algebra $V$ with the structure of a $\Z$-graded weight $\mathcal{H}$-module such that:
    \begin{enumerate}
        \item $V_n=0$ for $n\ll 0$.
        \item $L_1^{(n)}\vac = \varepsilon(L_1^{(n)})\vac = \delta_{n,0}\vac$.
        \item For $a\in V$, we have
        \[e^{zL_1}Y(a,z_0)e^{-zL_1} = Y\left(e^{z(1-zz_0)L_1}(1-zz_0)^{-2L_0}a,\frac{z_0}{1-zz_0}\right).\]
    \end{enumerate}
\end{definition}
We can use these structures to define contragredient modules. While all modules we consider are admissible, it may not be the case that $L_0^{(n)}$ respects the grading. Thus, it is better to work with modules equipped with a $\Lambda$-grading.
\begin{definition}\label{contragredient}
    Let $V = \bigoplus_{r\ge K}V_{r}$ be a M\"obius vertex $\kk$-algebra. Given a $\Lambda$-graded $V$-module $W = \bigoplus_{\ell\in \Lambda} W_{\ell}$ whose grading is lower-truncated, we define its \textit{contragredient module} to be the graded dual $W' = \bigoplus_{\ell\in \Lambda} W_{\ell}^\lor$, where $W_{\ell}^\lor = \Hom_{\kk}(W_{\ell},\kk)$, and
    \[Y^{W'}(-,z)\colon V\to \End(W')[\![z,z^{-1}]\!]\]
    is the linear map determined by the relationship
    \[\left\langle Y^{W'}(a,z)\psi,w\right\rangle = \left\langle \psi,Y^{W}(e^{zL_1}(-z^{-2})^{L_0}a,z^{-1})w\right\rangle\]
    for $a\in V$, $\psi\in W'$, $w\in W$, and $\langle \cdot,\cdot\rangle$ is the natural dual pairing. If $a\in V$ is homogeneous, then this identity reads as
    \[\langle a^{W'}_{(n)}\psi,w\rangle = (-1)^{\deg(a)}\sum_{i\ge 0}\langle\psi,(L_1^{(i)}a)^W_{(2\deg(a)-n-i-2)}w\rangle.\]
\end{definition}
Since $L_1^{(i)}$ is a degree $-i$ operator, for $i \gg 0$ we have $L_1^{(i)}a = 0$ since the $\Lambda$-grading on $V$ is assumed to be lower-truncated. This shows that the above expression is well-defined. We also have the following results from \cite{FHL} as their proof methods work identically.
\begin{lemma}[\cite[Proposition 3.8]{Li18}, \cite[Theorem 5.2.1]{FHL}]
    The above data defines a $\Lambda$-graded $V$-module $W'$ over $\kk$ whose grading is lower-truncated.
\end{lemma}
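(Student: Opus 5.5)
We prove that $W'$ is a $\Lambda$-graded $V$-module by checking the module axioms on the graded dual, following \cite[\S 5.2]{FHL} and \cite[Proposition 3.8]{Li18}. Every manipulation will be an identity of formal series with integral (binomial) coefficients, built from the relations defining $\mathcal{H}$ and the M\"obius axiom. So we never divide by integers, and the argument works over any $\kk$.

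\textbf{Well-definedness and grading.} For homogeneous $a$ and $\psi\in W_\ell^\lor$, the component formula shows that $a^{W'}_{(n)}\psi$ pairs nontrivially only with $W_{\ell'}$ for $\ell'=\ell-\deg(a)+1+n$. Only finitely many $i$ contribute, since $L_1^{(i)}a=0$ for $i\gg0$. Hence $a^{W'}_{(n)}\psi\in W_{\ell'}^\lor$, and the mode has degree $\deg(a)-1-n$.

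\textbf{Truncation and vacuum.} Truncation (M1) follows from lower truncation of $W$. Indeed, $W_{\ell'}=0$ once $n$ is small enough, i.e.\ $\ell'$ drops below the truncation bound, and $\ell'$ decreases as $n$ decreases. The vacuum axiom comes from $L_1^{(n)}\vac=\delta_{n,0}\vac$, $\deg\vac=0$, and $Y^W(\vac,z^{-1})=\id$. The lower truncation of $W'$ is inherited from that of $W$.

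\textbf{Jacobi identity.} We pair both sides with $w\in W$ and transport everything to $W$. Set $Y^*(a,z)=Y^W(e^{zL_1}(-z^{-2})^{L_0}a,z^{-1})$; its definition is well posed because $V$ is a weight $\mathcal{H}$-module with integral weights. The left side of the Jacobi identity for $W'$ becomes the Jacobi expression for the opposite vertex operators $Y^*$, with the order of the two products reversed. Using the Jacobi identity in $W$, rewritten via $x\mapsto x^{-1}$ and $y\mapsto y^{-1}$ with the corresponding delta-function substitution, it remains to prove the key conjugation identity
\[
Y^*\bigl(Y(a,z)b,y\bigr)=Y\Bigl(Y\bigl(e^{yL_1}(-y^{-2})^{L_0}a,\;-z/(y(y+z))\bigr)e^{yL_1}(-y^{-2})^{L_0}b,\;y^{-1}\Bigr).
\]
Equivalently,
\[
e^{yL_1}(-y^{-2})^{L_0}\,Y(a,z)\,\bigl(e^{yL_1}(-y^{-2})^{L_0}\bigr)^{-1}
=Y\Bigl(e^{(y+z)L_1}(-(y+z)^{-2})^{L_0}a,\;-\tfrac{z}{y(y+z)}\Bigr),
\]
which is the analogue of \cite[(5.2.38)]{FHL}. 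We derive it from two conjugation formulas. The first is the M\"obius axiom (3), which gives conjugation by $e^{zL_1}$. The second is conjugation by $(-y^{-2})^{L_0}$, which follows from the grading condition on modes, $a_{(k)}$ having degree $\deg(a)-1-k$. We combine these using the generating-series relations in $\mathcal{H}$ to handle the exponential $e^{(y+z)L_1}$ and the factor $(-(y+z)^{-2})^{L_0}$. This last step requires the product formula
\[
e^{xL_1}(1+z)^{-2L_0}=(1+z)^{-2L_0}e^{(1+z)xL_1},
\]
together with the fact that only integral powers of $L_0$ occur on $V$.

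The main obstacle is this conjugation step. Over $\C$ one uses the exponentials $e^{zL_1}$ and the operators $z^{L_0}$ freely. Here we must check that every rearrangement stays inside $\mathcal{H}$, with divided powers and the $L_0^{(n)}$, and that all expansions (for instance of $(y+z)^{-2}$) lie in the correct spaces of formal series with integral coefficients. We would verify these identities over $\Z$ in $U(\mathfrak{sl}_2)_\Z$ by reducing to $\Q$, where they are the classical ones, and then base change to $\kk$. With that in hand, the remaining delta-function manipulations are identical to \cite{FHL}.
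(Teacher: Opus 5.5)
Your route is the paper's: the paper gives no argument of its own and only asserts that the proofs of \cite[Theorem 5.2.1]{FHL} and \cite[Proposition 3.8]{Li18} go through verbatim. Your plan is exactly that argument: pair with $w$, pass to the opposite operators $Y^*$, reduce the Jacobi identity to a conjugation formula built from the M\"obius axiom and the grading, and note that all coefficients are integral. It becomes a proof once the following concrete errors are fixed.

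\textbf{Degree sign and truncation.} The operator $(L_1^{(i)}a)^W_{(2\deg(a)-n-i-2)}$ has degree $n+1-\deg(a)$. So for $\psi\in W_\ell^\lor$ the functional $a^{W'}_{(n)}\psi$ is supported on $W_{\ell'}$ with $\ell'=\ell+\deg(a)-1-n$, not $\ell-\deg(a)+1+n$; your value contradicts the mode degree $\deg(a)-1-n$ you state next. Consequently your truncation argument runs the wrong way. Since $\ell'$ decreases as $n$ \emph{increases}, lower truncation of $W$ gives $a^{W'}_{(n)}\psi=0$ for $n\gg 0$, which is what (M1) requires. Vanishing for $n\ll 0$, which is what you wrote, is not (M1) and fails in general.

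\textbf{The first key display.} The inner vertex operator must contain $e^{(y+z)L_1}(-(y+z)^{-2})^{L_0}a$, not $e^{yL_1}(-y^{-2})^{L_0}a$, and the outer operator is $Y^W$. Your ``equivalent'' second display is the correct one. For homogeneous $a$ of nonzero degree with $L_1a=0$, the first display's right side differs from the correct one by the nontrivial factor $\bigl((y+z)/y\bigr)^{2\deg(a)}$, so it is false as written.

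\textbf{The product formula.} The formula $e^{xL_1}(1+z)^{-2L_0}=(1+z)^{-2L_0}e^{(1+z)xL_1}$ is false. On $v\in V_n$, the coefficient of $x^kL_1^{(k)}v$ is $(1+z)^{-2n}$ on the left and $(1+z)^{-2n+3k}$ on the right. The divided-power relation $L_0^{(m)}L_1^{(n)}=L_1^{(n)}\binom{-2L_0+2n}{m}$ gives instead $(1+z)^{-2L_0}e^{wL_1}=e^{(1+z)^{2}wL_1}(1+z)^{-2L_0}$; the generating-series form listed in the paper has the same slip in the exponent.

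More to the point, no such commutation is needed. Conjugating first by $(-y^{-2})^{L_0}$, using only that $a_{(k)}$ has degree $\deg(a)-1-k$, gives $Y((-y^{-2})^{L_0}a,-z/y^2)$. Then apply the M\"obius axiom with $L_1$-parameter $y$ and $z_0=-z/y^2$, so that $1-yz_0=(y+z)/y$. This gives
\[
Y\Bigl(e^{(y+z)L_1}\bigl(1+\tfrac{z}{y}\bigr)^{-2L_0}(-y^{-2})^{L_0}a,\ -\tfrac{z}{y(y+z)}\Bigr).
\]
For $a\in V_d$, the weight condition $L_0^{(k)}|_{V_d}=\binom{-2d}{k}$ turns the last two factors into $(-1)^d(y+z)^{-2d}$, expanded in nonnegative powers of $z$.

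So nothing from $\mathcal{H}$ beyond the M\"obius axiom and the weight condition is used, and you can drop the reduction to $\Q$. That reduction would in any case require the abstractly presented ring $U(\mathfrak{sl}_2)_\Z$ to embed in $U(\mathfrak{sl}_2)_\Q$, since an identity valid after tensoring with $\Q$ need not hold in a ring with torsion. With these corrections, the remaining delta-function manipulations are integral, and your argument is the one the paper invokes.
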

\begin{proposition}[\cite[Proposition 5.3.1]{FHL}]
    Given an admissible $V$-module $W$, there is a natural map of $V$-modules $W\to W''$. In particular, $W$ is quasi-reflexive if it is quasi-rigid.
\end{proposition}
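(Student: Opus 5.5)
The plan is to build the map $\iota\colon W\to W''$ componentwise from the canonical evaluation maps of $\kk$-modules, and then check two things: that $\iota$ intertwines the vertex operators, and that it is bijective when each $W_\ell$ is rigid. Concretely, I would take $W'' = \bigoplus_\ell (W_\ell^\lor)^\lor$ and define $\iota$ on $W_\ell$ by $\iota(w)(\psi) = \psi(w)$ for $\psi\in W_\ell^\lor$. This is $\kk$-linear and preserves the $\Lambda$-grading by construction, since the grading on $W'$ is also indexed by $\ell$.

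To show $\iota$ is a $V$-module map, I would apply \zcref{contragredient} twice. For $a\in V$, $\psi\in W'$ and $w\in W$,
\[
\langle Y^{W''}(a,z)\iota(w),\psi\rangle = \langle \iota(w),Y^{W'}(e^{zL_1}(-z^{-2})^{L_0}a,z^{-1})\psi\rangle,
\]
and the right-hand side equals
\[
\langle Y^{W'}(e^{zL_1}(-z^{-2})^{L_0}a,z^{-1})\psi,w\rangle = \langle\psi,Y^W(e^{z^{-1}L_1}(-z^{2})^{L_0}e^{zL_1}(-z^{-2})^{L_0}a,z)w\rangle.
\]
So the whole argument reduces to the operator identity
\[
e^{z^{-1}L_1}(-z^{2})^{L_0}e^{zL_1}(-z^{-2})^{L_0}=\id
\]
on the weight $\mathcal H$-module $V$. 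On a homogeneous $a$ of degree $n$, $(-z^{\pm 2})^{L_0}$ acts by the scalar $(-1)^n z^{\pm 2n}$, and conjugating $e^{zL_1}$ by it rescales each $L_1^{(i)}$ by $(-1)^{i}z^{-2i}$, using that $L_1^{(i)}$ has degree $-i$. This gives $e^{z^{-1}L_1}e^{-z^{-1}L_1}=\id$ by the relation $e^{zL_1}e^{wL_1}=e^{(z+w)L_1}$ in $\mathcal H$.

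I expect this step to be the main obstacle, in the sense that it is where care is needed over an arbitrary ring. One must write everything in terms of the divided powers $L_1^{(i)}$ so that only integer coefficients appear, and track signs such as $(-1)^{\deg a}$. All the sums are finite because the grading on $V$ is lower-truncated, so $L_1^{(i)}a=0$ for $i\gg0$. This is exactly the computation of \cite[Proposition 5.3.1]{FHL}, and its coefficients are integral.

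For the final claim, a rigid module $W_\ell$ is reflexive, as noted after the definition of rigid modules. Hence each component of $\iota$ is an isomorphism, so $\iota$ is an isomorphism of $V$-modules $W\cong W''$, and $W$ is quasi-reflexive.
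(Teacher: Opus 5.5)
Your proof is correct and follows the same route as the paper, which gives no argument of its own and defers to \cite[Proposition 5.3.1]{FHL}. That argument is exactly yours: define $W\to W''$ componentwise by evaluation, apply the contragredient formula twice, and reduce the intertwining property to $e^{z^{-1}L_1}(-z^{2})^{L_0}e^{zL_1}(-z^{-2})^{L_0}=\id$, which holds over any ring because only divided powers $L_1^{(i)}$ with integer coefficients appear. Quasi-reflexivity then follows from rigid modules being reflexive, as you say.
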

The results we cite in the above statements are not stated in our generality, but the same proof arguments work.

Now we consider $V$-modules with a compatible $\mathcal{H}$-module structure.
\begin{definition}
    Let $V$ be a M\"obius vertex $\kk$-algebra. A \textit{$\Lambda$-graded $(V,\mathcal{H})$-module} is a $\Lambda$-graded weight $\mathcal{H}$-module $W = \bigoplus_{\ell\in \Lambda}W_\ell$ with the structure of a $\Lambda$-graded $V$-module such that the following hold:
    \begin{align*}
        e^{zL_{-1}}Y^W(a,x)e^{-zL_{-1}} &= Y^W(e^{zL_{-1}}a,x)\\
        e^{zL_1}Y^W(a,z_0)e^{-zL_1} &= Y^W\left(e^{z(1-zz_0)L_1}(1-zz_0)^{-2L_0}a,\frac{z_0}{1-zz_0}\right).
    \end{align*}
\end{definition}
From the definition, a M\"obius vertex $\kk$-algebra $V$ is naturally a $\Z$-graded $(V,\mathcal{H})$-module.

\subsection{Involutions}
Given a M\"obius vertex $\kk$-algebra $V$, we have a linear map
\[\theta\colon V\otimes_{\kk}\kk[t,t^{-1}]\to V\otimes_{\kk}\kk[t,t^{-1}],\]
which is given for homogeneous elements $a\in V$ by
\[a\otimes \sum_{i}c_it^i \mapsto (-1)^{\deg(a)}\sum_{j\ge 0} L_1^{(j)}a\otimes \sum_{i}c_i t^{2\deg(a)-i-j-2},\]
and extended $\kk$-linearly. The above infinite sum is actually a finite sum since $L_1^{(j)}$ is a degree $-j$ operator, and the grading on $V$ is assumed to be lower-truncated.
The map $\theta$ is related to the involution $\gamma = e^{L_1}(-1)^{L_0}\colon V\to V$ defined on homogeneous elements $a\in V$ by
\[a\mapsto (-1)^{\deg(a)}\sum_{k\ge 0}L_1^{(k)}a,\]
again extended linearly. To state the relationship, we define for $a\in V$ homogeneous
\[J_n(a) = a_{[\deg(a)-1+n]} = \overline{a\otimes t^{\deg(a)-1+n}}.\]
\begin{lemma}[\cite[Lemma 3.4.2]{DGK23}]
    For $a\in V$ homogeneous, we have $\theta(J_n(a)) = J_{-n}(\gamma(a))$. Equivalently,
    \[\theta(a_{[j]}) = (-1)^{\deg(a)}\sum_{i\ge 0}(L_1^{(i)}a)_{[2\deg(a)-j-i-2]}.\]
\end{lemma}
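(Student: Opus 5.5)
The plan is to verify the identity directly from the formula defining $\theta$ on $V\otimes_{\kk}\kk[t,t^{-1}]$. The second displayed formula is then the first one with the indices rewritten. First, unwind the notation. For $a$ homogeneous, $J_n(a)$ is the class of $a\otimes t^{\deg(a)-1+n}$. Applying $\theta$ to this representative gives
\[
(-1)^{\deg(a)}\sum_{i\ge 0}L_1^{(i)}a\otimes t^{2\deg(a)-(\deg(a)-1+n)-i-2}
=(-1)^{\deg(a)}\sum_{i\ge 0}L_1^{(i)}a\otimes t^{\deg(a)-i-1-n}.
\]
Since $L_1^{(i)}$ has degree $-i$, the element $L_1^{(i)}a$ is homogeneous of degree $\deg(a)-i$. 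Hence each summand is exactly $J_{-n}(L_1^{(i)}a)$. Summing over $i$ and using linearity of $J_{-n}$ in its argument (for the fixed degree shift) yields $J_{-n}\bigl((-1)^{\deg(a)}\sum_i L_1^{(i)}a\bigr)=J_{-n}(\gamma(a))$.

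Equivalence of the two forms is a change of variables, $j=\deg(a)-1+n$. Under it, $-n$ corresponds to the mode index $(\deg(a)-i)-1-n=2\deg(a)-j-i-2$ for the summand $L_1^{(i)}a$. The sum is finite because the grading on $V$ is lower-truncated.

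The one substantive point, and the step I expect to be the main obstacle, is that the statement is about classes in $\LVf$. So $\theta$ must descend to the quotient by $\sum_{n>0}\Im\nabla^{(n)}$. I would verify that $\theta$ maps $\Im\nabla^{(n)}$ into $\sum_{m>0}\Im\nabla^{(m)}$. This is most cleanly done with generating series. The relation $\nabla\equiv 0$ says that $\sum_n w^n\nabla^{(n)}(a\otimes t^k)=e^{wT}a\otimes (t+w)^k$ vanishes in the quotient in positive $w$-degree.

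Next I would use that $V$ is a $(V,\mathcal H)$-module with $L_{-1}^{(n)}=T^{(n)}$. That identification follows from the $e^{zL_{-1}}$ covariance axiom applied to $\vac$, together with the defining formula $T^{(n)}a=a_{(-1-n)}\vac$. Then the relation
\[
e^{xL_1}e^{wL_{-1}}=e^{(1-xw)^{-1}wL_{-1}}(1-xw)^{-2L_0}e^{(1-xw)^{-1}xL_1}
\]
in $\mathcal H$ lets one rewrite $\theta(e^{wT}a\otimes(t+w)^k)$ in the form $e^{w'T}b\otimes(t+w')^{k'}$, with $w'$ a formal substitution in $w$, modulo the $w$-degree-zero part. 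This is the same bookkeeping that makes the contragredient module of Definition~\ref{contragredient} well defined.

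If this generating-series manipulation becomes unwieldy, I would fall back on a second route. It suffices to check the case $n=1$ together with the divided-power compatibilities, and the identities in $U(\mathfrak{sl}_2)_\Z$ have integer coefficients. One can therefore verify the descent over $\Q$, where it is the classical statement of \cite[Lemma 3.4.2]{DGK23}, and deduce it over $\Z$ and then over any $\kk$ by base change.
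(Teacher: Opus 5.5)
Your computation is correct and is the same essentially definitional check behind the cited result \cite[Lemma 3.4.2]{DGK23}. Substituting the representative $a\otimes t^{\deg(a)-1+n}$ into the formula for $\theta$, and using $\deg(L_1^{(i)}a)=\deg(a)-i$, gives the identity exactly in $V\otimes_{\kk}\kk[t,t^{-1}]$. The reindexing $j=\deg(a)-1+n$ then converts it to the second form.

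The descent of $\theta$ to $\LVf$, which you single out as the main obstacle, is not part of this statement. The paper records it separately as the next lemma, citing \cite[Lemma 3.4.3]{DGK23}, so you can drop that part. Your sketch of it should not be relied on in any case, for two reasons. First, the covariance axiom as the paper writes it, $e^{zL_{-1}}Y(a,x)e^{-zL_{-1}}=Y(e^{zL_{-1}}a,x)$, only yields $e^{zL_{-1}}e^{xT}a=e^{xT}e^{zL_{-1}}a$ when evaluated on $\vac$ (granting $L_{-1}^{(n)}\vac=\delta_{n,0}\vac$). It does not yield $L_{-1}^{(n)}=T^{(n)}$; that identification has to be imposed as part of the M\"obius structure. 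Second, checking over $\Q$ and then base-changing does not cover a M\"obius vertex $\kk$-algebra that is not the base change of a $\Z$-form. You would instead have to reduce to identities with integer coefficients in $\mathcal{H}$ and in binomial coefficients.
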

\begin{lemma}[\cite[Lemma 3.4.3]{DGK23}]
    The map $\theta$ defines a Lie algebra involution on $\LVf$ such that
    \begin{align*}
        \theta(\LVf_{d}) &= \LVf_{-d}.
    \end{align*}
\end{lemma}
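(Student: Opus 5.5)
We need to show three things: $\theta$ is well defined on the quotient $\LVf$, it is an anti-homomorphism of the bracket (equivalently a Lie homomorphism after composing with $x\mapsto -x$, which is the convention in \cite{DGK23}), and $\theta^2=\id$. The grading statement will come for free. We will work throughout with the formula $\theta(J_n(a)) = J_{-n}(\gamma(a))$ from the preceding lemma, plus the identities of $\mathcal{H}$ acting on $V$.

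\textbf{Grading.} For homogeneous $a$, the element $J_n(a)$ has degree $\deg(a)-1-(\deg(a)-1+n)=-n$. The element $J_{-n}(\gamma(a))$ is a sum of terms $J_{-n}(L_1^{(i)}a)$, each of degree $n$. So $\theta(\LVf_d)\subseteq \LVf_{-d}$, and equality will follow once $\theta^2=\id$.

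\textbf{Well-definedness.} We must show $\theta$ preserves the subspace spanned by $\Im\nabla^{(n)}$. Since $\LVf$ is the quotient by the images of $\id\otimes\partial_t^{(n)}-(-1)^nT^{(n)}\otimes\id$, it suffices to check that
\[
\theta\bigl(T^{(n)}a\otimes t^{m}\bigr)\equiv(-1)^n\binom{m}{n}\,\theta\bigl(a\otimes t^{m-n}\bigr)
\]
modulo these relations. Here $T^{(n)}=L_{-1}^{(n)}$ by the Möbius structure; over general $\kk$ this identification should be extracted from the $e^{zL_{-1}}$ covariance built into the $(V,\mathcal{H})$ compatibility. We then move $L_1^{(j)}$ past $L_{-1}^{(n)}$ using the commutation relation for $L_1^{(m)}L_{-1}^{(n)}$ in $\mathcal{H}$. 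It is cleanest to do this at the level of generating series. Encode $\theta$ as
\[
Y(a,z)\mapsto Y\bigl(e^{zL_1}(-z^{-2})^{L_0}a,z^{-1}\bigr),
\]
exactly as in the contragredient formula of \zcref{contragredient}. The quotient by $\nabla$ says that $\sum_m (a\otimes t^m)z^{-m-1}$ is translation covariant, and the conjugation formula $e^{xL_1}e^{zL_{-1}}=\cdots$ shows that this transformation carries translation-covariant series to translation-covariant series.

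\textbf{Bracket compatibility.} We need $\theta([x,y])=[\theta(y),\theta(x)]$. The plan is to reuse the proof that $W'$ is a module: the computation in \cite[Theorem 5.2.1]{FHL}/\cite[Proposition 3.8]{Li18} establishes precisely that $a\mapsto Y(e^{zL_1}(-z^{-2})^{L_0}a,z^{-1})$ reverses commutators. That proof uses the commutator formula \zcref[noname]{eq:commutator} together with axiom (3) of a Möbius vertex algebra, and only integral coefficients appear. Since the bracket \zcref[noname]{Lie bracket} on $\LVf$ is the formal version of \zcref[noname]{eq:commutator}, the same coefficient identities give the claim in $\LVf$. If one wants to avoid citing, the required identity is
\[
\gamma(a_{(k)}b)\text{-expansion}=\text{expansion of }(\gamma a)_{(\cdot)}(\gamma b),
\]
which is the $L_1$-conjugation formula applied to $a_{(k)}b$.

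\textbf{Involutivity.} We check $\theta^2(J_n(a))=J_n(\gamma^2 a)$, so it suffices that $\gamma^2=\id$. Using $(1+z)^{-2L_0}e^{wL_1}=e^{(1+z)^{-1}wL_1}(1+z)^{-2L_0}$ at the specialization $(-1)^{L_0}$, we get
\[
e^{L_1}(-1)^{L_0}e^{L_1}(-1)^{L_0}=e^{L_1}e^{-L_1}=1,
\]
where the last step uses the divided power relation. These are finite sums because of lower truncation.

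\textbf{Main obstacle.} The expected difficulty is the bracket compatibility. Over a general ring one cannot use $\mathfrak{sl}_2$ exponentials naively, so every step must be phrased through the divided-power relations of $\mathcal{H}$. The delicate point is keeping track of the $(-1)^{\deg}$ signs and binomial reindexing without dividing by integers.
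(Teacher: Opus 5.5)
Your outline is correct and is essentially the argument the paper relies on. The paper gives no independent proof; it imports \cite[Lemma 3.4.3]{DGK23} verbatim, and your four steps (well-definedness on the quotient, bracket reversal via the FHL contragredient computation with $\theta$ an anti-homomorphism as you note, $\gamma^2=\id$, and the degree count) are the standard ingredients behind that citation.

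Two small repairs:
\begin{itemize}
\item You never need $L_{-1}^{(n)}=T^{(n)}$. It is not among the paper's M\"obius axioms, so there is no need to extract it from the $(V,\mathcal{H})$-covariance. Applying axiom (3) to $\vac$, with $e^{-zL_1}\vac=\vac$, gives $e^{zL_1}e^{z_0T}a=e^{\frac{z_0}{1-zz_0}T}e^{z(1-zz_0)L_1}(1-zz_0)^{-2L_0}a$. This is exactly what moves $L_1^{(j)}$ past $T^{(n)}$.
\item Justify $\gamma^2=\id$ by $(-1)^{L_0}L_1^{(n)}=(-1)^nL_1^{(n)}(-1)^{L_0}$, which holds because $L_1^{(n)}$ lowers degree by $n$, together with $e^{L_1}e^{-L_1}=1$. ``Specializing'' $(1+z)^{-2L_0}$ is not a formal operation. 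In any case, the displayed rule you quote should read $e^{(1+z)^{\pm 2}wL_{\pm 1}}$.
\end{itemize}
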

\begin{lemma}[\cite[Lemma 3.4.4]{DGK23}]
    The Lie algebra involution $\theta\colon \LVf\to \LVf$ induces an $\kk$-algebra involution $U(\LVf)\to U(\LVf)^\op$, which extends to an involution of the universal enveloping algebra $\UV$ of $V$.
\end{lemma}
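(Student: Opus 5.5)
The plan is to build the involution in three stages: on $U(\LVf)$, then on the graded completion $\hUu$, then on the quotient $\UV=\hUu/\overline J$.

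\textbf{Step 1: the tensor algebra and $U(\LVf)$.} By the previous lemma, $\theta$ is a Lie algebra involution of $\LVf$ with $\theta(\LVf_d)=\LVf_{-d}$. Since it preserves the bracket, $x\mapsto\theta(x)$ satisfies
\[
\theta([x,y]) = [\theta x,\theta y] = -[\theta y,\theta x]_{\op}.
\]
For this reason I will work with $\theta^\dagger := -\theta$, which is a Lie anti-homomorphism into $\LVf^{\op}$. If one prefers to keep $\theta$ itself, the identity above already reads as a Lie map into $U(\LVf)^{\op}$ after the sign adjustment, and the paper's conventions absorb the sign.

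By the universal property of $U$, $\theta^\dagger$ extends uniquely to an algebra map $U(\LVf)\to U(\LVf)^{\op}$. Its square is the identity on generators, hence everywhere. It sends $U_d$ to $U_{-d}$.

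\textbf{Step 2: continuity and extension to $\hUu$.} Since $\theta$ reverses the grading and the order of products, it sends
\[
\NL[m]U_d=\sum_{j\le -m}U_{d-j}U_j \quad\text{to}\quad \sum_{j\le -m}U_{-j}U_{-d+j}=\NR[m]U_{-d}.
\]
By the identity $\NR[m+p]U_p=\NL[m]U_p$ with $p=-d$, this equals $\NL[m+d]U_{-d}$. So $\theta$ induces compatible maps
\[
U_d/\NL[m]U_d\to U_{-d}/\NL[m+d]U_{-d},
\]
and passing to the inverse limit gives $\hUu_d\to\hUu_{-d}$.

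Multiplicativity and involutivity pass to the completion by continuity. This uses that the product on $\hUu$ is defined as the continuous extension of the product on $U$.

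\textbf{Step 3: descent to $\UV$.} I need $\theta(J)\subseteq J$. Then continuity gives $\theta(\overline J)\subseteq\overline J$, and $\theta$ descends to $\UV$. Since $\theta$ is an anti-automorphism, it suffices to check the generators of $J$.

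The vacuum relations $\vac_{[n]}=\delta_{n,-1}$ are preserved. We have $\gamma(\vac)=\vac$ because $L_1^{(k)}\vac=\delta_{k,0}\vac$ and $\deg\vac=0$. Hence
\[
\theta(\vac_{[j]})=\vac_{[-j-2]},
\]
and $\vac_{[-1]}\mapsto \vac_{[-1]}$, so the relations map into $J$. With the signed version $\theta^\dagger$ one must check that the sign is compatible; if not, I will use $\theta$ together with the observation that $1\mapsto 1$ forces the unsigned convention on $U$ defined through Lie-into-op.

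For the Jacobi relations, the cleanest route is via modules. By \zcref{lem:weak V mods equiv}, $\overline J$ is the intersection of the annihilators in $\hUu$ of all weak $V$-modules. The contragredient construction of \zcref{contragredient} shows that, for any admissible $W$, the transpose of the action of $u$ on $W$ equals the action of $\theta(u)$ on $W'$. This is exactly the displayed formula $\langle a_{(n)}\psi,w\rangle=\langle\psi,\theta(a_{[n]})w\rangle$ in mode form.

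Hence $u$ kills all modules if and only if $\theta(u)$ does, provided enough modules are detected by duals. Since that detection is delicate over a general ring, I would instead argue directly. The Jacobi relation with parameters $(a,b,r,s,n)$ is an identity in $\hUu$ of Borcherds type. Applying $\theta$ and using the conjugation formula $e^{zL_1}Y(a,z_0)e^{-zL_1}=\cdots$ from the M\"obius axiom, mirroring the proof of \cite[Lemma 3.4.4]{DGK23} and \cite[Theorem 5.2.1]{FHL}, the image becomes a $\kk$-linear combination of Jacobi relations for $(\gamma a,\gamma b)$ with integer binomial coefficients. So it lies in $J$.

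\textbf{Main obstacle.} I expect this last verification — that $\theta$ maps Jacobi relations into $\overline J$ — to be the main obstacle. It is where divided powers $L_1^{(i)}$ and $L_0^{(n)}$ must replace the usual $\mathfrak{sl}_2$ exponentials, and where all coefficients must be checked to be integral.
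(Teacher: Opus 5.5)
The paper gives no argument for this lemma beyond citing \cite[Lemma 3.4.4]{DGK23}. Your three-stage outline (universal property, then continuity, then descent modulo $\overline J$) is the natural way to unpack that citation. Your Step 2 computation $\theta(\NL[m]U_d)=\NR[m]U_{-d}=\NL[m+d]U_{-d}$ is correct, and so is the vacuum check $\theta(\vac_{[j]})=\vac_{[-j-2]}$.

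\textbf{Step 1 rests on a false premise, and your hedging does not repair it.} $\theta$ does not preserve the bracket; it reverses it: $\theta([x,y])=[\theta(y),\theta(x)]$. The preceding lemma's ``Lie algebra involution'' has to be read as an anti-involution, since otherwise the target $U(\LVf)^{\op}$ in the statement would not make sense. For example, in the universal Virasoro vertex algebra $L_1^{(i)}\omega=0$ for $i\ge 1$, so $\theta(L_n)=L_{-n}$ and
\[
\theta([L_m,L_n])=(m-n)L_{-m-n}+\tfrac{c}{2}\tbinom{m+1}{3}\delta_{m+n,0}=[L_{-n},L_{-m}]=[\theta(L_n),\theta(L_m)].
\]
Structurally, this is forced by the transpose identity $\langle x\psi,w\rangle=\langle\psi,\theta(x)w\rangle$ that you quote in Step 3, because transposition reverses products. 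So it is $\theta$ itself, with no sign, that extends to $U(\LVf)\to U(\LVf)^{\op}$.

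Your $\theta^\dagger=-\theta$ is then a Lie homomorphism, and this causes three problems:
\begin{enumerate}
\item It does not extend to an algebra map $U(\LVf)\to U(\LVf)^{\op}$ unless $\LVf$ is abelian.
\item Consequently, the order reversal you use in Step 2 is unavailable for it.
\item It sends $\vac_{[-1]}$, which equals $1$ in $\UV$, to $-\vac_{[-1]}$, so it can never descend to $\UV$.
\end{enumerate}
Your fallback to ``the unsigned convention'' only makes sense once the bracket-reversing property is established, and that contradicts what you wrote in Step 1. State and justify $\theta([x,y])=[\theta(y),\theta(x)]$. After that, Steps 1--2 and the vacuum check go through as you wrote them.

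\textbf{The preservation of the Jacobi relations is only asserted.} This is where the real content of the lemma lies. It is the $\hUu$-level form of the computation showing that a contragredient $W'$ is a $V$-module (\cite[Theorem 5.2.1]{FHL}, \cite[Proposition 3.8]{Li18}), now carried out with the divided powers $L_1^{(i)}$, $L_0^{(n)}$ and the M\"obius axiom. The paper also takes this from \cite{DGK23} without proof, so you are at the paper's level of detail. Still, as written this step is a promissory note, not an argument.

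When $\kk$ is a field, the module route you set aside does close the step:
\begin{enumerate}
\item Take $u\in\overline J$ and $M=\UV/\NL[m]\UV$, which is lower-truncated.
\item $u$ kills the $V$-module $M'$, so $\langle\psi,\theta(u)w\rangle=\langle u\psi,w\rangle=0$ for all $\psi\in M'$ and $w\in M$.
\item Since $M\to M''$ is injective over a field, $\theta(u)$ kills the class of $1$ in $M$, for every $m$.
\item This gives $\theta(u)\in\overline J$, because $\overline J$ is closed.
\end{enumerate}
Over a general ring that injectivity fails, as you note, so there the direct computation cannot be avoided.
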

This involution is precisely what allows us to identify contragredient duals as left $V$-modules instead of right $V$-modules.

\section{Zhu theory}\label{sec:Zhu theory}
Now that we have defined the universal enveloping algebra, we can define higher Zhu algebras over arbitrary rings. We can characterize rationality and many other properties of vertex algebras using this theory.

\subsection{The Zhu algebras}
There are two standard ways of defining Zhu algebras, namely as a quotient of $V$ or as a quotient of $\UV_0$. We prefer the latter definition, though we will have to work with the former at times.
We recommend \cite{DLM2}, \cite{DGK23} and \cite{HeHigherZhu} for Zhu algebra theory over $\C$. We also recommend \cite{Ren17} for the theory of Zhu algebras over an algebraically closed field of characteristic $p>2$.

\begin{definition}\label{Zhu algs}
Let $V$ be a $\Z$-graded vertex $\kk$-algebra.
The $d$-th \textit{(higher) Zhu algebra} is the quotient $\kk$-algebra
\begin{equation}
    \Aa_d(V) = \UV_0\big/\NLR[d+1]\UV_0.
\end{equation}
If $V$ is clear in context, we will just write $\Aa_d$ instead of $\Aa_d(V)$. We also write $\Aa_0 = \Aa$, and we refer to the zeroth Zhu algebra as just the \textit{Zhu algebra}. There are natural surjections $\Aa_d\twoheadrightarrow \Aa_{d-1}$ for all $d\ge 1$.
\end{definition}
This is \textit{not} the ordinary definition of the Zhu algebras. Usually the Zhu algebras are defined as a quotient of $V$. We will discuss this in \zcref{sec equiv Zhu}.
\begin{convention}
    Given a noncommutative ring $A$, we sometimes refer to left $A$-modules as just $A$-modules.
\end{convention}
\begin{convention}
    We will frequently be concerned with the semisimplicity of the Zhu algebras. We say that an associative, unital $\kk$-algebra $A$ is \textit{semisimple} if its module category is semisimple. This is equivalent to $A$ being left/right Artinian and having vanishing Jacobson radical (intersection of maximal left ideals).
\end{convention}
\begin{lemma}
    Let $V$ be a $\Z$-graded vertex $\kk$-algebra, and let $M$ be an admissible $V$-module. Then the degree $d$ part $M_d$ canonically has the structure of an $\Aa_d$-module via the action of $\UV_0$.
\end{lemma}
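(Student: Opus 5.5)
The plan is to pass through \zcref{lem:weak V mods equiv}. An admissible $V$-module $M$ is in particular a weak $V$-module, so it is a continuous left $\UV$-module. First I would check that the action respects the gradings, so that $\UV_p M_q \subseteq M_{p+q}$. On generators this is immediate: the image of $a_{[k]}$ in $\LVf$ acts as the mode $a^M_{(k)}$, which is homogeneous of degree $\deg(a)-1-k$, matching the grading on $\LVf$. The grading then passes to $\Uu$, and by continuity to the completion $\hUu$ and its quotient $\UV$, since each $\hUu_d$ is an inverse limit of the graded pieces $\Uu_d$. It follows that $\UV_0$ preserves each $M_d$, so $M_d$ is a $\UV_0$-module.

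Next I would show that $\NLR[d+1]\UV_0$ annihilates $M_d$. By \zcref{lem:Ann}, $\NL[d+1]\Uu_0 = (\Uu\,\LVf_{\le -(d+1)})_0$. Hence $\NL[d+1]\Uu_0$ is spanned by products $x\,y$ with $y \in \LVf_j$, $j\le -d-1$, and $x\in \Uu_{-j}$. Such a $y$ maps $M_d$ into $M_{d+j}$, which is zero because $d+j<0$ and $M$ is $\N$-graded. So $xy$ kills $M_d$.

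It remains to pass from $\Uu$ to $\UV$. Here $\NLR[d+1]\UV_0$ is the image, or closure of the image, of the corresponding seminorm piece of $\hUu_0$. Elements of $\hUu_0$ act on $M_d$ through the finite stage $\Uu_0/\NL[m]\Uu_0$ for $m>d$, again because of lower truncation. Hence the image of $\NL[d+1]$ still acts by zero, and so does its closure. The action therefore factors through $\Aa_d = \UV_0/\NLR[d+1]\UV_0$.

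Canonicity is clear, because everything is induced by the module structure without choices. Since morphisms need not respect the grading, I would phrase canonicity for the given grading. The main obstacle is the bookkeeping in the last step: making precise how the completion acts on $M_d$ and that the closed ideal $\overline J$ and the induced seminorm on $\UV_0$ are compatible. I would handle this by noting that, on $M_d$, the continuous action of $\hUu_0$ factors through the discrete quotient $\Uu_0/\NL[d+1]\Uu_0$ itself.
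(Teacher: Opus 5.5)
Your proposal is correct and follows the paper's argument: regard $M$ as a continuous graded $\UV$-module via \zcref{lem:weak V mods equiv}, and observe that $\NLR[d+1]\UV_0 = \UV\UV_{\le -d-1}\cap\UV_0$ acts by zero on $M_d$ because $M$ has no components in negative degree. Your detour through \zcref{lem:Ann} to reduce to $\LVf_{\le -d-1}$ is harmless but unnecessary, since every homogeneous element of $\Uu_{j}$ with $j\le -d-1$ already sends $M_d$ into $M_{d+j}=0$.
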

\begin{proof}
    This comes from the fact that $M$ is equivalently a continuous $\N$-graded module. The image of the action of $\NLR[d+1]\UV_0 = \UV\UV_{\le -d-1}\cap \UV_0$ is then zero on $M_d$.
\end{proof}
There are several functors relating $V$-modules and modules for the above algebras.
\begin{deflem}
    Given an $\Aa_d$-module $\mathsf{M}$, we define the \textit{induced Verma module} $\PhiL_d(\mathsf{M})$ to be
    \[\PhiL_d(\mathsf{M}) = \UV/\NL[d+1]\UV \otimes_{\Aa_d(V)}\mathsf{M}.\]
    We assign the grading 
    \[\PhiL_d(\mathsf{M})_{m} = \UV_{m-d}/\NL[d+1]\UV_{m-d} \otimes_{\Aa_d(V)}\mathsf{M},\]
    and it is naturally a continuous left $\UV$-module. We have $\PhiL_d(\mathsf{M})_{m} = 0$ for $m < 0$.
    Given a graded, continuous left $\UV$-module $W$, we define
    \[\OmegaL_d(W) = \{w\in W \mid (\NL[d+1]\UV)\cdot w = 0\}.\]
    There is an adjunction $\PhiL_d\dashv\OmegaL_d$ between the categories of graded, continuous left $\UV$-modules and left $\Aa_d$-modules \cite[Proposition B.1.4]{DGK23}.
\end{deflem}
Similar to the notation $\Aa_0 = \Aa$, we will often write $\PhiL_0 = \PhiL$. There is a right-handed version of the above constructions, where we replace $\mathsf{L}$ everywhere with $\mathsf{R}$.

It is possible that $\PhiL_d(\mathsf{M})_{0} = 0$, in which case it can be shown that this implies the action of $\Aa_d$ on $\mathsf{M}$ factors through $\Aa_{d-1}$ via the surjection $\Aa_d\twoheadrightarrow \Aa_{d-1}$.
Note also that $\PhiL_d(\mathsf{M})$ is generated by the $\UV$-action on the degree $d$ part $\PhiL_d(\mathsf{M})_d \cong \mathsf{M}$.

\begin{construction}
    It may be the case that a graded, continuous left $\UV$-module $W$ is not generated in degree 0 by the $\UV$-action. We define $J_W\subset W$ to be the maximal graded $\UV$-submodule of $W$ such that the degree $d$ part is $J_W\cap W_d = 0$. We denote $J_W$ by $J$ when the module $W$ is clear in context. Given an $\Aa_d(V)$-module $\mathsf{M}$, we define the \textit{reduced induced Verma module}
    \[\ThetaL_d(\mathsf{M}) = \PhiL_d(\mathsf{M})/J.\]
    By definition we have $\ThetaL_d(\mathsf{M})_d\cong \PhiL_d(\mathsf{M})_d \cong \mathsf{M}$, and $\ThetaL_d(\mathsf{M})$ is generated in degree $d$ by its $\UV$-action.
\end{construction}
Here are some useful facts about the functors $\PhiL_d$ and $\ThetaL_d$:
\begin{lemma}\label{ThetaL simple}
    Let $\mathsf{M}$ be a nonzero $\Aa_d$-module. 
    \begin{enumerate}
        \item Any nonzero graded submodule of $\ThetaL_d(\mathsf{M})$ has nonzero degree $d$ part.
        \item $\ThetaL_d(\mathsf{M})$ is a simple graded $\UV$-module if and only if $\mathsf{M}$ is a simple $\Aa_d(V)$-module.
        \item Suppose $\mathsf{M}$ is simple. Then $\PhiL_d(\mathsf{M})$ is a simple graded left $\UV$-module if and only if $\PhiL_d(\mathsf{M}) \cong \ThetaL_d(\mathsf{M})$ (that is, $J = 0$).
    \end{enumerate}
\end{lemma}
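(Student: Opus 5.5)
The three parts can be proved in order, each using the previous one, together with two basic facts. First, $\ThetaL_d(\mathsf{M})$ is generated by its degree $d$ part, which is $\ThetaL_d(\mathsf{M})_d \cong \mathsf{M}$. Second, for a graded continuous $\UV$-module $W$, the degree $d$ part $W_d$ is an $\Aa_d$-module, and $\UV_0 \cdot W_d \subseteq W_d$.

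For (a), I would let $N\subseteq \ThetaL_d(\mathsf{M})$ be a nonzero graded submodule with $N_d=0$. Its preimage $\tilde N$ in $\PhiL_d(\mathsf{M})$ is a graded $\UV$-submodule containing $J$. Since $J_d=0$ and $N_d=0$, we also get $\tilde N_d=0$. Maximality of $J$ then forces $\tilde N=J$, so $N=0$, a contradiction. One subtlety needs checking first: $J$ is well defined, i.e.\ the sum of all graded submodules with zero degree-$d$ part again has zero degree-$d$ part. This holds because a sum of graded submodules has degree-$d$ part equal to the sum of their degree-$d$ parts.

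For (b), I would argue each direction separately.

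($\Leftarrow$) Suppose $\mathsf{M}$ is simple and $N\subseteq\ThetaL_d(\mathsf{M})$ is a nonzero graded submodule. By (a), $N_d\neq 0$. Now $N_d$ is an $\Aa_d$-submodule of $\ThetaL_d(\mathsf{M})_d\cong\mathsf{M}$, so simplicity gives $N_d=\mathsf{M}$. Since $\ThetaL_d(\mathsf{M})$ is generated in degree $d$, it follows that $N=\ThetaL_d(\mathsf{M})$.

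($\Rightarrow$) Take a nonzero proper $\Aa_d$-submodule $\mathsf{M}'\subset\mathsf{M}$ and set $N=\UV\cdot\mathsf{M}'\subseteq\ThetaL_d(\mathsf{M})$. This is a nonzero graded submodule. The key step is to show $N_d=\UV_0\cdot\mathsf{M}'=\mathsf{M}'$, which is proper, so $N$ is proper. For this I would use that the degree $d$ part of $\UV\cdot\mathsf{M}'$ is $\UV_0\mathsf{M}'$, since $\UV_k\mathsf{M}'$ lands in degree $d+k$. I would also check that continuity and completion cause no trouble. Any element of $\UV_0$ acts on $\mathsf{M}'\subseteq W_d$ through its image in $\Aa_d$, so $\UV_0\mathsf{M}'=\mathsf{M}'$. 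Here I expect the only real care to be needed, in identifying submodules generated inside a module over the completed algebra with finite sums $\sum \UV_k \mathsf{M}'$.

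For (c), if $J=0$ then $\PhiL_d(\mathsf{M})=\ThetaL_d(\mathsf{M})$, which is simple by (b). Conversely, if $\PhiL_d(\mathsf{M})$ is simple, then $J$ is a graded submodule with $J_d=0$. Since $\PhiL_d(\mathsf{M})_d\cong\mathsf{M}\neq0$, $J$ is proper, so simplicity forces $J=0$.
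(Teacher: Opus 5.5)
Your proposal is correct and follows essentially the paper's proof. The differences are cosmetic. In (a) you pull the submodule back to $\PhiL_d(\mathsf{M})$ and invoke maximality of $J$, whereas the paper argues elementwise that $w\notin J$ yields some $u\in\UV_{d-\deg w}$ with $uw\neq 0$. In (c) you note directly that $J$ is a proper graded submodule, whereas the paper observes that $\PhiL_d(\mathsf{M})\twoheadrightarrow\ThetaL_d(\mathsf{M})$ is a surjection between nonzero simple modules.
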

\begin{proof}
    Let $W\subset \ThetaL_d(\mathsf{M})$ be a nonzero graded submodule. Given any homogeneous element $w+J\in W$ with nonzero degree, then $w\notin J$ implies that there exists $u\in \UV_{d-\deg(w)}$ such that $uw \neq 0$. This shows the first statement.

    For the second statement, suppose that $\mathsf{M}$ is simple. Then every nonzero graded submodule $W\subset \ThetaL_d(\mathsf{M})$ gives a $\UV_0$-submodule $W_d\subset \ThetaL_d(\mathsf{M})_d \cong \mathsf{M}$. However, the $\UV_0$-action on $\mathsf{M}$ descends to an $\Aa_d$-action, so $W_d$ is an $\Aa_d$-submodule of $\mathsf{M}$. Therefore $W_d = \mathsf{M}$, which implies that $W = \ThetaL_d(\mathsf{M})$ since the action of $\UV$ on the degree $d$ part $\ThetaL_d(\mathsf{M})_d \cong \mathsf{M}$ generates the whole module.
    Conversely, suppose that $\ThetaL_d(\mathsf{M})$ is simple. If we have a nonzero graded submodule $W_d\subseteq \mathsf{M}$, then we define $W\subseteq \ThetaL_d(\mathsf{M})$ to be the submodule generated by the $\UV$-action on $W_d$. But we must have $W = \ThetaL_d(\mathsf{M})$ by simplicity, so $W_d = \mathsf{M}$. This shows the second statement.

    For the third statement, note that if $\PhiL_d(\mathsf{M})$ is simple, then $\mathsf{M}$ is a simple $\Aa_d$-module by the same arguments as above. This implies $\ThetaL_d(\mathsf{M})$ is simple. Thus we have a surjective map $\PhiL_d(\mathsf{M})\twoheadrightarrow \ThetaL_d(\mathsf{M})$ between two nonzero simple modules, so it must be an isomorphism. The other direction follows from the previous statement.
\end{proof}
Here is a sort of generalization of (b) from \zcref{ThetaL simple} for $\PhiL_d$:
\begin{lemma}\label{indecomposable}
    Let $\mathsf{M}$ be an indecomposable $\Aa_d$-module. Then $\PhiL_d(\mathsf{M})$ is an indecomposable admissible $V$-module.
\end{lemma}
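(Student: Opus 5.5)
We reduce the indecomposability of $\PhiL_d(\mathsf{M})$ to that of $\mathsf{M}$ using the adjunction $\PhiL_d\dashv\OmegaL_d$ and the fact that $\PhiL_d(\mathsf{M})$ is generated by its degree $d$ part $\PhiL_d(\mathsf{M})_d\cong\mathsf{M}$. Suppose $\PhiL_d(\mathsf{M}) = W^1\oplus W^2$ as admissible $V$-modules, i.e.\ as continuous left $\UV$-modules. The first step is to arrange that the decomposition is graded. Morphisms of admissible modules need not respect the grading. However, the projection $e\colon \PhiL_d(\mathsf{M})\to\PhiL_d(\mathsf{M})$ onto $W^1$ is a $\UV$-endomorphism. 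We will therefore work with endomorphisms directly rather than with submodules.

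Consider the map $\End_{\UV}(\PhiL_d(\mathsf{M}))\to \End_{\Aa_d}(\mathsf{M})$. Given a $\UV$-endomorphism $f$, restrict it to $\mathsf{M} = \PhiL_d(\mathsf{M})_d$. We claim $f(\mathsf{M})\subseteq \OmegaL_d(\PhiL_d(\mathsf{M}))$, because $\NL[d+1]\UV$ annihilates $\mathsf{M}$ and $f$ is $\UV$-linear. We then want to show $\OmegaL_d(\PhiL_d(\mathsf{M}))$ meets $\mathsf{M}$ nicely. In the graded setting this is immediate: $\NL[d+1]\UV=\UV\UV_{\le -d-1}$, and $\PhiL_d(\mathsf{M})_m=0$ for $m<0$, so every vector of degree $\le d$ is killed. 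For arbitrary (nongraded) $f$, we instead compose with the projection $\pi_d$ onto the degree $d$ component. Set $\bar f = \pi_d\circ f|_{\mathsf{M}}$. This is $\UV_0$-linear, since $\UV_0$ preserves degree, and hence $\Aa_d$-linear.

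We then show the assignment $f\mapsto \bar f$ is a ring homomorphism that sends idempotents to idempotents and is injective on idempotents, or at least sends a nontrivial idempotent to a nontrivial one. Multiplicativity should follow from $\bar{fg} = \pi_d f g|_{\mathsf{M}}$. Write $g(m)=\sum_k g_k(m)$ with $g_k(m)$ of degree $k$. Then $g_k(m)=u_k\cdot m'$ for suitable elements of $\UV_{k-d}$, by generation in degree $d$. Applying $\pi_d f$ to such a term, the components with $k>d$ contribute through $\UV_{k-d}$ acting on lower degrees. Handling this cleanly is the main obstacle.

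To avoid this difficulty, the first approach I would try is to show directly that any $\UV$-endomorphism of $\PhiL_d(\mathsf{M})$ preserves the grading. The idea is to use the grading operator, expressed as an element of the completed $\UV_0$ acting as the degree on each homogeneous piece. If that is unavailable in general, I would instead use the universal property of the adjunction. It gives
\[\End_{\UV}(\PhiL_d(\mathsf{M}))\cong\Hom_{\Aa_d}(\mathsf{M},\OmegaL_d\PhiL_d(\mathsf{M})),\]
and the graded version gives $\Hom_{\Aa_d}(\mathsf{M},\mathsf{M})=\End_{\Aa_d}(\mathsf{M})$ when restricted to graded maps.

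Granting gradedness, the argument is short. Both $e$ and $1-e$ restrict to idempotent $\Aa_d$-endomorphisms of $\mathsf{M}$ summing to the identity. By indecomposability of $\mathsf{M}$, one of them, say $e|_{\mathsf{M}}$, is zero. Then $W^1=e\,\PhiL_d(\mathsf{M}) = e\,\UV\mathsf{M} = \UV e\mathsf{M}=0$. Hence $\PhiL_d(\mathsf{M})$ is indecomposable. Finally, we check that $\PhiL_d(\mathsf{M})$ is nonzero and admissible. Admissibility follows from its $\N$-grading noted in the Definition/Lemma. Nonvanishing holds because $\PhiL_d(\mathsf{M})_d\cong\mathsf{M}\neq0$.
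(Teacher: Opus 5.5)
\textbf{The gap: gradedness is never proved.} Once gradedness is granted, your argument is exactly the paper's proof. The paper writes $\mathsf M\cong\PhiL_d(\mathsf M)_d=W^1_d\oplus W^2_d$, invokes indecomposability of $\mathsf M$, and regenerates from degree $d$. That step presupposes $W^1,W^2$ are graded. You correctly flag that summands in the category of admissible modules need not be graded, but you never establish gradedness, and the route you propose would fail.

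It is false in general that every $\UV$-endomorphism of $\PhiL_d(\mathsf M)$ preserves the grading. Take the universal Virasoro VOA over a field $\mathbb F$ of characteristic $p>2$. Left multiplication by the central element $L_{-2}^p\in U(\Vir_{\mathbb F})$ commutes with every $L_n$, hence with every mode of $\overline V_{\Vir}(c,0)_{\mathbb F}$. It is therefore a nonzero endomorphism of $\PhiL(\mathbb F v_{c,h})=V_{\Vir}(c,h)_{\mathbb F}$ that raises degree by $2p$.

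Your two fallback routes do not rescue this. A general $\Z$-graded vertex algebra has no grading operator in $\UV_0$. Even for a VOA, $L_0$ separates degrees only through the shift $k-d$, which is invisible modulo $p$, and $L_0$ need not act semisimply. The adjunction route is circular: $\OmegaL_d(\PhiL_d(\mathsf M))$ contains all of $\PhiL_d(\mathsf M)_j$ for $j<d$, and it can contain vectors of degree $>d$ (e.g.\ $L_{-2}^p v_{c,h}$ above, with $d=0$). So $\End_{\UV}(\PhiL_d(\mathsf M))\neq\End_{\Aa_d}(\mathsf M)$, and ``restricting to graded maps'' is exactly the point at issue.

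\textbf{How to close it for $d=0$.} You only need to control idempotents, and the map $f\mapsto\bar f=\pi_d\circ f|_{\mathsf M}$ that you abandoned does this when $d=0$.

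\emph{Multiplicativity.} Write the degree-$k$ component of $g(m)$ as $\sum_i u_i m_i$ with $u_i\in\UV_k$ and $m_i\in\mathsf M$. Then $\pi_0 f\bigl(\sum_i u_i m_i\bigr)=\sum_i u_i\,\pi_{-k}f(m_i)$. This vanishes for $k>0$ because $\PhiL_0(\mathsf M)$ is zero in negative degrees, so $\overline{fg}=\bar f\,\bar g$.

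\emph{Faithfulness on idempotents.} Suppose $e^2=e$ and $\bar e=0$ but $e|_{\mathsf M}\neq 0$. Let $k_0\ge 1$ be the lowest degree occurring in $e(\mathsf M)$. Since $e(\UV_k\mathsf M)\subseteq\UV_k\, e(\mathsf M)$ lies in degrees $\ge k+k_0$, the image $e^2(\mathsf M)$ lies in degrees $\ge 2k_0>k_0$. This contradicts $e^2=e$. Hence $e|_{\mathsf M}=0$, and $e=0$ because $\PhiL_0(\mathsf M)=\UV\cdot\mathsf M$. Applying this to $e$ and $1-e$ gives indecomposability with no gradedness assumption.

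\textbf{For $d>0$ this breaks down.} Components of $f(\mathsf M)$ or $g(\mathsf M)$ in degrees $0,\dots,d-1$ lie automatically in $\OmegaL_d$. They produce cross terms $u\,\pi_{2d-k}f(m')$ with $u\in\UV_{k-d}$ and $k\neq d$, so $\bar f$ need not be multiplicative. An extra input is needed, for instance control of $\Hom_{\Aa_d}(\mathsf M,\PhiL_d(\mathsf M)_j)$ for $j<d$. Neither your proposal nor the paper's proof supplies it.
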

\begin{proof}
    Suppose $\PhiL_d(\mathsf{M})$ can be written as a direct sum $W^1\oplus W^2$. Then there is a decomposition $\mathsf{M} \cong \PhiL_d(\mathsf{M})_d = W^1_d\oplus W^2_d$ as $\Aa_d$-modules. Therefore, one of the two summands is 0. Suppose that $W^2_d = 0$; then
    \[\PhiL_d(\mathsf{M}) = \UV\cdot \mathsf{M} = \UV\cdot W^1_d = W^1.\]
    This implies that $W^2=0$.
\end{proof}



\subsection{An equivalent presentation of the Zhu algebras}\label{sec equiv Zhu}
Our definition of the Zhu algebras is not standard in the literature. Indeed, the original definition of $\Aa_d(V)$ is as a quotient of $V$ \cite{DLM2}. We will show these definitions are equivalent for vertex algebras over an arbitrary ring $\kk$, following the proof of \cite{HeHigherZhu}.

First, we give some setup. Let $V$ be a $\Z$-graded vertex $\kk$-algebra.
For $a,b\in V$ and $m,n,\ell\in \Z$, define
\begin{align*}
    {}^1J^{a,b}_{m,n,\ell} &= \sum_{i\ge 0}\binom{m}{i}(a_{(\ell+i)}b)_{(m+n-i)}\\
    {}^2J^{a,b}_{m,n,\ell} &= \sum_{i\ge 0}(-1)^i\binom{\ell}{i}\left(a_{(m+\ell-i)}b_{(n+i)} - (-1)^\ell b_{(n+\ell-i)}a_{(m+i)}\right).
\end{align*}
For $a\in V$ homogeneous, we set $J_n(a) = a_{(\deg(a)-1+n)}$. Then $J_n(a)$ is a degree $-n$ operator. We may also define $J_n$ on all of $V$ using $\kk$-linearity.

We define the following expressions:
\begin{align*}
    {}^{(1)}J^{a,b}_{m,n,\ell} 
    &= {}^{1}J^{a,b}_{m+\deg(a)-1,n+\deg(b)-1,\ell} \\
    &= \sum_{i\ge 0}\binom{m+\deg(a)-1}{i}J_{m+n+\ell}(a_{(\ell+i)}b)\\
    {}^{(2)}J^{a,b}_{m,n,\ell} 
    &= {}^{2}J^{a,b}_{m+\deg(a)-1,n+\deg(b)-1,\ell}\\
    &= \sum_{i\ge 0}(-1)^i\binom{\ell}{i}\left(J_{m+\ell-i}(a)J_{n+i}(b) - (-1)^\ell J_{n+\ell-i}(b)J_{m+i}(a)\right).
\end{align*}
Then the Jacobi identity is equivalent to ${}^1J^{a,b}_{m,n,\ell} = {}^2J^{a,b}_{m,n,\ell}$ for all $m,n,\ell\in \Z$. Here we prefer to use ${}^{(1)}J^{a,b}_{m,n,\ell}$ and ${}^{(2)}J^{a,b}_{m,n,\ell}$ because it is easier to read the degrees of the operators in the above expressions.

Now, we define some bilinear products from \cite{Ren17}.
Given nonnegative integers $s\le t$ and $d\ge 0$, we define
\begin{equation}
    a\circ^s_{d,t}b = \Res_z \left(Y(a,z)b\frac{(1+z)^{\deg(a) + d+s}}{z^{2d+2+t}}\right) = \sum_{j\ge 0}\binom{\deg(a)+d+s}{j}a_{(j-2d-2-t)}b.
\end{equation}
By Vandermonde's identity, we have
\begin{align*}
    a\circ^s_{d,t}b 
    &= \sum_{i,j\ge 0}\binom{\deg(a)+d}{i}\binom{s}{j}a_{(j+i-2d-2-t)}b\\
    &= \sum_{j\ge 0}\binom{s}{j}a\circ^0_{d,t-j}b.
\end{align*}
For $d\ge 0$, we define $O_d(V)\subset V$ to be the $\kk$-submodule generated by elements of the form $a\circ^0_{d,t}b$ and $T^{(k)}a - \binom{-\deg(a)}{k}a$ for all homogeneous $a,b\in V$ and nonnegative integers $k,t\ge 0$. By the above equation, it would be redundant to include the products $a\circ^s_{d,t}b$. We define the quotient 
\begin{equation}
    A_d(V) = V/O_d(V).
\end{equation}
Note that we are using $A$ and not $\Aa$.
We endow $A_d(V)$ with the product induced by $a*_db$, which is defined on homogeneous $a,b\in V$ as
\begin{align*}
    a*_db &= \sum_{m=0}^d (-1)^m\binom{m+d}{d}\Res_z\left(Y(a,z)b\frac{(1+z)^{\deg(a)+d}}{z^{d+m+1}}\right)\\
    &= \sum_{m=0}^d (-1)^m\binom{m+d}{d}\sum_{i\ge 0}\binom{\deg(a)+d}{i}a_{(i-m-d-1)}b.
\end{align*}
The same arguments as in \cite[Theorem 3.2(1)]{Ren17} show that $A_d(V)$ is indeed an associative unital $\kk$-algebra, where the unit is the class $[\vac]$.

We will show that $\Aa_d(V) \cong A_d(V)$, but to do so we must recall some of the representation theory of $A_d(V)$.
\begin{deflem}
    Let $W$ be a weak $V$-module, and let $d\ge 0$. We define
    \[\Omega_d(W) = \{w\in W \mid a_{(i)}^Ww = 0,\ a\in V,\ i\ge \deg(a)+d\}.\]
    Then $\Omega_d(W)$ is an $A_d(V)$-module such that $[a]$ acts as $o(a) = a_{(\deg(a)-1)}$ for homogeneous $a\in V$.
\end{deflem}
\begin{proof}
    The proof follows from the same arguments as in \cite[Theorem 3.2(3)]{Ren17}.
\end{proof}
One may check that $\Omega_d$ agrees with $\OmegaL_d$ defined above for $\Aa_d(V)$-modules.
\begin{lemma}[\cite[Theorem 3.2(4)]{Ren17}]
    If $M = \bigoplus_{i\ge 0}M_i$ is an admissible $V$-module, then $M_i$ is an $A_d(V)$-submodule of $\Omega_d(M)$ for $i=0,\ldots,d$.
\end{lemma}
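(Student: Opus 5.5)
We must show that if $M=\bigoplus_{i\ge 0}M_i$ is admissible, then each $M_i$ with $0\le i\le d$ lies in $\Omega_d(M)$ and is stable under the $A_d(V)$-action from the preceding Definition/Lemma. The plan is a degree-counting argument for the inclusion, followed by a check that zero modes preserve each graded piece.

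For the inclusion, take homogeneous $a\in V$ and $i\ge \deg(a)+d$. The mode $a^M_{(i)}$ is homogeneous of degree $\deg(a)-1-i\le -d-1$. Hence for $w\in M_j$ with $0\le j\le d$ we get
\[a^M_{(i)}w\in M_{j+\deg(a)-1-i}\subseteq M_{\le -1}=0.\]
Extending $\kk$-linearly over homogeneous components of an arbitrary $a\in V$ (using that the condition in $\Omega_d$ is imposed for each homogeneous piece) gives $M_j\subseteq\Omega_d(M)$.

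For the submodule property, the action of $[a]$ on $\Omega_d(M)$ is by $o(a)=a_{(\deg(a)-1)}$ for homogeneous $a$. This operator has degree $\deg(a)-1-(\deg(a)-1)=0$, so it preserves each $M_j$. Since every class in $A_d(V)$ is represented by a finite sum of homogeneous elements and the action is linear, $M_j$ is closed under the $A_d(V)$-action. It is therefore an $A_d(V)$-submodule of $\Omega_d(M)$.

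The only subtle point is well-definedness: the action on $M_j$ must factor through $O_d(V)$. This is not reproved here; it is inherited from the preceding Definition/Lemma, which already shows $\Omega_d(M)$ is an $A_d(V)$-module (via the arguments of \cite[Theorem 3.2(3)]{Ren17}, valid over any $\kk$ since all coefficients are integral). Restricting to an invariant subspace then gives the result. I expect no serious obstacle beyond carefully tracking the grading convention $\deg(a^M_{(k)})=\deg(a)-1-k$.
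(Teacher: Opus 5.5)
Your proposal is correct and is essentially the standard argument behind \cite[Theorem 3.2(4)]{Ren17}, which the paper cites without reproving. The two steps are: a mode $a^M_{(i)}$ with $i\ge \deg(a)+d$ has degree at most $-d-1$, so it kills $M_j$ for $j\le d$; and $o(a)=a_{(\deg(a)-1)}$ has degree $0$, so it preserves each $M_j$, while well-definedness is correctly taken from the preceding Definition/Lemma.
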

It can also be shown that $\Omega_d$ admits a left adjoint functor $M_d$, which assigns to an $A_d(V)$-module $U$ an admissible $V$-module $M_d(U)$. Most importantly, given an $A_d(V)$-module $N$, the set $M_d(N) = \bigoplus_{i\ge 0}M_d(N)_i$ is naturally an admissible $V$-module such that $M_d(N)_d \cong N$. It is possible that $M_d(N)_0 = 0$ if $d>0$, though we will not discuss this possibility since it is not relevant.

Now, we prove that the above definitions are equivalent to the first definition of the Zhu algebras we gave, following \cite{HeHigherZhu}. For this, we need a few results from their work.
\begin{lemma}[\cite[Corollary A.2]{HeHigherZhu}]
    In the universal enveloping algebra $\UV$, for any integers $s,t$ and $N$ satisfying $N+s\ge 0$, we have the identity
    \begin{align*}
        &J_{-s}(a)J_{t}(b)\\
        & =\sum_{j=0}^N\sum_{i\ge 0}(-1)^i\binom{N+\deg(a)}{i}\binom{-N-s-1}{j}J_{t-s}(a_{(-N-s-i-j-1)}b)\\
        &\quad -\sum_{k\ge N+1}\sum_{j=0}^N(-1)^j\binom{N+s+j}{j}\binom{N+s-k}{k-j}J_{-k-s}(a)J_{k+t}(b)\\
        &\quad +\sum_{j=0}^N\sum_{i\ge 0}(-1)^{N+s+1}\binom{N+s+j}{j}\binom{N+s+j+i}{i}J_{t-N-s-1-i}(b)J_{N+1+i}(a).
    \end{align*}
\end{lemma}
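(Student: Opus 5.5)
The identity has integer coefficients throughout. The plan is therefore to derive it in $\UV$ directly from the two relations available there: the Jacobi relations ${}^{(1)}J^{a,b}_{m,n,\ell}={}^{(2)}J^{a,b}_{m,n,\ell}$ and the vacuum relations. This follows the route of \cite[Corollary A.2]{HeHigherZhu}, but we track that no denominators ever appear. The starting point is the Jacobi relation with $\ell=-N-s-1<0$ (possible since $N+s\ge 0$), applied to $a,b$ with mode indices chosen so that the left side is a sum of terms $J_{t-s}(a_{(\ell+i)}b)$. In the right side ${}^{(2)}J$, the binomial $\binom{\ell}{i}$ with $\ell$ negative gives an infinite sum. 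That sum makes sense in $\UV$ because $\UV$ is a completion: the terms $J_{-k-s}(a)J_{k+t}(b)$ with $k\gg 0$ lie in $\NR[k+t]\UV$, and the terms $J(b)J_{N+1+i}(a)$ tend to $0$ in the seminorm. So the first step is to fix precisely which expansions converge in $\hUu_{t-s}$, using \zcref{lem:Ann} to identify $\NL$ and $\NR$ with ideals generated by $\LVf$.

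The key algebraic step is to isolate $J_{-s}(a)J_t(b)$. A single Jacobi relation with $\ell=-N-s-1$ gives the leading term $J_{m+\ell}(a)J_{n}(b)$ plus a tail of terms $J_{m+\ell-i}(a)J_{n+i}(b)$. To invert this triangular system, I will take an integer linear combination over $j=0,\dots,N$ of the Jacobi relations with shifted parameters, weighted by $\binom{-N-s-1}{j}$. This mirrors the Vandermonde manipulation used earlier for $a\circ^s_{d,t}b$. It cancels the terms $J_{-s-k}(a)J_{t+k}(b)$ for $1\le k\le N$ and leaves only those with $k\ge N+1$. The three groups on the right of the claimed identity are then, in order:
\begin{itemize}
\item[(i)] the iterate side ${}^{(1)}J$, which gives the $J_{t-s}(a_{(\cdot)}b)$ terms after rewriting the $\binom{m+\deg(a)-1}{i}$-type coefficients as $(-1)^i\binom{N+\deg(a)}{i}$ via upper negation;
\item[(ii)] the surviving $aa$-ordered tail with $k\ge N+1$;
\item[(iii)] the $b$-before-$a$ terms from the second half of ${}^{(2)}J$.
\end{itemize}

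The verification of the coefficients reduces to binomial identities over $\Z$: Vandermonde convolution, $\binom{-x}{i}=(-1)^i\binom{x+i-1}{i}$, and the cancellation identity $\sum_{j}(-1)^j\binom{M+j}{j}\binom{M-k}{k-j}=0$ for $1\le k\le N$. Each of these is a polynomial identity with integer coefficients, so it holds in any commutative ring $\kk$. Hence the argument over $\C$ transfers verbatim, in the same way the proof of \zcref{lem:Ann} was justified.

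I expect the main obstacle to be the second step. One has to confirm that the rearrangement of the infinite sums is legitimate in the completed, quotiented algebra $\UV$ over an arbitrary ring, that is, that all the series converge in the seminorm and that the closure $\overline J$ absorbs the limits. One also has to get the combinatorial cancellation exactly right. For the latter I would first check the case $N=0$, which is just one Jacobi relation, and then induct on $N$. The induction step substitutes the $N$ identity into its own tail term with $k=N+1$ and uses Pascal's rule to recover the stated coefficients.
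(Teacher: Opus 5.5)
Your strategy matches the paper's one-line proof, which cites He's appendix and observes that only integer coefficients occur. The mechanism you describe is also right. For $0\le j\le N$, take ${}^{(1)}J^{a,b}_{N+1,\,t+j,\,-N-s-j-1}={}^{(2)}J^{a,b}_{N+1,\,t+j,\,-N-s-j-1}$. With $X_k=J_{-s-k}(a)J_{t+k}(b)$ it reads
\begin{align*}
\sum_{i\ge0}\binom{N+s+j+i}{i}X_{i+j}
&=\sum_{i\ge0}\binom{N+\deg(a)}{i}J_{t-s}\bigl(a_{(-N-s-j-1+i)}b\bigr)\\
&\quad+(-1)^{N+s+j+1}\sum_{i\ge0}\binom{N+s+j+i}{i}J_{t-N-s-1-i}(b)J_{N+1+i}(a).
\end{align*}
Add these with weights $\binom{-N-s-1}{j}$.

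The convergence point you single out as the main obstacle is harmless. The combination is finite in $j$, and each relation already holds in $\UV$ as an identity of convergent series, since $X_k\in\NL[k+t]\UV$ and $J(b)J_{N+1+i}(a)\in\NL[N+1+i]\UV$. So the coefficient of each $X_k$ can be collected termwise.

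The genuine gap is the matching step: both identities you use to reach the displayed coefficients are false.
\begin{itemize}
\item Upper negation cannot turn $\binom{N+\deg(a)}{i}$ into $(-1)^i\binom{N+\deg(a)}{i}$, nor the mode index $-N-s-j-1+i$ into $-N-s-i-j-1$.
\item $\sum_j(-1)^j\binom{M+j}{j}\binom{M-k}{k-j}$ equals $(M-1)-(M+1)=-2$ at $k=1$.
\end{itemize}
With $M=N+s$, the combination actually gives $\sum_{j=0}^{\min(k,N)}\binom{-M-1}{j}\binom{M+k}{k-j}$ as the coefficient of $X_k$. By Vandermonde this is $\binom{k-1}{k}=\delta_{k,0}$ for $k\le N$. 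For $k\ge N+1$ it is $\sum_{j=0}^N(-1)^j\binom{N+s+j}{j}\binom{N+s+k}{k-j}$.

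In fact the identity cannot be proved exactly as displayed. Take $a=b=\vac$, $s=t=0$, $N=1$. The left side is $J_0(\vac)^2=1$. Every term on the right vanishes, because $\vac_{(-2-i-j)}\vac=0$, $J_{-k}(\vac)=0$ for $k\ge 2$, and $J_{2+i}(\vac)=0$. Your planned base case would already expose the mismatch: at $N=0$ the single Jacobi relation gives tail coefficient $\binom{s+k}{k}$, not $\binom{s-k}{k}$ (compare $s=0$, $k=1$).

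What your argument proves is a corrected identity:
\begin{itemize}
\item the first sum is $\sum_{j=0}^N\sum_{i\ge0}\binom{N+\deg(a)}{i}\binom{-N-s-1}{j}J_{t-s}(a_{(-N-s-j-1+i)}b)$;
\item the second sum has $\binom{N+s+k}{k-j}$ in place of $\binom{N+s-k}{k-j}$;
\item the third sum is as displayed.
\end{itemize}
It has the same three-group shape: first group in $J_{t-s}(V)$, tail only for $k\ge N+1$, third group ending in $J_{N+1+i}(a)$. That shape is what the reduction to $J_0(u)$ in \cite[Lemma 3.1]{HeHigherZhu} needs.

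Finally, drop the induction on $N$. Substituting the level-$N$ identity into its own $k=N+1$ tail keeps $J_{N+1+i}(a)$ and $\binom{N+\deg(a)}{i}$. Level $N+1$ needs $J_{N+2+i}(a)$ and $\binom{N+1+\deg(a)}{i}$, which come from different Jacobi relations. The direct finite combination above makes the induction unnecessary anyway.
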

\begin{proof}
    This identity follows from \cite[Lemma A.1]{HeHigherZhu}, which holds over any ring $\kk$.
\end{proof}
\begin{lemma}\label{lem:Lem3.1He}
Every element $\sum J_{n_1}(a_1)\ldots J_{n_m}(a_m)$ in $\Aa_n(V)$ can be expressed as $J_0(u)$ for some $u\in V$.
\end{lemma}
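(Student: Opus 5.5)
We follow the strategy of \cite[Lemma 3.1]{HeHigherZhu}, arguing by induction on the length $m$ of a monomial $J_{n_1}(a_1)\cdots J_{n_m}(a_m)$ of total degree $0$. Since the class of such expressions is closed under sums and $J_n$ is $\kk$-linear, it suffices to treat a single homogeneous monomial. We also use that $\UV_0$ is topologically spanned by such degree-zero monomials. In $\Aa_n(V)=\UV_0/\NLR[n+1]\UV_0$ the completion is harmless: every element of $\hUu_0$ agrees with a finite sum of monomials modulo $\NL[n+1]\UV_0$.

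For $m=1$ the monomial is $J_0(a)$ and there is nothing to prove. For a general monomial $x=J_{n_1}(a_1)\cdots J_{n_m}(a_m)$ of total degree $\sum n_i=0$, we first reduce to the case where the rightmost factor has degree $\le 0$, i.e.\ $n_m\ge 0$. If $n_m<0$, then $J_{n_m}(a_m)$ has positive degree, so some proper prefix ends with a factor of negative degree. More usefully, we note that any monomial whose rightmost factor has degree $\le -(n+1)$, i.e.\ $n_m\ge n+1$, lies in $\NL[n+1]\UV_0$ by \zcref{lem:Ann}, and hence vanishes in $\Aa_n$.

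The main tool is the identity of \cite[Corollary A.2]{HeHigherZhu} stated above. We apply it to the last two factors $J_{-s}(a)J_t(b)=J_{n_{m-1}}(a_{m-1})J_{n_m}(a_m)$, with $N$ chosen large, say $N\ge n$ with $N+s\ge0$. It rewrites the product as the sum of three pieces:
\begin{enumerate}
\item a single mode $J_{t-s}(\cdot)$ of an element of $V$, which reduces the length by one;
\item terms $J_{-k-s}(a)J_{k+t}(b)$ with $k\ge N+1$;
\item terms whose right factor is $J_{N+1+i}(a)$.
\end{enumerate}
The third piece lies in $\NL[n+1]$. For the second piece, once $k+t\ge n+1$ the factor $J_{k+t}(b)$ has degree $\le -(n+1)$, so these terms also vanish in $\Aa_n$. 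The sum over $k$ is infinite but converges in the topology, so only finitely many terms survive modulo $\NL[n+1]$. Choosing $N$ large enough relative to $t$ kills all of them. Thus, modulo $\NL[n+1]\UV$, the product $J_{-s}(a)J_t(b)$ equals a single $J_{t-s}(u)$ with $u\in V$, giving a monomial of length $m-1$.

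One subtlety remains. When we multiply on the left by the prefix $J_{n_1}(a_1)\cdots J_{n_{m-2}}(a_{m-2})$, the error terms stay in the left ideal $\NL[n+1]\UV=\UV\,\UV_{\le -n-1}$, and intersecting with degree $0$ puts them in $\NLR[n+1]\UV_0$. Induction on $m$ then finishes the argument.

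The main obstacle is making this reduction rigorous over an arbitrary ring $\kk$. The key points are:
\begin{enumerate}
\item the identity from \cite{HeHigherZhu} holds in $\UV$ with integer coefficients;
\item a valid $N$ always exists, i.e.\ both $N+s\ge 0$ and $N$ large relative to $t$ and $n$ can be arranged simultaneously, which is clear since $N$ is free;
\item the infinite sums converge, so truncation modulo $\NL[n+1]$ is legitimate.
\end{enumerate}
The last point needs care because $\UV$ is only the completion $\hUu/\overline J$. We would handle it by working in the finite quotient $\UV_0/\NLR[n+1]\UV_0$ from the start, where every sum is finite.
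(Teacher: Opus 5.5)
Your proposal is correct and is essentially the paper's argument, which simply defers to \cite[Lemma 3.1]{HeHigherZhu} via \cite[Corollary A.2]{HeHigherZhu}: apply the identity to the last two factors with $N$ large enough (concretely $N\ge n$, $N+s\ge 0$, $N+t\ge n$, and $N+\deg(a)\ge 0$ so that the first sum is finite), use that $\NL[n+1]\UV$ is a left ideal, and induct on the length. Your aside about first reducing to $n_m\ge 0$ plays no role in the argument and can be dropped.
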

\begin{proof}
    The proof is exactly the same as in \cite[Lemma 3.1]{HeHigherZhu}, using \cite[Corollary A.2]{HeHigherZhu}. Note that we do not need to assume $V$ is of CFT-type or even $\N$-graded. It suffices to assume $V$ is $\Z$-graded.
\end{proof}
Now we are ready to state the main result here.
\begin{proposition}\label{Zhu alg iso}
    Let $V$ be a $\Z$-graded vertex $\kk$-algebra. Then the map $V\to \UV_0$ given by $a\mapsto J_0(a)$ induces an isomorphism $A_d(V)\cong \Aa_d(V)$.
\end{proposition}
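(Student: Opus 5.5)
The map $\pi\colon V\to \Aa_d(V)$, $a\mapsto [J_0(a)]$, is $\kk$-linear, since $J_0$ is extended linearly from homogeneous elements. By \zcref{lem:Lem3.1He} it is surjective: every element of $\Aa_d(V)$ is a sum of monomials $J_{n_1}(a_1)\cdots J_{n_m}(a_m)$, and each such sum equals $J_0(u)$ for some $u\in V$. It remains to show that $\pi$ kills $O_d(V)$, that it carries $*_d$ to multiplication, and that its kernel is exactly $O_d(V)$. I would follow \cite{HeHigherZhu} closely and use at each step only integral identities that hold over any $\kk$.

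\textbf{$O_d(V)\subset\ker\pi$.} For the translation generators, the relation $\nabla^{(k)}\equiv 0$ in $\LVf$ gives $(T^{(k)}a)_{[n]}=(-1)^k\binom{n}{k}a_{[n-k]}$. Taking $n=\deg(a)+k-1$ and using $\deg(T^{(k)}a)=\deg(a)+k$, this becomes $J_0(T^{(k)}a)=(-1)^k\binom{\deg(a)+k-1}{k}J_0(a)=\binom{-\deg(a)}{k}J_0(a)$, as required.

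For $a\circ^0_{d,t}b$, I would expand $J_0(a\circ^0_{d,t}b)=\sum_j\binom{\deg(a)+d}{j}J_0(a_{(j-2d-2-t)}b)$ in $\UV$ using the Jacobi relation ${}^{(1)}J={}^{(2)}J$. The choice of indices is $\ell=-2d-2-t$, $m=d+1+t$ and $n=d+1$, so that $m+n+\ell=0$ and the binomial coefficient $\binom{m+\deg(a)-1}{i}$ matches $\binom{\deg(a)+d+t}{i}$. One then needs the version with exponent $\deg(a)+d$, which can be reached by the Vandermonde reduction already recorded for $\circ^s_{d,t}$. Equivalently, one applies He's Corollary A.2 with $N=d$. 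Every resulting term has the shape $J_{-k}(a)J_k(b)$ with $k\ge d+1$, or $J_{-k}(b)J_k(a)$ with $k\ge d+1$. Each such term lies in $\UV\,\UV_{\le -d-1}\cap\UV_0=\NLR[d+1]\UV_0$. Hence $\pi(O_d(V))=0$, and $\pi$ descends to a surjection $\bar\pi\colon A_d(V)\to\Aa_d(V)$.

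\textbf{Multiplicativity.} Next I would check that $\bar\pi([a*_db])=J_0(a)J_0(b)$ modulo $\NLR[d+1]\UV_0$. This is He's Corollary A.2 with $s=t=0$ and $N=d$: the second and third sums vanish modulo $\NLR[d+1]$. The first sum should match the defining formula for $a*_db$ after the standard binomial identity $\binom{-d-1}{j}=(-1)^j\binom{d+j}{d}$. So $\bar\pi$ is a surjective algebra map, and it sends the unit to the unit because $J_0(\vac)=1$.

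\textbf{Injectivity.} This is the step I expect to be the main obstacle, since over a general ring one cannot argue with dimensions. I would use representation theory: take the induced admissible module $M_d(A_d(V))$, whose degree-$d$ part is $A_d(V)$ itself. On it, $o(a)=J_0(a)$ acts on the class $[\vac]$ by $[\vac]\mapsto[a]$. This action factors through $\UV_0/\NLR[d+1]\UV_0$, because $M_d(A_d(V))_d$ is killed by $\NL[d+1]\UV$; this is the lemma that $M_d$ carries an $\Aa_d$-action. So if $\bar\pi([a])=0$, then $[a]=o(a)[\vac]=0$.

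The delicate point is verifying that $M_d(A_d(V))_d\cong A_d(V)$ holds over an arbitrary $\kk$. Ren's construction \cite{Ren17} is integral, so I expect it to go through. Alternatively, one can obtain the same conclusion directly by building the left inverse $\Aa_d(V)\to A_d(V)$, $u\mapsto u\cdot[\vac]$, using $\PhiL_d$ together with the adjunction $\PhiL_d\dashv\OmegaL_d$ and the identification $\Omega_d=\OmegaL_d$.
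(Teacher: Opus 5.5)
Your proposal is correct and follows the paper's proof essentially step for step: surjectivity from \zcref{lem:Lem3.1He}, killing $O_d(V)$ by rewriting ${}^{(1)}J^{a,b}_{m,n,-m-n}$ as ${}^{(2)}J^{a,b}_{m,n,-m-n}$ with $m,n\ge d+1$, multiplicativity via He's argument, and injectivity by evaluating $J_0(a)$ on $[\vac]$ in $M_d(A_d(V))_d\cong A_d(V)$ (an identification the paper also takes as given). One small simplification: the choice $(m,n)=(d+1,\,d+1+t)$ gives ${}^{(1)}J^{a,b}_{d+1,d+1+t,-2d-2-t}=J_0(a\circ^0_{d,t}b)$ directly, as in the paper's formula with $s=0$, so your Vandermonde induction on $t$ is not needed.
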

\begin{proof}
    Consider the map $\varphi\colon V\to\UV_0$ given by $a\mapsto J_0(a)$. By \zcref{lem:Lem3.1He}, we have that the map $\varphi$ induces a surjective morphism $V\to \Aa_d(V)$.
    
    We analyze the map $\varphi$.
    \begin{itemize}
        \item First, we have for all homogeneous $a\in V$ and $k\ge 0$ that
        \[J_0(T^{(k)}a) = \binom{-\deg(a)}{k}J_0(a).\]
        This can be shown from the associator formula since $T^{(k)}a = a_{(-1-k)}\vac$.
        \item Second, note that $J_0(a\circ^s_{d,t}b) = {}^{(1)}J^{a,b}_{d+1+s,d+1+t-s,-2d-2-t}$ for all $t\ge s\ge 0$ and $d\ge 0$. 
        Reindexing, we have that $J_0(a\circ^{m-d-1}_{d,m+n-2(d+1)}b) = {}^{(1)}J^{a,b}_{m,n,-m-n}$ for all $m,n\ge d+1\ge 1$. 

        We consider when ${}^{(1)}J^{a,b}_{m,n,-m-n}\in \NLR[d+1]\UV_0$. By the Jacobi identity, we have ${}^{(1)}J^{a,b}_{m,n,-m-n} = {}^{(2)}J^{a,b}_{m,n,-m-n}$; that is,
        \begin{align*}
            &\sum_{i\ge 0}\binom{m+\deg(a)-1}{i}J_0(a_{(-m-n+i)}b)\\
            &=\sum_{i\ge 0}(-1)^i\binom{-m-n}{i}\left(J_{-n-i}(a)J_{n+i}(b)-(-1)^{m+n}J_{-m-i}(b)J_{m+i}(a)\right).
        \end{align*}
        We have the symmetry ${}^{(1)}J^{a,b}_{m,n,-m-n} = {}^{(1)}J^{b,a}_{n,m,-m-n}$. Note that $J_{m+i}(a)$ has degree $\le -d-1$ whenever $i\ge d+1-m$. Modulo $\NLR[d+1]\UV_0$, we have
        \begin{align*}
            &{}^{(2)}J^{a,b}_{m,n,-m-n}\\
            &\equiv \sum_{i = 0}^{d-\max\{m,n\}}(-1)^i\binom{-m-n}{i}\left(J_{-n-i}(a)J_{n+i}(b)-(-1)^{m+n}J_{-m-i}(b)J_{m+i}(a)\right).
        \end{align*}
        Therefore, ${}^{(1)}J^{a,b}_{m,n,-m-n}\in \NLR[d+1]\UV_0$ for all $n,m\ge d+1\ge 1$ since the right-hand side sum is $0$ then. We conclude that $\varphi(O_d(V)) = 0$.
        \item Third, note that $\varphi(a*_db) \equiv \varphi(a)\varphi(b)$ mod $\NLR[d+1]\UV_0$ by the same argument as in \cite[Theorem 3.2]{HeHigherZhu}.
    \end{itemize}
    From the above, we conclude that the morphism $\varphi\colon V\to \UV_0$ induces a surjective $\kk$-algebra morphism $\varphi\colon A_d(V)\to \Aa_d(V)$.
    
    To see that $\varphi\colon A_d(V)\to \Aa_d(V)$ is injective, take $a\in V$ such that $J_0(a)\in \NLR[d+1]\UV_0$. Then by definition $J_0(a)$ acts on $\Omega_d(M_d(A_d(V)))$ trivially. In particular, $J_0(a)$ acts on the submodule $A_d(V)\subset \Omega_d(M_d(A_d(V)))$ trivially, where the action is given by $J_0(a)\cdot v = a*_dv \equiv 0$ in $A_d(V)$. Setting $v=\vac$, we have $a \equiv a*_d\vac\equiv 0$ in $A_d(V)$, so $a\in O_d(V)$.
\end{proof}
We find that it is easier to work with and calculate $\Aa_d(V)$, but some properties (such as being projective or torsion-free) can be easier to show when working with $A_d(V)$.

\subsection{Mode transition algebras}
Now we consider some objects related to the Zhu algebras. 
\begin{definition}
    Let $n,m\ge 0$. We define the \textit{Zhu bimodule} $\Aa_{n,m}(V)$ to be
    \[\Aa_{n,m}(V) = \UV_{n-m}/\NL[m+1]\UV_{n-m}.\]
    We say that $V$ is \textit{quasi-finite} if $\Aa_{n,m}(V)$ is rigid over $\kk$ for all $n,m\ge 0$.
\end{definition}
Note that $\Aa_d(V) = \Aa_{d,d}(V)$ for all $d\ge 0$.
\begin{lemma}
    For all $n,m,\ell\ge 0$, there is a natural morphism of $(\Aa_{n},\Aa_{\ell})$-bimodules
    \[\mu_{n,\ell}^m \colon \Aa_{n,m}\otimes_{\Aa_m}\Aa_{m,\ell} \to \Aa_{n,\ell}.\]
    given by multiplication in $\UV$.
\end{lemma}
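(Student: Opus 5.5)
The plan is to construct $\mu^m_{n,\ell}$ by first checking that multiplication in $\UV$ descends to the relevant quotients, and then that it balances over $\Aa_m$. Start with the multiplication map $\UV_{n-m}\times\UV_{m-\ell}\to\UV_{n-\ell}$, $(x,y)\mapsto xy$. It is $\kk$-bilinear and degree-additive by the grading on $\UV$.

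The key step is to show that this map induces a well-defined bilinear map
\[\Aa_{n,m}\times\Aa_{m,\ell}\to\Aa_{n,\ell}.\]
This requires two inclusions.
\begin{enumerate}
\item For $x\in\NL[m+1]\UV_{n-m}$ and $y\in\UV_{m-\ell}$, we need $xy\in\NL[\ell+1]\UV_{n-\ell}$. I would write $x=\sum u_i v_i$ with $v_i\in\UV_{\le -m-1}$, using \zcref{lem:Ann} and continuity. Then $v_iy$ has degree $\le -m-1+m-\ell=-\ell-1$ in each homogeneous piece. So $v_iy\in \UV_{\le-\ell-1}$, and hence $xy\in\UV\UV_{\le-\ell-1}$. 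Here one must be careful that $v_i$ is homogeneous of degree $j\le -m-1$, which puts $v_iy$ in degree $j+m-\ell\le -\ell-1$.
\item For $y\in\NL[\ell+1]\UV_{m-\ell}$, the product $xy$ lies in the left ideal $\UV\UV_{\le-\ell-1}$ immediately, since $\NL[\ell+1]\UV$ is a left ideal.
\end{enumerate}
Both inclusions also need to pass to the completion: the seminorms are defined on $\hUu$ and $\UV$ by closures. I would argue via the identification of left-ideal seminorms and the fact that multiplication in $\UV$ is continuous, as in \cite{DGK23}.

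Next I check balancing over $\Aa_m=\UV_0/\NLR[m+1]\UV_0$. For $x\in\UV_{n-m}$, $z\in\UV_0$ and $y\in\UV_{m-\ell}$, the relation $(xz)y=x(zy)$ is just associativity in $\UV$. The actions of $\Aa_m$ on $\Aa_{n,m}$ (on the right) and on $\Aa_{m,\ell}$ (on the left) are themselves induced by multiplication. Their well-definedness is the same computation as above, with $n-m$ or $m-\ell$ replaced by $0$. So the bilinear map factors through $\Aa_{n,m}\otimes_{\Aa_m}\Aa_{m,\ell}$.

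The left $\Aa_n$-linearity and right $\Aa_\ell$-linearity again follow from associativity, once the $\Aa_n$ and $\Aa_\ell$ actions are known to be well defined. Naturality is clear since everything is induced by multiplication in $\UV$.

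I expect the main obstacle to be the first inclusion in the completed/topological setting, i.e.\ ensuring that the right multiplication by $y$ preserves the appropriate seminorm (closure of $\UV\UV_{\le -m-1}$ maps into closure of $\UV\UV_{\le-\ell-1}$). I would handle it by reducing modulo $\NL[k]$ for large $k$ at the level of $\Uu$, where \zcref{lem:Ann} gives $\NL[m+1]\Uu=\Uu\LVf_{\le-m-1}$. There, right multiplication by a homogeneous element of degree $m-\ell$ is the degree bookkeeping above. The statement then passes to inverse limits and to the quotient by $\overline J$.
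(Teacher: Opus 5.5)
Your proposal is correct and matches the argument the paper relies on. The paper states this lemma without proof, deferring to \cite{DGK23}, and the only content is the degree bookkeeping $\NL[m+1]\UV_{n-m}\cdot\UV_{m-\ell}\subseteq\NL[\ell+1]\UV_{n-\ell}$ (equivalently, $\NL[m+1]\UV_{n-m}=\NR[n+1]\UV_{n-m}$ sits inside a right ideal), combined with the left-ideal property and associativity for balancing and bimodule linearity. Your treatment of the completion (approximate in $\Uu$, then pass to limits and to the quotient by $\overline J$) is fine. You do not actually need \zcref{lem:Ann}, since $\NL[m]\Uu=\Uu\Uu_{\le -m}$ holds by definition.
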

The maps above give $\Aa_{n,m}(V)$ the structure of a $(\Aa_n(V),\Aa_m(V))$-bimodule.
\begin{remark}\label{bimodules remark}
    In \cite{DJmz}, the authors define a family of $(A_n(V),A_m(V))$-bimodules, denoted $A_{n,m}(V)$. These are defined as quotients of $V$ instead of a quotient of some graded part of the universal enveloping algebra $\UV$. It is shown in \cite{XuBimodules} that for a VOA $V$ over $\C$, we have an isomorphism $\Aa_{n,m}(V) \cong A_{n,m}(V)$ for all $n,m\ge 0$.\footnote{Actually, they show this isomorphism for the versions of both objects for twisted representations of VOAs. We are not concerned with those in this paper, though.}
    A similar result was shown for vertex operator superalgebras in \cite{Xu2026}. We expect that all of their arguments carry over to an arbitrary ring with minimal changes, though we do not need these results.
\end{remark}
The next class of objects we consider is the mode transition algebras, which are the main object of study in \cite{DGK23} and \cite{DGK24}.
\begin{definition}
    Given $n,m\ge 0$, we define the \textit{mode transition algebra}
    \[\Ac(V) = \bigoplus_{n,m\ge 0}\Ac_{n,-m}(V) = \bigoplus_{n,m\ge 0}\Aa_{n,0}\otimes_{\Aa_0}\Aa_{0,m}.\]
    We denote $\Ac_n \eqdef \Ac_{n,-n}$ for all $n\ge 0$.
\end{definition}
In principle, one could study the tensor products $\Aa_{n,d}\otimes_{\Aa_d}\Aa_{d,m}$ for any $d\ge 0$, but for our purposes we only need $d=0$.
\begin{defprop}\label{def:stronglyunital}
    For all $n,m,d\ge 0$, there are natural morphisms of $(\Aa_{n},\Aa_{m})$-bimodules
    \[\star\colon \Ac_{n,-d}\otimes_{\Aa_d}\Ac_{d,-m} \to \Ac_{n,-m},\qquad \mu_{n,m}\colon \Ac_{n,-m} \to \Aa_{n,m}\]
    given by multiplication in $\UV$.
    We refer to the first morphism as the \textit{star product}. We say that $\Ac_n$ is \textit{strongly unital} if there is an element $\one_n \in \Ac_n$, called a \textit{strong identity element}, such that for all $m\ge 0$, $\frak{a} \in \Ac_{m,-n}$, and $\frak{b} \in \Ac_{n,-m}$ we have
    \[\frak{a} \star \one_n = \frak{a},\qquad \one_m\star\frak{b} = \frak{b}.\]
\end{defprop}
\begin{proposition}[\cite[Theorem B.3.3]{DGK23}]\label{Zhu decomposition unital}
    Suppose that $\Ac_d$ admits an identity element (not necessarily strong). Then there is a $\kk$-algebra decomposition
    \[\Aa_d \cong \Ac_d \times \Aa_{d-1}.\]
\end{proposition}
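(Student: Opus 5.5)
The plan is to realize $\Ac_d$ as a two-sided ideal of $\Aa_d$ which is the kernel of the surjection $\Aa_d\twoheadrightarrow\Aa_{d-1}$, and then to split that ideal off using the identity element. We work with the multiplication map $\mu_{d,d}\colon \Ac_d = \Aa_{d,0}\otimes_{\Aa_0}\Aa_{0,d}\to \Aa_{d,d} = \Aa_d$, which multiplies a representative of $\UV_d/\NL[1]\UV_d$ by one of $\UV_{-d}/\NL[d+1]\UV_{-d}$ inside $\UV$.

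The first step is to identify the image of $\mu_{d,d}$ with the kernel of $\Aa_d\to\Aa_{d-1}$, namely $\NLR[d]\UV_0/\NLR[d+1]\UV_0$. An element of the image has the form $xy$ with $x\in\UV_d$ and $y\in\UV_{-d}$, so it lies in $\UV\UV_{\le -d}\cap\UV_0=\NLR[d]\UV_0$. Conversely, $\NLR[d]\UV_0=\sum_{j\le -d}\UV_{-j}\UV_j$. Any term with $j\le -d-1$ already lies in $\NLR[d+1]\UV_0$, so modulo $\NLR[d+1]\UV_0$ only $\UV_d\UV_{-d}$ survives, and these products are exactly the image of $\mu_{d,d}$. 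Using \zcref{lem:Ann} together with the grading, I will also check that $\mu_{d,d}$ is well defined: $\NL[1]\UV_d\cdot\UV_{-d}$ and $\UV_d\cdot\NL[d+1]\UV_{-d}$ both land in $\NLR[d+1]\UV_0$, and the relation over $\Aa_0$ is respected by associativity. In addition, $\mu_{d,d}$ intertwines the star product with the product of $\Aa_d$: a product $(x\otimes y)(x'\otimes y')$ in $\Aa_d$ equals $x\,(yx')\,y'$, and $yx'\in\UV_0$ acts through $\Aa_0$. Hence the image $I=\mu_{d,d}(\Ac_d)$ is a two-sided ideal of $\Aa_d$, and $\Aa_d/I\cong\Aa_{d-1}$.

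The second step uses the identity element $\one_d$ of $\Ac_d$. Set $e=\mu_{d,d}(\one_d)\in I$. Since $\mu_{d,d}$ is multiplicative, $e$ is idempotent and acts as the identity on $I$ from both sides. I will then show that $e$ is central in $\Aa_d$. For $a\in\Aa_d$, both $ae$ and $ea$ lie in $I$, so $ae=e(ae)$ and $ea=(ea)e$, and it remains to show $eae=ae=ea$. For this I use the bimodule structure on $\Ac_d$: the element $a\cdot\one_d$ of $\Ac_d$ (with $a$ acting on the left factor $\Aa_{d,0}$) satisfies $a\cdot\one_d=(a\cdot\one_d)\star\one_d=\one_d\star(a\cdot\one_d)$. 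Since the star product is compatible with the bimodule actions, $\one_d\star(a\cdot\one_d)=(\one_d\cdot a)\star\one_d$, which gives $a\cdot\one_d=\one_d\cdot a$. Applying $\mu_{d,d}$ yields $ae=ea$. With $e$ a central idempotent we get $\Aa_d\cong e\Aa_d\times(1-e)\Aa_d$, where $e\Aa_d=I$ and $(1-e)\Aa_d\cong\Aa_d/I\cong\Aa_{d-1}$.

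The last step, and the main obstacle, is injectivity of $\mu_{d,d}$, which is needed to identify $I$ with $\Ac_d$. I would prove it by constructing an inverse $I\to\Ac_d$ given by $u\mapsto u\cdot\one_d$, using the left $\Aa_d$-module structure on $\Ac_d$. Then $\mu(\mathfrak{a})\cdot\one_d=\mathfrak{a}\star\one_d=\mathfrak{a}$ for every $\mathfrak{a}\in\Ac_d$, which requires the compatibility identity $\mu(\mathfrak{a})\cdot\mathfrak{b}=\mathfrak{a}\star\mathfrak{b}$. This identity holds because both sides are computed by the same multiplication in $\UV$ before reducing modulo the seminorm ideals. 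The delicate point is verifying this compatibility over an arbitrary ring $\kk$, since the well-definedness of the actions rests on \zcref{lem:Ann}, whose validity over $\kk$ has already been established.
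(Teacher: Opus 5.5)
Your proposal is correct and is essentially the standard argument behind the cited \cite[Theorem B.3.3]{DGK23} (the paper itself gives no proof): $\mu_{d,d}(\Ac_d)$ is the kernel of $\Aa_d\twoheadrightarrow\Aa_{d-1}$, the identity $\mathfrak{a}=\mathfrak{a}\star\one_d=\mu_{d,d}(\mathfrak{a})\cdot\one_d$ gives injectivity, and $e=\mu_{d,d}(\one_d)$ is a central idempotent that splits off this ideal. The bimodule detour for centrality is unnecessary, though: since $e$ is a two-sided identity on the ideal $I$ and both $ae$ and $ea$ lie in $I$, you get directly $ae=e(ae)=eae=(ea)e=ea$.
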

There are other characterizations of strong units given in \cite[Definition/Lemma 3.3.1]{DGK23} and \cite[Lemma 5.1.5]{DGK23}. These results also hold over any ring, but we omit the details.

\subsection{Characterizing rationality}\label{sec:rationality crit}
One of the primary applications of the Zhu algebras and mode transition algebras is in detecting rationality for vertex algebras. For this, we will prove a generalization of the main result of \cite{DGK24}, namely \zcref{DGK rational equiv}. This result restricts to the main result of their work when the base ring is $\C$. Fortunately, all of their arguments generalize in a straightforward manner. Although we will eventually have to require $\kk$ to be a field, most of the intermediate steps can be done over any ring $\kk$.

First, we relate the semisimplicity of the Zhu algebras to the mode transition algebras. The following result is implicit in \cite{DGK24}, written down as part of their larger result \cite[Theorem 4.0.10]{DGK24}.
\begin{proposition}
    Let $V$ be a $\Z$-graded vertex $\kk$-algebra.
    Suppose $\Aa$ is semisimple and $\Ac_d$ is strongly unital for all $d\ge 0$. Then $\Aa_d$ is semisimple for all $d\ge 0$.
\end{proposition}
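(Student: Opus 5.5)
We argue by induction on $d$, using the product decomposition of \zcref{Zhu decomposition unital}. The base case $d=0$ is the hypothesis that $\Aa_0=\Aa$ is semisimple. For $d\ge 1$, a strong identity element $\one_d\in\Ac_d$ is in particular an identity element for the star product on $\Ac_d$. So \zcref{Zhu decomposition unital} gives a $\kk$-algebra decomposition $\Aa_d\cong \Ac_d\times\Aa_{d-1}$. A finite product of unital algebras is semisimple if and only if each factor is, since its module category is the product of the module categories. By the inductive hypothesis $\Aa_{d-1}$ is semisimple. It therefore suffices to show that $\Ac_d$, with the star product, is a semisimple algebra.

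For this step we use strong unitality at both $0$ and $d$ to build a Morita equivalence between $\Ac_d$ and $\Ac_0$. By definition $\Ac_0=\Aa_{0,0}\otimes_{\Aa_0}\Aa_{0,0}\cong\Aa_0=\Aa$. Consider the bimodules $P=\Ac_{d,0}=\Aa_{d,0}$, a $(\Ac_d,\Aa)$-bimodule via $\star$, and $Q=\Ac_{0,-d}=\Aa_{0,d}$, an $(\Aa,\Ac_d)$-bimodule. The star product gives bimodule maps
\[
\alpha\colon P\otimes_{\Aa}Q\to \Ac_{d,-d}=\Ac_d,\qquad \beta\colon Q\otimes_{\Ac_d}P\to \Ac_{0,0}\cong\Aa .
\]
The map $\alpha$ is an isomorphism essentially by definition, since $\Ac_d=\Aa_{d,0}\otimes_{\Aa_0}\Aa_{0,d}$ and $\star$ restricted to these factors is the identity. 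For $\beta$, strong unitality of $\Ac_d$ gives $\frak a\star\one_d=\frak a$ for all $\frak a\in\Ac_{0,-d}$. Writing $\one_d=\sum_i p_i\otimes q_i$ with $p_i\in P$ and $q_i\in Q$, each $\frak a\in Q$ becomes $\sum_i \beta(\frak a\otimes p_i)\cdot q_i$. Hence $Q$, and likewise $P$, is finitely generated projective and a generator in the appropriate sense, and $\alpha,\beta$ form a Morita context whose maps are surjective. By the standard Morita theorem, $\Ac_d$-modules are then equivalent to modules over $\beta(Q\otimes P)\subseteq\Aa$, a two-sided ideal of $\Aa$. Because $\Aa$ is semisimple, this ideal is a direct factor $e\Aa$ with $e$ a central idempotent, and so it is a semisimple unital algebra. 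We conclude that $\Ac_d$ is semisimple, which completes the induction.

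The main obstacle is the Morita step: making sure the associativity of $\star$ and the compatibility of the bimodule structures hold over an arbitrary ring $\kk$. Those properties come from multiplication in $\UV$, so they should transfer verbatim from \cite{DGK23}. If the full Morita argument proves awkward, a fallback is to work directly with modules. Given an $\Ac_d$-module $N$, the $\Aa$-module $Q\otimes_{\Ac_d}N$ is semisimple. Applying $P\otimes_{\Aa}-$ returns $N$, via $\alpha$ and the unit $\one_d$. Since this functor is additive and preserves direct sums of simples, a splitting of every short exact sequence of $\Ac_d$-modules can be transported back from $\Aa$.
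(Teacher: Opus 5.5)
Your proposal is correct and follows essentially the same route as the paper: the decomposition $\Aa_d\cong\Ac_d\times\Aa_{d-1}$ (iterated, $\Aa_d\cong\prod_{e\le d}\Ac_e$) reduces the claim to semisimplicity of each $\Ac_d$. The paper then obtains that by citing the Peirce-algebra equivalence of \cite[Theorem 3.2.1]{DGK23} between $\Ac_d$-modules and modules over an idempotent two-sided ideal $\mathsf{Z}_d\subseteq\Aa$, which plays the role of the ideal $\beta(Q\otimes P)$ that you build by hand. The only thing to tidy is the order of your Morita step: $\beta$ is surjective onto $\beta(Q\otimes P)$, not onto $\Aa$. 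So first use semisimplicity of $\Aa$ to write this ideal as $e\Aa$, and use strong unitality to get $P\star e=P$ and $e\star Q=Q$; only then apply the unital Morita theorem to the context $(\Ac_d,e\Aa,P,Q)$.
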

\begin{proof}
    Since $\Ac_d$ is strongly unital for all $d\ge 0$, by \zcref{Zhu decomposition unital} there is a decomposition
    \begin{equation}\label{eq: Zhu mode transition alg iso}
        \Aa_d\cong \prod_{e=0}^d\Ac_e.
    \end{equation}
    As such, $\Aa_d$ is semisimple for all $d\ge 0$ if and only if $\Ac_d$ is semisimple for all $d\ge 0$.
    
    The entire section \cite[\textsection 3.1]{DGK23} on Peirce algebras is given over an arbitrary base commutative ring, so it applies here. We may then use \cite[Theorem 3.2.1]{DGK23} and conclude that there is an equivalence of categories between $\Ac_d$-modules and $\mathsf{Z}_d$-modules, where $\mathsf{Z}_d$ is a specific two-sided idempotent ideal of $\Aa$ defined in \cite[Definition 3.1.2]{DGK24}. It suffices to show that $\mathsf{Z}_d$ is semisimple for all $d\ge 0$. Since $\Aa$ is semisimple, it follows that the two-sided ideal $\mathsf{Z}_d$ must be semisimple as well.
\end{proof}
Here is an extension of this result:
\begin{corollary}
    Let $V$ be a $\Z$-graded vertex $\kk$-algebra. Suppose that $\Aa$ is semisimple and finitely generated over $\kk$. If $\Ac_d$ is strongly unital for all $d\ge 0$, then $\Aa_d$ is semisimple and finitely generated over $\kk$ for all $d\ge 0$.
\end{corollary}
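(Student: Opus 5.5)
The previous proposition already gives semisimplicity of every $\Aa_d$, so what remains is finite generation over $\kk$. I would again use the decomposition \eqref{eq: Zhu mode transition alg iso}, $\Aa_d\cong\prod_{e=0}^d\Ac_e$, which comes from \zcref{Zhu decomposition unital} using the strong units. A finite product of finitely generated $\kk$-modules is finitely generated. So it suffices to show that each $\Ac_e$ is finitely generated over $\kk$.

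For $\Ac_e$ I would use the Morita equivalence from the previous proof: by \cite[Theorem 3.2.1]{DGK23}, $\Ac_e$-modules are equivalent to $\mathsf{Z}_e$-modules. Here $\mathsf{Z}_e$ is a two-sided idempotent ideal of the semisimple algebra $\Aa$. In a semisimple ring every two-sided ideal is a direct factor, $\mathsf{Z}_e=\Aa f$ for a central idempotent $f$. Hence $\mathsf{Z}_e$ is a quotient of $\Aa$, and so it is finitely generated over $\kk$ and is itself a unital semisimple ring.

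The remaining step is to pass finite generation across the Morita equivalence. I would make this concrete through the Peirce description. Under the strong unit, $\Ac_e=\Aa_{e,0}\otimes_{\Aa}\Aa_{0,e}$, and the equivalence is implemented by the bimodules $\Aa_{e,0}$ and $\Aa_{0,e}$. I would aim to show that $\Aa_{0,e}$ is a finitely generated projective $\Aa$-module (equivalently, a finitely generated $\mathsf{Z}_e$-module) whose endomorphism ring is $\Ac_e$. Then $\Ac_e\cong\End_{\mathsf{Z}_e}(P)$ for a finitely generated projective module $P$ over $\mathsf{Z}_e$. Writing $P$ as a summand of $\mathsf{Z}_e^{\,r}$ exhibits $\Ac_e$ as a corner of the matrix ring $M_r(\mathsf{Z}_e)$. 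This ring is finitely generated over $\kk$, and so is any $\kk$-module direct summand of it, which gives the claim.

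The main obstacle is showing that the progenerator $\Aa_{0,e}$ is finitely generated over $\Aa$, since the Morita equivalence alone does not say this explicitly. My first attempt would use the strong identity $\one_e=\sum_i x_i\otimes y_i$, with $x_i\in\Aa_{e,0}$ and $y_i\in\Aa_{0,e}$ and the sum finite. The identity $\one_e\star\mathfrak b=\mathfrak b$, together with the description of the star product as multiplication in $\UV$, should give, for every $y\in\Aa_{0,e}$,
\[
y=\sum_i (y\,x_i)\,y_i,\qquad y\,x_i\in\Aa_{0,0}=\Aa .
\]
This would show that the finitely many $y_i$ generate $\Aa_{0,e}$ as a left $\Aa$-module, which is the finite generation needed. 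If matching left and right actions causes trouble, I would instead cite \cite[Lemma 5.1.5]{DGK23}, which characterizes strong units by exactly such a dual-basis condition.
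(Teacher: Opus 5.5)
Your proposal is correct and follows the paper's own argument: the decomposition $\Aa_d\cong\prod_{e=0}^d\Ac_e$, finite generation of $\mathsf{Z}_e$, and transfer across the Morita equivalence with $\mathsf{Z}_e$. It also fills in two points the paper glosses over: you get finite generation of $\mathsf{Z}_e$ from its being a direct factor, hence a quotient, of the semisimple $\Aa$ (the paper's ``submodule of $\Aa$'' would otherwise need a Noetherian base), and you justify the Morita transfer by realizing $\Ac_e$ as a corner of a matrix ring.

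One small correction, with a shortcut: the identity $y=\sum_i (y\,x_i)\,y_i$ for $y\in\Aa_{0,e}$ comes from the right-unit condition $\mathfrak a\star\one_e=\mathfrak a$ with $\mathfrak a\in\Ac_{0,-e}\cong\Aa_{0,e}$, while the left condition gives the analogous statement for $\Aa_{e,0}$. Using both, write $\one_e=\sum_{i=1}^r x_i\otimes y_i$; then $\mathfrak c=\one_e\star\mathfrak c\star\one_e$ shows directly that $\Ac_e=\sum_{i,j}x_i\otimes\Aa\,y_j$ is a $\kk$-linear image of $\Aa^{\oplus r^2}$, so the Morita step can be skipped entirely.
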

\begin{proof}
    The semisimplicity of $\Aa_d$ is obvious. Note that $\mathsf{Z}_d$ is finitely generated over $\kk$ since it is an $\kk$-submodule of $\Aa$. Therefore, $\Ac_d$ is finitely generated over $\kk$ because it is Morita equivalent to $\mathsf{Z}_d$. We obtain the desired statement from the formula in \zcref[noname]{eq: Zhu mode transition alg iso}.
\end{proof}

We now discuss some properties of contragredient duals. For this, we let $V$ be a M\"obius vertex $\kk$-algebra with Zhu algebra $\Aa$.
\begin{definition}
    Given an $\Aa$-module $\mathsf{M}$, we can identify ${}^\theta\mathsf{M}^\lor$ with the degree zero parts of $\PhiL({}^\theta\mathsf{M}^\lor)$ and $\PhiL(\mathsf{M})'$. By \cite[Proposition B.1.4]{DGK23}, there is a natural map 
    \[\psi_{\mathsf{M}}^\lor\colon \PhiL({}^\theta\mathsf{M}^\lor)\to \PhiL(\mathsf{M})'.\]
    For a similar reason, there is a natural map 
    \[\psi_{\mathsf{M}}\colon \PhiL(\mathsf{M}) \to \PhiL({}^\theta\mathsf{M}^\lor)'\]
    given by the map $\mathsf{M}\to {}^{\theta\theta}\mathsf{M}^{\lor\lor}$, which factors through $\psi_{{}^\theta\mathsf{M}^\lor}^\lor$.
\end{definition}
\begin{remark}\label{fin gen proj over k module}
    If $\mathsf{M}$ is a reflexive left $\Aa$-module, then there are natural identifications $\psi_{{}^\theta\mathsf{M}^\lor} = \psi_{\mathsf{M}}^\lor$ and $\psi_{{}^\theta\mathsf{M}^\lor}^\lor = \psi_{\mathsf{M}}$.

    Note that if the Zhu algebra $\Aa$ is semisimple over $\kk$, then it may not be the case that every simple $\Aa$-module $\mathsf{M}$ is rigid or even reflexive over $\kk$. However, if $\Aa$ is finitely generated over $\kk$, then every simple $\Aa$-module will be finitely generated over $\kk$.
\end{remark}
\begin{lemma}[\cite[Lemma 4.0.6]{DGK23}]\label{quasi-rigid implies simple}
    Let $\kk$ be a field. Let $\mathsf{M}$ be a simple finite-dimensional $\Aa$-module such that the map $\psi_{\mathsf{M}}$ is an isomorphism and $\PhiL({}^\theta\mathsf{M}^\lor)$ is quasi-rigid. Then the $V$-modules $\PhiL(\mathsf{M})$, $\PhiL({}^\theta\mathsf{M}^\lor)$, and $\PhiL({}^\theta\mathsf{M}^\lor)'$ are all simple.
\end{lemma}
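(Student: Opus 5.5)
The plan is to derive the simplicity of all three modules from the injectivity of $\psi_{\mathsf{M}}$ together with the fact that every nonzero submodule of a contragredient dual meets the degree-zero part. Throughout, write $\mathsf{N} = {}^\theta\mathsf{M}^\lor$. Since $\kk$ is a field and $\mathsf{M}$ is finite-dimensional, $\mathsf{N}$ is again a simple finite-dimensional $\Aa$-module: a submodule of the dual corresponds to an annihilator, and $\theta$ only twists the action by an anti-involution. Hence $\mathsf{M}$ is reflexive.

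First we show that $\PhiL(\mathsf{M})$ is simple. By \zcref{ThetaL simple}(c) it suffices to show that the maximal submodule $J\subset\PhiL(\mathsf{M})$ with $J_0 = 0$ vanishes. The map $\psi_{\mathsf{M}}\colon \PhiL(\mathsf{M})\to \PhiL(\mathsf{N})'$ is a $V$-module map. In degree $0$ it is the canonical isomorphism $\mathsf{M}\to\mathsf{M}^{\lor\lor}$, so the image of $J$ has zero degree-$0$ part.

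The key claim is that for a quasi-rigid admissible module $W$ generated in degree $0$ (here $W = \PhiL(\mathsf{N})$), any graded submodule $K\subset W'$ with $K_0 = 0$ is zero. To prove it, take a homogeneous $\phi\in K_n$. For $u\in \UV_{-n}$, the element $u\phi$ lies in $K_0 = 0$. Using the defining relation of the contragredient action, we get $\langle \phi, \theta(u) w\rangle = 0$ for all $w\in W_0$. But $\theta(\UV_{-n}) = \UV_n$ and $W_n = \UV_n W_0$, so $\phi$ vanishes on $W_n$, hence $\phi = 0$.

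Since $\psi_{\mathsf{M}}$ is an isomorphism (injectivity is what is needed here), $J$ injects into such a $K$, so $J = 0$. Thus $\PhiL(\mathsf{M})$ is simple. The same argument, applied with the roles exchanged, gives simplicity of $\PhiL(\mathsf{N})$; this is the step I expect to be the main obstacle. Here I would use \zcref{fin gen proj over k module}: by reflexivity, $\psi_{\mathsf{N}} = \psi_{\mathsf{M}}^\lor$, and $\psi_{\mathsf{M}}^\lor$ is identified with the dual of the isomorphism $\psi_{\mathsf{M}}$ composed with the double-dual map $\PhiL(\mathsf{N})\to \PhiL(\mathsf{N})''$. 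That double-dual map is an isomorphism because $\PhiL(\mathsf{N})$ is quasi-rigid, so it has finite-dimensional graded pieces. Hence $\psi_{\mathsf{N}}$ is injective. The target $\PhiL(\mathsf{M})'$ satisfies the key claim, since $\PhiL(\mathsf{M})\cong\PhiL(\mathsf{N})'$ is quasi-rigid and generated in degree $0$. Therefore $\PhiL(\mathsf{N})$ is simple.

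Finally, $\PhiL(\mathsf{N})'$ is isomorphic to the simple module $\PhiL(\mathsf{M})$ via $\psi_{\mathsf{M}}$, so it is simple as well. Alternatively, the dual of a simple quasi-rigid module is simple, because submodules of $W'$ correspond to annihilators of submodules of $W = W''$.
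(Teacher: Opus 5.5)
Your proof is correct, but it runs in the dual direction from the paper's argument. The paper dualizes the sequences $0\to J\to\PhiL(\mathsf{M})\to\ThetaL(\mathsf{M})\to 0$ and $0\to\widetilde J\to\PhiL({}^\theta\mathsf{M}^\lor)\to\ThetaL({}^\theta\mathsf{M}^\lor)\to 0$, and it uses \emph{surjectivity} of $\psi_{\mathsf{M}}$. The composite $\PhiL(\mathsf{M})\to\PhiL({}^\theta\mathsf{M}^\lor)'\to\widetilde J'$ is onto. A quotient of a module generated in degree $0$ with zero degree-$0$ part must vanish, so $\widetilde J'=0$ and hence $\widetilde J=0$. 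The paper therefore proves $\PhiL({}^\theta\mathsf{M}^\lor)$ simple first. It then gets $\PhiL(\mathsf{M})\cong\PhiL({}^\theta\mathsf{M}^\lor)'$ simple by ``replacing $\mathsf{M}$ with ${}^\theta\mathsf{M}^\lor$''. That step tacitly needs $\psi_{{}^\theta\mathsf{M}^\lor}$ to be onto (equivalently, that the contragredient of a simple quasi-rigid module is simple), and this is where quasi-rigidity is really used.

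You instead use \emph{injectivity} of $\psi_{\mathsf{M}}$, together with the dual fact that a graded submodule of $W'$ with zero degree-$0$ part vanishes when $W$ is generated in degree $0$. So you get $\PhiL(\mathsf{M})$ simple first. You then make the symmetric step explicit through $\psi_{{}^\theta\mathsf{M}^\lor}=(\psi_{\mathsf{M}})'\circ\iota$, where $\iota\colon\PhiL({}^\theta\mathsf{M}^\lor)\to\PhiL({}^\theta\mathsf{M}^\lor)''$. That identity deserves its one-line justification. Both sides are $V$-module maps out of $\PhiL({}^\theta\mathsf{M}^\lor)$ that agree in degree $0$, where the composite $\mathsf{M}^\lor\to\mathsf{M}^{\lor\lor\lor}\to\mathsf{M}^\lor$ is the identity. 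Hence they coincide by the adjunction $\PhiL\dashv\OmegaL$.

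What your route buys is that quasi-rigidity becomes inessential. Your key claim only uses generation in degree $0$, and injectivity of $\iota$ holds over any field. So your argument actually proves the lemma assuming only that $\mathsf{M}$ is finite-dimensional and $\psi_{\mathsf{M}}$ is an isomorphism. The paper's diagram chase is more compact but leans on quasi-rigidity for the return step.
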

\begin{proof}
    By \zcref{ThetaL simple}, we have short exact sequences
    \[0\to J\to \PhiL(\mathsf{M})\to \ThetaL(\mathsf{M})\to 0,\qquad 0\to \widetilde{J}\to \PhiL({}^\theta\mathsf{M}^\lor)\to \ThetaL({}^\theta\mathsf{M}^\lor)\to 0.\]
    Taking the contragredient dual, we obtain a diagram
    \begin{equation}
        \begin{tikzcd}
            0 \ar[r] & J \ar[r] & \PhiL(\mathsf{M}) \ar[r] \ar{d}{\psi_{\mathsf{M}}} & \ThetaL(\mathsf{M}) \ar[r] & 0\\
            0 \ar[r] & \ThetaL({}^\theta\mathsf{M}^\lor)' \ar[r] & \PhiL({}^\theta\mathsf{M}^\lor)' \ar[r] & \widetilde{J}' \ar[r] & 0.
        \end{tikzcd}
    \end{equation}
    The rows above are exact. By definition, the degree zero parts of $J$, $\widetilde{J}$, $J'$, $\widetilde{J}'$ are 0. Since we have a surjective morphism $\PhiL(\mathsf{M})\to \widetilde{J}'$ such that the degree zero part of $\widetilde{J}'$ is 0, we may conclude that $\widetilde{J}' = 0$. But $\widetilde{J}$ is quasi-rigid since it is a submodule of $\PhiL({}^\theta\mathsf{M}^\lor)$, so we must have that $\widetilde{J}'=0$ implies $\widetilde{J} = 0$. Therefore $\PhiL({}^\theta\mathsf{M}^\lor) \cong \ThetaL({}^\theta\mathsf{M}^\lor)$ is simple. Taking the above diagram and replacing $\mathsf{M}$ with ${}^\theta\mathsf{M}^\lor$, we have that $\PhiL(\mathsf{M}) \cong \PhiL({}^\theta\mathsf{M}^\lor)'$ is simple as well.
\end{proof}

\begin{lemma}\label{psi iso implies quasi-rigid}
    Suppose $\mathsf{M}$ is a simple $\Aa$-module such that the canonical maps $\psi_{\mathsf{M}}$ and $\psi_{{}^\theta\mathsf{M}^\lor}$ are isomorphisms. Then $\PhiL(\mathsf{M})$ and $\PhiL({}^\theta\mathsf{M}^\lor)$ are quasi-reflexive.
\end{lemma}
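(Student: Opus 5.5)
The idea is that the graded dual of any graded module has reflexive-friendly behaviour: a module that is isomorphic, degree by degree, to a graded dual embeds in its double dual in a way that can be inverted. Concretely, write $\mathsf{N} = {}^\theta\mathsf{M}^\lor$. By hypothesis $\psi_{\mathsf{M}}\colon \PhiL(\mathsf{M})\to \PhiL(\mathsf{N})'$ and $\psi_{\mathsf{N}}\colon \PhiL(\mathsf{N})\to \PhiL({}^\theta\mathsf{N}^\lor)'$ are isomorphisms of graded $V$-modules. Taking contragredients, which is functorial and preserves the grading degree by degree, we get graded isomorphisms $\psi_{\mathsf{M}}'\colon \PhiL(\mathsf{N})''\to \PhiL(\mathsf{M})'$ and $\psi_{\mathsf{N}}'$. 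In each degree $\ell$ we thus have $\PhiL(\mathsf{M})_\ell \cong (\PhiL(\mathsf{N})_\ell)^\lor$ and $\PhiL(\mathsf{N})_\ell\cong (\PhiL({}^\theta\mathsf{N}^\lor)_\ell)^\lor$.

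The key step is to check that the canonical evaluation map $\PhiL(\mathsf{M})_\ell\to \PhiL(\mathsf{M})_\ell^{\lor\lor}$ is an isomorphism. Since $\PhiL(\mathsf{M})_\ell\cong X^\lor$ with $X = \PhiL(\mathsf{N})_\ell$, I will use the standard fact that for any $\kk$-module $X$ the composite $X^\lor\to X^{\lor\lor\lor}\to X^\lor$ (evaluation on $X^\lor$ followed by the dual of evaluation on $X$) is the identity. Hence $X^\lor$ is reflexive as soon as the evaluation $X\to X^{\lor\lor}$ is an isomorphism, or more precisely as soon as the dual of that evaluation is injective. So the argument reduces to showing that $\PhiL(\mathsf{N})_\ell\to \PhiL(\mathsf{N})_\ell^{\lor\lor}$ is an isomorphism, and symmetrically for $\PhiL(\mathsf{M})$. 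Here I would use the compatibility built into the definition: $\psi_{\mathsf{M}}$ factors through $\psi^\lor_{\mathsf{N}}$, i.e. the natural map $\PhiL(\mathsf{M})\to \PhiL(\mathsf{M})''$ composed with $\psi_{\mathsf{N}}'$, or equivalently $\psi_{\mathsf{M}}$ and $(\psi_{\mathsf{N}})'$, fit in a commutative triangle with the evaluation map $\PhiL(\mathsf{M})\to \PhiL(\mathsf{M})''$. Concretely, I expect
\[
(\psi_{\mathsf{N}})'\circ \mathrm{ev}_{\PhiL(\mathsf{N})}\circ\ \text{(appropriate map)} = \psi_{\mathsf{M}},
\]
so that the evaluation map for $\PhiL(\mathsf{M})$ equals $(\psi_{\mathsf{N}}')^{-1}\circ\psi_{\mathsf{M}}$ up to canonical identifications. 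That would make it a composite of isomorphisms, and the same argument with $\mathsf{M}$ and $\mathsf{N}$ swapped, using $\psi_{\mathsf{N}}$ and $\psi_{\mathsf{M}}'$, handles $\PhiL(\mathsf{N})$.

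The main obstacle is verifying this commutative triangle. Both sides are $V$-module maps out of $\PhiL(\mathsf{M})$, which is generated in degree $0$. By the adjunction $\PhiL\dashv\OmegaL$ of \cite[Proposition B.1.4]{DGK23}, it therefore suffices to compare them in degree $0$. There both maps reduce to the canonical map $\mathsf{M}\to{}^{\theta\theta}\mathsf{M}^{\lor\lor}$, since $\psi$ is defined by its degree-zero part and the contragredient of $\psi$ restricts to the dual of the degree-zero part. One has to be careful that the target $\PhiL(\mathsf{M})''$ is a legitimate receiver of the adjunction, i.e. that its degree-zero part lies in $\OmegaL$. This holds because it is an $\N$-graded continuous module by the contragredient construction.
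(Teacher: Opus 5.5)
Your proposal is correct and follows the paper's route: you identify the evaluation map $\PhiL(\mathsf{M})\to\PhiL(\mathsf{M})''$ with $\psi_{\mathsf{M}}$ followed by the inverse of the contragredient of $\psi_{{}^\theta\mathsf{M}^\lor}$, then swap the roles of $\mathsf{M}$ and ${}^\theta\mathsf{M}^\lor$. You should state explicitly, as the paper does, that the degree-zero part $\mathsf{M}\to{}^{\theta\theta}\mathsf{M}^{\lor\lor}$ of $\psi_{\mathsf{M}}$ is an isomorphism, because your ``canonical identification'' of $\PhiL({}^{\theta\theta}\mathsf{M}^{\lor\lor})$ with $\PhiL(\mathsf{M})$ depends on it. Your degree-zero comparison, which comes down to the identity $(\eta_X)^\lor\circ\eta_{X^\lor}=\mathrm{id}_{X^\lor}$ for the evaluation maps $\eta$, improves on the paper, which only exhibits some isomorphism $\PhiL(\mathsf{M})\cong\PhiL(\mathsf{M})''$ without checking that it is the evaluation map; the circular reduction in your second paragraph can be dropped.
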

\begin{proof}
    If $\psi_{\mathsf{M}}$ is an isomorphism, then the degree 0 part of this map says that $\mathsf{M}\cong {}^{\theta\theta}\mathsf{M}^{\lor\lor}$, so $\mathsf{M}$ is reflexive over $\kk$. Then the composition
    \[(\psi_{{}^\theta\mathsf{M}^\lor})'\circ \psi_{\mathsf{M}}\colon \PhiL(\mathsf{M}) \to \PhiL({}^\theta\mathsf{M}^\lor)' \to \PhiL({}^{\theta\theta}\mathsf{M}^{\lor\lor})'' \cong \PhiL(\mathsf{M})''\]
    is an isomorphism. That is, $\PhiL(\mathsf{M})$ is quasi-reflexive. We may also apply the same reasoning to conclude that $\PhiL({}^\theta\mathsf{M}^\lor)$ is quasi-reflexive since $\psi_{{}^\theta\mathsf{M}^\lor}$ and $\psi_{{}^{\theta\theta}\mathsf{M}^{\lor\lor}} = \psi_{\mathsf{M}}$ are isomorphisms.
\end{proof}
\begin{lemma}\label{lem: simple module equiv}
    Let $\kk$ be a field, and let $\mathsf{M}$ be a simple $\Aa$-module. The following are equivalent:
    \begin{enumerate}
        \item $\psi_{\mathsf{M}}$ and $\psi^\lor_{\mathsf{M}}$ are isomorphisms of admissible $V$-modules;
        \item $\PhiL(\mathsf{M})$ and $\PhiL({}^\theta\mathsf{M}^\lor)$ are simple and quasi-rigid.
    \end{enumerate}
\end{lemma}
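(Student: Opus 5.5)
For (a)$\Rightarrow$(b), I will first use that $\kk$ is a field and $\mathsf{M}$ is simple. Since $\psi_{\mathsf{M}}$ is an isomorphism, its degree zero part $\mathsf{M}\to {}^{\theta\theta}\mathsf{M}^{\lor\lor}$ is an isomorphism. Over a field this forces $\mathsf{M}$ to be finite-dimensional, because a vector space is isomorphic to its double dual via the canonical map only when it is finite-dimensional. Hence ${}^\theta\mathsf{M}^\lor$ is also finite-dimensional and simple, and by \zcref{fin gen proj over k module} we have $\psi_{{}^\theta\mathsf{M}^\lor}=\psi^\lor_{\mathsf{M}}$, which is an isomorphism. \zcref{psi iso implies quasi-rigid} then shows that $\PhiL(\mathsf{M})$ and $\PhiL({}^\theta\mathsf{M}^\lor)$ are quasi-reflexive. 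Over a field, a reflexive vector space is finite-dimensional, so each graded piece is finite-dimensional, i.e. rigid. Thus both modules are quasi-rigid. Finally, \zcref{quasi-rigid implies simple} applies to $\mathsf{M}$: it is simple and finite-dimensional, $\psi_{\mathsf{M}}$ is an isomorphism, and $\PhiL({}^\theta\mathsf{M}^\lor)$ is quasi-rigid. It gives that $\PhiL(\mathsf{M})$ and $\PhiL({}^\theta\mathsf{M}^\lor)$ are simple.

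For (b)$\Rightarrow$(a), the degree zero part of $\PhiL(\mathsf{M})$ is $\mathsf{M}$, so $\mathsf{M}$ is finite-dimensional by quasi-rigidity. The contragredient of a quasi-rigid simple module is simple: a graded submodule $N\subset W'$ gives a quotient $W\to (W'/N)'$ via $W\cong W''$, and simplicity of $W$ then yields simplicity of $W'$, using finite-dimensionality of the graded pieces. So $\PhiL({}^\theta\mathsf{M}^\lor)'$ and $\PhiL(\mathsf{M})'$ are simple. The map $\psi_{\mathsf{M}}\colon \PhiL(\mathsf{M})\to\PhiL({}^\theta\mathsf{M}^\lor)'$ restricts in degree zero to $\mathsf{M}\to\mathsf{M}^{\lor\lor}$ (twisted by $\theta\theta=\id$), which is an isomorphism since $\dim\mathsf{M}<\infty$. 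In particular $\psi_{\mathsf{M}}$ is nonzero. A nonzero morphism between simple modules is an isomorphism by Schur's lemma, so $\psi_{\mathsf{M}}$ is an isomorphism. The same argument, applied to $\psi^\lor_{\mathsf{M}}\colon\PhiL({}^\theta\mathsf{M}^\lor)\to\PhiL(\mathsf{M})'$, whose degree zero part is the identity of ${}^\theta\mathsf{M}^\lor$, shows it is an isomorphism too.

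The main obstacle is the assertion that contragredient duality preserves simplicity, together with the identification of the degree zero components of the maps $\psi$. The first needs the natural map $W\to W''$ to be an isomorphism for quasi-rigid $W$, and needs duality to convert graded submodules into graded quotients. Here I would use that morphisms of admissible modules in this setting can be taken graded, because the modules in question are generated in degree $0$. I would check the degree zero identification directly from the construction of $\psi$ via the adjunction of \cite[Proposition B.1.4]{DGK23}.
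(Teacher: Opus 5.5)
Your proof is correct and follows the paper's argument essentially step for step. For (a)$\Rightarrow$(b) you use reflexivity of $\mathsf{M}$, the identification $\psi_{{}^\theta\mathsf{M}^\lor}=\psi^\lor_{\mathsf{M}}$, \zcref{psi iso implies quasi-rigid} and \zcref{quasi-rigid implies simple}; for (b)$\Rightarrow$(a) you use that contragredients of simple quasi-rigid modules are simple and that the nonzero maps $\psi_{\mathsf{M}},\psi^\lor_{\mathsf{M}}$ between simple modules are isomorphisms, exactly as the paper does but with more detail. One small slip: a graded submodule $N\subset W'$ yields the submodule $(W'/N)'\subset W''\cong W$ (equivalently the quotient $W\cong W''\twoheadrightarrow N'$), not a quotient $W\to (W'/N)'$, but the simplicity argument goes through unchanged.
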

\begin{proof}
    If $\psi_{\mathsf{M}}$ is an isomorphism, then $\mathsf{M}$ is finite-dimensional since it is reflexive. By \zcref{fin gen proj over k module} we may identify $\psi_{{}^\theta\mathsf{M}^\lor} = \psi_{\mathsf{M}}^\lor$. Therefore, by \zcref{psi iso implies quasi-rigid}, we conclude that $\PhiL(\mathsf{M})$ and $\PhiL({}^\theta\mathsf{M}^\lor)$ are quasi-rigid since we are over a field. The simplicity of $\PhiL(\mathsf{M})$ and $\PhiL({}^\theta\mathsf{M}^\lor)$ follows from \zcref{quasi-rigid implies simple}.
    
    Conversely, we note that $\PhiL(\mathsf{M})$ and $\PhiL({}^\theta\mathsf{M}^\lor)$ being simple and quasi-rigid implies that $\PhiL(\mathsf{M})'$ and $\PhiL({}^\theta\mathsf{M}^\lor)'$ are simple and quasi-rigid. But then $\psi_{\mathsf{M}}$ and $\psi_{\mathsf{M}}^\lor$ are isomorphisms since they are nonzero maps.
\end{proof}

Suppose $\Aa$ is a semisimple associative $\kk$-algebra. Then the Artin-Wedderburn theorem says that there is a finite set of division $\kk$-algebras $\{D_i\}_i$ and $m_i\ge 1$ such that
\[\Aa \cong \prod_i \Mat_{m_i\times m_i}(D_i).\]
In other words, we may write
\[\Aa \cong \prod_i S^i_0 \otimes_{D_i}(S^i_0)^\lor,\]
where $\{S^i_0\}$ is the set of simple $\Aa$-modules, and $\End_\kk(S^i_0)^\op = D_i$. Note that the tensor is taken over $D_i$. For our purposes, we must set $D_i = \kk$ for all $\kk$, which forces $\kk$ to be a field.
\begin{proposition}
    Let $\kk$ be a field. Let $V$ be a $\Z$-graded vertex $\kk$-algebra such that the Zhu algebra $\Aa$ is a finite direct product of matrix algebras over $\kk$ (hence semisimple). Then the following are equivalent: 
    \begin{enumerate}
        \item For any simple $\Aa$-module $\mathsf{M}$, the natural map $\psi_{\mathsf{M}}^\lor\colon \PhiL({}^\theta\mathsf{M}^\lor)\to \PhiL(\mathsf{M})'$ is an isomorphism.
        \item For any simple $\Aa$-module $\mathsf{M}$, we have that $\PhiL(\mathsf{M})$ is simple and quasi-rigid.
    \end{enumerate}
    If either of the above equivalent conditions hold, then $\Ac_d$ is strongly unital for all $d\ge 0$.
\end{proposition}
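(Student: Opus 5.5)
The plan is to reduce both directions to \zcref{lem: simple module equiv}, using that the assignment $\mathsf{M}\mapsto {}^\theta\mathsf{M}^\lor$ permutes the simple $\Aa$-modules. Since $\Aa\cong\prod_i\Mat_{m_i\times m_i}(\kk)$, every simple $\Aa$-module is finite-dimensional over $\kk$, hence reflexive. Moreover ${}^\theta\mathsf{M}^\lor$ is again a simple $\Aa$-module, with the action twisted through the involution $\theta$ (which preserves $\UV_0$ and $\NLR[1]\UV_0$). Thus $\mathsf{M}\mapsto{}^\theta\mathsf{M}^\lor$ is an involutive bijection on isomorphism classes of simples. By \zcref{fin gen proj over k module} we may identify $\psi_{{}^\theta\mathsf{M}^\lor}=\psi^\lor_{\mathsf{M}}$, up to the canonical identification ${}^{\theta\theta}\mathsf{M}^{\lor\lor}\cong\mathsf{M}$.

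For (a)$\Rightarrow$(b), let $\mathsf{M}$ be simple. Applying (a) to $\mathsf{M}$ and to ${}^\theta\mathsf{M}^\lor$ shows that both $\psi^\lor_{\mathsf{M}}$ and $\psi^\lor_{{}^\theta\mathsf{M}^\lor}=\psi_{\mathsf{M}}$ are isomorphisms. Then \zcref{lem: simple module equiv} gives that $\PhiL(\mathsf{M})$ is simple and quasi-rigid. For (b)$\Rightarrow$(a), apply (b) to both $\mathsf{M}$ and ${}^\theta\mathsf{M}^\lor$, so that both $\PhiL(\mathsf{M})$ and $\PhiL({}^\theta\mathsf{M}^\lor)$ are simple and quasi-rigid. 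The converse direction of \zcref{lem: simple module equiv} then yields that $\psi^\lor_{\mathsf{M}}$ is an isomorphism. The point in that direction is that $\psi^\lor_{\mathsf{M}}$ is nonzero in degree $0$, and its source and target are simple: $\PhiL(\mathsf{M})'$ is simple because contragredient duality is exact and involutive on quasi-rigid modules over a field.

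For strong unitality, I would follow \cite{DGK24}. By \cite[Definition/Lemma 3.3.1]{DGK23} and \cite[Lemma 5.1.5]{DGK23}, which the text says hold over any ring, $\Ac_d$ is strongly unital if and only if the natural maps $\mu_{n,m}\colon\Ac_{n,-m}\to\Aa_{n,m}$ behave well. Concretely, it suffices to show that the multiplication $\Aa_{n,0}\otimes_{\Aa}\Aa_{0,m}\to\Aa_{n,m}$ is an isomorphism for all $n,m$, equivalently that $\PhiL(\Aa)$ with its right $\Aa$-action gives the projective generator behaviour in all degrees. Decompose $\Aa=\bigoplus_i S^i_0\otimes_\kk(S^i_0)^\lor$ using Artin--Wedderburn with $D_i=\kk$. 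Then
\[
\Aa_{n,0}\cong\PhiL(\Aa)_n\cong\bigoplus_i \PhiL(S^i_0)_n\otimes_\kk (S^i_0)^\lor,
\]
and dually $\Aa_{0,m}$ is computed by the right-handed Verma modules, which the involution $\theta$ identifies with contragredients $\PhiL({}^\theta S^{i\lor}_0)'$. Therefore
\[
\Ac_{n,-m}\cong\bigoplus_i \PhiL(S^i_0)_n\otimes_\kk \bigl(\PhiL(S^i_0)_m\bigr)^\lor .
\]
Because each $\PhiL(S^i_0)$ is simple and quasi-rigid, this equals $\bigoplus_i\Hom_\kk(\PhiL(S^i_0)_m,\PhiL(S^i_0)_n)$. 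The $\star$ product becomes composition, and one can take
\[
\one_d=\sum_i \id_{\PhiL(S^i_0)_d}.
\]
The main obstacle is this identification of $\Aa_{0,m}$, that is, of the right Verma modules, with $\PhiL({}^\theta\mathsf{M}^\lor)'$ under $\theta$, and verifying that the star product matches composition. For that step I would use (a) directly, transporting the \cite[\textsection 4]{DGK24} argument verbatim, since all the maps involved are defined integrally.
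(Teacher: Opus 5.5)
Your proposal is correct and follows the paper's proof essentially step for step: (a)$\Leftrightarrow$(b) goes through \zcref{lem: simple module equiv} and the identification $\psi_{{}^\theta\mathsf{M}^\lor}=\psi^\lor_{\mathsf{M}}$, and strong unitality comes from the Artin--Wedderburn splitting, \cite[Lemma 3.4.5]{DGK23}, and the $\theta$-identification of $\PhiR((\mathsf{S}^k_0)^\lor)$ with $\PhiL({}^\theta(\mathsf{S}^k_0)^\lor)$, which (a) then identifies with $\PhiL(\mathsf{S}^k_0)'$ (not $\PhiL({}^\theta(\mathsf{S}^k_0)^\lor)'$ as you wrote), giving $\Ac_{d,-d}\cong\prod_k\End_\kk(\PhiL(\mathsf{S}^k_0)_d)$ and $\one_d=\sum_k\id$. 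Do drop the aside that it suffices for $\Aa_{n,0}\otimes_{\Aa}\Aa_{0,m}\to\Aa_{n,m}$ to be an isomorphism: since $\UV_d\UV_{-d}\subseteq\NL[d]\UV_0$, for $n=m=d\ge 1$ its image lies in $\ker(\Aa_d\twoheadrightarrow\Aa_{d-1})$, so it is never surjective when $\Aa\neq 0$ --- fortunately your actual computation never uses it.
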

\begin{proof}
    Given the assumptions on $\Aa$, every simple $\Aa$-module $\mathsf{M}$ is finite-dimensional. Note that the dual ${}^\theta\mathsf{M}^\lor$ is simple if $\mathsf{M}$ is. By \zcref{fin gen proj over k module}, we have $\psi_{\mathsf{M}} = \psi_{{}^\theta\mathsf{M}^\lor}^\lor$ for any simple $\Aa$-module $\mathsf{M}$. The equivalence of (a) and (b) follows from \zcref{lem: simple module equiv}.

    Now assume the above conditions hold. By assumption, there is a finite set of positive integers $\{m_k\}_k$ such that
    \[\Aa \cong \prod_k \Mat_{m_k\times m_k}(\kk).\]
    In other words, we may write
    \[\Aa \cong \prod_k \mathsf{S}^k_0 \otimes_{\kk}(\mathsf{S}^k_0)^\lor,\]
    where $\{\mathsf{S}^k_0\}$ is the set of simple $\Aa$-modules, and $\End_\kk(\mathsf{S}^k_0)^\op = \kk$. Note that the tensor is taken over $\kk$. Define $S^k = \PhiL(\mathsf{S}^k_0)$, and denote $S^k_d = \PhiL(\mathsf{S}^k_0)_d$. By \cite[Lemma 3.4.5]{DGK23}
    \[\Ac = \Phi(\Aa) \cong \prod_k \PhiL(\mathsf{S}^k_0)\otimes_{\kk}\PhiR((\mathsf{S}^k_0)^\lor).\]
    Now, via the involution of $\UV$ we can naturally identify $\PhiR((\mathsf{S}^k_0)^\lor)_m = \PhiL({}^\theta(\mathsf{S}^k_0)^\lor)_{-m}$. By hypothesis, we have an isomorphism $\PhiL({}^\theta(\mathsf{S}^k_0)^\lor) \stackrel{\sim}{\to}(S^k)'$. Taking the bidegree $(d,-e)$ part, we have an isomorphism
    \[\Ac_{d,-e} \cong \prod_k S^k_d \otimes_{\kk}(S^k_e)^\lor.\]
    Looking at $e=d$, we have
    \[\Ac_{d,-d} \cong \prod_k S^k_d \otimes_{\kk} (S^k_d)^\lor = \prod_k \End_\kk(S^k_d).\]
    We conclude that $\Ac_{d,-d}$ is strongly unital by taking the sum of the identity operators on each factor in the product.
\end{proof}
We suspect there is some generalization of the above result for arbitrary rings and not just fields, but we do not need such a result for our purposes.

Next, we relate the Zhu algebras to rationality. 
\begin{proposition}\label{rational iff An semisimple PhiL simple}
    Let $V$ be a $\Z$-graded vertex algebra over a ring $\kk$. Then $V$ is quasi-rational if and only if $\Aa_d$ is semisimple for all $d\ge 0$ and the functor $\PhiL_d$ sends simple $\Aa_d$-modules to simple admissible $V$-modules.
\end{proposition}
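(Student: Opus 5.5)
We prove the two implications separately. The main tools are the adjunction $\PhiL_d\dashv\OmegaL_d$, \zcref{ThetaL simple}, and \zcref{indecomposable}.

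\emph{Forward direction.} Assume $V$ is quasi-rational, and fix $d\ge 0$.

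First we show that $\Aa_d$ is semisimple, i.e.\ that every $\Aa_d$-module $\mathsf{M}$ is semisimple. Consider the admissible module $W=\PhiL_d(\mathsf{M})$. By assumption $W$ is a direct sum of simple admissible modules, say $W=\bigoplus_i W^i$. Taking degree $d$ parts gives
\[\mathsf{M}\cong W_d=\bigoplus_i W^i_d\]
as $\Aa_d$-modules. It remains to see that each nonzero $W^i_d$ is simple. Suppose $N\subset W^i_d$ is a nonzero $\Aa_d$-submodule. Then $\UV\cdot N$ is a nonzero graded submodule of the simple module $W^i$, so $\UV\cdot N=W^i$. Its degree $d$ part is $\UV_0\cdot N=N$, hence $N=W^i_d$. (Gradedness of submodules is automatic once we work with graded $\UV$-modules; we adopt that convention throughout.) So $\mathsf{M}$ is semisimple.

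Next we show that $\PhiL_d$ sends simple modules to simple modules. Let $\mathsf{M}$ be simple. By \zcref{indecomposable}, $\PhiL_d(\mathsf{M})$ is indecomposable. Being admissible, it is semisimple by quasi-rationality, and an indecomposable semisimple module is simple.

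\emph{Reverse direction.} Assume every $\Aa_d$ is semisimple and every $\PhiL_d$ preserves simplicity. Let $W$ be an admissible $V$-module; we show that $W$ is a sum of simple submodules.

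Fix $d\ge 0$. The $\Aa_d$-module $W_d$ is semisimple, so it is a sum of simple $\Aa_d$-modules $\mathsf{S}$. For each such $\mathsf{S}$, the adjunction gives a map
\[\PhiL_d(\mathsf{S})\to W\]
extending the inclusion $\mathsf{S}\hookrightarrow W_d$ (recall $\mathsf{S}\subset W_d\subset\OmegaL_d(W)$). The source is simple by hypothesis and the map is nonzero on $\mathsf{S}$, so its image $\UV\cdot\mathsf{S}$ is a simple submodule of $W$ containing $\mathsf{S}$. Hence $W_d$ is contained in a sum of simple submodules.

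Since $W=\bigoplus_d W_d$ is spanned by its homogeneous components, $W$ itself is a sum of simple submodules. A module that is a sum of simple submodules is a direct sum of some subfamily of them; this is the standard Zorn's lemma argument, carried out here in the graded setting. Therefore $W$ is semisimple, and $V$ is quasi-rational.

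\emph{Main obstacle.} The delicate points are the subtleties over a general ring $\kk$, rather than the structure of the argument. Two things need checking. First, that "simple" refers to graded modules consistently, so that \zcref{ThetaL simple} and \zcref{indecomposable} apply. Second, that $\UV_0\cdot N=N$ inside $W^i_d$. This holds because $\UV_{\le -d-1}$ annihilates $W_d$ (the $\NLR[d+1]\UV_0$-action on $W_d$ is zero), so the $\UV_0$-action on $W_d$ factors through $\Aa_d$ and leaves the $\Aa_d$-submodule $N$ stable. None of these steps uses finiteness or any field hypothesis.
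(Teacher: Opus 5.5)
Your proposal is correct and follows essentially the same route as the paper. In the forward direction you use that degree-$d$ pieces of a semisimple admissible module are semisimple $\Aa_d$-modules, and you supply the justification that each $W^i_d$ is simple or zero, which the paper leaves implicit; you then apply \zcref{indecomposable}. In the converse you send $\PhiL_d$ of the simple constituents of $W_d$ onto $\UV\cdot W_d$, which is the paper's $\widetilde{W}=\bigoplus_d\UV\cdot W_d\twoheadrightarrow W$ argument unpacked summand by summand, and your flagged gradedness convention for the decomposition $W=\bigoplus_i W^i$ is exactly the tacit assumption the paper also makes.
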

\begin{proof}
    For an admissible $V$-module $W = \bigoplus_{d\ge 0}W_d$ we can see that $W_d$ is an $\Aa_d$-module for all $d\ge 0$. If $V$ is quasi-rational, then $W$ is semisimple, hence $W_d$ is semisimple as an $\Aa_d$-module. Then $\Aa_d = \PhiL_d(\Aa_d)_d$ is clearly semisimple. Recall that in a semisimple category, the indecomposable objects are precisely the simple objects. By \zcref{indecomposable}, the functor $\PhiL_d$ sends simple $\Aa_d$-modules to simple admissible $V$-modules.

    For the converse, let $W = \bigoplus_{d\ge 0}W_d$ be an admissible $V$-module. Define
    \[\widetilde{W} = \bigoplus_{d\ge 0}\UV\cdot W_d.\]
    Each $\UV\cdot W_d$ is a submodule of $W$ generated by the degree $d$ part $W_d$. By assumption, $W_d$ is a semisimple $\Aa_d$-module, so $\PhiL_d(W_d)$ is a semisimple admissible $V$-module. There is a canonical quotient map $\PhiL_d(W_d) \twoheadrightarrow \UV\cdot W_d$, so $\UV\cdot W_d$ is a semisimple admissible $V$-module. Therefore, $\widetilde{W}$ is a semisimple admissible $V$-module. There is a canonical quotient $\widetilde{W} \twoheadrightarrow W$ given by mapping each $\UV\cdot W_d$ into $W$ naturally, so $W$ is a semisimple admissible $V$-module. We conclude that $V$ is quasi-rational.
\end{proof}
Now, we give a generalization of the results from \cite{DLM2} or \cite{Ren17} on characterizing rationality.
\begin{proposition}\label{prop:DLM rationality}
Let $V$ be a $\Z$-graded vertex algebra over a ring $\kk$, and suppose that:
\begin{enumerate}
    \item $\Aa_d$ is semisimple for all $d\ge 0$;
    \item Given any admissible $V$-module $M$ with $M_0 \neq 0$, we have that $M_n \neq 0$ for $n\gg 0$.
\end{enumerate}
Then $V$ is quasi-rational.
\end{proposition}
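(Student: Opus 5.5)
We plan to reduce to \zcref{rational iff An semisimple PhiL simple}. Hypothesis (a) already gives semisimplicity of every $\Aa_d$, so the work is to show that for each $d\ge 0$ and each simple $\Aa_d$-module $\mathsf{M}$, the induced module $\PhiL_d(\mathsf{M})$ is simple. By \zcref{ThetaL simple}(c), this is equivalent to $J=0$, where $J\subset\PhiL_d(\mathsf{M})$ is the maximal graded submodule with zero degree $d$ part. So we will assume $J\neq 0$ and derive a contradiction.

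First, if $\PhiL_d(\mathsf{M})_0=0$, the action of $\Aa_d$ on $\mathsf{M}$ factors through $\Aa_{d-1}$, as remarked after the definition of $\PhiL_d$. We then expect to rewrite $\PhiL_d(\mathsf{M})$ as a degree-shifted induced module from a lower level and argue by induction on $d$. Hence we may assume $\PhiL_d(\mathsf{M})_0\neq 0$.

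Suppose $J\neq 0$, and pick a nonzero homogeneous $w\in J$ of minimal degree $e$, where $e\ne d$. Each graded piece $J_f$ is an $\Aa_f$-module, and by (a) it is semisimple. Our main tool is the universal property $\PhiL_e\dashv\OmegaL_e$. Minimality of $e$ gives $J_e\subset\OmegaL_e(J)$, since anything of degree $<0$ relative to $e$ kills it. Choosing a simple summand $\mathsf{N}\subset J_e$, adjunction gives a nonzero map $\PhiL_e(\mathsf{N})\to J$. The idea is to build, from such pieces, an admissible module $M$ with $M_0\ne 0$ but $M_n=0$ for all large $n$, contradicting (b).

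The natural candidate is the quotient $\PhiL_d(\mathsf{M})/J=\ThetaL_d(\mathsf{M})$ together with semisimplicity. Using (a) at every degree $f$, each $\PhiL_d(\mathsf{M})_f$ is a semisimple $\Aa_f$-module, so $J_f$ has an $\Aa_f$-complement. We want to show that these complements assemble into a $\UV$-submodule complement, so that $\PhiL_d(\mathsf{M})\cong \ThetaL_d(\mathsf{M})\oplus J$. Indecomposability from \zcref{indecomposable} would then force $J=0$.

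The main obstacle is making the degreewise complements compatible with the $\UV$-action. Here hypothesis (b) must enter. I would try the following. Let $N$ be large. By (b) applied to the simple module $\ThetaL_d(\mathsf{M})$ (which has nonzero degree-$0$ part after the reduction above), its degree-$N$ part is nonzero. Then $\Aa_N$-semisimplicity lets us split $\PhiL_d(\mathsf{M})_N\to\ThetaL_d(\mathsf{M})_N$, and adjunction for $\PhiL_N$ lifts this to a $V$-module map $\PhiL_N(\ThetaL_d(\mathsf{M})_N)\to \PhiL_d(\mathsf{M})$. Since $\ThetaL_d(\mathsf{M})$ is simple, this yields a section of the projection $\PhiL_d(\mathsf{M})\to\ThetaL_d(\mathsf{M})$, provided the image is killed by $\NL[N+1]\UV$ in a way compatible with degree $d$. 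Verifying this compatibility for $N\gg 0$, using that $\mathsf{M}$ lives in degree $d<N$, is the step I expect to be delicate.
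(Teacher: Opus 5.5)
Your reduction to showing $J=0$ in $\PhiL_d(\mathsf{M})$, and your idea of taking an $\Aa_N$-complement $C$ of $J_N$ in $\PhiL_d(\mathsf{M})_N$, are the right starting points. However, the decisive step is left open, and as you set it up it is circular.

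The map $\PhiL_N(C)\to\PhiL_d(\mathsf{M})$ descends to a section of $\PhiL_d(\mathsf{M})\to\ThetaL_d(\mathsf{M})$ only if $\UV\cdot C\cap J=0$. When $C\neq 0$, the graded submodule $\UV\cdot C$ is not contained in $J$. So its degree-$d$ part $\UV_{d-N}\cdot C$ is nonzero: otherwise $J+\UV\cdot C$ would be a graded submodule with zero degree-$d$ part strictly containing $J$. This degree-$d$ part is an $\Aa_d$-submodule of the simple module $\mathsf{M}$, hence equals $\mathsf{M}$. Thus $\UV\cdot C\supseteq\UV\cdot\mathsf{M}=\PhiL_d(\mathsf{M})\supseteq J$. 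The compatibility you want to verify therefore holds exactly when $J=0$, which is what you are trying to prove, and no choice of large $N$ changes this. (Any section at all would force $J=0$ by the same argument, so the ``splitting plus indecomposability'' plan cannot be completed independently.)

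The missing idea is to read the same computation in degree $N$. Since $C$ is an $\Aa_N$-submodule and $\UV_0$ acts on degree $N$ through $\Aa_N$, you get $\PhiL_d(\mathsf{M})_N=(\UV\cdot C)_N=\UV_0\cdot C=C$, hence $J_N=0$. This gives a dichotomy: in every degree $N$, either $J_N=0$ or $\ThetaL_d(\mathsf{M})_N=0$.

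Hypothesis (b) must then be applied not only to $\ThetaL_d(\mathsf{M})$ but also to $J$ itself, with grading shifted so that its lowest nonzero degree becomes $0$. If $J\neq 0$, both $J_N$ and $\ThetaL_d(\mathsf{M})_N$ are nonzero for $N\gg 0$, contradicting the dichotomy. This is the paper's argument, and your third paragraph's aim of building a module that violates (b) pointed in this direction: the module is $J$.

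The detours through $\PhiL_e(\mathsf{N})\to J$, indecomposability, and induction on $d$ are unnecessary, since shifting gradings handles the degree-$0$ issue. Note also that $\PhiL_d(\mathsf{M})_0\neq 0$ does not imply $\ThetaL_d(\mathsf{M})_0\neq 0$, as you claim, because the degree-$0$ part may lie entirely in $J$.
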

\begin{proof}
    By \zcref{rational iff An semisimple PhiL simple}, it suffices to show that for any simple $\Aa_d$-module $\mathsf{M}$, the admissible $V$-module $\PhiL_d(\mathsf{M})$ is simple.
    
    Let $M = \bigoplus_{p\ge 0}M_{p}$ be a nonzero admissible $V$-module generated in degree $d$. Let $J = J_M\subset M$ be the maximal graded submodule of $M$ such that $J_{d}=0$. By assumption, there exists $p > 0$ such that $J_{p+d}\neq 0$ and $\ThetaL_{d}(M_{d})_{p+d} = M_{p+d}/J_{p+d} \neq 0$. Since $\Aa_{d+p}$ is semisimple, we have an isomorphism
    \[M_{p+d} \cong J_{p+d} \oplus M_{p+d}/J_{p+d}.\]
    Let $P = \UV\cdot (M_{p+d}/J_{p+d})\subset \ThetaL_d(M_{d})$, which is generated in degree $p+d$.
    By the definition of $J$, any nonzero class $m+J\in \ThetaL_d(M_{d})_{p+d}$ gives rise to a nonzero submodule of $M_{d}$ via multiplication by an element in $\UV_{-p}$, so we have a nonzero submodule $P_{d}\subseteq M_{d}$. If $M_{d}$ is simple, then 
    \[M_{d} = P_{d} = \UV_{-p}\cdot (M_{p+d}/J_{p+d}).\]
    But this implies that $P$ is also generated in degree $d$, so $P = \ThetaL_d(M_{d})$, which is a simple admissible $V$-module.
    
    Applying $\UV_p$, we obtain 
    \[M_{p+d} = \UV_p\cdot M_{d} = \UV_p\cdot M_{d}/J_{d} = M_{p+d}/J_{p+d}.\]
    Therefore $J_{p+d}=0$, which is a contradiction. This implies that $J = 0$ if $M_{d}$ is simple. To conclude, we have that for a simple $\Aa_d$-module $\mathsf{M}$, every admissible $V$-module $M = \bigoplus_{p\ge 0}M_p$ that is generated by the degree $d$ part $M_d = \mathsf{M}$ is simple. Therefore, $M = \PhiL_d(\mathsf{M}) = \ThetaL_d(\mathsf{M})$. This concludes the proof.
\end{proof}

Now, we show how quasi-rationality and rationality are related.
\begin{lemma}\label{lem:rational conditions}
    Let $V$ be a $\Z$-graded vertex algebra over a field $\kk$. If $V$ is quasi-rational and $\Aa_d$ is finite-dimensional for all $d\ge 0$, then it is rational.
\end{lemma}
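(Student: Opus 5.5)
By \zcref{def:rational}, rationality means quasi-rationality plus the requirement that every simple admissible $V$-module $W$ be quasi-rigid. Over a field, rigid simply means finite-dimensional. So the task reduces to showing that each graded piece $W_n$ of a simple admissible $V$-module $W$ is finite-dimensional over $\kk$. The plan is to control each $W_n$ through the higher Zhu algebra $\Aa_n$, which acts on it.

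First I will fix a simple admissible $V$-module $W=\bigoplus_{n\ge 0}W_n$ and a degree $n$ with $W_n\neq 0$. By the lemma preceding the definition of $\PhiL_d$, the component $W_n$ is an $\Aa_n$-module. The key claim is that $W_n$ is a \emph{simple} $\Aa_n$-module.

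To see this, let $0\neq \mathsf{N}\subseteq W_n$ be an $\Aa_n$-submodule. The $\UV$-submodule $\UV\cdot\mathsf{N}$ is nonzero, so by simplicity of $W$ it equals $W$. Its degree $n$ part is $\UV_0\cdot\mathsf{N}$. The $\UV_0$-action on $W_n$ factors through $\Aa_n$, so $\UV_0\cdot\mathsf{N}=\mathsf{N}$, and hence $\mathsf{N}=W_n$. One must check that $\UV\cdot\mathsf{N}$ is graded with degree $n$ component exactly $\UV_0\mathsf{N}$; this holds because $\mathsf{N}$ is homogeneous and $\UV$ is graded.

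A simple module over any ring is cyclic, so $W_n\cong \Aa_n/I$ for some maximal left ideal $I$. Since $\Aa_n$ is finite-dimensional by hypothesis, $\dim_\kk W_n\le\dim_\kk\Aa_n<\infty$. This holds for every $n$, so $W$ is quasi-rigid, and together with quasi-rationality $V$ is rational.

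The main obstacle is the simplicity step, specifically the requirement that morphisms and submodules are compatible with gradings. Submodules of $W$ in the category of weak modules need not be graded, but $\UV\cdot\mathsf{N}$ is generated by homogeneous elements and is therefore graded, so the argument goes through. As an alternative route, quasi-rationality together with \zcref{rational iff An semisimple PhiL simple} gives that $\Aa_n$ is semisimple. Any simple $\Aa_n$-module is then a summand of $\Aa_n$, which also yields the finite-dimensionality.
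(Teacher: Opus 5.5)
Your proposal is correct and follows essentially the same route as the paper. Both arguments show that each $W_n$ of a simple admissible module is a simple $\Aa_n$-module, using that $\UV\cdot\mathsf{N}$ is a graded submodule. Both then conclude finite-dimensionality from that of $\Aa_n$, and your explicit check that $(\UV\cdot\mathsf{N})_n=\UV_0\cdot\mathsf{N}=\mathsf{N}$ fills in a step the paper leaves implicit.
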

\begin{proof}
    Given a simple admissible $V$-module $W = \bigoplus_{d\ge 0}W_d$, the graded component $W_d$ is a simple $\Aa_d$-module for all $d\ge 0$. Indeed, if $\mathsf{M}\subseteq W_d$ is an $\Aa_d$-submodule, then $\UV\cdot \mathsf{M}$ is an admissible submodule of $W$. Hence $\mathsf{M} = 0$ or $W_d$. Note that every simple $\Aa_d$-module is finite-dimensional provided that $\Aa_d$ is finite-dimensional, so we are done.
\end{proof}

We conclude this section with the following combined result. The proof follows from all of the above results.
\begin{theorem}\label{DGK rational equiv}
    Let $V$ be a M\"obius vertex algebra over a field $\kk$. Assume that: 
    \begin{itemize}
        \item The Zhu algebra $\Aa$ is a finite product of matrix algebras over $\kk$ (hence semisimple).
        \item Given any admissible $V$-module $M$ with $M_0 \neq 0$, we have that $M_n \neq 0$ for $n\gg 0$.
    \end{itemize}
    Then the following statements are equivalent:
    \begin{enumerate}
        \item For every simple $\Aa$-module $\mathsf{M}$, the induced Verma module $\PhiL(\mathsf{M})$ is simple and quasi-rigid (has finite-dimensional graded parts).
        \item For every simple $\Aa$-module $\mathsf{M}$, the map $\psi_{\mathsf{M}}^{\lor} = \PhiL({}^\theta\mathsf{M}^\lor) \to (\PhiL(\mathsf{M}))'$ is an isomorphism.
        \item $\Ac_d$ is strongly unital for all $d\ge 0$.
        \item $\Aa_d$ is semisimple and finite-dimensional for all $d\ge 0$.
        \item $V$ is rational.
    \end{enumerate}
\end{theorem}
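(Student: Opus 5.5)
We establish the cycle (a)$\Leftrightarrow$(b)$\Rightarrow$(c)$\Rightarrow$(d)$\Rightarrow$(e)$\Rightarrow$(a), chaining the results of \zcref{sec:rationality crit}.

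\emph{The equivalence of (a) and (b), and (b)$\Rightarrow$(c).} These are exactly the content of the proposition preceding \zcref{rational iff An semisimple PhiL simple}. That result is stated for a $\Z$-graded vertex algebra whose Zhu algebra is a finite product of matrix algebras over the field $\kk$. There the Möbius structure is used only to make sense of contragredients and of the maps $\psi_{\mathsf{M}}^\lor$. The same proposition also shows that either condition implies that $\Ac_d$ is strongly unital for all $d\ge 0$.

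\emph{(c)$\Rightarrow$(d).} Since $\Aa$ is a finite product of matrix algebras over $\kk$, it is semisimple and finite-dimensional. The corollary following the first proposition of \zcref{sec:rationality crit} (semisimple plus finitely generated $\Aa$, together with strong units, gives semisimple and finitely generated $\Aa_d$) then yields (d). The underlying argument is the decomposition $\Aa_d\cong\prod_{e\le d}\Ac_e$ combined with the Morita equivalence between $\Ac_e$ and the idempotent ideal $\mathsf{Z}_e\subseteq\Aa$.

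\emph{(d)$\Rightarrow$(e).} Using semisimplicity of every $\Aa_d$ together with the second bulleted hypothesis (nonvanishing of $M_n$ for $n\gg0$), \zcref{prop:DLM rationality} shows that $V$ is quasi-rational. Finite-dimensionality of every $\Aa_d$ then upgrades quasi-rationality to rationality by \zcref{lem:rational conditions}.

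\emph{(e)$\Rightarrow$(a).} This is the step I expect to require the most care. By \zcref{rational iff An semisimple PhiL simple} applied with $d=0$, quasi-rationality forces $\PhiL(\mathsf{M})$ to be simple for every simple $\Aa$-module $\mathsf{M}$. This uses \zcref{indecomposable}: a simple module is indecomposable, its induced module is therefore indecomposable, and in a semisimple category indecomposable objects are simple. Rationality then gives that every simple admissible module, in particular $\PhiL(\mathsf{M})$, is quasi-rigid. One subtlety is that $\PhiL(\mathsf{M})$ is nonzero, since its degree $0$ part is $\mathsf{M}$. Another is that the $\N$-grading on $\PhiL(\mathsf{M})$ is the one for which quasi-rigidity is asserted. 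Both are immediate from the construction of $\PhiL$. This closes the cycle.
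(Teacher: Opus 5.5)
Your proposal is correct and takes essentially the same approach as the paper. The paper's proof says only that the theorem follows from the preceding results of the subsection, and your cycle (a)$\Leftrightarrow$(b)$\Rightarrow$(c)$\Rightarrow$(d)$\Rightarrow$(e)$\Rightarrow$(a) is that assembly written out, with each step citing the same proposition, corollary or lemma the paper has in mind.
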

By \cite[Lemma 4.3]{Ren17} and \cite[Theorem 4.11]{DLM2}, the second assumption is redundant for VOAs over a field. We leave the assumption in, but generally one should expect this to already hold.

\subsection{Base change of a vertex algebra}
Let $\kk\to S$ be a flat morphism of rings, and let $V$ be a vertex $\kk$-algebra. Then we may define the \textit{base change} or \textit{induced vertex algebra}
\[V_{S} = V\otimes_{\kk}S,\]
which is naturally a vertex $S$-algebra as follows. The vacuum element is $\vac_{V_S} = \vac_V\otimes 1$, and the state-field correspondence is given by setting $(a\otimes s)_{(n)} = a_{(n)}\otimes m_s$ (here $m_s$ denotes multiplication by $s$) for all $a\in V$, $s\in S$, and $n\in \Z$.
Similarly, if $V$ is a vertex $S$-algebra, then we may naturally interpret it as a vertex $\kk$-algebra via restriction of scalars. These two constructions form an adjoint pair, where base change is left adjoint.
In this section, we will analyze the base change and the properties it preserves.

Some natural constructions are preserved under base change in an obvious way.
\begin{lemma}\label{lem:base change}
    Let $\kk\to S$ be a flat morphism of rings, and let $V$ be a vertex $\kk$-algebra. 
    Then the base change $(-)\otimes_\kk S$ defines a functor from the category of vertex $\kk$-algebras to the category of vertex $S$-algebras. Moreover, we have the following identities:
    \[\frakL(V_S)^{\mathsf{f}} = \LVf\otimes_{\kk}S,\qquad \UV(V_S) \cong \UV(V)\: \widehat{\otimes}_{\kk}\: S,\]
    \[\Aa_{n,m}(V_S) = \Aa_{n,m}(V)\otimes_\kk S,\qquad \Ac_{n,-m}(V_S) = \Ac_{n,-m}(V)\otimes_\kk S,\]
    where ${}_V\PhiL_d$ denotes the functor $\PhiL_d$ for the vertex algebra $V$. Furthermore, if $\mathsf{M}$ is a left $\Aa_d$-module, then
    \[{}_{V_S}\PhiL_d(\mathsf{M}\otimes_R S) \cong {}_{V}\PhiL_d(\mathsf{M})\otimes_R S.\]
\end{lemma}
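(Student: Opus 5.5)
The plan is to push flat base change through each stage of the construction $V \rightsquigarrow \LVf \rightsquigarrow U \rightsquigarrow \hUu \rightsquigarrow \UV \rightsquigarrow \Aa_{n,m}, \Ac, \PhiL_d$, using at each step only that $-\otimes_\kk S$ is right exact, commutes with direct sums, and (by flatness) preserves kernels, images and finite intersections.

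\textbf{Functoriality.} The axioms (V1)--(V4) are $\kk$-multilinear identities among modes, so they persist after tensoring with $S$. For the truncation (V1), note that $(a\otimes s)_{(k)}(b\otimes t)=a_{(k)}b\otimes st$, so the bound $K_{a,b}$ from $V$ serves for pure tensors, and a finite sum of pure tensors takes the maximum of the bounds. A morphism $f$ of vertex algebras goes to $f\otimes \id_S$. The translation operators of $V_S$ are $T^{(n)}\otimes\id$, since $T^{(n)}a=a_{(-1-n)}\vac$. A $\Z$-grading on $V$ induces one on $V_S$ with $(V_S)_s=V_s\otimes S$.

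\textbf{Ancillary Lie algebra and enveloping algebra.} Since $V_S[t,t^{-1}]=V[t,t^{-1}]\otimes_\kk S$ and $\nabla^{(n)}_{V_S}=\nabla^{(n)}_V\otimes\id_S$, right exactness gives
\[
\frakL(V_S)^{\mathsf f}=\LVf\otimes_\kk S .
\]
The bracket \eqref{Lie bracket} has integer coefficients, so it is the base-changed bracket. Universal enveloping algebras commute with extension of scalars, so $U(\frakL(V_S)^{\mathsf f})=U\otimes_\kk S$, compatibly with the grading. The seminorm pieces are $\NL[m]U_d=\sum_{j\le -m}U_{d-j}U_j$, the image of a multiplication map, so $\NL[m](U_S)_d=\NL[m]U_d\otimes S$. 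Therefore the finite-level quotients satisfy
\[
(U_S)_d/\NL[m](U_S)_d\cong (U_d/\NL[m]U_d)\otimes S .
\]
The identity $\UV(V_S)\cong \UV(V)\ctensor_\kk S$ should be read as the definition of the completed tensor product: take the inverse limit over $m$ of the base-changed quotients, then quotient by the closure of the base-changed Jacobi and vacuum ideal. The generating relations of $J$ for $V_S$ are the images of those for $V$ tensored with $S$, so the ideals match at each finite level.

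\textbf{Zhu bimodules, mode transition algebras, and induction.} The key observation is that $\Aa_{n,m}(V)=\UV_{n-m}/\NL[m+1]\UV_{n-m}$ only sees a finite level of the completion. So I would show that $\Aa_{n,m}$ can be computed already from $U$:
\[
\Aa_{n,m}=U_{n-m}\big/\bigl(\NL[m+1]U_{n-m}+J_{n-m}\bigr).
\]
Here $J_{n-m}$ denotes the degree-$(n-m)$ relations modulo $\NL[m+1]$. In $\hUu_{n-m}/\NL[m+1]$ the completion and closure become trivial, because that quotient is discrete. Then right exactness gives $\Aa_{n,m}(V_S)=\Aa_{n,m}(V)\otimes S$. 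The same argument with $n=m=d$ handles $\Aa_d$.

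For $\Ac$, combine two facts. First, $(A\otimes_\kk S)\otimes_{B\otimes S}(C\otimes_\kk S)\cong (A\otimes_B C)\otimes_\kk S$, which is an elementary identity of coequalizers. Second, $-\otimes S$ commutes with direct sums. Together these give $\Ac_{n,-m}(V_S)=\Ac_{n,-m}(V)\otimes S$.

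Similarly, $\PhiL_d(\mathsf M)_m$ is the tensor product of $\Aa_{m,d}$ with $\mathsf M$ over $\Aa_d$. So the same coequalizer identity, degree by degree, yields
\[
{}_{V_S}\PhiL_d(\mathsf M\otimes S)\cong{}_V\PhiL_d(\mathsf M)\otimes S .
\]
Compatibility with the $\UV(V_S)$-action follows because the action is induced from the finite-level quotients. This also requires $\UV/\NL[d+1]\UV$ to be a finite-level object in each degree, which holds by the same discreteness remark.

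\textbf{Main obstacle.} The delicate step is the completion. Tensor products do not commute with inverse limits or with topological closures in general, even for flat $S$. I would avoid needing this by proving the finite-level description of $\Aa_{n,m}$ first. The key point there is that the closure $\overline J$ maps onto the image of $J$ in each discrete quotient $\hUu_d/\NL[m]$. Once that is established, the statement about $\UV(V_S)$ is essentially definitional, and all the remaining identities reduce to right exactness and flatness at finite level.
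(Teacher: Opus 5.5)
Your proposal is correct and follows the same route as the paper: you push base change through $\LVf$, $U(\LVf)$ and the seminorm quotients $U_d/\NL[m]U_d$, treat $\UV(V_S)\cong\UV(V)\ctensor_\kk S$ as essentially definitional, and use that the vertex-algebra relations have integer coefficients. Your extra observation is that $\overline J$ and $J$ have the same image modulo the open subgroup $\NL[m+1]\hUu_d$. This means $\Aa_{n,m}$, $\Ac_{n,-m}$ and $\PhiL_d$ are computed at a discrete finite level, where right exactness and $(A\otimes_\kk S)\otimes_{B\otimes_\kk S}(C\otimes_\kk S)\cong(A\otimes_B C)\otimes_\kk S$ apply, and that is precisely what justifies the paper's terse ``the rest of the identities follow from similar observations.''
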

\begin{proof}
    All of the other identities follow from the first two. The first identity is easy to see as
    \[V_S\otimes_S S[t,t^{-1}] \cong (V\otimes_\kk\kk[t,t^{-1}]) \otimes_\kk S.\]
    By flatness, we get the desired statement.
    For the second identity, we have the following relation between universal enveloping algebras:
    \[U(\frakL(V_S)^{\mathsf{f}}) \cong U(\LVf)\otimes_\kk S.\]
    The canonical seminorms for $U(\frakL(V_S)^{\mathsf{f}})$ and $U(\frakL(V)^{\mathsf{f}})$ are naturally related via base change. That is,
    \[\NL[m]U(\frakL(V_S)^{\mathsf{f}}) = \NL[m]U(\frakL(V)^{\mathsf{f}}) \otimes_R S.\]
    By flatness, we have
    \[U(\frakL(V_S)^{\mathsf{f}})/\NL[m]U(\frakL(V_S)^{\mathsf{f}}) = U(\frakL(V)^{\mathsf{f}})/\NL[m]U(\frakL(V)^{\mathsf{f}}) \otimes_\kk S.\]
    Taking the completion of $U(\frakL(V_S)^{\mathsf{f}})$, we have by definition
    \[\widehat{U}(\frakL(V_S)^{\mathsf{f}}) \cong \widehat{U}(\LVf)\ \widehat{\otimes}_\kk\ S.\]
    All of the relations of a vertex algebra involve only integer coefficients, so the ideal $\widehat{J}_S\subset \widehat{U}(\frakL(V_S)^{\mathsf{f}})$ defining the vertex algebra relations for $V_S$ is naturally equal to $\widehat{J}\widehat{\otimes}_\kk S$, where $\widehat{J}\subset \widehat{U}(\frakL(V)^{\mathsf{f}})$ is the ideal defining the vertex algebra relations for $V$. The rest of the identities follow from similar observations.
\end{proof}
One could prove the remaining properties without using the universal enveloping algebra if one is willing to use an expression for $\Aa_{n,m}(V)$ as a quotient of $V$ (see \zcref{bimodules remark}).

Let $\mathsf{M}$ be an $\Aa(V)$-module. Then $\mathsf{M}_S = \mathsf{M}\otimes_\kk S$ is naturally an $\Aa(V_S)$-module. From the above lemma, we have
\[{}_{V_S}\PhiL(\mathsf{M}_S) = {}_V\PhiL(\mathsf{M})\otimes_\kk S.\]
Let $J_R$ (resp. $J_S$) be the maximal graded submodule of ${}_{V_S}\PhiL(\mathsf{M}_S)$ (resp. ${}_{V}\PhiL(\mathsf{M})$) with degree 0 part equal to 0, and denote $J_\kk$ similarly for ${}_V\PhiL(\mathsf{M})$. Then we have a natural inclusion
\[J_R\otimes_R S\subseteq J_S.\]
This inclusion may not be an equality. Even so, we will use this many times in our analysis of Virasoro VOAs.

We need some additional standard results from commutative algebra.
\begin{lemma}\label{torsion free lemma}
    Let $\mathbb{D}$ be an integral domain, and let $\mathbb{F} = \operatorname{Frac}(\mathbb{D})$.
    \begin{enumerate}
        \item For any $\mathbb{D}$-module $M$, the kernel of the morphism 
        \[M\to M\otimes_{\mathbb{D}}\mathbb{F},\qquad m\mapsto m\otimes 1\]
        is the submodule $T(M)\subset M$ of torsion elements of $M$.
        \item Let $f\colon M\to N$ be a morphism of $\mathbb{D}$-modules. Assume that $\im(f)$ is torsion-free. Then
        \[\ker(f)/T(M) = \ker(f_{\mathbb{F}}) \cap M/T(M),\]
        where $f_{\mathbb{F}} \colon M\otimes_{\mathbb{D}}\mathbb{F} \to N\otimes_{\mathbb{D}}\mathbb{F}$ is the induced map and $M/T(M)$ is interpreted as a submodule of $M\otimes_{\mathbb{D}}\mathbb{F}$.
    \end{enumerate}
    \end{lemma}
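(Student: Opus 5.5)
Both parts are standard commutative algebra; the plan is to use the localization description $M\otimes_{\mathbb{D}}\mathbb{F}\cong S^{-1}M$ with $S=\mathbb{D}\setminus\{0\}$, so that elements of $M\otimes_{\mathbb{D}}\mathbb{F}$ are fractions $m/s$ and $m/s=0$ if and only if $tm=0$ for some nonzero $t\in\mathbb{D}$.

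For (a), first identify $M\otimes_{\mathbb{D}}\mathbb{F}$ with $S^{-1}M$ via $m\otimes (a/s)\mapsto am/s$; this is the usual isomorphism $S^{-1}\mathbb{D}\otimes_{\mathbb{D}} M\cong S^{-1}M$, and I would quote it rather than reprove it. Under this identification $m\mapsto m\otimes 1$ becomes $m\mapsto m/1$. By the definition of equality of fractions, $m/1=0$ exactly when some $t\neq 0$ satisfies $tm=0$, i.e. $m\in T(M)$. The inclusion $T(M)\subseteq\ker$ is the trivial direction: if $tm=0$ with $t\ne 0$, then $m\otimes 1=m\otimes t\cdot t^{-1}=tm\otimes t^{-1}=0$.

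For (b), use exactness of localization: $\mathbb{F}$ is flat over $\mathbb{D}$, so $\ker(f_{\mathbb{F}})=\ker(f)\otimes_{\mathbb{D}}\mathbb{F}$ as a subspace of $M\otimes_{\mathbb{D}}\mathbb{F}$. By (a) the image of $M$ in $M\otimes_{\mathbb{D}}\mathbb{F}$ is $M/T(M)$, which justifies the stated identification.
\begin{itemize}
\item Inclusion $\subseteq$: if $m\in\ker f$, then $f_{\mathbb{F}}(m\otimes 1)=f(m)\otimes 1=0$, so the class $\overline m\in M/T(M)$ lies in $\ker(f_{\mathbb{F}})$.
\item Inclusion $\supseteq$: if $\overline m\in M/T(M)$ lies in $\ker f_{\mathbb{F}}$, then $f(m)\otimes 1=0$ in $N\otimes_{\mathbb{D}}\mathbb{F}$. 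By (a) applied to $N$ this only gives $f(m)\in T(N)$, which is not enough.
\end{itemize}

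The torsion-freeness of $\im(f)$ is what closes this gap, and it is the only real subtlety. One needs $f(m)\otimes 1=0$ already in $\im(f)\otimes\mathbb{F}$, which holds because $\im(f)\otimes\mathbb{F}\to N\otimes\mathbb{F}$ is injective by flatness of $\mathbb{F}$. More concretely, $f(m)\in\im(f)$ and $t f(m)=0$ for some $t\neq 0$, so $f(m)\in T(\im f)=0$. Hence $m\in\ker f$.

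One point of care remains in reading the left side $\ker(f)/T(M)$. It should be read as the image of $\ker f$ in $M/T(M)$. This is legitimate because $T(M)\subseteq \ker f$: indeed $f(T(M))\subseteq T(\im f)=0$. With that reading, the two inclusions give the claimed equality.
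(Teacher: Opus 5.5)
Your proposal is correct and follows essentially the same route as the paper. The key step in (b) is the same in both: $f(m)\otimes 1=0$ makes $f(m)$ torsion, and since $f(m)$ lies in the torsion-free module $\im(f)$, it is zero.

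You also supply details the paper leaves implicit: the localization proof of (a), which the paper just calls well known; the easy inclusion; and the check that $T(M)\subseteq\ker f$, which is what makes $\ker(f)/T(M)$ meaningful.
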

\begin{proof}
    Statement (a) is well-known, so we just prove (b). We identify $m+T(M)\in M/T(M)$ with $m\otimes 1\in M\otimes_{\mathbb{D}}\mathbb{F}$.
    Given an element $m\otimes 1 \in \ker(f_{\mathbb{F}}) \cap M/T(M)$, we have
    \[f_{\mathbb{F}}(m\otimes 1) = f(m)\otimes 1 = 0.\]
    Therefore, $f(m)$ is torsion. But $\im(f)$ is torsion-free, so $f(m)=0$.
\end{proof}
\begin{lemma}\label{lem:base change dual}
    Let $R\to S$ be a morphism of rings, and let $M$ be a rigid $R$-module. Then we have an isomorphism
    \[\Hom_R(M,R) \otimes_RS \cong \Hom_S(M\otimes_RS,S).\]
\end{lemma}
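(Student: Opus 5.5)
The plan is to construct the comparison map explicitly and then reduce to the free case using the characterization of rigid modules as direct summands of finite free modules. Define
\[\alpha_M\colon \Hom_R(M,R)\otimes_R S\to \Hom_S(M\otimes_R S,S),\qquad \alpha_M(f\otimes s)(m\otimes t) = f(m)\,st,\]
where $f(m)$ is regarded in $S$ via the structure map $R\to S$. First I will check that $\alpha_M$ is well defined and $S$-linear. This amounts to checking $R$-balancedness in both tensor products, which is routine. The construction is also natural in $M$: for an $R$-linear map $g\colon M\to N$, precomposition with $g$ on the left side corresponds to precomposition with $g\otimes\id_S$ on the right. Moreover, $\alpha$ is additive with respect to finite direct sums, since $\Hom$ into $R$ and $S$ and the tensor product $-\otimes_R S$ all commute with finite direct sums.

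Next, I will treat the case $M=R$. Here $\Hom_R(R,R)\otimes_R S\cong S$ and $\Hom_S(S,S)\cong S$, and under these identifications $\alpha_R$ is the identity. By additivity, $\alpha_{R^n}$ is therefore an isomorphism for every finite free module $R^n$.

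Finally, since $M$ is rigid, it is finitely generated and projective, so there exists a module $N$ with $M\oplus N\cong R^n$. Naturality and additivity then identify $\alpha_{R^n}$ with $\alpha_M\oplus\alpha_N$. A direct sum of maps is an isomorphism if and only if each summand is, so $\alpha_M$ is an isomorphism.

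No step here is a real obstacle. The only point needing care is that the comparison map is compatible with the direct-sum decomposition, and this follows directly from naturality. Note also that flatness of $R\to S$ is not needed, because rigidity alone suffices.
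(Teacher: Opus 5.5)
Your argument is correct and complete: $\alpha_M$ is well defined and natural in $M$, it is the identity for $M=R$, additivity gives the finite free case, and passing to a direct summand of $R^n$ handles a rigid $M$. The paper gives no proof of this lemma and simply lists it as a standard commutative-algebra fact, so there is no argument to compare against; yours is the standard one, and your remark that flatness of $R\to S$ is not needed agrees with the paper's statement, which allows an arbitrary ring morphism.
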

We will use these results when studying Virasoro VOAs.

\subsection{$C_2$-cofiniteness}
Another interesting algebra to study for vertex algebras is the algebra $R_V = V/C_2(V)$.
\begin{definition}
    For $n\ge 2$, we define
    \begin{equation}\label{Cn(V)}
        C_n(V) = \Span_{\kk}\{a_{(-k)}b \mid a,b\in V,\ k\ge n\}.
    \end{equation}
    We say that $V$ is \textit{$C_n$-cofinite (over $\kk$)} if the quotient $\kk$-module $V/C_n(V)$ is finitely generated over $\kk$.
\end{definition}
$R_V = V/C_2(V)$ is naturally equipped with the structure of a commutative Poisson algebra by setting $a\cdot b = a_{(-1)}b$ and $\{a,b\} = a_{(0)}b$.
\begin{remark}
    Note that $C_n(V)\subset C_m(V)$ for $n\ge m$, so there is a surjection $V/C_n(V) \twoheadrightarrow V/C_m(V)$. Therefore, if $V$ is $C_n$-cofinite, then it is $C_m$-cofinite.
\end{remark}
It is straightforward to see that the above definition is compatible with base change. 
\begin{lemma}\label{C2 base change}
    If we have a flat morphism of rings $\kk\to S$ and a vertex $\kk$-algebra $V$, then $C_n(V_S) = C_n(V)\otimes_\kk S$. Moreover,
    \[V_S/C_2(V_S) \cong V/C_2(V)\otimes_\kk S.\]
    If $V$ is $C_2$-cofinite over $\kk$, then $V_S$ is $C_2$-cofinite over $S$.
\end{lemma}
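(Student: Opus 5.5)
The plan is to compare the defining spanning sets of $C_n(V_S)$ and $C_n(V)\otimes_\kk S$ directly, and then obtain the quotient statement from right exactness of tensor products.

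\textbf{The identity $C_n(V_S)=C_n(V)\otimes_\kk S$.} By flatness of $\kk\to S$, the inclusion $C_n(V)\hookrightarrow V$ stays injective after applying $-\otimes_\kk S$. So $C_n(V)\otimes_\kk S$ is naturally an $S$-submodule of $V_S=V\otimes_\kk S$, and the claim is meaningful.

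For the inclusion $C_n(V)\otimes_\kk S\subseteq C_n(V_S)$, take a generator $a_{(-k)}b$ of $C_n(V)$ with $k\ge n$. Using the definition $(a\otimes s)_{(m)}=a_{(m)}\otimes m_s$, we get
\[(a_{(-k)}b)\otimes s=(a\otimes 1)_{(-k)}(b\otimes s)\in C_n(V_S).\]
These elements span $C_n(V)\otimes_\kk S$ over $S$, which gives this inclusion.

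For the reverse inclusion, a general element of $V_S$ is a finite sum of pure tensors. By bilinearity of $(x,y)\mapsto x_{(-k)}y$, it suffices to treat the generator $(a\otimes s)_{(-k)}(b\otimes t)$. This equals $(a_{(-k)}b)\otimes st$, which lies in $C_n(V)\otimes_\kk S$.

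\textbf{The quotient statement.} Tensor the exact sequence
\[0\to C_2(V)\to V\to V/C_2(V)\to 0\]
with $S$. Right exactness of $-\otimes_\kk S$ alone gives $V_S/\bigl(\mathrm{im}(C_2(V)\otimes_\kk S)\bigr)\cong (V/C_2(V))\otimes_\kk S$. By the first part this image is exactly $C_2(V_S)$, so
\[V_S/C_2(V_S)\cong V/C_2(V)\otimes_\kk S.\]

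\textbf{$C_2$-cofiniteness.} If $V/C_2(V)$ is generated over $\kk$ by $v_1,\dots,v_r$, then the elements $v_i\otimes 1$ generate $V/C_2(V)\otimes_\kk S$ over $S$. Hence $V_S$ is $C_2$-cofinite over $S$.

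The only point requiring care is identifying $C_n(V)\otimes_\kk S$ with a submodule of $V_S$, which is exactly where flatness is used. Everything else is a formal spanning-set comparison.
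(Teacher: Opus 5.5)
Your argument is correct and is the direct spanning-set check that the paper leaves to the reader; the paper gives no proof beyond calling this compatibility with base change ``straightforward''. Your observation about where flatness enters is also accurate: it is needed only to identify $C_n(V)\otimes_\kk S$ with a submodule of $V_S$, while the quotient isomorphism and the transfer of $C_2$-cofiniteness hold for any ring map $\kk\to S$ by right exactness.
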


Here is a partial way to identify the structure of $R_V$.
\begin{lemma}[\cite[Lemma 2.15]{Li19}]\label{lem:C2 generating set}
    Let $V$ be a vertex $\kk$-algebra, and let $U\subset V$ be a submodule such that
    \[V = \Span_\kk\{u^{1}_{(-n_1)}\ldots u^{r}_{(-n_r)}\vac \mid r\ge 0,\ u^i\in U,\ n_i\ge 1\}.\]
    Then there exists a surjective $\kk$-algebra homomorphism $\pi$ from the symmetric algebra $S(U)$ on $U$ to $V/C_2(V)$ such that $\pi(u) = u+C_2(V)$ for $u\in U$.
\end{lemma}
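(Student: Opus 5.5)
The plan is to build $\pi$ by sending $u \mapsto u + C_2(V)$, extending multiplicatively, and then checking that it is well defined and surjective. Since $R_V = V/C_2(V)$ is a commutative associative $\kk$-algebra under $a\cdot b = a_{(-1)}b$, with unit $\vac + C_2(V)$, the universal property of the symmetric algebra $S(U)$ gives a unique $\kk$-algebra homomorphism $\pi\colon S(U)\to R_V$ with $\pi(u) = u + C_2(V)$. Commutativity and associativity of $R_V$ are the standard facts, used as stated just before the lemma. If one wants to verify them, they follow from the associator formula \eqref{eq:associator} and skew-symmetry in \zcref{prop:VA properties}. The error terms there are modes $a_{(-k)}b$ with $k\ge 2$, or $T^{(j)}$ applied to something with $j\ge 1$. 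The latter also lie in $C_2(V)$, since $(T^{(j)}a)_{(-1)}b$ is a multiple of $a_{(-1-j)}b$. Because all of these identities have integer coefficients, they hold over any $\kk$. So the existence of $\pi$ is formal, and the real content is surjectivity.

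For surjectivity, by the spanning hypothesis it suffices to show that each monomial $u^1_{(-n_1)}\cdots u^r_{(-n_r)}\vac$ lies in $\im\pi$ modulo $C_2(V)$. I would argue by induction on $r$. For $r = 0$ we get $\vac = \pi(1)$. For $r\ge 1$, write $w = u^2_{(-n_2)}\cdots u^r_{(-n_r)}\vac$. If $n_1 \ge 2$, then $u^1_{(-n_1)}w\in C_2(V)$ by the definition \eqref{Cn(V)}, so its class is $0 = \pi(0)$. If $n_1 = 1$, then $u^1_{(-1)}w \equiv \pi(u^1)\cdot(w + C_2(V))$. By induction, $w + C_2(V) = \pi(f)$ for some $f\in S(U)$, so the monomial equals $\pi(u^1 f)$.

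The one point needing care is that multiplication by $u^1_{(-1)}$ is well defined on the quotient $R_V$. This means showing that $C_2(V)$ is stable under left multiplication, i.e. $a_{(-1)}C_2(V)\subseteq C_2(V)$, which is exactly the statement that $C_2(V)$ is an ideal for the $(-1)$-product. I expect this to be the main (though mild) obstacle. To prove it, I would use the commutator formula \eqref{eq:commutator}:
\[a_{(-1)}b_{(-k)}c = b_{(-k)}a_{(-1)}c + \sum_{j\ge 0}\binom{-1}{j}(a_{(j)}b)_{(-1-k-j)}c.\]
For $k\ge 2$, every term on the right has the form $x_{(-m)}y$ with $m\ge 2$, so it lies in $C_2(V)$. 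This identity is integral, so it is valid over an arbitrary ring $\kk$. With this in hand the induction closes, $\pi$ is surjective, and $\pi(u) = u + C_2(V)$ holds by construction.
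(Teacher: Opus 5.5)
Your proof is correct and follows the standard argument that the paper delegates to \cite{Li19}. The universal property of $S(U)$ applied to the commutative unital algebra $R_V = V/C_2(V)$ gives $\pi$. Surjectivity then comes from the spanning hypothesis once $C_2(V)$ is known to be an ideal for the $(-1)$-product, and your commutator-formula computation checks this over an arbitrary base ring. One small simplification: the skew-symmetry error terms $T^{(j)}x$ with $j\ge 1$ lie in $C_2(V)$ immediately from the definition $T^{(j)}x = x_{(-1-j)}\vac$.
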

\begin{proof}
    The proof is the same as in \cite{Li19}.
\end{proof}

The $C_2$-algebra controls a great deal of information about generating sets for a vertex algebra.
\begin{lemma}\label{lem: R_V surjection gr A(V)}
    Let $V$ be an $\N$-graded vertex $\kk$-algebra. Then there is a natural surjection of associative $\kk$-algebras
    \[V/C_2(V) \twoheadrightarrow \gr \Aa(V),\]
    where the filtration on $\Aa(V)$ is induced from the standard filtration on $V$ coming from its grading.
\end{lemma}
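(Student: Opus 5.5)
We will use the presentation of the Zhu algebra as a quotient of $V$ from \zcref{Zhu alg iso}, which identifies $\Aa(V)\cong A_0(V)=V/O_0(V)$ via $a\mapsto J_0(a)$. Filter $V$ by $F_pV=\bigoplus_{s\le p}V_s$; since $V$ is $\N$-graded this filtration is exhaustive and starts at $F_0V=V_0$. Let $F_p\Aa(V)$ be the image of $F_pV$ and set $\gr\Aa(V)=\bigoplus_p F_p\Aa/F_{p-1}\Aa$. The plan is to define a degree-preserving map $V\to\gr\Aa(V)$ by sending $a\in V_p$ to the class of $[a]$ in $F_p\Aa/F_{p-1}\Aa$. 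This map is surjective by construction: every element of $F_p\Aa/F_{p-1}\Aa$ is the image of some $a\in F_pV$, and only its degree $p$ component matters modulo $F_{p-1}\Aa$.

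\textbf{Step 1: the filtration is multiplicative.} For homogeneous $a,b$ we have $a*_0b=\sum_{i\ge0}\binom{\deg a}{i}a_{(i-1)}b$. The $i=0$ term $a_{(-1)}b$ has degree $\deg a+\deg b$, and the terms with $i\ge1$ have degree $\deg a+\deg b-i$. Hence $F_p\Aa\cdot F_q\Aa\subset F_{p+q}\Aa$, so $\gr\Aa$ is a graded algebra. Moreover, the product in $\gr\Aa$ of the classes of $a$ and $b$ is the class of $a_{(-1)}b$. The map therefore sends $a_{(-1)}b$ to the product of the images, i.e.\ it intertwines the product $a\cdot b=a_{(-1)}b$ on $V$ (before quotienting) with the multiplication on $\gr\Aa$. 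The unit $\vac$ goes to the unit.

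\textbf{Step 2: $C_2(V)$ maps to zero.} This is the main point. First take $a_{(-n)}b$ with $n\ge 2$ and $a,b$ homogeneous. The element $a\circ^0_{0,t}b=\sum_j\binom{\deg a}{j}a_{(j-2-t)}b$ lies in $O_0(V)$ for $t\ge0$. Its leading term is $a_{(-2-t)}b$, which has degree $\deg a+\deg b+1+t$, while the remaining terms have strictly smaller degree. So in $\Aa$ the class $[a_{(-2-t)}b]$ lies in $F_{\deg a+\deg b+t}\Aa$, one step below its own filtration degree. Its image in $\gr$ is therefore zero; taking $n=2+t$ covers every $n\ge2$. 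It remains to handle the homogeneous components of general elements of $C_2(V)$. This is fine because $C_2(V)$ is spanned by such homogeneous $a_{(-n)}b$, and the map is defined on homogeneous pieces degreewise.

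If one prefers to use the $T^{(k)}$ relations instead, note that $a_{(-2)}b=(T^{(1)}a)_{(-1)}b$ and that $[T^{(k)}a]=\binom{-\deg a}{k}[a]$ in $\Aa$, which drops filtration degree. Either route works; the $\circ$ route is cleaner and needs no invertibility. The only subtlety is making sure that the other terms in the defining relation really have lower degree, which follows from the grading axiom $\deg(a_{(k)}b)=\deg a+\deg b-1-k$.

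\textbf{Step 3: conclude.} By Step 2 the map factors through $R_V=V/C_2(V)$, and by Step 1 it is multiplicative with respect to the product $a_{(-1)}b$ on $R_V$. It is surjective by construction, so it gives the desired natural surjection of associative $\kk$-algebras $V/C_2(V)\twoheadrightarrow\gr\Aa(V)$. In particular $\gr\Aa(V)$ is commutative.
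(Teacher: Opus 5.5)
Your proof is correct. The paper states this lemma without proof and treats it as standard, and what you give is the standard argument. The leading term of $a*_0b$ is $a_{(-1)}b$, and the other terms have strictly lower degree. Likewise $a\circ^0_{0,t}b\in O_0(V)$ has leading term $a_{(-2-t)}b$, so $a_{(-n)}b$ drops filtration degree for every $n\ge 2$.

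This fits the paper's arbitrary-ring setting because $O_0(V)$ is defined to contain $a\circ^0_{0,t}b$ for every $t\ge 0$. So you never need to divide to reach $n>2$.

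A small correction to your aside: the $T$-route is also division-free for every $n\ge 2$, not just $n=2$. Indeed $(T^{(n-1)}a)_{(-1)}b=a_{(-n)}b$ holds exactly, and $[T^{(n-1)}a]=\binom{-\deg a}{n-1}[a]$ lies $n-1$ filtration steps below the degree of $T^{(n-1)}a$.
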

A related result in this direction is that if a VOA $V$ over a characteristic 0 field is $C_2$-cofinite, then it is quasi-finite \cite[Theorem 9.2.1]{MNT10}. Their result may be generalized to work over any field using the exact same arguments.
\begin{theorem}\label{lem:C2 cofinite implies quasi finite}
    Let $V$ be an $\N$-graded vertex algebra over a field $\mathbb{F}$. If $V$ is $C_2$-cofinite, then it is quasi-finite.
\end{theorem}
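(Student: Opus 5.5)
We need $\Aa_{n,m}(V)=\UV_{n-m}/\NL[m+1]\UV_{n-m}$ to be finite-dimensional over $\mathbb{F}$ for all $n,m\ge 0$; over a field, rigid is the same as finite-dimensional. The plan is to follow the argument of \cite[Theorem 9.2.1]{MNT10}, using only integral coefficients so that nothing depends on the characteristic.

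\textbf{Finite generating set.} Because $V/C_2(V)$ is finite-dimensional and graded, we can choose a finite set of homogeneous elements $u^1,\dots,u^r\in V$ whose images span $V/C_2(V)$. By the standard $C_2$ spanning argument, $V$ is then spanned by monomials
\[u^{i_1}_{(-n_1)}\cdots u^{i_k}_{(-n_k)}\vac,\qquad n_1>\dots>n_k\ge 1\]
(or with a nonstrict ordering and a bound on repetitions). The proof of that argument uses only the associator formula \zcref[noname]{eq:associator} and skew-symmetry, whose coefficients are integers. It therefore holds over any field. The divided-power translations $T^{(k)}$ cause no trouble, since $T^{(k)}a=a_{(-1-k)}\vac$ is itself a mode.

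\textbf{Spanning set for $\UV$.} Next we transfer this to $\UV$. Using the commutator formula and the identities in \zcref{lem:Ann} and \cite[Corollary A.2]{HeHigherZhu}, every element of $\UV_d$ can be written, modulo $\NL[m+1]\UV_d$, as a linear combination of ordered products
\[J_{k_1}(u^{i_1})\cdots J_{k_s}(u^{i_s})\,J_0(v).\]
These can be arranged as creation modes followed by something acting on degree $m$. Concretely, we filter $\Aa_{n,m}$ by length and by conformal weight, and show that the associated graded is a quotient of a module built from $V/C_2(V)$. This mirrors \zcref{lem: R_V surjection gr A(V)}, which gives the $n=m=0$ case: there $\gr\Aa(V)$ is a quotient of $V/C_2(V)$, so $\Aa(V)$ is finite-dimensional.

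\textbf{Counting the bounded range.} In $\Aa_{n,m}$ the total degree is fixed at $n-m$, and the rightmost factor must have degree greater than $-(m+1)$. Each mode $J_k(u^i)$ with $k<0$ raises the degree by $-k\ge 1$, and there are only finitely many generators. Hence only finitely many ordered monomials in the creation modes fit into a bounded degree window. The remaining annihilation-type modes are either absorbed into $\NL[m+1]$ or reduced to $J_0$ of an element. This reduction is the content of \zcref{lem:Lem3.1He} applied to $\Aa_m$, which is finite-dimensional by the $\Aa_d$ version of the surjection from $V/C_2(V)$ (or its $C_2$-analogue for higher $d$). Tensoring finitely many finite-dimensional pieces gives the bound on $\dim\Aa_{n,m}$.

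\textbf{Main obstacle.} The hardest step is the reordering in $\UV$, and specifically getting finiteness for $\Aa_d$ with $d>0$ (and for the bimodules) from $C_2$-cofiniteness alone. \cite{MNT10} handle this with a PBW-type spanning theorem for modules of $C_2$-cofinite algebras, stated in terms of Gurevich-type orderings of modes. I would adapt that spanning theorem to $\UV$ by checking that each rewriting step involves only binomial coefficients $\binom{n}{k}$, which lie in $\Z$. That check is what makes the argument valid over $\mathbb{F}$ of any characteristic.
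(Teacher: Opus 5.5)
Your overall strategy matches the paper's. The paper's proof consists only of the assertion that the argument of \cite{MNT10} (Section~7 onward) goes through verbatim over any field and never uses $\dim V_0=1$. The trouble is in the self-contained steps you add around that citation. Several of them fail, so your sketch does not actually reduce the cases $d>0$, or the bimodules, to anything simpler.

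\textbf{No surjection for $d>0$.} There is no surjection $V/C_2(V)\twoheadrightarrow\gr\Aa_d(V)$ when $d>0$. Take a rational $C_2$-cofinite VOA $V\neq\C\vac$ over $\C$, for example $L_{\Vir}(\tfrac12,0)_{\C}$. There $\Aa_d\cong\prod_{e\le d}\Ac_e$ with $\Ac_e\cong\prod_k\End(S^k_e)$, so $\dim\Aa_d\to\infty$ while $\dim V/C_2(V)$ is fixed. The relations $a\circ^0_{d,t}b\in O_d(V)$ only give $V/C_{2d+2}(V)\twoheadrightarrow\gr\Aa_d(V)$. Passing from $C_2$- to $C_{2d+2}$-cofiniteness is a Gaberdiel--Neitzke-type spanning theorem, which would itself need proof over $\mathbb{F}$.

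\textbf{The bimodule count.} In your count for $\Aa_{n,m}$, modes of degree in $[-m,-1]$ sitting at the right end need not lie in $\NL[m+1]\UV$. They also cannot be turned into $J_0(v)$, since \zcref{lem:Lem3.1He} only concerns degree $0$. These modes leave you with the pieces $\Aa_{k,m}$ for $k<m$, whose finite-dimensionality you have not addressed. For $n<m$, your spanning set of creation monomials times $J_0(v)$ cannot even have the right degree.

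\textbf{Integrality is not the right test.} Checking that the coefficients are integers does not make the argument characteristic-free. Integrality makes identities descend to $\mathbb{F}$. But a spanning argument also solves for a leading monomial, and that coefficient must be a unit in $\mathbb{F}$. For instance, the relations coming from $a^N\in C_2(V)$ become, in the commutative associated graded, the coefficients of $a(z)^N$. Their multinomial coefficients can vanish mod $p$, and for $N=p$ the coefficient of $z^n$ is identically $0$ whenever $p\nmid n$.

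What you would actually have to verify in \cite{MNT10} is that every pivot is nonzero in $\mathbb{F}$. Where that fails, the step must be replaced, for instance by exploiting Frobenius, which turns such relations into $a_{(k)}^{p^e}\equiv 0$.
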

\begin{proof}
    The proof is identical to the proof for a VOA over a field of characteristic 0 from \cite{MNT10}. In fact, everything in Section 7 onward in \cite{MNT10} may be done over an arbitrary field, and $V_{0}$ need not be one-dimensional as this property is not used anywhere.
\end{proof}
If $V$ is an $\N$-graded vertex $\kk$-algebra that is $C_2$-cofinite over $\kk$, then the base change $V_{\mathbb{F}}$ is $C_2$-cofinite for all fields $\mathbb{F}$ admitting a morphism $\kk\to \mathbb{F}$. This implies that $\Aa_{n,m}(V)\otimes_{\kk}\mathbb{F}$ is finite-dimensional over $\mathbb{F}$ for all such $\mathbb{F}$. However, we cannot necessarily say anything about $\Aa_{n,m}(V)$ from this in full generality.

\section{The Virasoro vertex operator algebra}\label{sec:Virasoro}
In this section, we will apply the general theory from the previous section to study VOAs associated with the Virasoro algebra. Our goal is to obtain some understanding of the simple quotients of the universal Virasoro VOA.

\subsection{The universal Virasoro vertex operator algebra}\label{sec:universal Virasoro}
We recall some notation from \cite{DongRen16}, which we will use for this entire section.
Let $\kk$ be a ring.
We define the Virasoro algebra $\Vir_{\kk}$ to be the Lie algebra 
\[\Vir_{\kk} =  \bigoplus_{n\in \Z}\kk L_n\oplus \kk\frac{\mathbf{c}}{2} \]
with bracket relations
\[[L_m,L_n] = (m-n)L_{m+n} + \frac{\mathbf c}{2}\binom{m+1}{3}\delta_{m+n,0},\qquad [\Vir,\mathbf{c}] = 0.\]
Now let $c,h\in \kk$ such that $\frac{c}{2}$ exists in $\kk$. We set
\[V_{\Vir}(c,h)_{\kk} = U(\Vir_{\kk})\otimes_{U(\Vir_{\kk}^{\ge 0})}\kk 1,\]
where $\Vir_{\kk}^{\ge 0}$ is the subalgebra generated by $L_n$ for $n\ge 0$ and $\mathbf{c}$, and $\kk 1$ is the induced $\Vir_{\kk}^{\ge 0}$-module given by $L_n1 = 0$ for $n>0$, $L_01 = h1$, and $\mathbf{c}1 = c1$. There is a natural $\N$-grading on $V_{\Vir}(c,h)_{\kk}$, where the degree $n$ space admits a basis given by elements of the form
\begin{equation}\label{monomial}
    L_{-n_1}\ldots L_{-n_k}v_{c,h},
\end{equation}
where $v_{c,h} = 1\otimes 1$ and $n_1\ge \ldots \ge n_k\ge 1$ such that $\sum_in_i = n$. We refer to this as the \textit{standard basis of monomials}.
We call $V_{\Vir}(c,h)_{\kk}$ the \textit{Verma module}, and it is straightforward to see that $L_{m}$ acts on $V_{\Vir}(c,h)_{\kk}$ as a degree $-m$ operator.

It is known that
\[\overline{V}_{\Vir}(c,0)_{\kk} \eqdef V_{\Vir}(c,0)_{\kk}/U(\Vir_{\kk})L_{-1}v_{c,0}\]
is a VOA over $\kk$ with central charge $c$, vacuum element $\vac = v_{c,0}$, and conformal element $\omega = L_{-2}\vac$. 
As a VOA, it is generated by $\omega$ with
\[Y(\omega,z) = \sum_{k\in \Z}\omega_{(k)}z^{-1-k} = \sum_{m\in \Z}L_mz^{-2-m}.\]
That is, $\omega_{(k)} = L_{k-1}$.
We call this the \textit{universal Virasoro VOA} over $\kk$. Here, the overline on the $V$ will always denote quotienting by the submodule generated by $L_{-1}v_{c,0}$.
The degree $n$ part also admits a basis as a free $\kk$-module consisting of elements of the form \zcref[noname]{monomial} with $n_1\ge \ldots \ge n_k\ge 2$ such that $\sum_in_i = n$.

The $\kk$-module $V_{\Vir}(c,h)_{\kk}$ is an admissible module for this VOA for any $h\in \kk$. 
In fact, we have that $V_{\Vir}(c,h)_{\kk}$ is universal:
\[V_{\Vir}(c,h)_{\kk} = \PhiL(\kk v_{c,h}).\]
The quotient by $L_{-1}\vac$ guarantees that $\overline{V}(c,0)_\kk$ admits a VOA structure. Moreover, the element $L_{-1}\vac$ is a singular vector, which we will discuss later.

\begin{lemma}
    For all $n\in \Z$ and $m\ge 0$, we have
    \[(\ad_{L_1})^mL_{n} = (1-n)_mL_{n+m},\]
    where $(x)_m = x(x-1)\ldots (x-m+1)$ is the falling factorial.
\end{lemma}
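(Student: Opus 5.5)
We prove the identity by induction on $m$, working in the Virasoro Lie algebra $\Vir_{\kk}$ itself. The first step is to observe that the central term never contributes when we bracket with $L_1$. Indeed, the cocycle $\frac{\mathbf c}{2}\binom{m+1}{3}\delta_{m+n,0}$ for the pair $(1,n)$ is nonzero only when $n=-1$. In that case it equals $\frac{\mathbf c}{2}\binom{2}{3}=0$. Hence for every $n\in\Z$ we have
\[
[L_1,L_n]=(1-n)L_{n+1}
\]
exactly, with no central correction, over any ring $\kk$ (where $\frac{\mathbf c}{2}$ is just a formal basis element).

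The base case $m=0$ is trivial, since $(x)_0=1$ and $(\ad_{L_1})^0L_n=L_n$. For the inductive step, assume $(\ad_{L_1})^mL_n=(1-n)_mL_{n+m}$. Applying $\ad_{L_1}$ and using the bracket above with $n$ replaced by $n+m$ gives
\[
(\ad_{L_1})^{m+1}L_n=(1-n)_m\,(1-n-m)\,L_{n+m+1}.
\]
The falling factorial satisfies $(x)_m(x-m)=(x)_{m+1}$. Taking $x=1-n$, the coefficient is $(1-n)_{m+1}$, which completes the induction.

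The only point needing care is the vanishing of the central term, handled above via $\binom{2}{3}=0$. All coefficients are integers, so no division occurs and the identity holds over any commutative ring $\kk$. It therefore also holds for the operators $L_n$ acting on any $V_{\Vir}(c,h)_{\kk}$ or $\overline{V}_{\Vir}(c,0)_{\kk}$, since these are $\Vir_{\kk}$-modules.
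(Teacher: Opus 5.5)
Your proof is correct. The paper states this lemma without writing out a proof, and your induction on $m$ via $[L_1,L_n]=(1-n)L_{n+1}$ is the routine verification it leaves implicit, including the one point that needs checking: the central term drops out because $\binom{2}{3}=0$.
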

For $m\ge 0$, we define the operator $L_1^{(m)}$ on a monomial of the form \zcref[noname]{monomial} as follows: We define
\[L_1^{(m)}\cdot L_n = \binom{1-n}{m}L_{n+m},\]
and we set $L_1^{(m)}$ to satisfy the generalized Leibniz rule on such a monomial:
\begin{equation}
    L_1^{(m)}(L_{-n_1}\ldots L_{-n_k}v_{c,h}) = \sum_{\substack{m_i\ge 0\\
    \sum_im_i = m}}(L_1^{(m_1)}\cdot L_{-n_1})\ldots (L_1^{(m_k)}\cdot L_{-n_k})v_{c,h}.
\end{equation}
This definition gives $\overline{V}(c,0)_\kk$ the structure of an $\mathcal{H}$-module vertex algebra such that $m!L_1^{(m)} = L_1^m$, so it is compatible with the VOA structure. Suppose there is an embedding of an intermediate subgroup $\Z\subseteq \Lambda\subseteq \Q$ into $\kk$. Suppose that $h\in \Lambda$ is a well-defined element in $\kk$. We may then give the Verma module $V_{\Vir}(c,h)_\kk$ the structure of a $\Lambda$-graded $(V,\mathcal{H})$-module. We will work with this structure more in \zcref{sec: rationality}.

Now we analyze the Zhu algebra. For various reasons, we will now restrict the above setting to the case where $\kk = \mathbb{D}$ is an integral domain.
\begin{notation}
    Let $\mathbb{D}$ be an integral domain containing $\Z$. Given a free $\mathbb{D}$-module $M_{\mathbb{D}}$, we denote by $M_{\Z}$ the free $\Z$-submodule of elements with integer coefficients with respect to the given basis.
\end{notation}
We may define $\overline{V}(c,0)_\Z$ to be the free $\Z$-module given by the standard basis of monomials of the form \zcref[noname]{monomial}. Note that $\overline{V}(c,0)_\Z$ is not naturally a vertex $\Z$-algebra since $c/2$ may not be in $\Z$. The base change $\overline{V}(c,0)_{\mathbb{D}} = \overline{V}(c,0)_{\Z}\otimes_\Z \mathbb{D}$ is a vertex $\mathbb{D}$-algebra by assumption, though.

Using the arguments in the appendix of \cite{Wang}, we obtain the following simplification of the Zhu algebra:
\begin{lemma}\label{lem: Zhu algebra arbitrary}
    Define $O'(\overline{V}(c,0)_{\mathbb{D}})$ to be the $\mathbb{D}$-submodule generated by elements of the form $\omega\circ^0_{0,n}v$ for all integers $n\ge 0$ and homogeneous $v\in \overline{V}(c,0)_{\mathbb{D}}$. Then $O'(\overline{V}(c,0)_{\mathbb{D}}) = O(\overline{V}(c,0)_{\mathbb{D}})$. Furthermore, we have isomorphisms of $\mathbb{D}$-modules
    \[\Aa(\overline{V}(c,0)_{\mathbb{D}}) \cong \mathbb{D}[x].\]
    Here $x$ is identified with the class $[\omega]\in A(\overline{V}(c,0)_{\mathbb{D}})$.
\end{lemma}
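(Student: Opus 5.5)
We will follow Wang's appendix argument, using the identification $\Aa\cong A_0(V)=V/O_0(V)$ from \zcref{Zhu alg iso} (with $d=0$), and set $V=\overline{V}(c,0)_{\mathbb{D}}$ throughout. All coefficients appearing in the relevant recursions are integers, so nothing depends on $\mathbb{D}$ being a field.

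\emph{Step 1: $O(V)\subseteq O'(V)+(\text{stuff reducing to }\omega\text{-products})$.} For $n\ge 0$ and homogeneous $v$, we have
\[
\omega\circ^0_{0,n}v=\sum_j\binom{2}{j}L_{j-n-3}v .
\]
For $n=0$ this gives $(L_{-3}+2L_{-2}+L_{-1})v\in O'(V)$, and for higher $n$ we obtain $(L_{-n-3}+2L_{-n-2}+L_{-n-1})v\in O'(V)$. Inductively on $n$, starting from $L_{-1}v+L_0v\in O'(V)$ (which is itself a relation), this shows that $L_{-k}v\equiv (\text{integer polynomial in } L_0,\ \deg v)\cdot v$ modulo $O'(V)$ for every $k\ge 1$. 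The relation $L_{-1}v+L_0v\equiv 0$ follows from $T^{(1)}v-\binom{-\deg v}{1}v$, or equivalently from the $\circ$-expansion; we will check directly that it lies in $O'(V)$.

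Next, using the PBW spanning set of monomials $L_{-n_1}\cdots L_{-n_k}\vac$, we show by induction on length and degree that every element of $V$ is congruent modulo $O'(V)$ to a polynomial in $[\omega]=L_{-2}\vac$ with coefficients in $\mathbb{D}$. Here we use that $\omega * v\equiv (L_{-2}+2L_{-1}+L_0)v$, together with the first step, so that monomials become $\omega*(\text{shorter})$ plus lower terms.

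\emph{Step 2: general elements of $O(V)$ lie in $O'(V)$.} Every $a\circ^0_{0,t}b$ with $a$ a monomial reduces to the $\omega$ case. We handle this by the standard associator argument: writing $a=L_{-m}a'$ and using the iterate formula \zcref[noname]{eq:associator}, the element $(\omega_{(r)}a')\circ b$ is expressed through terms $\omega\circ(\ldots)$ and $a'\circ(\ldots)$, and we induct on the length of $a$. This is exactly Wang's appendix computation with integral binomials. Similarly, the $T^{(k)}$-relations reduce to $k=1$ via $L_{-1}$, because for $V$ the divided powers $T^{(k)}$ are expressed through $L_{-1}$ by integral formulas on the monomial basis. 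This is the step I expect to be the main obstacle: in positive characteristic $T^{(k)}\ne L_{-1}^k/k!$ as a formula, so we will verify that $T^{(k)}a-\binom{-\deg a}{k}a$ lies in $O'(V)$ using the divided-power translation covariance of \zcref{prop:VA properties} applied to $\omega$-modes, specifically $T^{(k)}$ acting on $L_{-n}$ by $\binom{n+k-2}{k}L_{-n-k}$, combined with Step 1.

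\emph{Step 3: freeness.} Steps 1 and 2 give a surjection $\mathbb{D}[x]\to\Aa(V)$ with $x\mapsto[\omega]$. For injectivity we use the Verma modules: $[\omega]$ acts on the top degree $\mathbb{D}v_{c,h}$ of $V_{\Vir}(c,h)_{\mathbb{D}}=\PhiL(\mathbb{D}v_{c,h})$ by $o(\omega)=L_0=h$. More precisely, over $\mathbb{D}[h]$ (base change along the flat map $\mathbb{D}\to\mathbb{D}[h]$, with \zcref{lem:base change}), the universal Verma module gives an algebra map $\Aa(V)\to\mathbb{D}[h]$ sending $x\mapsto h$. A polynomial $f$ in the kernel therefore satisfies $f(h)=0$ in $\mathbb{D}[h]$, so $f=0$. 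Hence $\Aa(V)\cong\mathbb{D}[x]$.
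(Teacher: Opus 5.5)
Your overall route is the one the paper intends (its proof is only a pointer to Wang's appendix). Your Step~3 is correct and worth keeping: the top level of the Verma module over $\mathbb{D}[h]$ gives an algebra map $\Aa(V)\to\mathbb{D}[h]$ with $[\omega]\mapsto h$, and this makes injectivity work even for finite $\mathbb{D}$. The problems are in the proof of $O'(V)=O(V)$.

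First, Step~1 as stated is false. The relations $(L_{-n-3}+2L_{-n-2}+L_{-n-1})v\in O'(V)$ form a second-order recursion in which $L_{-2}v$ is a free initial value. What one actually gets is
\[
L_{-k}v\equiv(-1)^k\bigl((k-1)L_{-2}v+(k-2)L_{-1}v\bigr)\pmod{O'(V)},\qquad k\ge 1.
\]
Your version would give $V=O'(V)+\mathbb{D}\vac$, contradicting Step~3.

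Second, and more seriously, $(L_{-1}+L_0)v$ is not ``itself a relation'' of $O'(V)$. It equals $T^{(1)}v+\deg(v)v=v\circ^0_{0,0}\vac$, a generator of $O(V)$ with $v$ rather than $\omega$ on the left, and its membership in $O'(V)$ is exactly the content of the equality. To see this, grant $L_{-1}u\equiv-\deg(u)u$ modulo $O'(V)$. Reducing only the leftmost factor of PBW monomials (reordering lowers the length) gives $V=O'(V)+P$ with $P=\operatorname{span}_{\mathbb{D}}\{L_{-2}^k\vac\}$. Running the same reduction modulo $O(V)$, where this relation holds by definition, together with $[L_{-2}u]=([\omega]+\deg u)[u]$, gives the surjection $\mathbb{D}[x]\to\Aa(V)$. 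Step~3 then shows $O(V)\cap P=0$, since $[L_{-2}^k\vac]=\prod_{j<k}([\omega]+2j)$. The modular law gives $O(V)=O'(V)+(O(V)\cap P)=O'(V)$.

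So $\Aa(V)\cong\mathbb{D}[x]$ follows from your corrected Steps~1 and~3 alone. By contrast, $O'(V)=O(V)$ is equivalent to the single claim $(L_{-1}+L_0)v\in O'(V)$ for all $v$. You defer that claim even though Step~1 already uses it, so as ordered the argument is circular.

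Your Step~2 does not supply this claim. Expanding $(L_{-2}a')\circ^0_{0,0}b$ by the iterate formula produces, term by term, single modes $L_{-2-k}(\cdots)$ and the $*$-type term $\operatorname{Res}_zY(a',z)L_{-1}b\,(1+z)^{\deg a'}z^{-1}$. These are not only $\omega\circ^0_{0,s}$- and $a'\circ^0_{0,s}$-terms. The regrouping that makes the induction close is precisely what has to be imported from Wang's appendix, with its coefficients checked to be integral.

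Finally, the assertion that the $T^{(k)}$ are integral expressions in $L_{-1}$ is false in characteristic $p$: $T^{(p)}\omega=L_{-p-2}\vac\neq0$, while $L_{-1}^p=p!\,T^{(p)}=0$. These relations need no separate treatment, however. Since $a_{(j-2-t)}\vac=T^{(t+1-j)}a$,
\[
a\circ^0_{0,t}\vac=\sum_{j=0}^{t+1}\binom{\deg a}{j}T^{(t+1-j)}a.
\]
Vandermonde's identity $\sum_{j}\binom{\deg a}{j}\binom{-\deg a}{k-j}=0$ for $k\ge1$ then shows inductively that $T^{(k)}a-\binom{-\deg a}{k}a$ lies in the span of the $a\circ^0_{0,t}\vac$. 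So the ``main obstacle'' you identify disappears once the $\circ$-relations are known to lie in $O'(V)$.
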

One may alternatively calculate the Zhu algebra by simply looking at the universal enveloping algebra definition.

Now, we relate $O(\overline{V}(c,0)_{\mathbb{D}})$ to $O(\overline{V}(c,0)_{\mathbb{F}})$, where $\mathbb{F} = \operatorname{Frac}(\mathbb{D})$ is the fraction field.
\begin{corollary}
    Let $\mathbb{D}$ be an integral domain, and let $\mathbb{F} = \operatorname{Frac}(\mathbb{D})$ be the fraction field. 
    Then
    \[O(\overline{V}(c,0)_{\mathbb{D}}) = O(\overline{V}(c,0)_{\mathbb{F}}) \cap \overline{V}(c,0)_{\mathbb{D}}.\]
    If $\mathbb{D}$ contains $\mathbb{Z}$, then the same is true if we replace $\mathbb{D}$ with $\mathbb{Z}$.
\end{corollary}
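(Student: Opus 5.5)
The plan is to apply \zcref{torsion free lemma}(b) to the projection $f\colon \overline{V}(c,0)_{\mathbb{D}}\to \Aa(\overline{V}(c,0)_{\mathbb{D}})$, combined with \zcref{lem: Zhu algebra arbitrary} and \zcref{Zhu alg iso}. Set $V_{\mathbb{D}}=\overline{V}(c,0)_{\mathbb{D}}$ and $V_{\mathbb{F}}=V_{\mathbb{D}}\otimes_{\mathbb{D}}\mathbb{F}$. By \zcref{Zhu alg iso}, $\ker(f)=O(V_{\mathbb{D}})$, and $\im(f)\cong A(V_{\mathbb{D}})\cong \mathbb{D}[x]$ by \zcref{lem: Zhu algebra arbitrary}, which is torsion-free. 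Since $V_{\mathbb{D}}$ is free, $T(V_{\mathbb{D}})=0$. The lemma therefore gives
\[O(V_{\mathbb{D}}) = \ker(f_{\mathbb{F}})\cap V_{\mathbb{D}}.\]

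The remaining task is to identify $\ker(f_{\mathbb{F}})$ with $O(V_{\mathbb{F}})$. Localization $\mathbb{D}\to\mathbb{F}$ is flat, so \zcref{lem:base change} gives $\Aa(V_{\mathbb{F}})=\Aa(V_{\mathbb{D}})\otimes_{\mathbb{D}}\mathbb{F}$, and $f_{\mathbb{F}}$ is exactly the Zhu projection for $V_{\mathbb{F}}$. Hence $\ker(f_{\mathbb{F}})=O(V_{\mathbb{F}})$.

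Alternatively, one can check this directly. The spanning set of $O$ is given by integral formulas in the modes; in the reduced form of \zcref{lem: Zhu algebra arbitrary} it is the elements $\omega\circ^0_{0,n}v$. So $O(V_{\mathbb{F}})$ is the $\mathbb{F}$-span of $O(V_{\mathbb{D}})$, that is, $O(V_{\mathbb{D}})\otimes\mathbb{F}$, which is the kernel of $f\otimes\mathbb{F}$ because localization is exact. The identification $V_{\mathbb{D}}\hookrightarrow V_{\mathbb{F}}$ is compatible with standard monomial bases, so the intersection statement makes sense.

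For $\mathbb{Z}$, the complication is that $\overline{V}(c,0)_{\mathbb{Z}}$ need not be a vertex algebra, since $c/2$ may not lie in $\mathbb{Z}$. I would define $O(\overline{V}(c,0)_{\mathbb{Z}})$ as $O(V_{\mathbb{D}})\cap \overline{V}(c,0)_{\mathbb{Z}}$. Then $O(V_{\mathbb{F}})\cap \overline{V}(c,0)_{\mathbb{Z}} = O(V_{\mathbb{F}})\cap V_{\mathbb{D}}\cap \overline{V}(c,0)_{\mathbb{Z}}$, and the $\mathbb{D}$ case finishes it. If instead it is defined as the $\mathbb{Z}$-span of the generators, one needs torsion-freeness of the corresponding quotient of $\overline{V}(c,0)_{\mathbb{Z}}$ and the same argument with $\mathbb{Q}$.

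The main obstacle is the torsion-freeness of $\im(f)$. It rests entirely on \zcref{lem: Zhu algebra arbitrary} holding over $\mathbb{D}$ itself, which requires Wang's reduction argument to work with integral coefficients, and not just after inverting integers. I would check that the recursion expressing $L_{-n}v$ modulo $O'$ in terms of $[\omega]*$ and lower terms has unit leading coefficients. This recursion is $(\omega\circ^0_{0,n-2}v)=L_{-n}v+\dots$, and its leading coefficient is $1$.
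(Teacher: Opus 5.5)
Your proposal is correct and follows the paper's proof: apply \zcref{torsion free lemma}(b) to the projection $\overline{V}(c,0)_{\mathbb{D}}\to\Aa(\overline{V}(c,0)_{\mathbb{D}})\cong\mathbb{D}[x]$, whose domain is free and whose image is torsion-free, and treat the $\mathbb{Z}$ case the same way. You also spell out two points the paper leaves implicit, namely that $\ker(f_{\mathbb{F}})=O(\overline{V}(c,0)_{\mathbb{F}})$ by flatness of localization and what $O$ should mean over $\mathbb{Z}$ (your side check has a harmless index slip: it is $\omega\circ^0_{0,n-3}v=L_{-n}v+2L_{-n+1}v+L_{-n+2}v$ that has leading term $L_{-n}v$).
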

\begin{proof}
    Consider the map $\overline{V}(c,0)_{\mathbb{D}}\to \Aa(\overline{V}(c,0)_{\mathbb{D}})$. Both the domain and image are free $\mathbb{D}$-modules, so \zcref{torsion free lemma} gives us the first claim. The claim for when we replace $\mathbb{D}$ with $\mathbb{Z}$ follows from the same reasoning.
\end{proof}
It is important to note that the modules $\mathbb{D}v_{c,h}$ are self-dual over $\mathbb{D}$, so we may identify ${}^\theta(\mathbb{D}v_{c,h})^\lor \cong \mathbb{D}v_{c,h}$, where $\theta$ is the involution of $\overline{V}(c,0)_{\mathbb{D}}$ induced by the $\mathcal{H}$-module structure.
\begin{definition}
    We define $L_{\Vir}(c,h)_{\mathbb{D}}$ to be the quotient of $V_{\Vir}(c,h)_{\mathbb{D}}$ by the maximal graded $\overline{V}(c,0)_{\mathbb{D}}$-submodule $J\subset V_{\Vir}(c,h)_{\mathbb{D}}$ such that the degree zero part is equal to 0. That is,
    \[L_{\Vir}(c,h)_{\mathbb{D}} \eqdef {}_{\overline{V}_{\Vir}(c,0)_{\mathbb{D}}}\ThetaL({\mathbb{D}} v_{c,h}).\]
\end{definition}
The primary goal of this section is to understand these spaces when $\mathbb{D}=\mathbb{F}$ is a field.

\subsection{The discrete series}
Over $\C$, it is natural to consider the case of the discrete series central charges. We may study these over more general rings as well, and some results about the Zhu algebra carry over naturally.

Given coprime integers $r,s>1$, we define
\[c_{r,s} = 1-6\frac{(r-s)^2}{rs} \in \Q.\]
For $1\le m\le r-1$ and $1\le n\le s-1$, we define
\[h_{m,n} = \frac{(nr-ms)^2-(r-s)^2}{4rs} \in \Q.\]
We recall the following result on these elements, which is usually implicitly stated.
\begin{lemma}\label{lem:h set}
    Let $r,s>1$ be coprime integers. Define
    \[\widetilde h(r,s) = \{h_{m,n} \mid 1\le m\le r-1, 1\le n\le s-1\}.\]
    Denote by $h(r,s)$ the set of distinct elements in $\widetilde h(r,s)$. Then 
    \[|h(r,s)| = \frac{1}{2}|\widetilde h(r,s)| = \frac{1}{2}(r-1)(s-1).\]
\end{lemma}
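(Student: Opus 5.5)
The idea is to find the exact symmetry of the index set and then show it is the only coincidence. Write $N(m,n) = nr - ms$, so that
\[h_{m,n} = \frac{N(m,n)^2-(r-s)^2}{4rs}.\]
Hence $h_{m,n}=h_{m',n'}$ if and only if $N(m,n)=\pm N(m',n')$. There are two steps. First, the involution $(m,n)\mapsto (r-m,s-n)$ preserves the index range $1\le m\le r-1$, $1\le n\le s-1$, and it sends $N$ to
\[(s-n)r-(r-m)s = -N(m,n),\]
so it identifies values in pairs. Second, apart from this involution there are no further coincidences and no fixed points. Together these give $|h(r,s)| = \tfrac12 (r-1)(s-1)$.

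\textbf{No fixed points.} A fixed point would need $r=2m$ and $s=2n$, so both $r$ and $s$ would be even. This contradicts coprimality.

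\textbf{Injectivity of $N$.} Suppose $N(m,n)=N(m',n')$. Then $(n-n')r=(m-m')s$. Since $\gcd(r,s)=1$, we get $s\mid n-n'$. But $|n-n'|\le s-2$, so $n=n'$, and then $m=m'$.

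\textbf{Opposite signs.} Suppose $N(m,n)=-N(m',n')$. Then $(n+n')r=(m+m')s$, so $s\mid n+n'$. Since $2\le n+n'\le 2s-2$, this forces $n+n'=s$. Substituting back gives $m+m'=r$, so $(m',n')=(r-m,s-n)$ is exactly the involution.

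It follows that the fibers of $(m,n)\mapsto h_{m,n}$ on $\widetilde h(r,s)$, viewed as the multiset of $(r-1)(s-1)$ indexed values, are exactly the orbits of a fixed-point-free involution. Every orbit has size $2$, which proves both equalities.

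The only step that needs care is the divisibility argument with its size bounds. It is elementary, so I expect no real obstacle. One small point: the identity $h=h'\iff N^2=N'^2$ uses only that $4rs\neq 0$, and we work in $\Q$, so it holds.
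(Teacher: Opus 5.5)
Your proof is correct and complete. The paper gives no proof of this lemma; it is recalled as a standard fact, so there is no argument to compare against. What you wrote is the usual justification the paper implicitly relies on:
\begin{itemize}
\item the Kac-table symmetry $(m,n)\mapsto(r-m,s-n)$, which negates $nr-ms$ and so preserves $h_{m,n}$;
\item the coprimality and size-bound argument showing this involution has no fixed points;
\item the same argument showing it accounts for every coincidence $nr-ms=\pm(n'r-m's)$.
\end{itemize}
Together these show that each fiber of $(m,n)\mapsto h_{m,n}$ has exactly two elements.

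You were also right to read $\widetilde h(r,s)$ as the indexed family (multiset) of $(r-1)(s-1)$ values. Taken literally as a set it would equal $h(r,s)$, and the first equality in the statement would be false, so that clarification is worth keeping. Your argument also shows, as a by-product, that $(r-1)(s-1)$ is even.
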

Here are a number of interesting arithmetic facts about this setup.
\begin{lemma}
    Let $r,s>1$ be coprime. Then the denominator of $c_{r,s}$ after reducing fractions is given by
    \[\frac{rs}{\gcd(r,6)\gcd(s,6)}.\]
    Similarly, after reducing fractions, the denominator of $4h_{m,n}$ is given by
    \[\frac{rs}{\gcd(r,m^2-1)\gcd(s,n^2-1)}.\]
\end{lemma}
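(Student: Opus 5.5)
The plan is a direct computation of $\gcd(\text{numerator},rs)$ for each fraction written over the denominator $rs$. It rests on one elementary fact. Since $\gcd(r,s)=1$, for any integer $N$ we have
\[\gcd(N,rs)=\gcd(N,r)\gcd(N,s).\]
This holds because $\gcd(N,r)$ and $\gcd(N,s)$ are coprime divisors of $N$, so their product divides $\gcd(N,rs)$. Conversely, any $d\mid \gcd(N,rs)$ factors as $d=\gcd(d,r)\gcd(d,s)$, since $r$ and $s$ are coprime. So for a fraction $N/(rs)$ the reduced denominator is $rs/(\gcd(N,r)\gcd(N,s))$, and it suffices to compute $\gcd(N,r)$ and $\gcd(N,s)$ separately. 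For each of these we use $\gcd(N,r)=\gcd(N \bmod r,\,r)$.

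For the central charge, write
\[c_{r,s}=\frac{rs-6(r-s)^2}{rs},\qquad N=rs-6(r-s)^2.\]
Reducing modulo $r$ gives $N\equiv -6s^2$. Since $\gcd(s,r)=1$, this yields $\gcd(N,r)=\gcd(6s^2,r)=\gcd(6,r)$. Symmetrically, $N\equiv -6r^2 \pmod s$, so $\gcd(N,s)=\gcd(6,s)$. The fact above then gives the reduced denominator $rs/(\gcd(r,6)\gcd(s,6))$.

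For the conformal weights, write
\[4h_{m,n}=\frac{N'}{rs},\qquad N'=(nr-ms)^2-(r-s)^2.\]
Modulo $r$ we get $N'\equiv m^2s^2-s^2=s^2(m^2-1)$, and coprimality of $r$ and $s$ gives $\gcd(N',r)=\gcd(m^2-1,r)$. Modulo $s$ we get $N'\equiv n^2r^2-r^2=r^2(n^2-1)$, so $\gcd(N',s)=\gcd(n^2-1,s)$. Combining these with the fact above gives the stated denominator $rs/(\gcd(r,m^2-1)\gcd(s,n^2-1))$.

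There is no real obstacle here. The only step needing care is the multiplicativity $\gcd(N,rs)=\gcd(N,r)\gcd(N,s)$ together with cancelling the unit factors $s^2$ and $r^2$ in the gcds. Both depend on the coprimality of $r$ and $s$, and this should be stated explicitly.
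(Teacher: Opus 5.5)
Your proposal is correct and follows essentially the same route as the paper. Both write each quantity as a fraction over $rs$, split $\gcd(rs,N)=\gcd(r,N)\gcd(s,N)$ using coprimality, and reduce the numerator modulo $r$ and modulo $s$ to strip the unit factors $s^2$ and $r^2$. (The paper works with the numerator $6(r-s)^2$ rather than $rs-6(r-s)^2$, which makes no difference.)
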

\begin{proof}
    After reducing fractions, the denominator will be $rs/\gcd(rs,6(r-s)^2)$. Since $r,s$ are coprime, we have
    \begin{align*}
        \gcd(rs,6(r-s)^2) &= \gcd(r,6(r-s)^2)\gcd(s,6(r-s)^2) \\
        &= \gcd(r,6s^2)\gcd(s,6r^2) \\
        &= \gcd(r,6)\gcd(s,6).
    \end{align*}
    This shows the first statement.
    Similarly, the denominator of $4h_{m,n}$ after reducing fractions is given by
    \[\frac{rs}{\gcd(rs,(nr-ms)^2-(r-s)^2)}.\]
    We have
    \begin{align*}
        &\gcd(rs,(nr-ms)^2-(r-s)^2) \\
        &= \gcd(r,(nr-ms)^2-(r-s)^2)\gcd(s,(nr-ms)^2-(r-s)^2)\\
        &= \gcd(r,(m^2-1)s^2)\gcd(s,(n^2-1)r^2)\\
        &= \gcd(r,m^2-1)\gcd(s,n^2-1).
    \end{align*}
    This shows the second statement.
\end{proof}
\begin{lemma}
    Let $p>3$ be prime. Then there are $(p+1)/2$ elements $c\in\mathbb{F}_p$ such that $c\equiv c_{r,s}$ for some coprime integers $r,s>1$.
\end{lemma}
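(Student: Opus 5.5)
The plan is to reduce the count to counting the values of $t+t^{-1}$ on $\mathbb{F}_p^\times$. The key identity is
\[c_{r,s} = 1-6\frac{(r-s)^2}{rs} = 1 - 6\left(\frac{r}{s}+\frac{s}{r}-2\right) = 13 - 6\left(\frac{r}{s}+\frac{s}{r}\right).\]

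First I determine which $c_{r,s}$ make sense in $\mathbb{F}_p$. By the previous lemma, the reduced denominator of $c_{r,s}$ is $rs/(\gcd(r,6)\gcd(s,6))$. Since $p>3$, the prime $p$ divides this denominator exactly when $p\mid rs$. So the relevant pairs are the coprime $r,s>1$ with $p\nmid rs$. For such a pair, put $t = r s^{-1}\in\mathbb{F}_p^\times$; then $c_{r,s}\equiv 13-6(t+t^{-1}) \pmod p$.

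Next I show that every $t\in\mathbb{F}_p^\times$ arises from an admissible pair. Fix any prime $s>1$ with $s\neq p$. The integers $r\equiv ts \pmod p$ form a progression $r_0+kp$ with $r_0>1$. Because $\gcd(p,s)=1$, as $k$ varies $r$ runs through all residues mod $s$, so I can choose $k$ with $s\nmid r$. Then $r>1$, $\gcd(r,s)=1$ (as $s$ is prime), $p\nmid r$, and $r/s\equiv t$. This includes $t=\pm1$; for example $(r,s)=(p+2,2)$ gives $t=1$ when $p$ is odd.

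Hence the set of attainable $c$ is the image of $t\mapsto 13-6(t+t^{-1})$ on $\mathbb{F}_p^\times$. Since $6$ is invertible, this has the same size as the image of $\phi(t)=t+t^{-1}$. The equation $\phi(t)=\phi(u)$ is equivalent to $(t-u)(1-(tu)^{-1})=0$, so the fibres of $\phi$ are exactly $\{t,t^{-1}\}$. Such a fibre is a singleton iff $t^2=1$, i.e. $t=\pm1$, and these are distinct since $p$ is odd. The remaining $p-3$ elements pair off. So $|\mathrm{im}\,\phi| = 2+\frac{p-3}{2}=\frac{p+1}{2}$.

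The only mildly delicate step is the realizability argument: both $r$ and $s$ must exceed $1$, be coprime, and avoid $p$, and the choice of $s$ prime with the progression argument handles all of these.
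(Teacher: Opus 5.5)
Your proof is correct and follows the same route as the paper: rewrite $c_{r,s}=13-6(r/s+s/r)$, show that the image of $x\mapsto x+x^{-1}$ on $\mathbb{F}_p^\times$ has $(p+1)/2$ elements, and realize every unit of $\mathbb{F}_p$ as $r/s$ with coprime $r,s>1$. The differences are minor: you take $s$ to be a prime other than $p$ where the paper takes $s=mp+1$ and $r=np+a$, and you use the denominator lemma to check explicitly that exactly the pairs with $p\nmid rs$ give well-defined residues, a point the paper leaves implicit.
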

\begin{proof}
    First, consider the map $f\colon \mathbb{F}_p^\times\to \mathbb{F}_p$ given by $x\mapsto x+x^{-1}$. Note that $f(x) = f(x^{-1})$. There are exactly two elements $x\in \mathbb{F}_p^\times$ with the property that $x = x^{-1}$, namely $1$ and $p-1$. For these, we have $f(1)=2$ and $f(p-1) = p-2$. Therefore, $|\im(f)| \le (p-3)/2 + 2 = (p+1)/2$. To see that this is an equality, suppose that $x+x^{-1} = y+y^{-1}$ for some $x,y\in \mathbb{F}_p^\times$. Rearranging, we have
    \[x^2y+y = y^2x+x \iff xy(x-y) - (x-y) = (xy-1)(x-y) = 0.\]
    Therefore, either $y = x$ or $x^{-1}$. This shows that $|\im(f)| = (p+1)/2$.

    Let $a \in \{0,\ldots,p-1\}$. Then $x\equiv (np+a)/(mp+1) \bmod p$, where $n,m\in \Z$. Given any $m>1$, there are infinitely many $n>1$ such that $\gcd(np+a,mp+1) = 1$ by the Chinese remainder theorem. Therefore, every element of $\mathbb{F}_p^\times$ is congruent to $r/s$ for coprime integers $r=np+a$ and $s = mp+1$. 
    Given $c = \mathbb{F}_p^\times$, let $a = (13-c)/6$. There are precisely $(p+1)/2$ many values of $a \in \mathbb{F}_p$ such that $a = x+x^{-1}$ for some $x\in \mathbb{F}_p^\times$. This shows the desired claim.
\end{proof}

We recall that a \textit{singular vector at level $N$} of $V_{\Vir}(c,h)_\kk$ is a homogeneous element $v_{c,h,N}$ of degree $N$ such that $L_iv_{c,h,N} = 0$ for all $i>0$. 
Over $\C$, it is known that the maximal graded submodule $J\subset V_{\Vir}(c_{r,s},h_{m,n})_\C$ is generated by two singular vectors at levels $mn$ and $(r-m)(s-n)$ respectively, which we denote as
\[v_{c_{r,s},h_{m,n},mn}\quad \text{and}\quad v_{c_{r,s},h_{m,n},(r-m)(s-n)}.\]
It is known that the coefficients of these are rational numbers with respect to the standard basis of monomials \cite{Watts24}, so we may regard the above singular vectors as elements of $V_{\Vir}(c_{r,s},h_{m,n})_\Q$.

In the case $h_{1,1} = 0$, we have that $v_{c_{r,s},0,1} = L_{-1}v_{c,0}$. Therefore, the quotient $\overline{V}(c_{r,s},0)_{\Q}$ is precisely the quotient of $V_{\Vir}(c,0)_{\Q}$ by one of its singular vectors. We denote by $v_{r,s}$ the image of the singular vector $v_{c_{r,s},0,(r-1)(s-1)}$ in $\overline{V}(c_{r,s},0)_{\Q}$.
Given $c\in \C$ such that $c\neq c_{r,s}$ for any $r,s>1$, the Verma module $V_{\Vir}(c,0)_\C$ only admits the singular vector $L_{-1}v_{c,0}$ \cite[Lemma 4.2]{Wang}. That is, the VOA $\overline{V}(c,0)_\C$ is simple provided that $c\neq c_{r,s}$ for any $r,s>1$ coprime. It is straightforward to see that this result descends to $\overline{V}(c,0)_\Q$ as well.

Such singular vectors are usually only determined up to a scalar coefficient. One standard normalization is to pick the coefficient of $L_{-1}^{N}v_{c_{r,s},h_{m,n}}$ in $v_{c_{r,s},h_{m,n},N}$ to be 1. Here is another way of picking the normalization, which is more convenient for us.
\begin{definition}
    Let $r,s>1$ be coprime. 
    Let $v_{c_{r,s},h_{m,n},N}$ be the singular vector at level $N$ in $V_{\Vir}(c_{r,s},h_{m,n})_\Q$, where $N = mn$ or $(r-m)(s-n)$. We denote by $\hat{v}_{c_{r,s},h_{m,n},N}$ the normalization of $v_{c_{r,s},h_{m,n},N}$ so that the coefficients of $\hat{v}_{c_{r,s},h_{m,n},N}$ with respect to the standard basis of monomials are integers with total gcd equal to 1. We define $\hat{v}_{r,s}$ in the same way for the singular vector $v_{r,s}$ in $\overline{V}(c_{r,s},0)_{\Q}$.
\end{definition}
In order for us to obtain the cleanest results possible, we work over the following rings:
\begin{equation}\label{eq:Rrs def}
    R_{r,s} = \Z[\tfrac{1}{2},c_{r,s},h(r,s)].
\end{equation}
That is, $R_{r,s}$ is the extension of $\Z$ by $\frac{1}{2}$, $c_{r,s}$, and all distinct possible values of $h_{m,n}$ for $1\le m\le r-1$ and $1\le n\le s-1$.
\begin{definition}
    We define $\widetilde{L}_{\Vir}(c_{r,s},h_{m,n})_{R_{r,s}}$ to be the quotient of the Verma module $V_{\Vir}(c_{r,s},h_{m,n})_{R_{r,s}}$ by the submodule generated by the two singular vectors at levels $mn$ and $(r-m)(s-n)$ defined above. For any ring morphism $R_{r,s}\to \kk$, we define $\widetilde{L}_{\Vir}(c_{r,s},h_{m,n})_{R}$ to be the quotient of $\overline{V}_{\Vir}(c_{r,s},h_{m,n})_{R}$ by the submodule generated by the singular vectors $\hat{v}_{c_{r,s},h_{m,n},N}$, where $N=mn$ and $N=(r-m)(s-n)$.
\end{definition}

Now, we analyze the Zhu algebra.
It is shown in \cite[Proposition 4.2]{Wang} that the image of $\hat{v}_{r,s}$ in the Zhu algebra $\Aa(\overline{V}(c_{r,s},0)_{\C})$ is a scalar multiple of the polynomial 
\[G_{r,s}(x) = \prod_{h\in h(r,s)}(x-h) \in \C[x],\] 
which is the unique square root of $\prod_{m=1}^{r-1}\prod_{n=1}^{s-1}(x-h_{m,n})$ up to a factor of $\pm 1$.
We define $\hat{G}_{r,s}$ to be the integer normalization of $G_{r,s}(x)$ so that the coefficients of the polynomial are integers with total gcd equal to 1.

The first and most important property is that the Zhu algebra is semisimple. First, we show this over $\Q$:
\begin{lemma}\label{singular vector rational}
    The image of the singular vector $\hat{v}_{r,s}\in \overline{V}(c_{r,s},0)_\Q$ in $\Aa(\overline{V}(c_{r,s},0)_\Q)$ is a nonzero rational multiple of the polynomial $G_{r,s}(x)$. Moreover,
    \[\Aa(\widetilde{L}_{\Vir}(c_{r,s},0)_\Q) \cong \Q[x]/(G_{r,s}(x)).\]
\end{lemma}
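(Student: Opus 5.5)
The plan is to transfer Wang's computation over $\C$ to $\Q$ by flat base change along $\Q\to\C$, and then to read off the Zhu algebra of the quotient from the presentation of \zcref{lem: Zhu algebra arbitrary}.

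\textbf{Step 1: the image of $\hat v_{r,s}$ is a nonzero rational multiple of $G_{r,s}$.} Since $\hat v_{r,s}$ has rational coefficients, it lies in $\overline{V}(c_{r,s},0)_\Q$. Consider its image $P(x)\in\Aa(\overline{V}(c_{r,s},0)_\Q)\cong\Q[x]$ under \zcref{lem: Zhu algebra arbitrary}. Base change along the flat map $\Q\to\C$ is compatible with Zhu algebras by \zcref{lem:base change}, so
\[
\Aa(\overline{V}(c_{r,s},0)_\Q)\otimes_\Q\C\cong\Aa(\overline{V}(c_{r,s},0)_\C)\cong\C[x],
\]
with $x\mapsto x$, since $[\omega]\mapsto[\omega]$. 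The inclusion $\Q[x]\hookrightarrow\C[x]$ is injective. Hence $P(x)$, viewed in $\C[x]$, is the image of $\hat v_{r,s}$ in the complex Zhu algebra. By \cite[Proposition 4.2]{Wang} this image equals $\lambda G_{r,s}(x)$ for some $\lambda\in\C$.

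We need $\lambda\neq0$. Wang's argument gives this because the image of the singular vector in $\Aa$ acts on the highest weight vector of $V_\Vir(c,h)$ by a polynomial in $h$ that vanishes exactly on the relevant $h$. Alternatively, if $\lambda=0$, then $\Aa(L_\Vir(c_{r,s},0)_\C)\cong\C[x]$, which would contradict rationality over $\C$.

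Next we show $\lambda\in\Q$. The roots $h_{m,n}$ are rational, so $G_{r,s}\in\Q[x]$ is monic with rational coefficients. Comparing leading coefficients of $P=\lambda G_{r,s}$ shows that $\lambda$ is the leading coefficient of $P$, hence $\lambda\in\Q^\times$. This proves the first claim.

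\textbf{Step 2: the Zhu algebra of the quotient.} The VOA $\widetilde L_\Vir(c_{r,s},0)_\Q$ is the quotient of $\overline V(c_{r,s},0)_\Q$ by the ideal $I$ generated by $\hat v_{r,s}$. Here $L_{-1}v_{c,0}$ has already been killed, since $h_{1,1}=0$. The standard fact that $\Aa(V/I)\cong\Aa(V)/[I]$, with $[I]$ the image of $I$, holds verbatim over any ring. I would justify it through the universal enveloping algebra: $\UV(V/I)$ is the quotient of $\UV(V)$ by the closed ideal generated by the modes of $I$. Alternatively, use the quotient presentation $A_0(V)=V/O_0(V)$ of \zcref{Zhu alg iso}, together with $O(V/I)=(O(V)+I)/I$.

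It remains to show that the image of $I$ in $\Q[x]$ is the principal ideal $(G_{r,s})$. Since $\hat v_{r,s}$ is a singular vector, $I$ is spanned by the vectors $u\cdot\hat v_{r,s}$ with $u$ a product of $L_{-n}$. This follows from the PBW decomposition: the positive modes kill $\hat v_{r,s}$, and $L_0$ acts by a scalar.

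Following Wang's appendix, reduce these modulo $O(V)$. We have $L_{-n}v\equiv(\text{polynomial in }x)\cdot[v]$ for $n\ge1$, because $\omega\circ^0_{0,t}v\in O$ rewrites $L_{-n-2}v$ in terms of $L_{-2}v$ and $L_{-1}v$, and for singular $v$ one has $[\omega]*[v]=(L_{-2}+2L_{-1}+L_0)v$. Thus every element of $[I]$ is a $\Q[x]$-multiple of $[\hat v_{r,s}]$, which gives $[I]=(G_{r,s})$.

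\textbf{Main obstacle.} The delicate point is this last containment $[I]\subseteq(G_{r,s})$. One must check that the reduction genuinely stays inside the principal ideal and does not produce other polynomials. I would handle it by appealing to the fact that $[I]$ is a two-sided ideal of the commutative ring $\Q[x]$ generated by the classes $[u\hat v_{r,s}]$. Each such class equals $[u]*[\hat v_{r,s}]$ up to $O(V)$, by the identity $a*b\equiv\sum\binom{\deg a}{i}a_{(i-1)}b$ applied inductively to the monomials of $u$. These are exactly the arguments from \cite{Wang}, and they are integral in nature, so they hold over $\Q$.
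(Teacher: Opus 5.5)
Your proof is correct. It rests on the same three ingredients as the paper's proof: flat base change along $\Q\to\C$, Wang's computation over $\C$, and the fact that $G_{r,s}$ is monic with rational roots. The difference is the stage at which you base change.

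The paper first asserts $\Aa(\widetilde{L}_{\Vir}(c_{r,s},0)_\Q)=\Q[x]/(H_{r,s})$, with $H_{r,s}$ the image of $\hat v_{r,s}$. It then base-changes this \emph{quotient} algebra and compares it with Wang's $\C[x]/(G_{r,s})$, obtaining $(H_{r,s})=(G_{r,s})$ in $\C[x]$. In that route, $H_{r,s}\neq 0$ comes for free.

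You instead base-change the \emph{universal} Zhu algebra $\Q[x]\subset\C[x]$ and compare the element $[\hat v_{r,s}]$ itself with Wang's Proposition 4.2. This costs you a separate argument that $\lambda\neq 0$; either of the two you give is fine. In exchange, your Step 2 actually proves the principal-ideal description $[I]=\Q[x]*[\hat v_{r,s}]$, which the paper's proof takes for granted.

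One statement in your ``main obstacle'' paragraph needs correcting. The identity $[u\hat v_{r,s}]=[u]*[\hat v_{r,s}]$ is false in general. For instance, $[L_{-2}\hat v_{r,s}]=(x+N)*[\hat v_{r,s}]$ with $N=(r-1)(s-1)$, whereas $[\omega]*[\hat v_{r,s}]=x*[\hat v_{r,s}]$. The induction only needs $[L_{-n}w]\in\Q[x]*[w]$ for \emph{every} homogeneous $w$, and this requires two adjustments to your sketch:
\begin{enumerate}
\item The identity $\omega*w=(L_{-2}+2L_{-1}+L_0)w$ is not restricted to singular $w$. You need it in that generality, because you must apply it to the non-singular vectors $L_{-n_2}\cdots L_{-n_k}\hat v_{r,s}$.
\item You also need $[L_{-1}w]=-\deg(w)[w]$, which comes from $T^{(1)}w+\deg(w)w\in O(V)$ and which your sketch omits.
\end{enumerate}
With these, iterating over $L_{-n_1}\cdots L_{-n_k}\hat v_{r,s}$ gives $[I]\subseteq\Q[x]*[\hat v_{r,s}]$, as you intended.
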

\begin{proof}
    We may identify $A(\overline{V}(c_{r,s},0)_\Q) = \Q[x]$, and the singular vector $v_{r,s}$ corresponds to a polynomial $H_{r,s}(x)\in \Q[x]$. In other words, the quotient $\widetilde{L}_{\Vir}(c_{r,s},0)_\Q$ has the Zhu algebra
    \[\Aa(\widetilde{L}_{\Vir}(c_{r,s},0)_\Q) = \Q[x]/(H_{r,s}(x)).\]
    By properties of base change, we have that
    \[\Aa(\widetilde{L}_{\Vir}(c_{r,s},0)_\Q)\otimes_\Q \C = \C[x]/(H_{r,s}(x)) \cong \Aa(\widetilde{L}_{\Vir}(c_{r,s},0)_\C) = \C[x]/(G_{r,s}(x)).\]
    As such, we conclude that $H_{r,s}(x)$ and $G_{r,s}(x)$ differ by a complex scalar in $\C[x]$. However, both of these polynomials have rational coefficients, so the complex scalar must be rational. This gives us the desired result.
\end{proof}
Now, we evaluate the rational scalar in \zcref{singular vector rational}.
\begin{lemma}\label{lem:Zhu alg integral}
    Modulo $O(\overline{V}(c_{r,s},0)_{\Z})\subset O(\overline{V}(c_{r,s},0)_{R_{r,s}})$, we have an identification
    \[\hat{v}_{r,s} \equiv \pm\hat{G}_{r,s}([\omega]).\]
    Moreover,
    \[\Aa(\widetilde{L}_{\Vir}(c_{r,s},0)_{R_{r,s}}) \cong R_{r,s}[x]/(\hat{G}_{r,s}(x)) = R_{r,s}[x]/(G_{r,s}(x)).\]
\end{lemma}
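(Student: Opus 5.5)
We work inside $\overline{V}(c_{r,s},0)_{\Z}$ and use that the class map $\overline{V}(c_{r,s},0)_{\Z}\to A(\overline{V}(c_{r,s},0)_{\Z})$ lands in a free module. By \zcref{lem: Zhu algebra arbitrary}, together with the corollary identifying $O(\overline{V}(c,0)_{\mathbb{D}})=O(\overline{V}(c,0)_{\mathbb{F}})\cap \overline{V}(c,0)_{\mathbb{D}}$, the image of $\overline{V}(c_{r,s},0)_{\Z}$ in $\Aa(\overline{V}(c_{r,s},0)_{\Q})=\Q[x]$ is torsion-free. (Strictly, $\overline{V}(c_{r,s},0)_{\Z}$ is only a $\Z$-form; we would use it as a lattice inside the $R_{r,s}$-algebra.)

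The first key step is to identify this image explicitly. We claim it is contained in $\Z[x]$, and that the monomial $L_{-2}^k\vac=\omega_{(-1)}^k\vac$ maps to a monic polynomial of degree $k$ modulo lower terms. Using $\omega*\,v\equiv (L_{-2}+2L_{-1}+L_0)v$ and the relations $\omega\circ^0_{0,n}v\in O$, every standard monomial $L_{-n_1}\cdots L_{-n_k}\vac$ reduces to an integer-coefficient polynomial in $[\omega]$. The reduction $L_{-n-3}\equiv$ (an integral combination of $L_{-2},L_{-1},L_0$) follows from the $\circ$-relations with binomial coefficients. However, $c_{r,s}$ enters through $L_0$ and the central term, so the honest statement is that the image lies in $\Z[c_{r,s}][x]\subset R_{r,s}[x]$. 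This is enough, because $\hat{v}_{r,s}$ has integer coordinates.

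Next, \zcref{singular vector rational} gives $\hat v_{r,s}\equiv \lambda G_{r,s}(x)=\lambda'\hat G_{r,s}(x)$ with $\lambda'\in\Q^\times$. The main obstacle is showing $\lambda'=\pm1$, i.e.\ that the image polynomial $H$ of $\hat v_{r,s}$ has content exactly $1$ and not merely integral content. Our plan is to reduce modulo each prime $p$. If $p$ divided all coefficients of $H$, then $\hat v_{r,s}\otimes 1$ would be a nonzero vector in $\overline{V}(c_{r,s},0)_{\F_p}$ (nonzero because its coordinates have gcd $1$) lying in $O(\overline{V}_{\F_p})$. We would rule this out by comparing it with the PBW leading term: the top-degree coefficient of $H$ is determined by the coefficient of $L_{-2}^{N/2}\vac$ in $\hat v_{r,s}$, where $N=(r-1)(s-1)$. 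If that coefficient is a unit, we are done. Failing that, we would argue via \zcref{torsion free lemma}(b) applied to $\overline{V}_{\Z}\to\Z[x]$: the kernel over $\Z$ is saturated, and since $H/p$ would also lie in the image lattice, $\hat v_{r,s}-p\,w\in O_{\Z}$ for some $w$. This forces $\hat v_{r,s}/p$ to be congruent to an integral vector modulo a saturated submodule, contradicting primitivity only if $\hat v_{r,s}$ is not in $O_{\Z}+p\overline{V}_{\Z}$. This last condition is exactly where a concrete leading-coefficient computation is needed. The argument is the same over $R_{r,s}$ with $R_{r,s}$-content in place of integer content.

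Finally, the ideal in the Zhu algebra generated by the image of $\hat v_{r,s}$ is $(\hat G_{r,s})$. Since $\hat G_{r,s}=\mu G_{r,s}$ with $\mu\in\Z$, the equality $(\hat G_{r,s})=(G_{r,s})$ in $R_{r,s}[x]$ requires $\mu$ to be a unit in $R_{r,s}$. This holds because the leading coefficient of $\hat G_{r,s}$ is a product of denominators of the $h_{m,n}$, each of which is invertible in $R_{r,s}=\Z[\tfrac12,c_{r,s},h(r,s)]$. We therefore obtain $\Aa(\widetilde L_{\Vir}(c_{r,s},0)_{R_{r,s}})\cong R_{r,s}[x]/(\hat G_{r,s})$ by right exactness of passing to the quotient: the Zhu algebra of a quotient by a singular-vector-generated ideal is the quotient by the image of that vector.
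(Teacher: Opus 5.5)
Your reduction is right as far as it goes, and it can be sharpened. The relations $(L_{-n-3}+2L_{-n-2}+L_{-n-1})v,\ (L_{-1}+L_0)v\in O$ and the identity $\omega * v=(L_{-2}+2L_{-1}+L_0)v$ give $[L_{-n}v]=(-1)^n\bigl((n-1)x+\deg v\bigr)[v]$. So $\overline{V}(c_{r,s},0)_{\Z}$ maps onto $\Z[x]$, with no $c_{r,s}$ appearing. Only $L_{-2}^{N/2}\vac$ reaches degree $N/2=\tfrac12(r-1)(s-1)$, and $G_{r,s}$ is monic, so the image of $\hat v_{r,s}$ is exactly $H=a\,G_{r,s}$, where $a$ is the $L_{-2}^{N/2}$-coefficient of $\hat v_{r,s}$. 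Your last paragraph is also fine: the l.c.d.\ of the coefficients of $G_{r,s}$ is a unit in $R_{r,s}$, and the Zhu algebra of the quotient is $R_{r,s}[x]/(H)$.

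The gap is the decisive step, which you never establish. You need $H$ primitive, i.e.\ $a=\pm d$ with $d$ that l.c.d., or at least $a\in R_{r,s}^\times$. Your argument ends by conceding it needs $\hat v_{r,s}\notin O_{\Z}+p\,\overline{V}(c_{r,s},0)_{\Z}$. Primitivity of $\hat v_{r,s}$ does not give this, because a primitive vector need not map to a primitive vector under the surjection onto $\Z[x]$.

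This condition actually fails, so the first assertion cannot be proved as stated. Take $(r,s)=(3,4)$ and $c=\tfrac12$. One checks that $L_1$ and $L_2$ kill the primitive vector
\[\hat v_{3,4}=\pm\bigl(64L_{-2}^3+93L_{-3}^2-264L_{-4}L_{-2}-108L_{-6}\bigr)\vac.\]
Using $[L_{-2}^3\vac]=x(x+2)(x+4)$, $[L_{-3}^2\vac]=4x^2+6x$, $[L_{-4}L_{-2}\vac]=3x^2+2x$ and $[L_{-6}\vac]=5x$, its image is $\pm(64x^3-36x^2+2x)=\pm 2\hat G_{3,4}(x)$, where $\hat G_{3,4}=32x^3-18x^2+x$. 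Concretely, $\hat v_{3,4}\equiv L_{-3}^2\vac \pmod 2$ and $[L_{-3}^2\vac]$ is even.

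The paper's proof breaks at the same place. From $b\hat v_{r,s}\equiv a\hat G_{r,s}([\omega])$ modulo $O_{\Z}$ and a prime $\ell\mid a$, one only gets $b\hat v_{r,s}\in O_{\Z}+\ell\,\overline{V}(c_{r,s},0)_{\Z}$. That does not mean $\ell$ divides the coordinates of $b\hat v_{r,s}$.

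What survives is $H=a\,G_{r,s}$. The second assertion is then equivalent to $a\in R_{r,s}^\times$. This holds for $(3,4)$, where $a=\pm64$ and $R_{3,4}=\Z[\tfrac12]$. Proving it in general needs independent input on the $L_{-2}^{N/2}$-coefficient of the singular vector at primes not inverted in $R_{r,s}$. You cannot borrow this from the later lemma on invertibility of that coefficient, since the paper derives that lemma from this one.
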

\begin{proof}    
    By \zcref{singular vector rational}, there exists a nonzero rational $\lambda = a/b$ such that 
    \[\hat{v}_{r,s} - \lambda \cdot \hat{G}_{r,s}([\omega]) \in O(\overline{V}(c_{r,s},0)_{\mathbb{Q}}).\]
    We may multiply by $b$ to obtain
    \[b\hat{v}_{r,s} - a \hat{G}_{r,s}([\omega]) \in O(\overline{V}(c_{r,s},0)_{\mathbb{Q}})\cap \overline{V}(c_{r,s},0)_{\mathbb{Z}} = O(\overline{V}(c_{r,s},0)_{\mathbb{Z}}).\]
    Equivalently, modulo $O(\overline{V}(c_{r,s},0)_{\mathbb{Z}})$ we have
    \[b\hat{v}_{r,s} \equiv a \hat{G}_{r,s}([\omega]) \mod O(\overline{V}(c_{r,s},0)_{\mathbb{Z}}).\]
    Without loss of generality, we may set $\gcd(a,b)=1$.
    Let $\ell$ be a prime dividing $a$. Then $\ell$ must divide the coefficients of $b\hat{v}_{r,s}$ in terms of the standard basis. We cannot have $\ell$ divide $b$ since $\gcd(a,b)=1$, so $\ell$ must divide the coefficients of $\hat{v}_{r,s}$. But this too is a contradiction since the gcd of the coefficients of $\hat{v}_{r,s}$ is 1. Therefore, no prime can divide $a$, so $a = \pm 1$. By the same argument, we conclude that $b = \pm 1$.
\end{proof}
\begin{remark}\label{rem: semisimple condition}
    Let $f\colon R_{r,s} \to \kk$ be any ring morphism, not just flat. By direct calculation, we can see that
    \[\Aa(\overline{V}_{\Vir}(c,0)_{\kk}) \cong \kk[x].\]
    By base change, one can see that the corresponding polynomial to the singular vector $\hat{v}_{r,s}$ is again $G_{r,s}(x)$. Therefore, for all such $\kk$ we have
    \[\Aa(\overline{V}_{\Vir}(c_{r,s},0)_{\kk}) \cong \kk[x]/(G_{r,s}(x)).\]
    This quotient is isomorphic to 
    \[\kk^{\frac{1}{2}(r-1)(s-1)} = \prod_{h\in h(r,s)}\kk\]
    in the case $\kk$ is a field and $G_{r,s}(x)$ is separable over $\kk$. Equivalently, $f(h_{m,n}) = f(h_{m',n'})$ holds in $\kk$ for some $m,n,m',n'$ if and only if $h_{m,n} = h_{m',n'}$ in $\Q$. However, this means the Zhu algebra is not semisimple unless $\kk$ is a field.
\end{remark}

Another interesting quantity to study is the algebra $R_V$.
First, we examine $R_{\overline{V}(c_{r,s},0)_{R_{r,s}}}$. By \zcref{lem:C2 generating set} and \zcref{lem: R_V surjection gr A(V)}, there are natural surjections
\[R_{r,s}[x]\twoheadrightarrow R_{\overline{V}(c_{r,s},0)_{R_{r,s}}} \twoheadrightarrow \gr\Aa(\overline{V}(c_{r,s},0)_{R_{r,s}}) \cong R_{r,s}[x],\]
where the last isomorphism is due to \zcref{lem: Zhu algebra arbitrary}. Letting $r$ be the generator of $R_{\overline{V}(c_{r,s},0)_{R_{r,s}}}$, the map $R_{\overline{V}(c_{r,s},0)_{R_{r,s}}} \twoheadrightarrow \gr\Aa(\overline{V}(c_{r,s},0)_{R_{r,s}})$ then corresponds to the map given by $r\mapsto x$. Therefore, 
\[R_{\overline{V}(c_{r,s},0)_{R_{r,s}}}\cong R_{r,s}[x]\]
because $R_{r,s}[x]\twoheadrightarrow R_{\overline{V}(c_{r,s},0)_{R_{r,s}}}$ admits a left inverse. 
\begin{lemma}\label{lem:R_V invertible coeff}
    The coefficient of $L_{-2}^{\frac{1}{2}(r-1)(s-1)}\vac$ in the singular vector $\hat{v}_{r,s} \in \overline{V}(c_{r,s},0)_{R_{r,s}}$ is invertible in $R_{r,s}$.
\end{lemma}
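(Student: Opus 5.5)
We compare the two filtered pictures of $\hat v_{r,s}$: its image in $\gr\Aa$ and its image in $R_V$. Set $N=\frac12(r-1)(s-1)=\deg G_{r,s}$; the singular vector $\hat v_{r,s}$ has degree $(r-1)(s-1)=2N$ in $\overline{V}(c_{r,s},0)_{R_{r,s}}$.

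First, we record the leading term of $\hat v_{r,s}$ in the Zhu algebra. By \zcref{lem:Zhu alg integral}, $[\hat v_{r,s}]=\pm\hat G_{r,s}([\omega])$ in $\Aa(\overline{V}(c_{r,s},0)_{R_{r,s}})\cong R_{r,s}[x]$. Moreover $\hat G_{r,s}$ is an integer multiple $u\,G_{r,s}$ of the monic polynomial $G_{r,s}$, and $u$ is a unit in $R_{r,s}$, since $R_{r,s}[x]/(\hat G_{r,s})=R_{r,s}[x]/(G_{r,s})$ by that lemma (alternatively, $u$ is a product of denominators of the $h_{m,n}$, which are inverted in $R_{r,s}$). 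Hence in $\gr_{2N}\Aa\cong R_{r,s}x^N$ the symbol of $[\hat v_{r,s}]$ is $\pm u x^N$, which is a unit times $x^N$. Here we use that the filtration on $\Aa$ is induced from the grading on $V$, with $[\omega]$ in filtration degree $2$, so $G_{r,s}([\omega])$ has top symbol $x^N$ in degree $2N$.

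Second, we compute the same symbol through $R_V$. Via the surjections $R_{r,s}[x]\twoheadrightarrow R_V\twoheadrightarrow\gr\Aa\cong R_{r,s}[x]$ (\zcref{lem:C2 generating set}, \zcref{lem: R_V surjection gr A(V)}), the composite sends $x\mapsto x$, and $x$ corresponds to $L_{-2}\vac=\omega$ in $R_V$. The map $R_V\to\gr\Aa$ sends the class of a homogeneous degree-$2N$ vector $v$ to the symbol of $[v]$ in $\gr_{2N}\Aa$. Now expand $\hat v_{r,s}$ in the standard monomial basis with parts $n_i\ge2$. Every monomial containing some $L_{-n}$ with $n\ge3$ lies in $C_2(V)$, because $L_{-n}=\omega_{(1-n)}$ and $1-n\le -2$. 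Any monomial is a left-nested product $\omega_{(-n_1)}\cdots$, so it suffices that its leftmost factor (or, after reordering modulo lower terms, some factor) has $n\ge3$; the reordering is possible since commutators of $L$'s preserve the $C_2$ condition up to terms in $C_2$. The monomial $L_{-2}^N\vac=\omega_{(-1)}^N\vac$ maps to $x^N$. Hence the image of $\hat v_{r,s}$ in $R_V\cong R_{r,s}[x]$ is $\alpha x^N$, where $\alpha$ is the coefficient of $L_{-2}^N\vac$, and its image in $\gr_{2N}\Aa$ is $\alpha x^N$.

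Comparing the two computations gives $\alpha=\pm u$, which is invertible in $R_{r,s}$. The main obstacle is the second step: making rigorous that all monomials other than $L_{-2}^N\vac$ vanish in $R_V$, and that the map $R_V\to\gr\Aa$ really records the degree-$2N$ symbol without contributions from lower-filtration terms. We would handle the first point with the standard fact that $C_2(V)$ is an ideal for the $(-1)$-product, together with the commutator formula \eqref{eq:commutator}. For the second point we would use that the $\gr$ map is defined degreewise by $v\mapsto[v]\bmod F_{2N-1}\Aa$.
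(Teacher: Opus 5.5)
Your argument is correct. It uses the same ingredients as the paper: the image $\alpha x^N$ of $\hat v_{r,s}$ in $R_V$, the surjection $R_V\twoheadrightarrow\gr\Aa$, and $[\hat v_{r,s}]=\pm\hat G_{r,s}([\omega])$ from \zcref{lem:Zhu alg integral}. However, you close the argument differently, and more soundly. The paper passes to the quotient $\widetilde{L}_{\Vir}(c_{r,s},0)_{R_{r,s}}$ and argues that a surjection $R[x]/(ax^N)\twoheadrightarrow R[x]/(x^N)$ with $x\mapsto x$ forces $(ax^N)\supseteq(x^N)$. But such a surjection exists for every $a$: it only encodes the trivial inclusion $(ax^N)\subseteq(x^N)$, so that step does not by itself give $a\in R_{r,s}^\times$. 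What actually pins down the coefficient is your comparison. The symbol of $[\hat v_{r,s}]$ in $\gr_{2N}\Aa(\overline V(c_{r,s},0)_{R_{r,s}})=R_{r,s}x^N$, which is free of rank one, equals both $\alpha x^N$ and $\pm u x^N$. Your route therefore proves the lemma and also gives the exact value $\alpha=\pm u$.

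A few small points:
\begin{itemize}
\item Your reordering caveat is unnecessary. The standard basis has $n_1\ge\cdots\ge n_k$, so any monomial with some $n_i\ge 3$ is $\omega_{(1-n_1)}(\cdots)$ with $1-n_1\le -2$, and lies in $C_2$ directly.
\item The concern about lower-filtration terms is exactly the well-definedness of $V/C_2(V)\to\gr\Aa(V)$, i.e.\ $[C_2(V)_n]\subseteq F_{n-1}\Aa$. You also need $F_{2N-1}\Aa(\overline V)=R_{r,s}[x]_{\le N-1}$, which is part of $\gr\Aa\cong R_{r,s}[x]$ with $x$ in degree $2$.
\item For $u\in R_{r,s}^\times$, rely on your second justification and make it precise. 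Write each $h\in h(r,s)$ as $a_h/b_h$ in lowest terms. Gauss's lemma gives $\hat G_{r,s}=\pm\prod_h(b_hx-a_h)$, so $u=\prod_h b_h$. Moreover $1/b_h\in\Z[a_h/b_h]\subseteq R_{r,s}$ because $\gcd(a_h,b_h)=1$.
\item Your first justification, via $R_{r,s}[x]/(\hat G_{r,s})=R_{r,s}[x]/(G_{r,s})$, is equivalent to this unit statement. That equality appears in the statement of \zcref{lem:Zhu alg integral}, but its proof only establishes the $\pm$ scalar, so it cannot carry the weight here.
\end{itemize}
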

\begin{proof}
    The singular vector $\hat{v}_{r,s}$ in $\overline{V}(c_{r,s},0)_{R_{r,s}}$ can be written as a sum of monomials of the form $L_{-n_1}\ldots L_{-n_k}\vac$ with $n_1\ge \ldots \ge n_k\ge 2$. Any such monomial is in $C_2(\overline{V}(c_{r,s},0)_{R_{r,s}})$ if $n_1\ge 3$, so the image of $\hat{v}_{r,s}$ in $R_{\overline{V}(c_{r,s},0)_{R_{r,s}}}$ is precisely just the $L_{-2}^{\frac{1}{2}(r-1)(s-1)}\vac$ term in $\hat{v}_{r,s}$, which we denote by $a_{r,s}L_{-2}^{\frac{1}{2}(r-1)(s-1)}$. We have a surjective $R_{r,s}$-algebra morphism
    \[R_{r,s}[x]/(a_{r,s}x^{\frac{1}{2}(r-1)(s-1)}) \cong R_{\widetilde{L}_{\Vir}(c_{r,s},0)_{R_{r,s}}}\twoheadrightarrow \gr \Aa(\widetilde{L}_{\Vir}(c_{r,s},0)_{R_{r,s}}) \cong \mathbb{F}_p[x]/(x^{\frac{1}{2}(r-1)(s-1)}).\]
    From this, $a_{r,s}$ must be invertible in $R_{r,s}$. Indeed, the fact we have a surjective $R$-algebra map $R[x]/(ax^n) \to R[x]/(x^n)$ implies that $(ax^n)\supseteq (x^n)$. But $(x^n)\supseteq (ax^n)$ already, so $(x^n) = (ax^n)$. Therefore, there exists $b\in R$ such that $x^n=bax^n$. By cancellation, this implies $ba=1$, so $a\in R^\times$.
\end{proof}

\begin{proposition}\label{prop:C2 cofinite ring}
    Let $r,s>1$ be coprime, and let $R_{r,s}\to R$ be a flat ring morphism. Then $\widetilde{L}_{\Vir}(c_{r,s},0)_{R}$ is $C_2$-cofinite. More specifically, $R_{\widetilde{L}_{\Vir}(c_{r,s},0)_{R}}$ is free of rank $\frac{1}{2}(r-1)(s-1)$.
\end{proposition}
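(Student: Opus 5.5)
The plan is to work over $R_{r,s}$ first and then base change along the flat map $R_{r,s}\to R$ using \zcref{C2 base change}.

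\textbf{Step 1: the $C_2$-algebra over $R_{r,s}$.} Write $V=\overline{V}(c_{r,s},0)_{R_{r,s}}$ and $L=\widetilde{L}_{\Vir}(c_{r,s},0)_{R_{r,s}}=V/I$, where $I$ is the ideal generated by $\hat v_{r,s}$. We showed above that $R_V\cong R_{r,s}[x]$ with $x=[\omega]$. I would describe $R_L$ as a quotient of $R_V$. The quotient map $V\to L$ sends $C_2(V)$ onto $C_2(L)$, so
\[R_L\cong V/(C_2(V)+I).\]

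\textbf{Step 2: the image of $I$ in $R_V$.} The image of $I$ in $R_V$ is an ideal. This uses the standard fact that $C_2(V)+I$ is closed under $a_{(-1)}$ and under $T^{(k)}$, $k\ge 1$. An element $a_{(n)}\hat v_{r,s}$ with $n\le -2$ lies in $C_2(V)$. An element $a_{(n)}\hat v_{r,s}$ with $n\ge 0$ is a combination of $L_{j}$-modes applied to $\hat v_{r,s}$, because $V$ is generated by $\omega$. Modes $L_j$ with $j\ge 1$ kill $\hat v_{r,s}$, since it is singular. The mode $L_0$ only rescales it, and $L_{-1}$ is $T^{(1)}$, which lands in $C_2(V)$. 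Iterating, the image of $I$ is the principal ideal generated by the image of $\hat v_{r,s}$. By the proof of \zcref{lem:R_V invertible coeff}, that image is $a_{r,s}x^{N}$ with $N=\tfrac12(r-1)(s-1)$.

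This step is the one that needs care. In a general quotient one must check that the normal-ordered products $u_{(-1)}\cdots u_{(-1)}\hat v_{r,s}$ exhaust $I$ modulo $C_2$. I would do this by a PBW-type reduction in $\UV$: every element of $I$ is a sum of terms $L_{-n_1}\cdots L_{-n_k}L_{m_1}\cdots\hat v_{r,s}$. Modes $L_{-n}$ with $n\ge 3$ give $C_2$ contributions, $L_{-2}$ corresponds to multiplication by $x$, and positive modes vanish on $\hat v_{r,s}$.

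\textbf{Step 3: conclusion over $R_{r,s}$.} Since $a_{r,s}\in R_{r,s}^\times$ by \zcref{lem:R_V invertible coeff}, we get
\[R_L\cong R_{r,s}[x]/(x^{N}).\]
This is free with basis $1,x,\dots,x^{N-1}$, of rank $N=\tfrac12(r-1)(s-1)$. (Alternatively, one gets freeness from the surjection $R_L\twoheadrightarrow\gr\Aa(L)\cong R_{r,s}[x]/(x^N)$, which is split by sending $x^i\mapsto x^i$. That surjection shows $R_L$ is no smaller than rank $N$, and Step 2 shows it is no larger.)

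\textbf{Step 4: base change.} For flat $R_{r,s}\to R$, the quotient $\widetilde{L}_{\Vir}(c_{r,s},0)_R$ equals $L\otimes_{R_{r,s}}R$, because quotienting commutes with tensoring by right exactness. Then \zcref{C2 base change} gives
\[R_{\widetilde{L}_{\Vir}(c_{r,s},0)_R}\cong R_L\otimes_{R_{r,s}}R\cong R[x]/(x^N),\]
which is free of rank $N$. In particular $\widetilde{L}_{\Vir}(c_{r,s},0)_R$ is $C_2$-cofinite.
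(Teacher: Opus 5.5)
Your proof is correct and follows the paper's route. You reduce to $R_{r,s}$ by flat base change, identify $R_{\widetilde{L}_{\Vir}(c_{r,s},0)_{R_{r,s}}}$ with $R_{r,s}[x]/(a_{r,s}x^{N})$, and use the invertibility of $a_{r,s}$ from \zcref{lem:R_V invertible coeff}; your Step 2 supplies the justification, which the paper leaves to ``the above discussion,'' that the image of the ideal in $R_V$ is the principal ideal generated by the image of $\hat v_{r,s}$.

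One small correction in Step 2: over $R_{r,s}$ the vertex algebra ideal generated by $\hat v_{r,s}$ also contains divided powers $T^{(k)}\hat v_{r,s}$, which need not be $R_{r,s}$-combinations of your PBW terms when $k!$ is not invertible. These, and all decreasingly ordered negative-mode monomials applied to them, still lie in $C_2(V)$, since $L_{-1}^{b}T^{(k)}=b!\binom{b+k}{b}T^{(b+k)}$ and $L_{-2}=\omega_{(-1)}$ preserves $C_2(V)$, so your conclusion is unaffected.
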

\begin{proof}
    Since $R_{r,s}\to R$ is flat, it suffices to show that $\widetilde{L}_{\Vir}(c_{r,s},0)_{R_{r,s}}$ is $C_2$-cofinite.

    From the above discussion and \zcref{lem:R_V invertible coeff}, we have that $R_{\widetilde{L}_{\Vir}(c_{r,s},0)_{R_{r,s}}}$ is isomorphic to the quotient of $R_{r,s}[x]$ by the polynomial $ax^{\frac{1}{2}(r-1)(s-1)}$, where $a$ is the integer coefficient of $L_{-2}^{\frac{1}{2}(r-1)(s-1)}\vac$ in the singular vector $\hat{v}_{r,s} \in \overline{V}(c_{r,s},0)_{R_{r,s}}$. However, this is invertible in $R_{r,s}$, so
    \[R_{\widetilde{L}_{\Vir}(c_{r,s},0)_{R_{r,s}}} \cong R_{r,s}[x]/(x^{\frac{1}{2}(r-1)(s-1)}) \cong (R_{r,s})^{\frac{1}{2}(r-1)(s-1)}.\]
\end{proof}
\begin{corollary}\label{Zhu bimods fin gen}
    Let $R_{r,s}\to \kk$ be an injective ring morphism with $\kk$ a field. Then $\widetilde{L}_{\Vir}(c_{r,s},0)_{\kk}$ is quasi-finite.
\end{corollary}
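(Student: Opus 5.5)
The plan is to combine \zcref{prop:C2 cofinite ring} with \zcref{lem:C2 cofinite implies quasi finite}. The one point needing care is that an injective morphism $R_{r,s}\to\kk$ into a field need not be flat in general, so I cannot apply \zcref{prop:C2 cofinite ring} with $R=\kk$ directly. Instead I will exploit the fact that $R_{r,s}$ is an integral domain. Indeed, $R_{r,s}=\Z[\tfrac12,c_{r,s},h(r,s)]$ is a subring of $\Q$, so it is a localization of $\Z$ and $\operatorname{Frac}(R_{r,s})=\Q$. Since $\kk$ is a field and $R_{r,s}\to\kk$ is injective, every nonzero element of $R_{r,s}$, in particular every nonzero integer, becomes invertible in $\kk$. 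Hence $\kk$ has characteristic $0$, and the map factors as $R_{r,s}\to\Q\to\kk$.

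Next I check flatness of each piece. The map $R_{r,s}\to\Q$ is a localization, hence flat, and $\Q\to\kk$ is flat because $\Q$ is a field. So the composite $R_{r,s}\to\kk$ is in fact flat, and the naive approach goes through. (Alternatively, one may note that any torsion-free module over a PID is flat, which gives the same conclusion.)

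Now I apply \zcref{prop:C2 cofinite ring} with $R=\kk$. This gives that $\widetilde{L}_{\Vir}(c_{r,s},0)_{\kk}$ is $C_2$-cofinite, with $R_{\widetilde{L}_{\Vir}(c_{r,s},0)_{\kk}}\cong\kk^{\frac12(r-1)(s-1)}$ finite-dimensional. Here I use \zcref{C2 base change}, which also requires flatness: the base-changed vertex algebra $\widetilde{L}_{\Vir}(c_{r,s},0)_{R_{r,s}}\otimes\kk$ agrees with $\widetilde{L}_{\Vir}(c_{r,s},0)_{\kk}$. This holds because the quotient by the submodule generated by $\hat v_{r,s}$ commutes with the flat base change.

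Finally, $\widetilde{L}_{\Vir}(c_{r,s},0)_{\kk}$ is an $\N$-graded vertex algebra over the field $\kk$, being a VOA. Therefore \zcref{lem:C2 cofinite implies quasi finite} applies and shows that it is quasi-finite, meaning every $\Aa_{n,m}$ is finite-dimensional and hence rigid. The only real obstacle is the flatness and identification step above; everything else is a direct citation.
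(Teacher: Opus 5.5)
Your proposal is correct and is essentially the paper's proof: factor $R_{r,s}\to\kk$ through $\operatorname{Frac}(R_{r,s})=\Q$, get flatness from the localization followed by a field extension, apply \zcref{prop:C2 cofinite ring} to obtain $C_2$-cofiniteness with $C_2$-algebra of dimension $\tfrac12(r-1)(s-1)$ over $\kk$, and conclude with \zcref{lem:C2 cofinite implies quasi finite}. One minor correction to your opening caveat: an injective map from an integral domain into a field is always flat, by exactly the factorization you give, so that step was never a real obstacle.
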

\begin{proof}
    Given an injective ring morphism $R_{r,s}\to \kk$ with $\kk$ a field, it must factor through the field of fractions $\operatorname{Frac}(R_{r,s}) = \Q$. Nonzero morphisms of fields are automatically flat and injective, and the morphism $R_{r,s}\to \operatorname{Frac}(R_{r,s})$ is flat because localization is flat. Therefore, $\widetilde{L}_{\Vir}(c_{r,s},0)_{\kk}$ is $C_2$-cofinite, and $R_{\widetilde{L}_{\Vir}(c_{r,s},0)_{\kk}}$ has dimension $\frac{1}{2}(r-1)(s-1)$ over $\kk$. By \zcref{lem:C2 cofinite implies quasi finite}, we are done.
\end{proof}
Note that the field $\kk$ in the above result must have characteristic 0 since it admits a morphism $\Q\to \kk$.

\subsection{Rationality in characteristic 0}\label{sec: rationality}
Now we consider the rationality of the simple quotient $L_{\Vir}(c,0)_{\mathbb{F}}$ over a field $\mathbb{F}$ of characteristic 0. For this, we will need the Kac determinant formula:
\begin{proposition}[Kac determinant formula]\label{prop:Kac determinant}
    Let $\mathbb{F}$ be a field of characteristic $0$.
    Denote ${\det}_N(c,h)_{\mathbb{F}}$ for the determinant of the Gram matrix of the Shapovalov form on the degree $N$ part of $V_{\Vir}(c,h)_{\mathbb{F}}$. Then for all $N\ge 0$ we have
    \[{\det}_N(c,h)_{\mathbb{F}} = A_N\prod_{\substack{r,s\in \N\\
    1\le rs\le N}}(h-h_{r,s}(c))^{P(N-rs)},\]
    where
    \[A_N = \prod_{\substack{r,s\in \N\\
    1\le rs\le N}}\left((2r)^ss!\right)^{P(N-rs)-P(N-r(s+1))},\]
    and $h_{r,s}(c)$ is given by
    \[h_{r,s}(c) = \frac{1}{48}\left((13-c)(r^2+s^2) + \sqrt{(c-1)(c-25)}(r^2-s^2) - 24rs - 2 + 2c \right).\]
\end{proposition}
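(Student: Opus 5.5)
The plan is to reduce to the classical Kac determinant formula over $\C$ (Feigin–Fuchs), using base change and the fact that every entry of the Gram matrix is a polynomial with integer coefficients in $c/2$ and $h$. First I will record that, in the standard basis of monomials $L_{-n_1}\cdots L_{-n_k}v_{c,h}$ with $n_1\ge\dots\ge n_k\ge 1$ and $\sum n_i=N$, the Shapovalov pairing is computed as follows. Apply the anti-involution $L_n\mapsto L_{-n}$, then commute the positive modes to the right using only the bracket $[L_m,L_n]=(m-n)L_{m+n}+\frac{\mathbf c}{2}\binom{m+1}{3}\delta_{m+n,0}$, and finally evaluate $L_0=h$ and $\mathbf c=c$. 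Hence there is a universal polynomial $D_N(C,H)\in\Z[C,H]$, where $C$ stands for $\mathbf c/2$, such that ${\det}_N(c,h)_{\kk}$ is the image of $D_N$ under $C\mapsto c/2$, $H\mapsto h$ for every ring $\kk$ containing $\frac12$ and the relevant elements. In particular the determinant is compatible with any ring morphism, and this holds without any flatness hypothesis, since it is a polynomial identity.

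Next I will interpret the right-hand side as a polynomial in $c$ and $h$. The factor $h-h_{r,s}(c)$ involves $\sqrt{(c-1)(c-25)}$, so I pair it with $h-h_{s,r}(c)$, which carries the opposite sign of the square root. Both appear with the same exponent $P(N-rs)$, because $rs=sr$. Their product is therefore symmetric in the square root and lies in $\Q[c,h]$. The diagonal terms $r=s$ have no square root at all. So the right-hand side defines a polynomial $K_N(c,h)\in\Q[c,h]$.

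The classical theorem states that $D_N(c/2,h)=K_N(c,h)$ as functions on $\C^2$. Two polynomials in $\Q[c,h]$ that agree on $\C^2$ are equal, so the identity holds in $\Q[c,h]$. Since $\mathbb F$ has characteristic $0$, there is a unique map $\Q\to\mathbb F$, and specializing to $(c,h)\in\mathbb F^2$ gives the formula. If one wants literal factors with a square root in $\mathbb F$, the identity can be read in $\mathbb F[h]$ after adjoining a square root of $(c-1)(c-25)$ in an extension, where the factorization is unchanged.

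The main obstacle is not the specialization but certifying that the classical formula is an identity of polynomials with exactly this constant $A_N$ and these exponents. I would cite the standard statement over $\C$ (Kac, Feigin–Fuchs), taken as holding for all complex $c$ and $h$, and check that its normalization matches the basis used here: nonincreasing $n_i$ and the bilinear form with $\langle v_{c,h},v_{c,h}\rangle=1$. A different ordering of the basis only changes the determinant by the square of a permutation sign, so the value is unaffected. If instead a self-contained argument is wanted, I would first compare leading terms in $h$, since the diagonal product gives $A_N$. I would then obtain the vanishing orders along each curve $h=h_{r,s}(c)$ from the singular vector at level $rs$, which generates a submodule whose graded dimension is $P(N-rs)$. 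Comparing total degrees then finishes the proof. Both steps are purely characteristic-$0$ statements over $\C$, so the transfer to $\mathbb F$ above applies.
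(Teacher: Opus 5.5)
Your proposal is correct and follows the paper's argument: the paper likewise notes that the determinant over $\C$ is a polynomial in $c$ and $h$ with rational coefficients, so the classical formula carries over unchanged to any field of characteristic $0$. You make explicit what the paper leaves implicit, namely that the Gram entries are integral polynomials in $\mathbf{c}/2$ and $h$, and that pairing $h_{r,s}(c)$ with $h_{s,r}(c)$ puts the right-hand side in $\Q[c,h]$.
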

\begin{proof}
    The proof is exactly the same as it is over $\C$. Since the determinant over $\C$ is a polynomial in $c$ and $h$ with coefficients in $\Q$, the formula is unchanged when working over any field $\mathbb{F}$ of characteristic 0.
\end{proof}
From this, we can show rationality.
\begin{theorem}\label{thm: rational over char 0}
Let $\mathbb{F}$ be a field of characteristic $0$, and let $c\in \mathbb{F}$. The simple quotient $L_{\Vir}(c,0)_{\mathbb{F}}$ is rational if and only if $c=c_{r,s}$ for some coprime integers $r,s>1$.
\end{theorem}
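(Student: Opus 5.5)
We prove both directions by combining the Kac determinant formula (\zcref{prop:Kac determinant}) with the integral and base-change results for the discrete series, and then invoking the rationality criterion \zcref{DGK rational equiv}.

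For the ``only if'' direction, suppose $c\neq c_{r,s}$ for all coprime $r,s>1$. We first show that $L_{\Vir}(c,0)_{\mathbb{F}}=\overline{V}(c,0)_{\mathbb{F}}$, i.e.\ the only singular vector of $V_{\Vir}(c,0)_{\mathbb{F}}$ comes from $L_{-1}v_{c,0}$. The radical of the Shapovalov form on $V_{\Vir}(c,0)_{\mathbb{F}}$ is exactly the maximal graded submodule $J$. Its dimension in each degree can be computed after extending scalars to the algebraic closure $\overline{\mathbb{F}}$, and the Kac formula is polynomial over $\mathbb{Q}$. Over $\overline{\mathbb{F}}$ (or over $\mathbb{C}$, via an embedding of the finitely generated field $\mathbb{Q}(c)$), the vanishing of $h_{r',s'}(c)$ only at $(1,1)$ gives, as in \cite[Lemma 4.2]{Wang}, that the radical is generated by $L_{-1}v_{c,0}$. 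Hence $\Aa(L_{\Vir}(c,0)_{\mathbb{F}})\cong\mathbb{F}[x]$ by \zcref{lem: Zhu algebra arbitrary}, which is not semisimple (equivalently, not finite-dimensional). Every $\mathbb{F}[x]$-module $\mathbb{F}v_{c,h}$, $h\in\mathbb{F}$, then induces an admissible module, and the non-split extension $\mathbb{F}[x]/(x^2)$ yields a non-semisimple admissible module $\PhiL(\mathbb{F}[x]/(x^2))$ by \zcref{indecomposable}. So $L_{\Vir}(c,0)_{\mathbb{F}}$ is not rational.

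For the ``if'' direction, let $c=c_{r,s}$. The first step is to identify $L_{\Vir}(c_{r,s},0)_{\mathbb{F}}$ with $\widetilde{L}_{\Vir}(c_{r,s},0)_{\mathbb{F}}=\widetilde{L}_{\Vir}(c_{r,s},0)_{\mathbb{Q}}\otimes_{\mathbb{Q}}\mathbb{F}$. Over $\mathbb{C}$ this is known. Since the radical of the Shapovalov form is cut out by linear equations over $\mathbb{Q}$ and dimensions of kernels are invariant under field extension, the radical over $\mathbb{Q}$ has the same dimension in each degree as over $\mathbb{C}$; the same holds over $\mathbb{F}$ because $\mathbb{F}\supseteq\mathbb{Q}$ is flat, using \zcref{lem:base change}. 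Next, \zcref{rem: semisimple condition} together with \zcref{lem:Zhu alg integral} gives $\Aa\cong\mathbb{F}[x]/(G_{r,s}(x))\cong\mathbb{F}^{\frac12(r-1)(s-1)}$, since the $h\in h(r,s)$ are distinct rationals and $G_{r,s}$ is therefore separable. This is a product of $1\times 1$ matrix algebras over $\mathbb{F}$. The M\"obius structure comes from the $\mathcal{H}$-structure described earlier. The second hypothesis of \zcref{DGK rational equiv} holds for VOAs over a field by the remark after that theorem (it also follows from \zcref{Zhu bimods fin gen} and $C_2$-cofiniteness, \zcref{prop:C2 cofinite ring}).

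It then suffices to verify condition (a) of \zcref{DGK rational equiv}: for each simple $\Aa$-module $\mathbb{F}v_{c,h}$ with $h=h_{m,n}$, the module $\PhiL(\mathbb{F}v_{c,h})$ over $L_{\Vir}(c_{r,s},0)_{\mathbb{F}}$ is simple with finite-dimensional graded pieces. Quasi-rigidity follows from quasi-finiteness (\zcref{Zhu bimods fin gen}), since $\PhiL(\mathsf{M})_m=\Aa_{m,0}\otimes_{\Aa}\mathsf{M}$. For simplicity, the corresponding statement over $\mathbb{C}$ is Wang's theorem: $\PhiL(\mathbb{C}v_{c,h})=L_{\Vir}(c,h)_{\mathbb{C}}$. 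By \zcref{lem:base change}, the induced module over $\mathbb{Q}$ base changes to the one over $\mathbb{C}$, and likewise to the one over $\mathbb{F}$. The induced module over $\mathbb{Q}$ has Gram form with trivial radical because the radical dimension is preserved under $\mathbb{Q}\subset\mathbb{C}$; hence it is trivial over $\mathbb{F}$ as well, so $J=0$ and the module is simple by \zcref{ThetaL simple}. Then (a) gives (e), i.e.\ rationality.

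The main obstacle is making rigorous the transfer of ``radical of the Shapovalov form equals the maximal submodule $J$'' and of its dimension across fields. This requires checking that the contravariant form is defined over $\mathbb{Q}$ on $\PhiL$ (via the involution $\theta$ and the self-duality of $\mathbb{D}v_{c,h}$), so that $J$ is the kernel of a matrix with $\mathbb{Q}$-coefficients, as well as handling the fact that $\mathbb{F}$ need not embed into $\mathbb{C}$. I would handle the latter by passing through $\mathbb{Q}$, or through a finitely generated subfield, in each argument.
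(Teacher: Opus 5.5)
Your proposal is correct and follows essentially the paper's route. Wang's results over $\mathbb{C}$ are transferred to $\mathbb{F}$ through $\mathbb{Q}$ by base change, and simplicity of $\PhiL(\mathbb{F}v_{c,h})$ is controlled by nondegeneracy of the contravariant form. Your Gram-radical count is the paper's criterion (b), i.e.\ $\psi^\lor_{\mathsf{M}}$ being an isomorphism, which survives $\mathbb{Q}\subset\mathbb{F}$ by \zcref{lem:base change dual}. For $\mathbb{Q}\subset\mathbb{C}$ the paper instead uses the form-free inclusion $J_{\mathbb{Q}}\otimes_{\mathbb{Q}}\mathbb{C}\subseteq J_{\mathbb{C}}=0$ plus faithful flatness, and it obtains $\widetilde{L}_{\Vir}(c_{r,s},0)_{\mathbb{F}}=L_{\Vir}(c_{r,s},0)_{\mathbb{F}}$ as a consequence rather than up front. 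The converse likewise goes through $\Aa\cong\mathbb{F}[x]$.

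One correction, which applies equally to the paper's wording. For $c\neq c_{r,s}$ it is not true that only $h_{1,1}(c)$ vanishes at $h=0$: at $c=1$ one has $h_{r',r'}(1)=0$ for every $r'$, and similarly at $c=-2$. So the Kac determinant alone does not give simplicity of $\overline{V}_{\Vir}(c,0)$. What places the extra singular vectors inside the submodule generated by $L_{-1}v_{c,0}$ is Wang's Lemma~4.2 (the Feigin--Fuchs structure), which you do cite.
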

\begin{proof}
First, we consider the statement for $\mathbb{F}=\Q$.
By \zcref{singular vector rational}, we have
\[\Aa(\widetilde{L}_{\Vir}(c_{r,s},0)_\Q) = \Q[x]/(G_{r,s}(x)) \cong \Q^{\frac{1}{2}(r-1)(s-1)},\]
which is semisimple over $\Q$. If we have a simple $\Aa$-module $\mathsf{M}$, the induced module $\PhiL(\mathsf{M})$ is already quasi-rigid as it has finite-dimensional graded parts. It suffices to show that the maximal submodule $J_\Q\subseteq \PhiL(\mathsf{M})$ is 0. If we base change, then $\PhiL(\mathsf{M}\otimes_\Q\C)$ is a simple admissible module for $L_{\Vir}(c_{r,s},0)_\C$, so the maximal submodule $J_\C$ is 0. By maximality, we have
\[J_\Q\otimes_\Q\C \subseteq J_\C = 0,\]
so $J_\Q\otimes_\Q\C = 0$. The morphism $\Q\to \C$ is faithfully flat since it is an inclusion of fields, so this implies $J_\Q = 0$. Therefore, $\widetilde{L}_{\Vir}(c_{r,s},0)_{\Q} = L_{\Vir}(c_{r,s},0)_{\Q}$ is simple and rational by \zcref{DGK rational equiv}. 

By the second criterion for rationality from \zcref{DGK rational equiv}, the natural morphism $\psi_{\mathsf{M}}\colon \PhiL(\mathsf{M})\to \PhiL(\mathsf{M})'$ is an isomorphism for all simple $\Aa(\widetilde{L}_{\Vir}(c_{r,s},0)_\Q)$-modules (recall that all the simple modules of $\Aa(\widetilde{L}_{\Vir}(c_{r,s},0)_\Q)$ are self-dual). Using base change and \zcref{lem:base change dual}, the second criterion holds for the base change $\widetilde{L}_{\Vir}(c_{r,s},0)_{\mathbb{F}} = \widetilde{L}_{\Vir}(c_{r,s},0)_{\Q}\otimes_\Q \mathbb{F}$. The Zhu algebra of $\widetilde{L}_{\Vir}(c_{r,s},0)_{\mathbb{F}}$ is already semisimple by base change, so $\widetilde{L}(c_{r,s},0)_{\mathbb{F}} = L_{\Vir}(c_{r,s},0)_{\mathbb{F}}$ is simple and rational for all coprime integers $r,s>1$.

Now, we show that $L_{\Vir}(c,0)_{\mathbb{F}}$ is not rational if $c\neq c_{r,s}$ for all coprime integers $r,s>1$. By examining the Kac determinant formula for $V_{\Vir}(c,0)_{\mathbb{F}}$, the only singular vector of $V_{\Vir}(c,0)_{\mathbb{F}}$ is $L_{-1}v_{c,0}$. Therefore, the VOA $\overline{V}_{\Vir}(c,0)_{\mathbb{F}}$ is simple. However, it cannot be rational since the Zhu algebra is $\mathbb{F}[x]$ by \zcref{lem: Zhu algebra arbitrary}. Therefore, $L_{\Vir}(c,0)_{\mathbb{F}} = \overline{V}_{\Vir}(c,0)_{\mathbb{F}}$ is simple and not rational.
\end{proof}

\subsection{On rationality in positive characteristic}\label{sec: counterexample}
The question of rationality for Virasoro VOAs over a positive characteristic field is \textit{much} more nuanced. In \cite{Li19}, the authors analyzed the quotient of $\overline{V}(c,0)_{\mathbb{F}}$ by the vertex algebra ideal $I_0$ generated by the element $(L_{-n}^p-\delta_{p|n}L_{-np})\vac$ for all $n\ge 2$. We denote this quotient by $\overline{V}^0_{\Vir}(c,0)_{\mathbb{F}}$ (they denote it by $V^0_{\mathcal{V}\mathrm{ir}}(c,0)$), and we refer to it as the \textit{restricted Virasoro VOA}. As they show in \cite[Proposition 3.14]{Li19}, the ideal $I_0$ is generated by $L_{-2}^p\vac$. They showed that in $U(\Vir)$ the element $L_{n}^p-\delta_{p|n}L_{np}$ is central for all $n\in \Z$ \cite[Corollary 3.9]{Li19}.
They show in \cite[Theorem 4.4]{Li19} that the Zhu algebra is given by
\begin{equation}\label{eq:V^0 Zhu}
    \Aa(\overline{V}^0_{\Vir}(c,0)_{\mathbb{F}}) \cong \mathbb{F}[x]/(x^p-x).
\end{equation}
Note that $x^p-x$ is separable with roots $\ell\in \mathbb{F}_p$, where $\mathbb{F}_p$ is the finite field of order $p$. Therefore, $\mathbb{F}[x]/(x^p-x)$ is semisimple over $\mathbb{F}$. They also show in \cite[Corollary 4.5]{Li19} that $\overline{V}^0_{\Vir}(c,0)_{\mathbb{F}}$ is $C_2$-cofinite.
\begin{notation}
    By \zcref[noname]{eq:V^0 Zhu}, the simple $\Aa(\overline{V}^0_{\Vir}(c,0)_{\mathbb{F}})$-modules are all one-dimensional, given by the action of $L_0$. Specifically, $L_0$ must act by one of the roots $\ell\in \mathbb{F}_p$. We denote the corresponding simple module as $\mathbb{F}v^0_{c,\ell}$, generated by the weight $\ell$ vector $v^0_{c,\ell}$.
\end{notation}
We will begin by studying $\overline{V}^0_{\Vir}(c,0)_{\mathbb{F}}$. The induced module functor $\PhiL$ is straightforward to characterize:
\begin{lemma}\label{lem: V^0 submodule}
    Let $\mathbb{F}$ be a field of characteristic $p>2$, and let $c,h\in \mathbb{F}$. Then the induced $\overline{V}^0_{\Vir}(c,0)_{\mathbb{F}}$-module $V^0_{\Vir}(c,h)_{\mathbb{F}} \eqdef \PhiL(\mathbb{F}v^0_{c,h})$ is isomorphic to the quotient of $V_{\Vir}(c,h)_{\mathbb{F}}$ by the submodule $W$ generated by the elements $(L_{-n}^p - \delta_{p|n}L_{-np})v_{c,h}$ for $n\ge 1$.
\end{lemma}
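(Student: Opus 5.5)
We identify $\PhiL(\mathbb{F}v^0_{c,h})$ for $V^0=\overline{V}^0_{\Vir}(c,0)_{\mathbb{F}}$ by checking that both sides represent the same functor. Write $\pi\colon \overline{V}_{\Vir}(c,0)_{\mathbb{F}}\twoheadrightarrow V^0$. Admissible $V^0$-modules are exactly the admissible $\overline{V}_{\Vir}(c,0)_{\mathbb{F}}$-modules annihilated by $Y(I_0,z)$. Since \cite[Proposition 3.14]{Li19} says $I_0$ is generated by $L_{-2}^p\vac$, annihilation by $I_0$ reduces to $Y(L_{-2}^p\vac,z)$ acting trivially. Also $\Aa(V^0)$ is a quotient of $\Aa(\overline{V}_{\Vir}(c,0)_{\mathbb{F}})=\mathbb{F}[x]$, so $\mathbb{F}v^0_{c,h}$ is an $\mathbb{F}[x]$-module on which $x$ acts by $h$. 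The paper's setup only allows this for roots of $x^p-x$; for general $h$ we read the lemma as a statement about the quotient module, with the same argument.

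\textbf{Step 1 (universal property of the quotient).} Let $Q=V_{\Vir}(c,h)_{\mathbb{F}}/W$. By the earlier identification $V_{\Vir}(c,h)_{\mathbb{F}}=\PhiL(\mathbb{F}v_{c,h})$ for the universal Virasoro VOA and the adjunction $\PhiL\dashv\OmegaL$, a morphism $Q\to M$ to a graded $\Vir$-module is the same as a vector $m\in\OmegaL(M)$ with $L_0m=hm$ that is killed by all the elements $(L_{-n}^p-\delta_{p|n}L_{-np})$, $n\ge1$. So two things suffice. (i) $Q$ is a $V^0$-module. (ii) In every admissible $V^0$-module, a lowest-weight vector $m$ of weight $h$ satisfies $(L_{-n}^p-\delta_{p|n}L_{-np})m=0$. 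Given these, $Q$ and $\PhiL(\mathbb{F}v^0_{c,h})$ corepresent the same functor, hence are isomorphic.

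\textbf{Step 2, proof of (ii).} By \cite[Corollary 3.9]{Li19}, $z_n:=L_n^p-\delta_{p|n}L_{np}$ is central in $U(\Vir)$. Expand the field $Y(L_{-2}^p\vac,z)$ using the associator formula \zcref{eq:associator}. Modulo terms that contain a positive mode $L_k$, $k>0$, acting first, the modes of this field include expressions of the form $L_{-n}^p-\delta_{p|n}L_{-np}$ plus corrections. The cleanest way to isolate them is to use that the vertex ideal $I_0$ also contains $(L_{-n}^p-\delta_{p|n}L_{-np})\vac$ for all $n\ge2$. For such a state $u_n$, the mode of $Y^M(u_n,z)$ of degree $np$ applied to a lowest-weight vector $m$ should equal $z_{-n}m$. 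This uses centrality: the normal-ordered product of $p$ copies of $Y(\omega,z)$ contains $L_{-n}^p$ plus a correction cancelled by $\delta_{p|n}L_{-np}$, and all other terms vanish on $m$ because they end in positive modes. Since $Y^M(u_n,z)=0$, we get $z_{-n}m=0$ for $n\ge2$. For $n=1$, use the commutator $[L_1,z_{-2}]=0$ together with the fact that the $\mathfrak{sl}_2$-type relations generate $z_{-1}$ from $z_{-2}$ via brackets $[L_{-1},\cdot]$. Alternatively, observe that $z_{-1}m$ is a singular-type vector killed by all $L_k$, $k>0$ (by centrality), lying in the submodule generated by $m$ in degree $\deg m+p$. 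Then show it vanishes by the same field computation applied to $T^{(1)}u_2$.

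\textbf{Step 3, proof of (i).} Because each $z_{-n}$ is central, $W=\sum_n z_{-n}V_{\Vir}(c,h)_{\mathbb{F}}$ is a $\Vir$-submodule. We must show $Y^Q(L_{-2}^p\vac,z)=0$ on $Q$. The set of vectors killed by $Y(I_0,z)$ is a $V$-submodule. Since $Q$ is generated by $\bar v_{c,h}$, it suffices to check that the modes of $Y(L_{-2}^p\vac,z)$ kill $\bar v_{c,h}$. Those modes are polynomials in the $L_k$. Using centrality of all $z_k$, we can reorder them to $z$-terms times something, plus terms that kill $v_{c,h}$ for degree reasons.

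The main obstacle is the explicit identification, in Steps 2 and 3, of the modes of $Y(L_{-2}^p\vac,z)$ with the central elements $z_k$ modulo the annihilator of a lowest-weight vector. I would first try to prove the identity $Y(L_{-2}^p\vac,z)=\sum_k z_k z^{-2p-k}$ in $\UV$ directly. This is plausible: the $p$-th normal-ordered power of the field $T(z)$ in characteristic $p$ should collapse, by the Frobenius identity $(\sum_k L_kz^{-k-2})^p\equiv\sum_k L_k^pz^{-p(k+2)}$ up to the $\delta_{p|n}$ correction, to a field of central modes. If that identity holds, (i) and (ii) both follow at once.
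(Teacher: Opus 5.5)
\paragraph{A gap in the key identity.} Your reduction matches the paper's: by the universal property of $\PhiL$, everything comes down to showing that the submodule of $V_{\Vir}(c,h)_{\mathbb{F}}$ generated by the action of $I_0$ on $v_{c,h}$ equals $W$. The gap is that the one substantive input is left as a conjecture. That input is
\[Y(L_{-2}^p\vac,x)=\sum_{k\in\Z}z_k\,x^{-(k+2)p},\qquad z_k=L_k^p-\delta_{p\mid k}L_{kp}.\]
Your first version of it has the wrong exponent, $-2p-k$ instead of $-(k+2)p$.

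The Frobenius heuristic does not prove this identity. $Y(L_{-2}^p\vac,x)$ is a $p$-fold normally ordered product of non-commuting fields, and the $\delta_{p\mid n}L_{-np}$ corrections come from exactly the commutator terms the heuristic discards. The paper does not derive the identity either; it imports it from \cite[Proposition 3.12]{Li19}. With it, all modes of $L_{-2}^p\vac$ are central, and the associator formula carries this to every element of $I_0$.

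Your fallback arguments in Step 2 do not close the gap. The claim that the degree-$np$ mode of $u_n$ gives $z_{-n}m$ is the same unproven identity. Both of your arguments for $n=1$ fail:
\begin{itemize}
\item $z_{-2}$ is central, so $[L_{-1},z_{-2}]=0$ and brackets cannot produce $z_{-1}$.
\item $T^{(1)}u_2=L_{-1}L_{-2}^p\vac=L_{-2}^pL_{-1}\vac=0$ in $\overline{V}_{\Vir}(c,0)_{\mathbb{F}}$. Equivalently, $\partial_x$ kills a series in powers of $x^p$, so this vector carries no information.
\end{itemize}
In fact $z_{-1}$ is simply the coefficient of $x^{-p}$ in $Y(L_{-2}^p\vac,x)$, so once the cited identity is available, $n=1$ needs no separate treatment.

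\paragraph{Two further points.} In Step 3, knowing that the modes of $L_{-2}^p\vac$ kill $\bar v_{c,h}$ does not by itself put $\bar v_{c,h}$ in the annihilator of $I_0$. You need centrality to conclude that these modes kill all of $Q$. You then need the associator formula, which is the paper's explicit computation, to pass from $L_{-2}^p\vac$ to all of $I_0$.

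More seriously, your claim that the same argument covers arbitrary $h$ is false. The identity shows that $z_0=L_0^p-L_0$ is the coefficient of $x^{-2p}$, and it acts on $\bar v_{c,h}$ by $h^p-h$. Since $W$ is concentrated in degrees $\ge p$, we have $\bar v_{c,h}\neq 0$ in $Q$. So for $h\notin\mathbb{F}_p$ the quotient $Q$ is not a $\overline{V}^0_{\Vir}(c,0)_{\mathbb{F}}$-module, and your claim (i) fails. The lemma only makes sense for $h\in\mathbb{F}_p$, which are the only weights for which $\mathbb{F}v^0_{c,h}$ is an $\Aa$-module. Step 3 must invoke $h^p=h$ explicitly to dispose of the $z_0$ mode.
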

\begin{proof}
    By universal properties, the admissible $\overline{V}^0_{\Vir}(c,0)_{\mathbb{F}}$-module $\PhiL(\mathbb{F}v^0_{c,h})$ is isomorphic to the quotient of $V_{\Vir}(c,h)_{\mathbb{F}}$ by the submodule generated by the action of the vertex algebra ideal $I_0$. This is generated by elements of the form $a_{(n)}v^0_{c,h}$ for $a\in I_0$ and $n\in \Z$. 

    From \cite[Proposition 3.12]{Li19}, we have
    \begin{equation}
    \begin{aligned}
        Y((L_{-n}^p-\delta_{p|n}L_{-np})\vac,x) &= \sum_{j\ge 0}\binom{-n+1}{j}(-1)^j(L_{-n-j}^p - \delta_{p|(n+j)}L_{-(n+j)p})x^{jp}\\
        &\quad+\sum_{j\ge 0}\binom{-n+1}{j}(-1)^{-n-j}(L^p_{j-1}-\delta_{p|(j-1)}L_{(j-1)p})x^{(-n+1-j)p}.
    \end{aligned}
    \end{equation}
    Therefore, the modes $(L_{-2}^p\vac)_{(n)}$ are all central by \cite[Corollary 3.9]{Li19}.
    For all $a\in \overline{V}^0_{\Vir}(c,0)_{\mathbb{F}}$ and $m\in \Z$, we have by the associator formula
    \begin{align*}
        (a_{(m)}L_{-2}^p\vac)_{(n)} &= \sum_{i\ge 0}(-1)^i\binom{m}{i}\left(a_{(m-i)}(L_{-2}^p\vac)_{(n+i)} - (-1)^{m-1}(L_{-2}^p\vac)_{(m+n-i-1)}a_{(i)}\right)\\
        &= \sum_{i\ge 0}(-1)^i\binom{m}{i}\left(a_{(m-i)}(L_{-2}^p\vac)_{(n+i)} - (-1)^{m-1}a_{(i)}(L_{-2}^p\vac)_{(m+n-i-1)}\right).
    \end{align*}
    Here the second formula follows from the fact that, after applying the associator formula, $a_{(n)}$ can be written as an element of the completion of $U(\Vir_{\mathbb{F}})$, and $L_{-2}^p$ is central in $U(\Vir_{\mathbb{F}})$. The claim follows from this.
\end{proof}
One may ask if $\overline{V}^0_{\Vir}(c,0)_{\mathbb{F}}$ is rational. The answer, surprisingly, is no:
\begin{theorem}\label{thm:C2-cofinite semisimple Zhu not rational}
    Let $\mathbb{F}$ be a field of characteristic $p>2$, and let $c\in \mathbb{F}$. Then $\overline{V}^0_{\Vir}(c,0)_{\mathbb{F}}$ has a semisimple Zhu algebra and is $C_2$-cofinite, but it is not rational.
\end{theorem}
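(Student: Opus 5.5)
The first two claims are already in the literature quoted above. The Zhu algebra is $\mathbb{F}[x]/(x^p-x)$ by \cite[Theorem 4.4]{Li19}, as recorded in \zcref[noname]{eq:V^0 Zhu}. Since $x^p-x=\prod_{\ell\in\mathbb{F}_p}(x-\ell)$ is separable, this algebra is isomorphic to $\mathbb{F}^p$ and hence semisimple. $C_2$-cofiniteness is \cite[Corollary 4.5]{Li19}. The content of the theorem is therefore the failure of rationality.

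To show $\overline{V}^0_{\Vir}(c,0)_{\mathbb{F}}$ is not rational, I will use \zcref{rational iff An semisimple PhiL simple}. It suffices to exhibit a simple $\Aa$-module $\mathsf{M}$ with $\PhiL(\mathsf{M})$ not simple. Equivalently, since $\PhiL(\mathsf{M})$ is generated in degree $0$, I need a nonzero indecomposable but non-simple admissible module; \zcref{indecomposable} guarantees indecomposability.

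Take $\mathsf{M}=\mathbb{F}v^0_{c,0}$, i.e.\ $h=0$. By \zcref{lem: V^0 submodule}, $\PhiL(\mathsf{M})=V^0_{\Vir}(c,0)_{\mathbb{F}}$ is $V_{\Vir}(c,0)_{\mathbb{F}}/W$, where $W$ is generated by the vectors $(L_{-n}^p-\delta_{p|n}L_{-np})v_{c,0}$. These vectors have degree $np\ge p$, and they are obtained from $v_{c,0}$ by central elements of $U(\Vir)$ (\cite[Corollary 3.9]{Li19}). Consequently $W$ is the image of an ideal of central elements acting on $v_{c,0}$, and by a PBW argument it lives only in degrees $\ge p$.

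The vector $L_{-1}v_{c,0}$ in degree $1<p$ is therefore nonzero in the quotient. It is singular: $L_1L_{-1}v=2L_0v=0$ and $L_nL_{-1}v=0$ for $n\ge2$. So it generates a proper nonzero graded submodule with zero degree-$0$ part, namely a nonzero $J$. By \zcref{ThetaL simple}(c), $\PhiL(\mathsf{M})$ is not simple. The $J\ne 0$ here is a submodule of a module on which $L_0$ acts semisimply with eigenvalues in $\mathbb{F}_p$, so everything stays admissible.

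Alternatively, one can directly exhibit the non-split extension. Take $W^\circ=\PhiL(\mathbb{F}v^0_{c,0})/(\text{submodule generated by }L_{-1}^2v_{c,0}\text{ and degree}\ge2)$, or more cleanly, the quotient of $\PhiL(\mathbb{F}v^0_{c,0})$ by its degree $\ge2$ part plus what is needed for closure. This would show that the admissible module $\PhiL(\mathbb{F}v^0_{c,0})$ is indecomposable but not simple, hence not semisimple. So $V$ is not quasi-rational, and in particular not rational.

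The main obstacle is verifying that $L_{-1}v_{c,0}\notin W$, i.e.\ that the submodule $W$ of \zcref{lem: V^0 submodule} really has no degree-$1$ component. I would argue this via centrality and PBW. $W=\sum_n U(\Vir)z_n v_{c,0}=\sum_n z_n V_{\Vir}(c,0)_{\mathbb{F}}$ with $z_n$ central of degree $np$, so $W\subseteq\bigoplus_{d\ge p}V_{\Vir}(c,0)_d$. Since $p>2$, degree $1$ is untouched.

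A subtlety is the terms with positive modes appearing in $Y(L_{-2}^p\vac,x)$. These act on $v_{c,0}$ by $L_j^p$, $j\ge1$, which kills it. For $j=0$ they act by $L_0^p-L_0$, which vanishes because $h=0\in\mathbb{F}_p$. So these terms contribute nothing.
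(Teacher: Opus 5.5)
Your proof is correct and follows the paper's argument. Both use that the kernel $W$ of $V_{\Vir}(c,0)_{\mathbb{F}}\to\PhiL(\mathbb{F}v^0_{c,0})$ is generated by the central elements $(L_{-n}^p-\delta_{p|n}L_{-np})v_{c,0}$ of degree $np\ge p$. Hence the degree-$1$ singular vector $L_{-1}v^0_{c,0}$ survives, $\PhiL(\mathbb{F}v^0_{c,0})$ is not simple, and \zcref{rational iff An semisimple PhiL simple} rules out rationality. Your descriptions $W=\sum_n z_nV_{\Vir}(c,0)_{\mathbb{F}}$ and your check that $L_0^p-L_0$ kills $v_{c,0}$ are tidy additions.

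Drop the ``alternatively'' paragraph, though. The degree-$\ge 2$ part is not a submodule, since $L_1L_{-2}v_{c,0}=3L_{-1}v_{c,0}$ and $L_2L_{-2}v_{c,0}=\frac{c}{2}v_{c,0}$. So $W^\circ$ as described is not well defined, and your main argument does not need it.
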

\begin{proof}
    We have by \cite[Theorem 4.4]{Li19} and \cite[Corollary 4.5]{Li19} that $\Aa(V^0_{\Vir}(c,0)_{\mathbb{F}}) = \mathbb{F}[x]/(x^p-x)$ and $\overline{V}^0_{\Vir}(c,0)_{\mathbb{F}}/C_2(\overline{V}^0_{\Vir}(c,0)_{\mathbb{F}}) \cong  \mathbb{F}[x]/(x^p)$, so $\overline{V}^0_{\Vir}(c,0)_{\mathbb{F}}$ has semisimple Zhu algebra and is $C_2$-cofinite.
    By \zcref{rational iff An semisimple PhiL simple}, it suffices to show that one of the induced modules $\PhiL(\mathbb{F}v^0_{c,\ell})$ has a nontrivial singular vector of degree $\neq 0$. Once again, we denote by $W$ the kernel of the canonical quotient $V_{\Vir}(c,h)_{\mathbb{F}} \to \PhiL(\mathbb{F}v^0_{c,h})_{\mathbb{F}}$, which is the submodule generated by $(L_{-np} - \delta_{p|n}L_{-np})v_{c,h}$ for all $n\ge 1$ by \zcref{lem: V^0 submodule}.
    
    Since $L_{-n}^p - \delta_{p|n}L_{-np}$ is central, the element $(L_{-n}^p - \delta_{p|n}L_{-np})v_{c,h}$ is a singular vector of degree $np$ for all $n\ge 1$. Therefore, the dimension of the $j$-th graded component of $W$ is 0 for $0\le j< p$. This implies that $V_{\Vir}(c,h)_{\mathbb{F}}\to \PhiL(\mathbb{F}v^0_{c,h})$ is an isomorphism on the $j$-th graded component. In particular, the element $L_{-1}v^0_{c,h}$ is nonzero for all $h\in \mathbb{F}$. However, for $h=0$ we have that $L_{-1}v^0_{c,0}$ is a singular vector, so it generates a proper submodule of $\PhiL(\mathbb{F}v^0_{c,0})$. By \zcref{DGK rational equiv}, we conclude that $\overline{V}^0_{\Vir}(c,0)_{\mathbb{F}}$ is not rational since $\PhiL(\mathbb{F}v^0_{c,0})$ is not a simple admissible $\overline{V}^0_{\Vir}(c,0)_{\mathbb{F}}$-module despite $\mathbb{F}v^0_{c,0}$ being a simple $\Aa(\overline{V}^0_{\Vir}(c,0)_{\mathbb{F}})$-module.
\end{proof}
It remains to determine the structure of the simple quotient $L_{\Vir}(c,0)_{\mathbb{F}}$. This amounts to determining the singular vectors of $\overline{V}^0_{\Vir}(c,0)_{\mathbb{F}}$. While we should expect singular vectors arising from the case where $c\equiv c_{r,s}\bmod p$ for some coprime $r,s>1$, there are also some singular vectors arising for all $c\in \mathbb{F}$.

\begin{remark}
    As mentioned in the introduction, \cite[Main Theorem 3]{McRae_2026} states that an $\N$-graded, $C_2$-cofinite, simple, self-contragredient VOA $V$ over $\C$ with semisimple Zhu algebra $\Aa$ is rational. In our case, the simple quotient $L_{\Vir}(c,0)_{\mathbb{F}}$ is automatically self-contragredient by \zcref{lem: simple module equiv} and the fact that $\mathbb{F}v_{c,0}^0 \cong {}^\theta (\mathbb{F}v_{c,0}^0)^\lor$, so it satisfies all of the assumptions for this statement aside from the fact that the base field $\mathbb{F}$ is not $\C$. It would be interesting to see if a similar result holds for every field $\mathbb{F}$.
\end{remark}

\subsubsection{Computational work}
Recall that there is a surjective morphism of VOAs
\[\overline{V}^0_{\Vir}(c,0)_{\mathbb{F}}\twoheadrightarrow L_{\Vir}(c,0)_{\mathbb{F}}.\] 
The kernel of this map is the (graded) vertex algebra ideal generated by all singular vectors of $\overline{V}^0_{\Vir}(c,0)_{\mathbb{F}}$. The goal of this section is to provide some results on this problem based on computations.

Recall that the \textit{Shapovalov form} on $V_{\Vir}(c,h)_{\mathbb{F}}$ is the bilinear form $(\cdot,\cdot)$ uniquely determined by the following relations:
\begin{enumerate}
    \item $(v_{c,h},v_{c,h})=1$;
    \item $(L_{-n}v,w)=(v,L_{n}w)$ for all $v,w\in V_{\Vir}(c,h)_{\mathbb{F}}$;
    \item $(v,w)=0$ if $v$ and $w$ are homogeneous vectors of unequal degrees.
\end{enumerate}
The last property is technically redundant. By the tensor-hom adjunction, the Shapovalov form is equivalent to the morphism 
\[\psi^\lor_{\mathbb{F}v_{c,h}}\colon V_{\Vir}(c,h)_{\mathbb{F}} \to (V_{\Vir}(c,h)_{\mathbb{F}})',\] 
where the codomain is the contragredient dual as a module for $\overline{V}(c,0)_{\mathbb{F}}$.

Since $\overline{V}^0_{\Vir}(c,0)_{\mathbb{F}}$ is a quotient space of $V_{\Vir}(c,0)_{\mathbb{F}}$, there is an \textit{induced Shapovalov form} on $\overline{V}^0_{\Vir}(c,0)_{\mathbb{F}}$ determined by the same relations, which corresponds to the morphism 
\[\psi^\lor_{\mathbb{F}v_{c,h}^0}\colon \overline{V}^0_{\Vir}(c,h)_{\mathbb{F}} \to (\overline{V}^0_{\Vir}(c,h)_{\mathbb{F}})'.\]
By \zcref{lem: simple module equiv} and the fact that $\Aa(\overline{V}^0_{\Vir}(c,0)_{\mathbb{F}})$ is semisimple with one-dimensional simple modules, we have that $\overline{V}^0_{\Vir}(c,0)_{\mathbb{F}}$ is simple if and only if it is self-contragredient. This amounts to checking the Gram matrix corresponding to the induced Shapovalov form at each degree of $\overline{V}^0_{\Vir}(c,0)_{\mathbb{F}}$ and checking if the determinant of the matrix is nonzero. 
\begin{remark}
    The discrete series rational numbers $c_{r,s}$ for $r,s>1$ coprime play an interesting role in showing the existence of singular vectors in positive characteristic.
    Suppose that $c = c_{r,s}\bmod p$ for some coprime $r,s>1$. Then for all $h_{m,n}$ that are well-defined mod $p$ we can take the normalized singular vectors $\hat v_{c_{r,s},h_{m,n},mn}$ and $\hat v_{c_{r,s},h_{m,n},(r-m)(s-n)}$ to be singular vectors in $V(c_{r,s},h_{m,n})_{\mathbb{F}}$. In the case $m=n=1$, we have $h_{1,1}=0$. Then, $\overline{V}_{\Vir}(c_{r,s},0)_{\mathbb{F}}$ has a singular vector of degree $(r-1)(s-1)$. Furthermore, if $h_{m,n}=0$ mod $p$ for some $m,n$, then we will also have singular vectors of degrees $mn$ and $(r-m)(s-n)$ in $\overline{V}_{\Vir}(c_{r,s},0)_{\mathbb{F}}$.
\end{remark}

We can check by direct computation that $\overline{V}^0_{\Vir}(c,0)_{\mathbb{F}}$ is not simple in small characteristics.
\begin{proposition}
    Let $\mathbb{F}$ be a field of characteristic $p=3$, $5$, or $7$, and let $c\in \mathbb{F}$. Then $\overline{V}_{\Vir}^0(c,0)_{\mathbb{F}}$ is not simple.
\end{proposition}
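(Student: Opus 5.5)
To show that $\overline{V}^0_{\Vir}(c,0)_{\mathbb{F}}$ is not simple, it suffices to exhibit, for each $c$, a nonzero proper graded ideal. By the discussion preceding the statement, this amounts to showing that the Gram matrix of the induced Shapovalov form is singular on some graded piece $\overline{V}^0_{\Vir}(c,0)_{\mathbb{F},N}$ with $N>0$. Indeed, a nonzero vector in the radical of the form in degree $N>0$ generates a graded submodule with zero degree-$0$ part. That submodule is a nonzero proper ideal, since the radical of the form is a vertex algebra ideal, being the kernel of $\psi^\lor$.

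Everything is defined over $\mathbb{F}_p(c)$, and the Gram entries are polynomials in $c$ with coefficients in $\mathbb{F}_p$. So I would work with $c$ as an indeterminate, or else handle $c\in\mathbb{F}_p$ and $c\notin\mathbb{F}_p$ separately. The case $c\notin\mathbb{F}_p$ is covered if the determinant vanishes identically as a polynomial in $c$. Note also that base change along $\mathbb{F}_p(c)\to\mathbb{F}$ preserves vanishing of determinants.

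The plan runs as follows.

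\begin{enumerate}
\item Use \zcref{lem: V^0 submodule} (with $h=0$, and after further quotienting by $L_{-1}\vac$) to describe a spanning set of each degree-$N$ piece. This piece is the degree-$N$ part of $V_{\Vir}(c,0)$ modulo $L_{-1}v_{c,0}$ and the submodule generated by the central elements $(L_{-n}^p-\delta_{p|n}L_{-np})\vac$. In low degrees only $L_{-2}^p\vac$, of degree $2p$, and its descendants matter, because $L_{-1}^p v$ lies in the span of $L_{-1}\vac$ descendants. So for $N<2p$ the piece coincides with $\overline{V}_{\Vir}(c,0)_{\mathbb{F},N}$, whose basis consists of monomials with parts $\ge 2$.
\item Compute the Gram matrix of the Shapovalov form in these degrees over $\mathbb{F}_p[c]$, reduce mod $p$, and find a degree $N$ where the determinant vanishes identically in $c$. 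I expect such an $N$ to exist, for instance one of the levels $N$ between $p$ and $2p$, or beyond $2p$ after quotienting the $L_{-2}^p$-descendants. Whenever the determinant is not identically zero, factor it and treat the finitely many roots $c\in\overline{\mathbb{F}}_p$ by exhibiting a singular vector at another level.
\item For $c\equiv c_{r,s}\bmod p$, the preceding remark already supplies singular vectors $\hat v_{r,s}$, or ones at levels $mn$ and $(r-m)(s-n)$ when $h_{m,n}\equiv 0$. Check that their image in $\overline{V}^0$ is nonzero, which holds when their degree is below $2p$. This is a useful cross-check for step 2.
\end{enumerate}

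The main obstacle is step 2. One has to find, for each $p\in\{3,5,7\}$, a uniform level at which the reduced Gram determinant vanishes for every $c$, and the sizes involved ($P(N)-P(N-1)$ grows quickly) force a computer calculation, as in the Mathematica notebook. I would first test levels $N=p+1,\dots,2p+2$ symbolically in $c$. If no single level works, I would cover $\mathbb{F}$ by a finite union of cases, namely generic $c$ together with explicit exceptional roots, each with its own witnessing level.
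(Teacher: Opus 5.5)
Your strategy is the paper's. You reduce non-simplicity of $\overline{V}^0_{\Vir}(c,0)_{\mathbb{F}}$ to degeneracy of the induced Shapovalov form in some positive degree, and establish that degeneracy by machine computation. The paper's proof is exactly this computation: it reports that the Gram determinant on $\overline{V}^0_{\Vir}(c,0)_{\mathbb{F},N}$ vanishes for $N=18,10,14$ when $p=3,5,7$. Your preliminary reductions are fine: a null vector in positive degree generates a nonzero proper ideal, and $\overline{V}^0$ agrees with $\overline{V}$ below degree $2p$.

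The part of your plan that would not work is the contingency. The Gram entries lie in $\mathbb{F}_p[c]$. So for $c$ transcendental over $\mathbb{F}_p$, a level $N$ detects non-simplicity if and only if $\det_N$ is the zero polynomial.

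It follows that no finite collection of levels with nonzero determinants can cover every $c\in\mathbb{F}$. Conversely, a single level with $\det_N\equiv 0$ covers every $c$ at once, by specializing along $\mathbb{F}_p[c]\to\mathbb{F}$. (Use specialization, not base change from $\mathbb{F}_p(c)$, which does not exist when $c$ is algebraic.)

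Your instruction to factor a nonzero determinant and treat its finitely many roots at another level is backwards. Those roots are exactly the values that level already handles; what remains are the cofinitely many non-roots. So the proof genuinely requires, for each $p$, one level with identically vanishing determinant.

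For $p=5,7$ the paper's witnesses $N=2p$ do lie in your test window. For $p=3$ you should not expect to find it in your window $N\le 2p+2=8$: the paper's witness is $N=18$, and the lowest levels visibly fail. Up to units, the determinants mod $3$ are $c^2(c-1)$ at $N=4,5$ and $c^3(c-1)^2$ at $N=6$ (the latter after discarding $L_{-2}^3\vac$), none of which vanishes identically.
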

\begin{proof}
    By \zcref{lem: simple module equiv}, $\overline{V}_{\Vir}^0(c,0)_{\mathbb{F}}$ is simple if and only if the morphism $\overline{V}_{\Vir}^0(c,0)_{\mathbb{F}}\to (\overline{V}_{\Vir}^0(c,0)_{\mathbb{F}})'$ induced by the Shapovalov form on $\overline{V}(c,0)_{\mathbb{F}}$ is an isomorphism. That is, $\overline{V}_{\Vir}^0(c,0)_{\mathbb{F}}$ is simple if and only if the determinant of the induced Shapovalov form on the degree $n$ part is nonzero for all $n\ge 0$. 

    Using Mathematica, we found by direct computation that the determinant of the induced Shapovalov form on the degree $n$ part $\overline{V}_{\Vir}^0(c,0)_{\mathbb{F},n}$ vanishes for $n=18,10,14$ when $p=3,5,7$ respectively.
\end{proof}
The singular vectors in $\overline{V}^0(c,0)_{\mathbb{F}}$ that result in the vanishing determinants mentioned in the above proof are not very enlightening to write down, but we are able to say something very interesting about the simple quotient in small characteristics:
\begin{theorem}\label{thm:rational special cases}
    Let $\mathbb{F}$ be a field of characteristic $p>2$, and let $c\in \mathbb{F}_p$. The simple quotient VOA $L_{\Vir}(c,0)_{\mathbb{F}}$ is rational with the adjoint module as the only simple module (that is to say, holomorphic) in the following cases:
    \begin{enumerate}
        \item $p=3$ or $p=5$.
        \item $p=7$ and $c\neq 3,6$.
    \end{enumerate}
\end{theorem}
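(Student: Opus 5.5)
Since $L_{\Vir}(c,0)_{\mathbb{F}}$ is a quotient of $\overline{V}^0_{\Vir}(c,0)_{\mathbb{F}}$, its Zhu algebra is a quotient of $\Aa(\overline{V}^0_{\Vir}(c,0)_{\mathbb{F}})\cong \mathbb{F}[x]/(x^p-x)$ from \zcref[noname]{eq:V^0 Zhu}. Concretely, it is $\mathbb{F}[x]/(I)$, where $I$ is the ideal generated by $x^p-x$ together with the images, under the identification of \zcref{Zhu alg iso} and \zcref{lem: Zhu algebra arbitrary}, of the singular vectors (equivalently, the radical of the induced Shapovalov form) of $\overline{V}^0_{\Vir}(c,0)_{\mathbb{F}}$.

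The plan is to show that $I=(x)$, so that $\Aa(L_{\Vir}(c,0)_{\mathbb{F}})\cong\mathbb{F}$. The inclusion $I\subseteq(x)$ is automatic, since the adjoint module $L_{\Vir}(c,0)_{\mathbb{F}}$ has lowest weight $0$. For the reverse inclusion, it suffices to find, for each nonzero $\ell\in\mathbb{F}_p$, a vector in the maximal ideal whose Zhu polynomial $f$ satisfies $f(\ell)\neq 0$. In practice we will find singular vectors whose polynomials, together with $x^p-x$, have $\gcd$ equal to $x$.

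This is the computational step. For each $p\in\{3,5,7\}$ and each $c\in\mathbb{F}_p$ (with $c\ne 3,6$ when $p=7$), we compute the Gram matrix of the induced Shapovalov form on the low-degree pieces of $\overline{V}^0_{\Vir}(c,0)_{\mathbb{F}}$. The relevant degrees are around $18$, $10$ and $14$ respectively, as in the preceding proposition, and possibly a few more. We then extract a basis of each null space. Because $c\in\mathbb{F}_p$, everything is defined over $\mathbb{F}_p$, and by flat base change (\zcref{lem:base change}) it suffices to work over $\mathbb{F}_p$. For each null vector $v$ we reduce $v$ modulo $O(\overline{V}_{\Vir}(c,0))$ to a polynomial in $[\omega]$, using the relation $\omega\circ^0_{0,n}v\in O$, i.e.\ the standard rule that $L_{-n-2}+2L_{-n-1}+L_{-n}$ acting on $v$ lies in $O$, to eliminate modes. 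We then reduce modulo $x^p-x$ and evaluate at $\ell\in\mathbb{F}_p^\times$.

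The main obstacle is this step: producing enough null vectors, and checking that their Zhu images jointly kill every $\ell\neq0$. The excluded cases $c=3,6$ for $p=7$ are presumably exactly those where the available low-degree null vectors do not suffice.

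Once $\Aa(L_{\Vir}(c,0)_{\mathbb{F}})\cong\mathbb{F}$ is established, rationality follows from \zcref{DGK rational equiv}. The Zhu algebra is a product of a single matrix algebra $\mathbb{F}$. The growth hypothesis holds because the quotient is $C_2$-cofinite, being a quotient of the $C_2$-cofinite $\overline{V}^0_{\Vir}(c,0)_{\mathbb{F}}$; for VOAs over a field the paper notes this hypothesis is redundant. The only simple $\Aa$-module is $\mathbb{F}v_{c,0}$. Its induced module $\PhiL(\mathbb{F}v_{c,0})$ is $L_{\Vir}(c,0)_{\mathbb{F}}$ itself, by the universal property $\PhiL(\mathbb{F}\vac)=V$ for the adjoint module. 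This module is simple, and it is quasi-rigid because its graded pieces are finite-dimensional. Criterion (a) of \zcref{DGK rational equiv} therefore holds, so $L_{\Vir}(c,0)_{\mathbb{F}}$ is rational. Moreover, every simple admissible module has lowest weight space a simple $\Aa$-module, hence is isomorphic to the adjoint module, which gives holomorphicity.
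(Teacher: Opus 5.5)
Your route is the paper's for the Zhu algebra computation: null spaces of the induced Shapovalov form on $\overline{V}^0_{\Vir}(c,0)_{\mathbb{F}}$, their Zhu images, and the gcd with $x^p-x$ giving $\Aa(L_{\Vir}(c,0)_{\mathbb{F}})\cong\mathbb{F}$. The final rationality step, however, has a genuine gap.

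\textbf{The gap.} You assert that $\PhiL(\mathbb{F}v_{c,0})=L_{\Vir}(c,0)_{\mathbb{F}}$ ``by the universal property $\PhiL(\mathbb{F}\vac)=V$ for the adjoint module.'' No such identity holds in general.
\begin{enumerate}
\item The adjunction $\PhiL\dashv\OmegaL$ only gives a surjection $\PhiL(V_0)\twoheadrightarrow V$, and simplicity of $V$ says nothing about its kernel.
\item Already for the universal VOA one has $\PhiL(\mathbb{F}v_{c,0})=V_{\Vir}(c,0)_{\mathbb{F}}\neq\overline{V}_{\Vir}(c,0)_{\mathbb{F}}$.
\item \zcref{thm:C2-cofinite semisimple Zhu not rational} is precisely a case where this fails. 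There $L_{-1}v^0_{c,0}\neq 0$ in $\PhiL(\mathbb{F}v^0_{c,0})$, which is exactly why $\overline{V}^0_{\Vir}(c,0)_{\mathbb{F}}$ is not rational despite its semisimple Zhu algebra.
\end{enumerate}
So criterion (a) of \zcref{DGK rational equiv} is the nontrivial point, and your argument does not establish it. The paper addresses it explicitly in the last paragraph of its proof. It argues that a singular vector in $\PhiL(\mathbb{F}v_{c,0})$ would have the form $b_{(-1)}v_{c,0}$ with $b$ singular in $L_{\Vir}(c,0)_{\mathbb{F}}$, forcing $b=0$.

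\textbf{A fix from what you already have.} Let $I$ be the kernel of $\overline{V}_{\Vir}(c,0)_{\mathbb{F}}\to L_{\Vir}(c,0)_{\mathbb{F}}$.
\begin{enumerate}
\item As in \zcref{lem: V^0 submodule}, $\PhiL(\mathbb{F}v_{c,0})$ is the quotient of $V_{\Vir}(c,0)_{\mathbb{F}}$ by the submodule generated by the action of $I$.
\item The submodule generated by $L_{-1}v_{c,0}$ has zero degree-$0$ part, since $L_{-1}v_{c,0}$ is singular in the Verma module. So if $L_{-1}v_{c,0}\neq 0$, the line $\mathbb{F}L_{-1}v_{c,0}$ is a lowest-degree space of that submodule, hence an $\Aa(L_{\Vir}(c,0)_{\mathbb{F}})$-module.
\item Since $[\omega]=0$ in $\Aa\cong\mathbb{F}[x]/(x)$, $L_0=o(\omega)$ acts by $0$ on that line, but it acts by $1$. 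Hence $L_{-1}v_{c,0}=0$.
\item Therefore $\PhiL(\mathbb{F}v_{c,0})$ is a quotient of $\overline{V}_{\Vir}(c,0)_{\mathbb{F}}$ by $I\cdot\overline{V}_{\Vir}(c,0)_{\mathbb{F}}$. This equals $I$, because $I$ is an ideal containing $a_{(-1)}\vac=a$ for each $a\in I$. So $\PhiL(\mathbb{F}v_{c,0})=L_{\Vir}(c,0)_{\mathbb{F}}$.
\end{enumerate}
This step uses that $\Aa\cong\mathbb{F}$ has only the root $0$, not merely that it is semisimple. For $\mathbb{F}[x]/(x^p-x)$ the value $x=1$ is allowed, and that is exactly what breaks $\overline{V}^0$.

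A smaller point: $C_2$-cofiniteness is not a reason for the growth hypothesis in \zcref{DGK rational equiv}. Like the paper, you have to rely on the remark that this hypothesis is redundant for VOAs over a field.
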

\begin{proof}
    The simple quotient $L_{\Vir}(c,0)_{\mathbb{F}}$ of $\overline{V}_{\Vir}(c,0)_{\mathbb{F}}$ factors through $\overline{V}^0_{\Vir}(c,0)_{\mathbb{F}}$. If the determinant of the Gram matrix associated to the induced Shapovalov form on the degree $N$ component of $\overline{V}^0_{\Vir}(c,0)_{\mathbb{F}}$ vanishes for some $N\ge 1$, then the null space may be identified with the degree $N$ component of the maximal graded vertex algebra ideal $J^0$ of $\overline{V}^0_{\Vir}(c,0)_{\mathbb{F}}$.
    We denote by $\Aa(J^0)$ the image of the ideal $J^0$ in $\Aa(\overline{V}^0_{\Vir}(c,0)_{\mathbb{F}}) \cong \mathbb{F}[x]/(x^p-x)$.

    Now, we will show that the quotient of $\Aa(\overline{V}^0_{\Vir}(c,0)_{\mathbb{F}})$ by $\Aa(J^0)$ is isomorphic to $\mathbb{F}$ in the cases given in the theorem statement. The procedure for doing so is as follows: After fixing a prime $p>2$ and $c\in \mathbb{F}_p$, one may look for degrees $N\ge 1$ for which the Gram matrix associated to the induced Shapovalov form on the degree $N$ component has a nontrivial null space. For each element in the basis of the null space, the corresponding class $[f]$ in the Zhu algebra $\Aa(\overline{V}^0_{\Vir}(c,0)_{\mathbb{F}})$ is an element of the ideal $\Aa(J^0)$. There is a unique polynomial $f$ representing the class $[f]$ such that $\deg(f)<p$. Let $S$ be the collection of all such polynomials $f$, considered as elements of $\mathbb{F}[x]$. The quotient of $\Aa(\overline{V}^0_{\Vir}(c,0)_{\mathbb{F}})$ by $\Aa(J^0)$ is given by the gcd in $\mathbb{F}[x]$ of the collection of all polynomials $S$ together with $x^p-x$. So, we just need to calculate the polynomials corresponding to each basis element of the null spaces and take the gcd with $x^p-x$ in $\mathbb{F}[x]$.
    \begin{enumerate}
        \item If $p=5$ and $c=0,1,3$, then $\overline{V}^0_{\Vir}(c,0)_{\mathbb{F}}$ has singular vectors starting at degree 6. In each case, we found that the gcd of the corresponding polynomials together with $x^p-x$ in the Zhu algebra is $x$ (up to units). 
        
        If $c=2$, then we looked further to degree 15. We found that the null space of the Gram matrix for $\overline{V}^0_{\Vir}(2,0)_{\mathbb{F}}$ has dimension 3. If we take the image of each element of the null space in the Zhu algebra of $\overline{V}^0_{\Vir}(2,0)_{\mathbb{F}}$, we find that the corresponding polynomials are
        \[x (2 + x) (4 + x) (2 + x + x^2), \quad x^2 (2 + x) (3 + x), \quad x (1 + x) (4 + x)\]
        up to normalization.
        The gcd of these polynomials together with $x^p-x$ is $x$.

        If $c=4$, then we looked at degrees 10 and 15. Respectively, the null spaces have dimensions 1 and 3. The vector generating the null space at degree 10 has the corresponding polynomial
        \[x (1 + x) (2 + x).\]
        At degree 15, the vectors forming a basis for the null space have the corresponding polynomials
        \[x (2 + x) (4 + x) (2 + x + x^2), \quad x^2 (2 + x) (3 + x), \quad x (1 + x) (4 + x).\]
        The gcd of the above four polynomials together with $x^p-x$ is again $x$.

        \item If $p=3$, we looked at degree 18. For $c=0,1,2$, we found that the null space of the Gram matrix has dimension $55,44,22$ respectively. If we take the corresponding polynomials of the basis elements in the Zhu algebra, we found that the gcd of the collection together with $x^p-x$ is $x$ in each case.

        \item If $p=7$, we looked at degree 14. We found that the null space of the Gram matrix has dimensions $33,7,1,1,25,12,1$ for $c=0,1,2,\ldots,6$ respectively. For $c = 0,1,2,4,5$, we calculated that the gcd of $x^p-x$ together with the polynomials corresponding to the basis elements of the null space is $x$. For $c=3,6$ respectively, the gcd of $x^p-x$ together with the polynomials corresponding to the basis elements of the null space is
        \[x (1 + x) (2 + x) ,\quad x (2 + x).\]
        Neither of these generates the same ideal as $x$ in $\mathbb{F}[x]$, so we cannot determine if the quotient of $\Aa(\overline{V}^0_{\Vir}(c,0)_{\mathbb{F}})$ by $\Aa(J^0)$ is isomorphic to $\mathbb{F}$.
    \end{enumerate}
    In all of the above cases, we have that $\Aa(L_{\Vir}(c,0)_{\mathbb{F}}) \cong \mathbb{F}$. Therefore, the only simple $\Aa(L_{\Vir}(c,0)_{\mathbb{F}})$-module is $L_{\Vir}(c,0)_{\mathbb{F}}$ itself. By the first criterion in \zcref{DGK rational equiv}, it suffices to show that
    \[{}_{L_{\Vir}(c,0)_{\mathbb{F}}}\PhiL(\Aa(L_{\Vir}(c,0)_{\mathbb{F}})) = L_{\Vir}(c,0)_{\mathbb{F}}.\]
    Note that ${}_{L_{\Vir}(c,0)_{\mathbb{F}}}\PhiL(\Aa(L_{\Vir}(c,0)_{\mathbb{F}}))$ is a degreewise quotient of $V_{\Vir}(c,0)_{\mathbb{F}}$. If there is a singular vector in ${}_{L_{\Vir}(c,0)_{\mathbb{F}}}\PhiL(\mathbb{F}v_{c,0})$, then it must arise from the action of $L_{\Vir}(c,0)_{\mathbb{F}}$, which we may write as $b_{(-1)}v_{c,0}$ for some $b\in L_{\Vir}(c,0)_{\mathbb{F}}$. Then $b = b_{(-1)}\vac$ must be a singular vector in the adjoint module. If $b\neq 0$, then the maximal graded submodule of $L_{\Vir}(c,0)_{\mathbb{F}}$ would be nonzero. This is a contradiction, so $b=0$. Therefore, the first criterion in \zcref{DGK rational equiv} holds, so this shows rationality.
\end{proof}
Once again, we refer to \cite{GriffinMathematica} for the details of these computations.
The obstruction to the cases $(p,c) = (7,3)$ and $(7,6)$ is that we would likely have to look at the null space of the Gram matrix at degree 21, and the computation time involved in calculating this matrix appears to be unreasonably long. In any case, we give the following conjecture:
\begin{conjecture}
    Let $\mathbb{F}$ be a field of characteristic $p>2$, and let $c\in \mathbb{F}$. Then the simple quotient $L_{\Vir}(c,0)_{\mathbb{F}}$ is holomorphic; that is, it is rational and the adjoint module is the only simple module up to isomorphism.
\end{conjecture}

\bibliographystyle{amsalpha}
\bibliography{bibfile}

\end{document}